\documentclass[11pt]{article}
\usepackage{authblk}
\usepackage[toc,page]{appendix}
\usepackage[top=2cm, bottom=2cm, left=2cm, right=2cm]{geometry}

\usepackage{color}
\usepackage{helvet}         
\usepackage{courier}        
\usepackage{type1cm}        

\usepackage{framed} 
\usepackage{tikz}
\usepackage{makeidx}         
\usepackage{graphicx}        
\usepackage{multicol}        
\usepackage[bottom]{footmisc}

\usepackage{amsmath}
\usepackage{amssymb}
\usepackage{bbold}
\usepackage{amsthm}
\usepackage{subcaption}
\usepackage{sidecap}
\usepackage{floatrow}
\usepackage{pdflscape}
\usepackage{comment}
\usepackage[font=small]{caption}
\usepackage{enumitem}
\usepackage{esint}

\usepackage[hidelinks]{hyperref}

\usepackage{scalerel}
\usepackage{shuffle}
\usepackage{mathrsfs}

\newtheorem{theorem}{Theorem}

\newtheorem{lemma}[theorem]{Lemma}

\newtheorem{proposition}[theorem]{Proposition}

\theoremstyle{definition}
\newtheorem{definition}[theorem]{Definition}

\theoremstyle{remark}

\newtheorem{remark}[theorem]{\bf Remark}

\numberwithin{theorem}{section}
\numberwithin{figure}{section}
\numberwithin{equation}{section}

\begin{document}

\title{Multi-time Loewner energy: rate function for large deviation}
\bigskip{}
\author[1]{Mo Chen\thanks{chenmothuprobab@gmail.com}}
\author[1]{Chongzhi Huang\thanks{huangchzh2001prob@gmail.com}}
\author[1]{Hao Wu\thanks{hao.wu.proba@gmail.com.}}
\affil[1]{Tsinghua University, China}
\date{}

%
%


\global\long\def\qnum#1{\left[#1\right]_q }
\global\long\def\qfact#1{\left[#1\right]_q! }
\global\long\def\qbin#1#2{\left[\begin{array}{c}
	#1\\
	#2 
	\end{array}\right]_q}

\global\long\def\defpatt{\shuffle}
\global\long\def\rainbow{\includegraphics[scale=0.15]{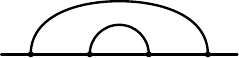}}
\global\long\def\rainbowBig{\includegraphics[scale=0.3]{figures/link-2}}

\newcommand{\nradpartfn}[2]{\mathcal{Z}^{#2}_{#1\mathrm{\textnormal{-}rad}}}
\global\long\def\covmap{h}

\global\long\def\mslitdriv{\omega}
\global\long\def\nnofloops{\mathscr{L}}

\global\long\def\Selberg{S}
\global\long\def\Diff{\Theta}
\global\long\def\SinDiff{\Xi}

\global\long\def\contour{\mathscr{C}}


\global\long\def\U{\mathbb{U}}
\global\long\def\T{\mathbb{T}}
\global\long\def\HH{\mathbb{H}}
\global\long\def\R{\mathbb{R}}
\global\long\def\C{\mathbb{C}}
\global\long\def\N{\mathbb{N}}
\global\long\def\Z{\mathbb{Z}}
\global\long\def\E{\mathbb{E}}
\global\long\def\PP{\mathbb{P}}
\global\long\def\rate{\mathcal{J}}
\global\long\def\QQ{\mathbb{Q}}
\global\long\def\A{\mathbb{A}}
\global\long\def\one{\mathbb{1}}

\newcommand{\PPspiral}[1]{\mathbb{P}^{\mu}_{#1\mathrm{\textnormal{-}rad}}}
\newcommand{\PPnospiral}[1]{\mathbb{P}^{0}_{#1\mathrm{\textnormal{-}rad}}}
\newcommand{\PPnospiralrho}{\mathbb{P}^{0; \bs{\rho}}}
\newcommand{\PPspiralrho}{\mathbb{P}^{\mu; \bs{\rho}}}

\global\long\def\CR{\mathrm{CR}}
\global\long\def\ST{\mathrm{ST}}
\global\long\def\SF{\mathrm{SF}}
\global\long\def\cov{\mathrm{cov}}
\global\long\def\dist{\mathrm{dist}}
\global\long\def\SLE{\mathrm{SLE}}
\global\long\def\hSLE{\mathrm{hSLE}}
\global\long\def\CLE{\mathrm{CLE}}
\global\long\def\GFF{\mathrm{GFF}}
\global\long\def\inte{\mathrm{int}}
\global\long\def\ext{\mathrm{ext}}
\global\long\def\inrad{\mathrm{inrad}}
\global\long\def\outrad{\mathrm{outrad}}
\global\long\def\dimH{\mathrm{dim}}
\global\long\def\capa{\mathrm{cap}}
\global\long\def\diam{\mathrm{diam}}
\global\long\def\sign{\mathrm{sgn}}
\global\long\def\cat{\mathrm{Cat}}
\global\long\def\cst{\mathrm{C}}
\global\long\def\ck{\mathrm{C}_{\kappa}}
\global\long\def\free{\mathrm{free}}
\global\long\def\hF{{}_2\mathrm{F}_1}
\global\long\def\simple{\mathrm{simple}}
\global\long\def\even{\mathrm{even}}
\global\long\def\odd{\mathrm{odd}}
\global\long\def\st{\mathrm{ST}}
\global\long\def\usf{\mathrm{USF}}
\global\long\def\Leb{\mathrm{Leb}}
\global\long\def\LP{\mathrm{LP}}
\global\long\def\I{\mathrm{I}}
\global\long\def\II{\mathrm{II}}
\global\long\def\hcap{\mathrm{hcap}}
\global\long\def\Poisson{\mathrm{P}}

\global\long\def\LA{\mathcal{A}}
\global\long\def\LB{\mathcal{B}}
\global\long\def\LC{\mathcal{C}}
\global\long\def\LD{\mathcal{D}}
\global\long\def\LF{\mathcal{F}}
\global\long\def\LK{\mathcal{K}}
\global\long\def\LE{\mathcal{E}}
\global\long\def\LG{\mathcal{G}}
\global\long\def\LGmu{\mathcal{G}_{\mu}}
\global\long\def\LI{\mathcal{I}}
\global\long\def\LJ{\mathcal{J}}
\global\long\def\LL{\mathcal{L}}
\global\long\def\LM{\mathcal{M}}
\global\long\def\LN{\mathcal{N}}
\global\long\def\OO{\mathcal{O}}
\global\long\def\LQ{\mathcal{Q}}
\global\long\def\LR{\mathcal{R}}
\global\long\def\LT{\mathcal{T}}
\global\long\def\LS{\mathcal{S}}
\global\long\def\LU{\mathcal{U}}
\global\long\def\LV{\mathcal{V}}
\global\long\def\LW{\mathcal{W}}
\global\long\def\LX{\mathcal{X}}
\global\long\def\LY{\mathcal{Y}}
\global\long\def\PartF{\mathcal{Z}}
\global\long\def\LH{\mathcal{H}}
\global\long\def\LJ{\mathcal{J}}

\global\long\def\blm{m}

\global\long\def\LZ{\mathcal{Z}}
\global\long\def\LZrp{\mathcal{Z}_{\alpha; \bs{s}}^{(p)}}
\global\long\def\LJrp{\mathcal{J}_{\alpha; \bs{s}}^{(p)}}
\global\long\def\chamberrp{\chamber_{\alpha; \bs{s}}^{(p)}}
\global\long\def\LErp{\mathcal{E}_{\alpha; \bs{s}}^{(p)}}
\global\long\def\Greenrp{G_{\alpha; \bs{s}}^{(p)}}
\global\long\def\Prp{P_{\alpha; \bs{s}}^{(p)}}
\global\long\def\norcst{\mathrm{C}_{\kappa}^{(\mathfrak{r})}}
\global\long\def\LZalphar{\mathcal{Z}_{\alpha}^{(\mathfrak{r})}}

\newcommand{\LZtwo}{\mathcal{Z}_{\includegraphics[scale=0.15]{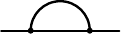}}^{(\kappa)}}
\newcommand{\Etwo}{\mathbb{E}_{\includegraphics[scale=0.15]{figures/link-0}}}
\newcommand{\PPtwo}{\PP_{\includegraphics[scale=0.15]{figures/link-0}}^{(\kappa)}}
\newcommand{\PPtworho}{\PP_{\includegraphics[scale=0.15]{figures/link-0}}^{(\kappa; \boldsymbol{\rho})}}
\newcommand{\LEtwo}{\mathcal{E}_{\includegraphics[scale=0.15]{figures/link-0}}}
\newcommand{\Etworho}{\mathbb{E}_{\includegraphics[scale=0.15]{figures/link-0}}^{(\kappa; \boldsymbol{\rho})}}
\newcommand{\LItwo}{\mathcal{I}_{\includegraphics[scale=0.15]{figures/link-0}}}
\newcommand{\LUtwo}{\mathcal{U}_{\includegraphics[scale=0.15]{figures/link-0}}}
\newcommand{\LZtwor}{\LZtwo^{(\mathfrak{r})}}
\newcommand{\LHtwo}{\mathcal{H}_{\includegraphics[scale=0.15]{figures/link-0}}}
\newcommand{\LFtwo}{\mathcal{F}_{\includegraphics[scale=0.15]{figures/link-0}}}
\newcommand{\chambertwo}{\chamber_{\includegraphics[scale=0.15]{figures/link-0}}}

\newcommand{\PPkapparho}{\PP_{\includegraphics[scale=0.15]{figures/link-0}}^{(\kappa; \bs{\rho})}}

\newcommand{\LZfoura}{\mathcal{Z}_{\includegraphics[scale=0.15]{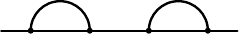}}}
\newcommand{\LZfourb}{\mathcal{Z}_{\includegraphics[scale=0.15]{figures/link-2}}}
\newcommand{\LHfoura}{\mathcal{H}_{\includegraphics[scale=0.15]{figures/link-1}}}
\newcommand{\LHfourb}{\mathcal{H}_{\includegraphics[scale=0.15]{figures/link-2}}}
\newcommand{\LFfoura}{\mathcal{F}_{\includegraphics[scale=0.15]{figures/link-1}}}
\newcommand{\LFfourb}{\mathcal{F}_{\includegraphics[scale=0.15]{figures/link-2}}}
\newcommand{\LFfouraRenorm}{\widehat{\mathcal{F}}_{\includegraphics[scale=0.15]{figures/link-1}}}
\newcommand{\LFfourbRenorm}{\widehat{\mathcal{F}}_{\includegraphics[scale=0.15]{figures/link-2}}}

\newcommand{\QQrainbow}[1]{\mathbb{Q}_{\rainbow_{#1}}}
\newcommand{\LZrainbow}[1]{\mathcal{Z}_{\rainbow_{#1}}}
\newcommand{\Frainbow}[1]{\mathscr{F}_{\rainbow_{#1}}}
\newcommand{\chamberrainbow}[1]{\chamber_{\rainbow_{#1}}}

\newcommand{\PPfusion}[1]{\mathbb{P}_{\includegraphics[scale=0.5]{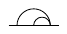}_{#1}}^{(\kappa)}}
\newcommand{\Efusion}[1]{\mathbb{E}_{\includegraphics[scale=0.5]{figures/link4fusion}_{#1}}^{(\kappa)}}
\newcommand{\LZfusion}[1]{\mathcal{Z}_{\includegraphics[scale=0.5]{figures/link4fusion}_{#1}}^{(\kappa)}}
\newcommand{\ratefusion}[1]{\mathcal{J}_{\includegraphics[scale=0.5]{figures/link4fusion}_{#1}}}
\newcommand{\LIfusion}[1]{\mathcal{I}_{\includegraphics[scale=0.5]{figures/link4fusion}_{#1}}}
\newcommand{\tilderatefusion}[1]{\tilde{\mathcal{J}}_{\includegraphics[scale=0.5]{figures/link4fusion}_{#1}}}
\newcommand{\hatratefusion}[1]{\hat{\mathcal{J}}_{\includegraphics[scale=0.5]{figures/link4fusion}_{#1}}}
\newcommand{\chamberfusion}[1]{\chamber_{\includegraphics[scale=0.5]{figures/link4fusion}_{#1}}}
\newcommand{\barchamberfusion}[1]{\overline{\chamber}_{\includegraphics[scale=0.5]{figures/link4fusion}_{#1}}}
\newcommand{\LUfusion}[1]{\mathcal{U}_{\includegraphics[scale=0.5]{figures/link4fusion}_{#1}}}
\newcommand{\LZfusionInv}[1]{\mathcal{Z}_{\rotatebox{180}{\scalebox{1}[-1]{\includegraphics[scale=0.5]{figures/link4fusion}}}_{#1}}}
\newcommand{\Ffusion}[1]{\mathscr{F}_{\includegraphics[scale=0.5]{figures/link4fusion}_{#1}}}

\newcommand{\chamberradial}[1]{\chamber_{#1\mathrm{\textnormal{-}rad}}}
\newcommand{\PPradial}[1]{\mathbb{P}_{#1\mathrm{\textnormal{-}rad}}}
\newcommand{\Eradial}[1]{\mathbb{E}_{#1\mathrm{\textnormal{-}rad}}}
\newcommand{\rateradial}[1]{\mathcal{J}_{#1\mathrm{\textnormal{-}rad}}}
\newcommand{\LIradial}[1]{\mathcal{I}_{#1\mathrm{\textnormal{-}rad}}}
\newcommand{\LVradial}[1]{\mathcal{V}_{#1\mathrm{\textnormal{-}rad}}}
\newcommand{\tilderateradial}[1]{\tilde{\mathcal{J}}_{#1\mathrm{\textnormal{-}rad}}}
\newcommand{\hatrateradial}[1]{\hat{\mathcal{J}}_{#1\mathrm{\textnormal{-}rad}}}
\newcommand{\LZradial}[1]{\mathcal{Z}_{#1\mathrm{\textnormal{-}rad}}}

\newcommand{\coulombGasHRenorm}{\widehat{\coulombGasH}}
\newcommand{\LZthree}{\mathcal{Z}_{\includegraphics[scale=0.8]{figures/link211}}}

\global\long\def\coulomb{\LH}
\global\long\def\auxcoulomb{\hat{\coulomb}}
\global\long\def\coulombGas{\LF}
\global\long\def\coulombnew{\LK}
\global\long\def\coulombLine{\LG}
\global\long\def\kfunc{p}

\global\long\def\eps{\epsilon}
\global\long\def\ov{\overline}
\global\long\def\QQrp{\QQ_{\alpha; \bs{s}}^{(p)}}

\global\long\def\bn{\mathbf{n}}
\global\long\def\MR{MR}
\global\long\def\cond{\,|\,}
\global\long\def\bigcond{\,\big|\,}
\global\long\def\Bigcond{\;\Big|\;}
\global\long\def\la{\langle}
\global\long\def\ra{\rangle}
\global\long\def\tree{\Upsilon}
\global\long\def\prob{\mathbb{P}}
\global\long\def\hm{\mathrm{Hm}}
%

\global\long\def\Im{\operatorname{Im}}
\global\long\def\Re{\operatorname{Re}}

\global\long\def\ud{\mathrm{d}}
\global\long\def\pder#1{\frac{\partial}{\partial#1}}
\global\long\def\pdder#1{\frac{\partial^{2}}{\partial#1^{2}}}
\global\long\def\pddder#1{\frac{\partial^{3}}{\partial#1^{3}}}
\global\long\def\der#1{\frac{\ud}{\ud#1}}

\global\long\def\bZnn{\mathbb{Z}_{\geq 0}}
\global\long\def\bZpos{\mathbb{Z}_{> 0}}
\global\long\def\bZneg{\mathbb{Z}_{< 0}}

\global\long\def\Vfunc{\LG}
\global\long\def\gfunc{g^{(\rr)}}
\global\long\def\hfunc{h^{(\rr)}}

\global\long\def\SimplexInt{\rho}
\global\long\def\CubeInt{\widetilde{\rho}}

\global\long\def\ii{\mathrm{i}}
\global\long\def\ee{\mathrm{e}}
\global\long\def\rr{\mathfrak{r}}
\global\long\def\chamber{\mathfrak{X}}
\global\long\def\Wchamber{\mathfrak{W}}

\global\long\def\SimplexIntKappa8{\SimplexInt}

\global\long\def\nested{\boldsymbol{\underline{\Cap}}}
\global\long\def\unnested{\boldsymbol{\underline{\cap\cap}}}
\global\long\def\unnested{\boldsymbol{\underline{\cap\cap}}}

\global\long\def\acycle{\vartheta}
\global\long\def\bcycle{\tilde{\acycle}}

\global\long\def\metric{\mathrm{dist}}

\global\long\def\adj#1{\mathrm{adj}(#1)}

\global\long\def\bs{\boldsymbol}

\global\long\def\edge#1#2{\langle #1,#2 \rangle}
\global\long\def\graph{G}

\newcommand{\conn}{\varsigma}
\newcommand{\realacycle}{\smash{\mathring{\acycle}}}
\newcommand{\realpt}{\smash{\mathring{x}}}
\newcommand{\corrind}{\LC}
\newcommand{\bssymb}{\pi}
\newcommand{\coeff}{p}
\newcommand{\MainConst}{C}

\global\long\def\removeLink{/}

\global\long\def\domainofdef{\mathfrak{U}}
\global\long\def\Test_space{C_c^\infty}
\global\long\def\Distr_space{(\Test_space)^*}

\global\long\def\bs{\boldsymbol}
\global\long\def\cst{\mathrm{C}}

\newcommand{\red}{\textcolor{red}}
\newcommand{\blue}{\textcolor{blue}}
\newcommand{\green}{\textcolor{green}}
\newcommand{\magenta}{\textcolor{magenta}}
\newcommand{\cyan}{\textcolor{cyan}}

\newcommand{\coulombGasH}{\mathcal{H}}
\newcommand{\secondbeta}{\intloop}

\newcommand{\cev}[1]{\reflectbox{\ensuremath{\vec{\reflectbox{\ensuremath{#1}}}}}}

\global\long\def\anticonf{\zeta}
\global\long\def\intloop{\varrho}
\global\long\def\Gloop{\smash{\mathring{\intloop}}}

\global\long\def\SLEmeasure{\mathrm{P}}
\global\long\def\SLEmeasureEx{\mathrm{E}}

\global\long\def\fugacity{\nu}
\global\long\def\meanderMat{\mathcal{M}}
\global\long\def\LM{\mathcal{M}}
\global\long\def\meanderMatrix{\meanderMat_{\fugacity}}
\global\long\def\meanderMatrixPrime{\meanderMat_{\fugacity(\kappa')}}
\global\long\def\meanderRenorm{\widehat{\mathcal{M}}}

\global\long\def\PartFRenorm{\widehat{\PartF}}
\global\long\def\coulombGasRenorm{\widehat{\coulombGas}}

\global\long\def\hexa{\scalebox{1.3}{\hexagon}}

\global\long\def\np{p}

\global\long\def\FKdual{\mathcal{L}}

\global\long\def\fixedindex{\flat}

\global\long\def\Dirichletradial{\mathcal{I}_{\mathrm{rad}}}
\global\long\def\LVone{\mathcal{V}_{1\mathrm{\textnormal{-}rad}}}
\maketitle
\vspace{-1cm}
\begin{center}
\begin{minipage}{0.95\textwidth}
\abstract{
The classification of probability measures that satisfy both conformal invariance and domain Markov property is equivalent to characterizing solutions to the Belavin--Polyakov--Zamolodchikov (BPZ) equations, as established by Dub\'edat~[Dub07]. In this context, the partition functions for half-watermelon SLE and for multi-radial SLE serve as fundamental solutions to the BPZ equations. In this article, we investigate the large deviation principle for both half-watermelon SLE and multi-radial SLE. The associated rate function is given by the multi-time Loewner energy, introduced in~[CHPW26]. 
As applications, we provide an alternative proof of the large deviation principle for Dyson Brownian motion, as well as a new derivation of the boundary perturbation property of the multi-time Loewner energy.}
\medbreak
\noindent\textbf{Keywords:} multiple SLE, large deviation, Loewner energy, return estimate \\ 
\noindent\textbf{MSC:} 60J67
\end{minipage}
\end{center}

\newpage
\tableofcontents
\newpage
\section{Introduction}
When examining the scaling limit of interfaces in two-dimensional critical lattice models, conformal invariance and domain Markov property arise naturally. In 1999, Schramm~\cite{SchrammFirstSLE} introduced the Schramm–Loewner evolution as a single random curve that possesses both conformal invariance and domain Markov property. For multiple interfaces in polygons, the combination of these two properties leads to Dub\'edat's commutation relation~\cite{DubedatCommutationSLE}. 

We say that $(\Omega; \bs{x})=(\Omega; x_1, \ldots, x_n)$ 
is a (topological) $n$-\emph{polygon} if $\Omega\subsetneq\C$ is simply connected, $\partial\Omega$ is locally connected, and $x_1, x_2, \ldots, x_n \in \partial\Omega$ are distinct points lying counterclockwise along the boundary. 
We say that $(\Omega; \bs{x})=(\Omega; x_1, \ldots, x_n)$ 
is a \emph{nice} polygon if we assume further that the marked boundary points $x_1, x_2, \ldots, x_n$ lie on $C^{1+\eps}$-boundary segments, for some $\eps>0$, so that derivatives of conformal maps on $\Omega$ are defined there.

For $n\ge 1$ and $n$-polygon $(\Omega; \bs{x})=(\Omega; x_1, \ldots, x_n)$, 
suppose $\{\PP(\Omega; \bs{x})\;|\; (\Omega; \bs{x})\}$ is a family of probability measures on $n$-tuple of disjoint continuous simple curves $\bs{\eta}=(\eta^1, \ldots, \eta^n)$ such that $\eta^j_0=x_j$ for $1\le j\le n$. Under the assumption of conformal invariance, domain Markov property, and a technical condition concerning absolute continuity, Dub\'edat~\cite{DubedatCommutationSLE} established that such a family must be encoded by a partition function $\LZ$ satisfying Belavin--Polyakov--Zamolodchikov (BPZ) equations. 
Consequently, the classification of probability measures enjoying conformal invariance and domain Markov property reduces to characterizing solutions of the BPZ equations.

In practice, when we parameterize the continuous curves $\bs{\eta}=(\eta^1, \ldots, \eta^n)$ by time, we may either take the normalization at a fixed boundary point (the chordal setting, see Section~\ref{subsec::chordalSLE}) or take the normalization at a fixed interior point (the radial setting, see Section~\ref{subsec::pre_radialLoewner}). 
The two normalizations lead to two versions of BPZ equations--the chordal version~\eqref{eqn::BPZ_H} and the radial version~\eqref{eqn::BPZ_U}. 
See also generalizations of Dub\'edat's argument in~\cite{BauerBernardKytolaMultipleSLE, GrahamSLE, FloresKlebanPDE1, KytolaPeltolaPurePartitionFunctions, 
KytolaPeltolaConformalCovBoundaryCorrelation,zhang2025multiplechordalslekappaquantum} for the chordal setting and in~\cite{KrusellWangWuCommutationRelation, zhang2025multipleradialslekappaquantum} for the radial setting.

\paragraph*{Chordal BPZ equations.}
Define 
\begin{equation}
\LX_n^{\HH}=\{(x_1, \ldots, x_n) \in \R^n\;|\; x_1<\cdots<x_n\}, 
\end{equation}
and consider functions $\LZ: \LX_n^{\HH}\to \R$. 
The system of chordal BPZ equations
\begin{equation}\label{eqn::BPZ_H}
	\frac{\kappa}{2} \frac{\partial_{j}^2\LZ}{\LZ} + \sum_{1\le i \ne j \le n } \left( \frac{2}{x_i - x_j} \frac{\partial_{i}\LZ}{\LZ} - \frac{(6-\kappa)/\kappa}{(x_i - x_j)^2} \right) = 0, \qquad \text{for all }1\le j\le n, 
\end{equation}
has a fundamental solution: 
\begin{equation}\label{eqn::halfwatermelon_pf_H}
	\LZ_{\shuffle_n}^{(\kappa)}(\bs{x}):=\prod_{1\le i<j\le n} (x_j-x_i)^{\frac{2}{\kappa}}, \qquad \text{for }\bs{x}=(x_1, \ldots, x_n)\in \LX_n^{\HH}. 
\end{equation}

\paragraph*{Radial BPZ equations.}
Define 
\begin{equation}
\LX_n^{\U}=\{(\theta_1, \ldots, \theta_n) \in \R^n\;|\; \theta_1<\cdots<\theta_n<\theta_1+2\pi\},
\end{equation}
and consider functions $\LZ: \LX_n^{\U}\to \R$. 
Fix a constant $\mu\in\R$. 
The system of radial BPZ equations
\begin{equation}\label{eqn::BPZ_U}
\frac{\kappa}{2} \frac{\partial_j^2 \LZ}{\LZ} 
+ \underset{1\leq i\neq j \leq n}{\sum} \left( \cot \left(\frac{\theta^{i}-\theta^{j}}{2}\right)
\frac{\partial_{i} \LZ}{\LZ} -\frac{(6-\kappa)/\kappa}{ 4\sin^2 \left(\frac{\theta^{i}-\theta^{j}}{2}\right)}\right)=\frac{\mu^2-n^2+1}{2\kappa}, 
 \qquad \text{for all }1\le j\le n, 
\end{equation}
has a fundamental solution:
\begin{equation}\label{eqn::nradial_pf_U}
	\LZradial{n}^{(\kappa; \mu)}(\bs{\theta}) := \prod_{1\le i<j\le n} |\ee^{\ii\theta_i} - \ee^{\ii\theta_j} |^{\frac{2}{\kappa}} \times \exp\left( \frac{\mu}{\kappa}\sum_{j=1}^{n} \theta_j \right), \qquad \text{for }\bs{\theta}=(\theta_1, \ldots, \theta_n) \in \LX_n^{\U}. 
\end{equation}

By fundamental solutions, we mean that more solutions of BPZ equations can be generated by combination of fundamental solutions of smaller size, see~\cite{KytolaPeltolaConformalCovBoundaryCorrelation, PeltolaBasisPDE} for the chordal setting.
The probability measure corresponds to the partition function $\LZ_{\shuffle_n}^{(\kappa)}$ in~\eqref{eqn::halfwatermelon_pf_H} is half-watermelon SLE which will be defined in Definition~\ref{def::halfwatermelonSLE}, and the probability measure for the partition function $\LZradial{n}^{(\kappa; \mu)}$ in~\eqref{eqn::nradial_pf_U} is multi-radial SLE with spiral which will be defined in Definition~\ref{def::multiradialSLEspiral}. 
These two models play as the building blocks of probability measures on multiple SLE curves. 
In~\cite{HuangPeltolaWuMultiradialSLEResamplingBP}, the authors derived resampling property and boundary perturbation of these two models. 
In the current article, we derive large deviation principle of these two models in Theorems~\ref{thm::halfwatermelon_LDP} and~\ref{thm::radialSLE_LDP}. 
The rate function for the large deviation is multi-time Loewner energy introduced in~\cite{ChenHuangPeltolaWumultitimeenergy}. 
As consequences of Theorems~\ref{thm::halfwatermelon_LDP} and~\ref{thm::radialSLE_LDP}, we provide an alternative proof for  large deviation for Dyson Brownian motion (see Propositions~\ref{prop::LDP_DysonBM_chordal} and~\ref{prop::LDP_DysonBM_radial}) and an alternative proof for boundary perturbation property of the multi-time Loewner energy (see Propositions~\ref{prop::bp_chordal} and~\ref{prop::bp_radial}). 

\subsection{Large deviation for half-watermelon SLE}

\paragraph{Curve Spaces. } For $2$-polygon $(\Omega; x, y)$, we denote by $\chamber(\Omega; x, y)$ the space of continuous simple curves $\eta\subset\overline{\Omega}$ from $x$ to $y$ such that $\eta\cap\partial\Omega=\{x,y\}$. In general, for $(n+1)$-polygon $(\Omega; \bs{x}, y)=(\Omega; x_1, \ldots, x_n, y)$, we denote by $\chamberfusion{n}(\Omega; \bs{x}, y)$ the space of $n$-tuples of continuous simple curves $\bs{\eta}=(\eta^1, \ldots, \eta^n)$ such that $\eta^j\in\chamber(\Omega; x_j, y)$ for all $1\le j\le n$ and that $\eta^i\cap\eta^j=\{y\}$ for all $i\neq j$, see Fig.~\ref{fig::multipleSLE}~(a). 

For $\bs{\eta}=(\eta^1, \ldots, \eta^n)\in\chamberfusion{n}(\Omega; \bs{x}, y)$, let $\varphi_{\HH}: \Omega\to \HH$ be a conformal map with $\varphi_{\HH}(y)=\infty$ and let $\varphi_{\U}: \Omega\to \U$ be a conformal map. We parameterize $\bs{\eta}\in\chamberfusion{n}(\Omega; \bs{x}, y)$ using $n$-time parameter of $\varphi_{\HH}(\bs{\eta})=(\varphi_{\HH}(\eta^1), \ldots, \varphi_{\HH}(\eta^n))$ (see~\eqref{eqn::multitime_def}). Define 
\begin{equation}\label{eqn::chamberfusion_metric}
\dist_{\chamber}(\bs{\eta}, \tilde{\bs{\eta}})= \sup_{\bs{t}\in [\bs{0}, \bs{\infty}]}\sup_{1\le j\le n}|\varphi_{\U}(\eta^j_{t_j})-\varphi_{\U}(\tilde{\eta}^j_{t_j})|, \qquad\text{for }\bs{\eta}, \tilde{\bs{\eta}}\in\chamberfusion{n}(\Omega; \bs{x}, y). 
\end{equation}
In this way, we obtain an incomplete metric space $(\chamberfusion{n}(\Omega; \bs{x}, y), \dist_{\chamber})$.
One example of curves in $\chamberfusion{n}(\Omega; \bs{x}, y)$ is half-watermelon SLE. 

\begin{definition}[Half-watermelon SLE]\label{def::halfwatermelonSLE}
Fix $\kappa\in (0,4]$ and $n\ge 1$ and $(n+1)$-polygon $(\Omega; \bs{x}, y) = (\Omega; x_1, \ldots, x_n, y)$.  
The half-$n$-watermelon $\SLE_{\kappa}$ is a probability measure on $\bs{\eta}=(\eta^1, \ldots, \eta^n)\in\chamberfusion{n}(\Omega; \bs{x}, y)$ which is characterized by the following recursion relation:
\begin{itemize}
\item $\eta^1$ is chordal $\SLE_{\kappa}(2, \ldots, 2)$ in $\Omega$ from $x_1$ to $y$ with force points $(x_2, \ldots, x_n)$; 
\item given $\eta^1$, the conditional law of $(\eta^2, \ldots, \eta^n)$ is half-$(n-1)$-watermelon $\SLE_{\kappa}$ in $(\Omega\setminus\eta^1; x_2, \ldots, x_n, y)$. 
\end{itemize}
We denote the law of half-$n$-watermelon $\SLE_{\kappa}$ in $(\Omega; \bs{x}, y )$ by $\PPfusion{n}(\Omega; \bs{x}, y)$. 
\end{definition}

It is proved in~\cite[Section~4]{MillerSheffieldIG2} that half-watermelon $\SLE_{\kappa}$ is the unique probability measure on $\bs{\eta}=(\eta^1,\ldots, \eta^n)\in \chamberfusion{n}(\Omega; \bs{x}, y)$ with the following resampling property: for each $j\in \{1, \ldots, n\}$, the conditional law of $\eta^{j}$ given $\{\eta^{i}: i\neq j\}$ is chordal $\SLE_{\kappa}$ from $x_j$ to $y$ 
in the connected component $\Omega_j$ of $\Omega\setminus\cup_{i\neq j}\eta^{i}$ having $x_j$ on its boundary. 
The solution $\LZ_{\shuffle_n}^{(\kappa)}$ in~\eqref{eqn::halfwatermelon_pf_H} is the partition function for half-$n$-watermelon $\SLE_{\kappa}$ in $(\HH; \bs{x}, \infty)$ with $\bs{x}=(x_1, \ldots, x_n)\in\LX_n^{\HH}$, in the sense that the driving function $W_t$ for $\eta^j$ solves the SDE
\begin{align}\label{eqn::halfwatermelon_marginal_SDE}
\begin{split}
\ud W_t=&\sqrt{\kappa}\ud B_t+\kappa\partial_j\left(\log\LZ_{\shuffle_n}^{(\kappa)}\right)(g_t(x_1), \ldots, g_t(x_{j-1}), W_t, g_t(x_{j+1}), \ldots, g_t(x_n))\ud t\\
=& \sqrt{\kappa}\ud B_t+\sum_{i\neq j}\frac{2\ud t}{W_t-g_t(x_i)},
\end{split}
\end{align}
where $B_t$ is standard one-dimensional Brownian motion (see Section~\ref{subsec::chordalSLE} for chordal Loewner chain). 

\begin{figure}[ht!]
\begin{subfigure}[t]{0.45\textwidth}
\begin{center}
\includegraphics[width=0.6\textwidth]{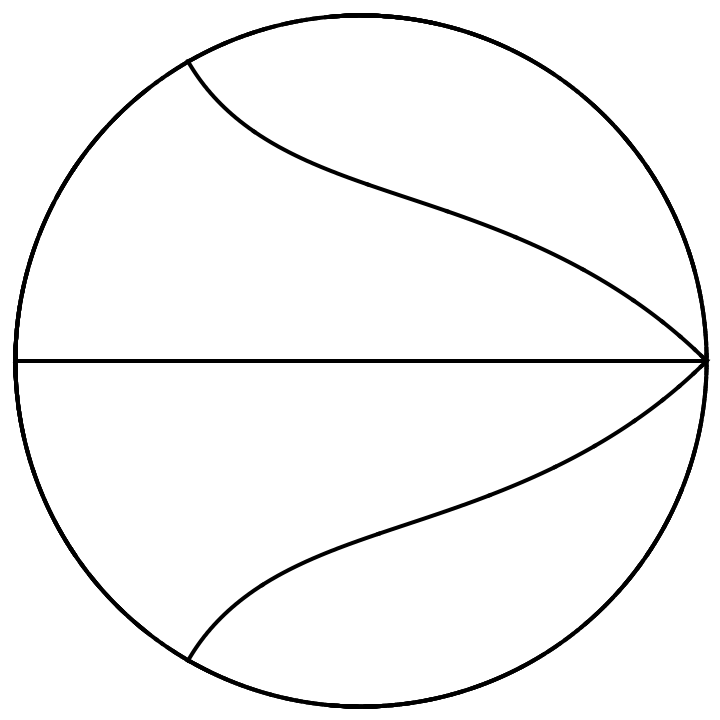}
\end{center}
\caption{Half-3-watermelon SLE$_0$.}
\end{subfigure}
\hfill
\begin{subfigure}[t]{0.45\textwidth}
\begin{center}
\includegraphics[width=0.6\textwidth]{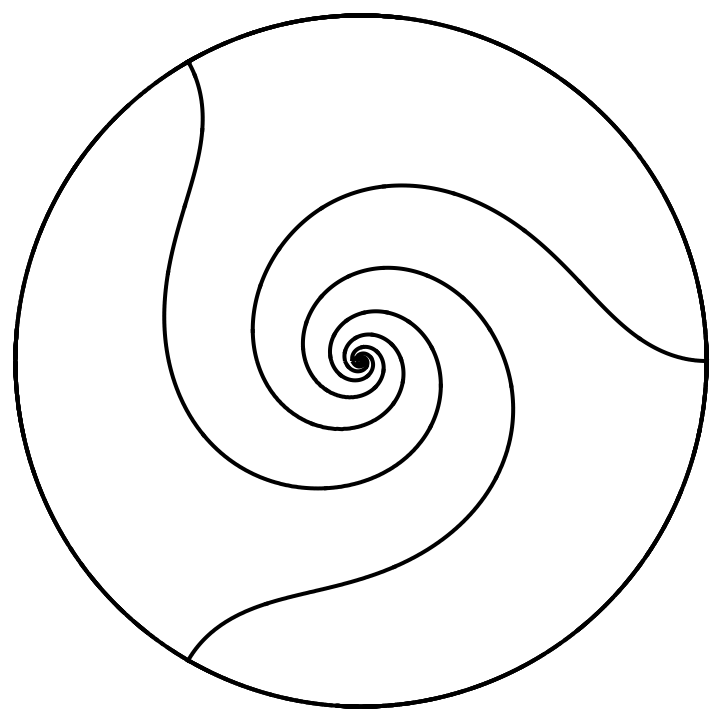}
\end{center}
\caption{3-radial SLE$_0$ with spiral. }
\end{subfigure}
\caption{\label{fig::multipleSLE} Examples of the curve spaces.}
\end{figure}

\begin{theorem}
\label{thm::halfwatermelon_LDP}
Fix $n\ge 1$ and $(n+1)$-polygon $(\Omega; \bs{x}, y)$. 
The family $\{ \PPfusion{n}(\Omega; \bs{x}, y)\}_{\kappa\in (0,4]}$ of laws of half-$n$-watermelon $\SLE_{\kappa}$   satisfies large deviation principle in the space $(\chamberfusion{n}(\Omega; \bs{x}, y), \dist_{\chamber})$ as $\kappa\to 0+$ with good rate function $\ratefusion{n}(\Omega; \bs{x}, y; \cdot)$ which will be defined in~\eqref{eqn::multi_time_energy_chordal} in Definition~\ref{Def::multitime_energy_chordal}, that is, 
\begin{align*}
	\liminf_{\kappa\to 0+} \kappa \log \PPfusion{n}(\Omega; \bs{x}, y) [O] \ge& -\inf_{\bs{\eta}\in O} \ratefusion{n}(\Omega; \bs{x}, y; \bs{\eta}), \quad \text{for any open }O\subset \chamberfusion{n}(\Omega; \bs{x}, y),\\
	\limsup_{\kappa\to 0+} \kappa \log \PPfusion{n}(\Omega; \bs{x}, y) [F] \le& -\inf_{\bs{\eta}\in F} \ratefusion{n}(\Omega; \bs{x}, y; \bs{\eta}), \quad \text{for any closed }F\subset \chamberfusion{n}(\Omega; \bs{x}, y).
\end{align*}
\end{theorem}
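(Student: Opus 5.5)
By conformal invariance we reduce to $(\HH;\bs{x},\infty)$ with $\bs{x}\in\LX_n^{\HH}$. The multi-time parametrization is well defined, and $\dist_{\chamber}$ is transported by $\varphi_{\U}$, so the LDP in a general polygon follows from the one in $\HH$ by contraction under a homeomorphism.

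In $\HH$ we follow the recursive structure of Definition~\ref{def::halfwatermelonSLE}. By~\eqref{eqn::halfwatermelon_marginal_SDE}, the first curve $\eta^1$ is chordal $\SLE_\kappa(2,\ldots,2)$. Its driving function is $\sqrt{\kappa}B$ plus the drift $\sum_{i\neq 1} 2/(W_t-g_t(x_i))$. The drift does not depend on $\kappa$, and the force points repel $W$ away from them. We therefore apply a Freidlin--Wentzell type argument. Schilder's theorem gives the LDP for $\sqrt{\kappa}B$ with rate $\frac12\int \dot{B}^2$. The map from noise to driving function is continuous (locally Lipschitz) as long as $W$ stays a positive distance from the force points $g_t(x_i)$. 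This yields the single-curve rate $\frac12\int_0^\infty(\dot W_t-\sum_{i\ne1}\frac{2}{W_t-g_t(x_i)})^2\ud t$, which is the $\SLE_0(2,\ldots,2)$ Loewner-type energy.

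We then pass from driving functions to curves in the $\dist_{\chamber}$ topology. On finite time intervals we use the finite-energy driving function estimates: driving functions of finite energy generate simple curves, and the Loewner transform is continuous on energy sublevel sets (as in Wang's chordal Loewner energy LDP and Peltola--Wang). Given $\eta^1$, the remaining curves form a half-$(n-1)$-watermelon in $\HH\setminus\eta^1$. We apply induction together with a conditional LDP, which requires uniformity of the LDP bounds over the conditioning curve ranging in compact sets. This is provided by the conformal covariance of the construction and the continuity of the rate function in the domain. Summing the rates gives $\ratefusion{n}$, which should agree with Definition~\ref{Def::multitime_energy_chordal}. To verify this, we expect to use the identity from~\cite{ChenHuangPeltolaWumultitimeenergy} expressing the multi-time energy as the sum of single-curve energies plus the $\log\LZ_{\shuffle_n}$ terms; alternatively, the Girsanov density against independent chordal SLEs contributes the $\kappa\log$ of the partition-function ratio in the limit. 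Goodness of the rate function follows from compactness of the energy sublevel sets, which uses the standard fact that finite energy implies the curves are quasiarcs hitting the boundary only at their endpoints.

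The main obstacle is that $\chamberfusion{n}$ is incomplete and the curves run for infinite time, all ending at the common point $y=\infty$. The upper bound requires exponential tightness, namely that with probability $\exp(-M/\kappa)$ the curves (i) approach the boundary or each other away from $y$, or (ii) fail to converge nicely to $y$ in the $\varphi_{\U}$-metric at large times. For (i) we would use one-arm and return estimates for $\SLE_\kappa(2,\ldots,2)$ uniformly as $\kappa\to0$: the probability of coming within $\epsilon$ of the boundary, or of another curve, and then returning macroscopically should be bounded by $\exp(-c(\epsilon)/\kappa)$ with $c(\epsilon)\to\infty$. For (ii) we use scale invariance to bound the probability that the tail after capacity time $T$ makes a large $\varphi_{\U}$-excursion near $y$. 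With these estimates we truncate at finite times and small neighbourhoods, apply the finite-time LDP, and conclude via the standard approximation (exponentially good approximations) lemma.
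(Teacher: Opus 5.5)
\textbf{There is a genuine gap at the structural core of your proposal: the inductive ``conditional LDP'' step.}

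To combine an LDP for $\eta^1$ with LDPs for the conditional laws of $(\eta^2,\ldots,\eta^n)$ given $\eta^1$ (a Chaganty-type argument), you need two things. First, whenever $\eta^1_\kappa\to\eta^1$ in $\dist_{\chamber}$, the conditional half-$(n-1)$-watermelon laws in $\HH\setminus\eta^1_\kappa$ must satisfy the LDP with rate $J(\cdot\,|\,\eta^1)$. Second, $J$ must be jointly lower semicontinuous.

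Conformal covariance does not supply this. The conditional laws live in the varying domains $\HH\setminus\eta^1_\kappa$. By contrast, $\chamberfusion{n}$ measures $\eta^2,\ldots,\eta^n$ by their own half-plane capacity in $\HH$ and through $\varphi_{\U}$ of $\HH$. The uniformizing map of $\HH\setminus\eta^1_\kappa$ transports neither of these.

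More seriously, $\dist_{\chamber}$-convergence gives no control of $\eta^1_\kappa$ near $y=\infty$. Two curves that agree until exiting $B(0,R)$ and stay outside it afterwards are $O(1/R)$-close, whatever they do there. Yet that is exactly where all curves meet, and where the infinite-time (return) estimates for the conditional law must hold uniformly. You would need return estimates for the conditional watermelon uniformly over the conditioning curve, which is no easier than the original problem.

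Two further points are left open. The identification of $J_1+J_2$ with $\ratefusion{n}$ is stated only as an expectation. And the return estimate you invoke for $\SLE_\kappa(2,\ldots,2)$ does not follow from scale invariance. With force points on both sides, the Radon--Nikodym derivative against chordal $\SLE_\kappa$ on a dyadic scale, $M_{\tau_{2^{k+1}S}}/M_{\tau_{2^kS}}$, is unbounded when $W_t$ comes close to some $g_t(x_i)$. Controlling this is the main technical work, and it was previously available only for a single force point.

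\textbf{The missing idea is the multi-time martingale of Lemma~\ref{lem::halfwatermelon_mart}, which removes the conditional step entirely.}

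Truncate at the exit times $\bs{T}_D$ of a bounded domain. The half-watermelon law is then the law of $n$ independent chordal $\SLE_\kappa$'s tilted by $\exp(\Phi(\cdot;\kappa)/\kappa)$, where $\Phi(\cdot;\kappa)=\kappa\log(M_{\bs{T}_D}/M_{\bs{0}})$. This $\Phi$ is bounded above and converges, as $\kappa\to 0$, to $\LU_{\shuffle_n}(\bs{x})-\LU_{\shuffle_n}(\bs{X}_{\bs{T}_D})-12\blm_{\bs{T}_D}+3\sum_j\log g'_{\bs{T}_D,j}(W^j_{T^j_D})$.

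Apply a $\kappa$-dependent Varadhan lemma (Lemma~\ref{lem::Varadhan}) to the product LDP of independent chordal SLEs. This gives the finite-time LDP with rate $\rate_{\oplus n}-\Phi(\cdot;0)$, which is literally \eqref{eqn::multi_time_energy_chordal_def}. You then need no Freidlin--Wentzell argument, no driving-function-to-curve transfer for $\SLE_\kappa(\rho)$, no conditional uniformity, no identification of the rate, and no separate estimate for curves approaching each other.

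Infinite time then follows from three ingredients:
\begin{itemize}
\item Dawson--G\"artner over the radius $R$.
\item The generalized contraction principle on the closed sets $\iota(F_{\overrightarrow{N}})$.
\item A return estimate for each marginal $\eta^j$, which is an $\SLE_\kappa(2,\ldots,2)$ parametrized by its own capacity.
\end{itemize}

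That return estimate (Proposition~\ref{prop::SLE_return_chordal}) is proved by splitting into two kinds of events. On good events, $|W_t-g_t(x_1^{L/R})|\ge\epsilon_k2^kS$, so the Radon--Nikodym derivative is at most $(12/\epsilon_k)^{\tilde\rho/\kappa}$ and the standard bound $c_\kappa(s/S)^{8/\kappa-1}$ applies. Bad events are controlled through a monotone coupling with a process having one force point on each side, together with transience estimates for the associated Bessel-type diffusion (Lemma~\ref{lem::chordalBessel_estimate}).
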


Large deviation principle for chordal SLE curve was first established in a weak, finite-time form in \cite{Wang:EnergyLoewnerChain}. This result was later strengthened to a large deviation principle for the entire curve with respect to the Hausdorff metric; see \cite[Theorem~1.5]{PeltolaWangSLELDP}.
The large deviation in Theorem~\ref{thm::halfwatermelon_LDP} when $n=1$ is proved in~\cite[Theorem~1.2]{AbuzaidPeltolaLargeDeviationCapacityParameterization}. 
We will prove it for $n\ge 1$ in Section~\ref{sec::LDP_halfwatermelon}.

\subsection{Large deviation for multi-radial SLE with spiral}
\paragraph{Curve Spaces. }
For $1$-polygon $(\Omega;x)$ with $z\in\Omega$, 
we denote by $\chamber(\Omega;x;z)$ the space of continuous simple curves $\gamma\subset \overline{\Omega}$ from $x$ to $z$, such that $\gamma\cap \partial\Omega = \{x\}$.
In general, for $n$-polygon $(\Omega; \bs{x})=(\Omega; x_1, \ldots, x_n)$ with $z\in\Omega$, we denote by $\chamberradial{n}(\Omega; \bs{x}; z)$ the space of $n$-tuples of continuous simple curves $\bs{\gamma}=(\gamma^1, \ldots, \gamma^n)$ such that $\gamma^j\in\chamber(\Omega; x_j; z)$ for all $1\le j\le n$ and that $\gamma^i\cap\gamma^j=\{z\}$ for all $i\neq j$, see Fig.~\ref{fig::multipleSLE}~(b). 

For $\bs{\gamma}=(\gamma^1, \ldots, \gamma^n)\in\chamberradial{n}(\Omega; \bs{x}; z)$, let $\varphi: \Omega\to \U$ be a conformal map with $\varphi(z)=0$. 
We parameterize $\bs{\gamma}\in \chamberradial{n}(\Omega; \bs{x}; z)$ using $n$-time parameter of $\varphi(\bs{\gamma})=(\varphi(\gamma^1), \ldots, \varphi(\gamma^n))$ (see~\eqref{eqn::multitime_def_radial}). Define
\begin{equation} \label{eqn::dist_chamber}
	\dist_{\chamber}(\bs{\gamma},\tilde{\bs{\gamma}})=\sup_{\bs{t}\in [\bs{0},\bs{\infty}]} \sup_{1\le j\le n} |\varphi(\gamma_{t_j}^j)-\varphi(\tilde{\gamma}_{t_j}^j)|,
\qquad \text{for }\bs{\gamma}, \tilde{\bs{\gamma}}\in\chamberradial{n}(\Omega; \bs{x}; z).
\end{equation}
In this way, we obtain an incomplete metric space $(\chamberradial{n}(\Omega; \bs{x}; z), \dist_{\chamber})$.
One example of curves in $\chamberradial{n}(\Omega; \bs{x}; z)$ is multi-radial SLE with spiral.

\begin{definition}[Multi-radial SLE with spiral]
\label{def::multiradialSLEspiral}
Fix $\kappa\in (0,4], \mu\in\R$ and $n\ge 1$ and $n$-polygon $(\Omega; \bs{x})=(\Omega; x_1, \ldots, x_n)$ with $z\in\Omega$. The $n$-radial $\SLE_{\kappa}^{\mu}$ is a probability measure on $\bs{\gamma}=(\gamma^1, \ldots, \gamma^n)\in\chamberradial{n}(\Omega; \bs{x}; z)$ which is characterized by the following properties:
\begin{itemize}
\item $\gamma^1$  is radial $\SLE_{\kappa}^{\mu}(2,\ldots,2)$ in $\Omega$ from $x_1$ to $z$ with force points $(x_2, \ldots, x_n)$; 
\item given $\gamma^1$, the conditional law of $(\gamma^2, \ldots, \gamma^n)$ is half-$(n-1)$-watermelon $\SLE_{\kappa}$ in $(\Omega\setminus\gamma^1; x_2, \ldots, x_n, z)$. 
\end{itemize}
We denote the law of $n$-radial $\SLE_{\kappa}^{\mu}$ in $(\Omega; \bs{x}; z)$ by $\PPradial{n}^{(\kappa; \mu)}(\Omega; \bs{x}; z)$. 
\end{definition}

The $n$-radial $\SLE_{\kappa}$ (without spiral) was originally constructed in~\cite{HealeyLawlerNSidedRadialSLE} by weighting independent radial $\SLE_{\kappa}$'s by common-time martingale. Later, the $n$-radial $\SLE_{\kappa}^{\mu}$ was constructed in~\cite[Section~3]{KrusellWangWuCommutationRelation} for $n=2$ and in~\cite{HuangPeltolaWuMultiradialSLEResamplingBP} for $n\ge 2$ as a process by weighting independent radial $\SLE_{\kappa}$'s by multi-time martingale (see \eqref{eqn::nradial_multitime_mart}). The definition we present here is an equivalent description proved in~\cite[Theorem~1.3]{HuangPeltolaWuMultiradialSLEResamplingBP}.
 The solution to radial BPZ equations $\LZradial{n}^{(\kappa; \mu)}$ in~\eqref{eqn::nradial_pf_U} is the partition function for $n$-radial $\SLE_{\kappa}^{\mu}$ in $(\U; \ee^{\ii\bs{\theta}}; 0)$ with $\bs{\theta}=(\theta_1, \ldots, \theta_n)\in\LX_n^{\U}$, in the sense that the driving function $\xi_t$ for $\gamma^j$ solves the SDE
 \begin{align}\label{eqn::multiradial_marginal_SDE}
 \begin{split}
\ud \xi_t=&\sqrt{\kappa}\ud B_t+\kappa\partial_j\left(\log\LZradial{n}^{(\kappa; \mu)}\right)(\covmap_t(\theta_1), \ldots, \covmap_t(\theta_{j-1}), \xi_t, \covmap_t(\theta_{j+1}), \ldots, \covmap_t(\theta_n))\ud t\\
 =&\sqrt{\kappa}\ud B_t+\sum_{i\neq j}\cot\left(\frac{\xi_t-\covmap_t(\theta_i)}{2}\right)\ud t+\mu\ud t, 
 \end{split}
 \end{align}
 where $B_t$ is standard one-dimensional Brorwnian motion (see Section~\ref{subsec::pre_radialLoewner} for radial Loewner chain). 
 

\begin{theorem}
\label{thm::radialSLE_LDP}
Fix $\mu\in\R$ and $n\ge 1$ and $n$-polygon $(\Omega; \bs{x})$ with $z\in\Omega$. 
The family $\{\PPradial{n}^{(\kappa; \mu)}(\Omega; \bs{x}; z)\}_{\kappa\in (0,4]}$ of laws of $n$-radial $\SLE_{\kappa}^{\mu}$
 satisfies large deviation principle in the space $(\chamberradial{n}(\Omega; \bs{x}; z), \dist_{\chamber})$ as $\kappa\to 0+$ with good rate function $\rateradial{n}^{(\mu)}(\Omega; \bs{x}; z; \cdot)$ which will be defined in~\eqref{eqn::multi_time_energy_radial} in Definition~\ref{def::multitime_energy_radial}, that is,
\begin{align*}
	\liminf_{\kappa\to 0+} \kappa \log \PPradial{n}^{(\kappa; \mu)}(\Omega; \bs{x}; z) [O] \ge& -\inf_{\bs{\gamma}\in O} \rateradial{n}^{(\mu)}(\Omega; \bs{x}; z; \bs{\gamma}), \quad \text{for any open }O\subset \chamberradial{n}(\Omega; \bs{x}; z),\\
	\limsup_{\kappa\to 0+} \kappa \log \PPradial{n}^{(\kappa; \mu)}(\Omega; \bs{x}; z) [F] \le& -\inf_{\bs{\gamma}\in F} \rateradial{n}^{(\mu)}(\Omega; \bs{x}; z; \bs{\gamma}), \quad \text{for any closed }F\subset \chamberradial{n}(\Omega; \bs{x}; z). 
\end{align*}
\end{theorem}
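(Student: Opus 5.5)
We will prove Theorem~\ref{thm::radialSLE_LDP} by reducing it, through the recursive description in Definition~\ref{def::multiradialSLEspiral}, to a large deviation principle for a single radial $\SLE_{\kappa}^{\mu}(2,\ldots,2)$ curve, combined with Theorem~\ref{thm::halfwatermelon_LDP} for the remaining $n-1$ curves. By conformal invariance of the law and of the metric $\dist_{\chamber}$ (which is defined through $\varphi$), we may take $(\Omega;\bs{x};z)=(\U;\ee^{\ii\bs{\theta}};0)$.

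\emph{Finite-time LDP for the driving process.} We work with the multi-time description. Under the $n$-radial $\SLE_\kappa^\mu$, the driving functions of the $n$ curves, run along any fixed multi-time path (for instance the common-time parametrization), form a diffusion whose drift $\kappa\nabla\log\LZradial{n}^{(\kappa;\mu)}$ converges, as $\kappa\to0$, to a $\kappa$-independent vector field built from the $\cot$-interactions and $\mu$. The noise is $\sqrt{\kappa}\,\ud B$. Alternatively, we view the law as independent radial $\SLE_\kappa$ curves reweighted by the multi-time martingale~\eqref{eqn::nradial_multitime_mart}. In that martingale, $\kappa\log$ of the Radon--Nikodym derivative converges uniformly on sets where the curves stay separated, up to a finite time $T$, to an explicit functional of the curves.

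From Schilder's theorem for the independent curves and Varadhan's lemma (or the Freidlin--Wentzell theory for the SDE), we get an LDP for the curves stopped at time $T$, with rate function equal to the truncation of $\rateradial{n}^{(\mu)}$. The rate function is the sum of Dirichlet energies plus the interaction terms from Definition~\ref{def::multitime_energy_radial}. The coefficients blow up only when two curves meet. We handle this with a return estimate: for the upper bound, the probability that the driving points come within $\epsilon$ of each other before time $T$ is at most $\exp(-c(\epsilon)/\kappa)$ with $c(\epsilon)\to\infty$. This follows from the $\frac{2}{\kappa}$-power repulsion in $\LZradial{n}^{(\kappa;\mu)}$, via a Bessel-type comparison. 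The contraction principle then transfers the result from driving functions to curves under $\dist_{\chamber}$ restricted to $[0,T]$; continuity of the Loewner map holds on the relevant set of finite energy.

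\emph{From finite time to the whole curve.} This is the main obstacle: $\dist_{\chamber}$ takes a supremum over all $\bs t\in[\bs0,\bs\infty]$, so we need exponential tightness of the tails. We need, uniformly in $\kappa$, that after time $T$ the curves stay close to $z$ in the $\varphi$-image, except on an event of probability at most $\exp(-M/\kappa)$ with $M\to\infty$ as $T\to\infty$. This is a ``return estimate'' for radial SLE: starting from the conformal radius $\ee^{-T}$, the probability that the curve returns to distance $\delta$ from $\partial\U$ is at most $\exp(-c(\delta)T/\kappa)$-type. We would prove it using the one-point radial SLE martingale, or the half-watermelon return estimate already needed for Theorem~\ref{thm::halfwatermelon_LDP}, together with the domain Markov property.

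Given this tail estimate, we combine the finite-time LDPs through the Dawson--Gärtner projective-limit theorem, and use exponential tightness to pass to the full topology. The rate function equals the monotone limit $T\to\infty$ of the truncated energies, which is $\rateradial{n}^{(\mu)}$. Goodness (compact level sets in the incomplete space) follows from the return estimate as well: finite energy forces the curves to be simple, disjoint, and to reach $z$. If the multi-time parametrization makes common-time stopping awkward, we instead use the recursive structure. We apply the single-curve argument to $\gamma^1$, then apply Theorem~\ref{thm::halfwatermelon_LDP} conditionally in $\U\setminus\gamma^1$, with continuity of that LDP in the domain. The rate function then decomposes additively, consistent with the definition of the multi-time energy.
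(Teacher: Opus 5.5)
Your architecture is the same as the paper's: reweight independent radial $\SLE_\kappa$ curves by the multi-time martingale to get a finite-time LDP, pass to the projective limit by Dawson--G\"artner, then upgrade to $\dist_{\chamber}$ with a return estimate. But the step you flag as the main obstacle is left without a workable argument, and it is where the real work lies.

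\textbf{The return estimate.} It is needed for the marginal law of each $\gamma^j$. That law is radial $\SLE_\kappa^\mu(2,\ldots,2)$ with force points at the other marked points, not plain radial $\SLE_\kappa$. Its Radon--Nikodym derivative $M_{\tau_{m+1}}/M_{\tau_m}$ against radial $\SLE_\kappa$ is unbounded, for two reasons. The factors $\sin(\Delta^j_{\tau_m}/2)^{-\rho_j/\kappa}$ blow up when the driver approaches a force point. The spiral factor $\exp\bigl(\frac{\mu}{\kappa}(\xi_t+\sum_j\frac{\rho_j}{2}\covmap_t(\theta_j))\bigr)$ involves the increments $\int\cot(\Delta^j_t/2)\,\ud t$, which are unbounded near the force points.

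Neither of your suggestions handles this. A ``one-point radial SLE martingale'' argument would at best reprove the known estimate for plain radial $\SLE_\kappa$ (Lemma~\ref{lem::intersection_estimate}). Transporting the chordal half-watermelon estimate through $\U\setminus\gamma^1$ gives no uniform control, since the map to $\HH$ near $0$ depends on $\gamma^1$; it also says nothing about $\gamma^1$ itself.

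The missing idea is the good/bad decomposition of Section~\ref{subsec::proof_return_estimate}:
\begin{itemize}
\item On the event that the driver stays at distance $\gtrsim m^{-1/2}$ from the force points, the derivative is at most $\exp((C'+C\sqrt m)/\kappa)$ (Lemma~\ref{lem::bound_finitetime_RN}). This is subexponential in $m$, so it is beaten by the excursion-measure decay $\exp(-(8/\kappa-1)(m-s)/2)$.
\item The complementary event is controlled by a monotone coupling with the radial Bessel-type process~\eqref{eqn::radialBessel_SDE} with $\alpha=1+\bar\rho/2$, and its hitting estimate (Lemmas~\ref{lem::radial_mono_coupling} and~\ref{lem::radialBessel_estimates}).
\item Before that, one reduces to a single force point via the exponential contraction of $\Delta^n-\Delta^2$ (Lemma~\ref{lem::radial_expdecay}).
\end{itemize}
Also, the return estimate does not give exponential tightness in $\dist_\chamber$, which would require compact sets of simple disjoint curves. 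What it gives, and what suffices, is closed sets $\iota(G_{\overrightarrow{N}})$ of probability $1-O(\ee^{-M/\kappa})$ on which $\iota^{-1}$ is continuous. These are fed into Lemma~\ref{lem::generalized_contraction}.

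\textbf{The finite-time step.} Here you misplace the difficulty.
\begin{itemize}
\item Near-collisions are harmless for the Varadhan upper bound. $\kappa\log$ of the martingale contains $2\sum_{i<j}\log\sin\bigl((\mslitdriv^j_{\bs T}-\mslitdriv^i_{\bs T})/2\bigr)\le 0$, and the indicator $\one_{\LE_\emptyset}$ kills collisions, so no collision estimate is needed.
\item What actually obstructs the upper bound is that the exponent is unbounded above through the winding term $\mu\sum_j\mslitdriv^j_{\bs T}$. ``Uniform convergence on separated sets'' does not address this. The paper verifies an exponential-moment condition with $\beta=2$: it writes $2\Phi_0^{\mu}=\Phi_0^{2\mu}+(\text{bounded})$ and uses that the martingale with spiral rate $2\mu$ has mean one.
\item Schilder's theorem plus continuity of the Loewner map on finite-energy drivers does not yield a curve-space LDP. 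Continuity on $\{\text{energy}<\infty\}$ only gives goodness; the transfer needs exponentially good closed sets on which the map is continuous. This is why the paper starts from the finite-time curve-space LDP for radial $\SLE_\kappa$ (Lemma~\ref{lem::finite_radialSLE_LDP}).
\end{itemize}
Finally, your recursive alternative would additionally need an LDP for the conditional half-watermelon law that is uniform in the random slit domain $\U\setminus\gamma^1$. That is an unproved statement, not a shortcut.
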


The large deviation Theorem~\ref{thm::radialSLE_LDP} when $n=1, \mu=0$ is proved in~\cite[Theorem~1.2]{AbuzaidPeltolaLargeDeviationCapacityParameterization}. We will prove it for $n\ge 1$ and general $\mu\in\R$ in Section~\ref{sec::LDP_radialSLE}.
\medbreak
The proof for large deviation in Theorems~\ref{thm::halfwatermelon_LDP} and~\ref{thm::radialSLE_LDP} has two main ingredients. 
\begin{itemize}
\item First, we derive return estimate for SLE in Propositions~\ref{prop::SLE_return_chordal} and~\ref{prop::radialSLE_return}. The proof for the return estimates 
consists of two parts: we split the estimates into estimates on ``good event" and estimates on ``bad event". 
On the good event, the return estimates are comparable to the return estimates for SLE without force points or spiral, which was proved in~\cite{field2015escape,LawlerMinkowskiSLERealLine,PeltolaWangSLELDP} for the chordal case and in~\cite{LawlerContinuityofradialSLE,AbuzaidPeltolaLargeDeviationCapacityParameterization} for the radial case. The control of the ``bad event'' essentially reduces to establishing the transience of an associated Bessel-type process.
\item Second, we establish a large deviation principle for finite-time setting in Propositions~\ref{prop::finite_watermelon_LDP} and~\ref{prop::finitetime_LDP_radial}. The proof for Propositions~\ref{prop::finite_watermelon_LDP} and~\ref{prop::finitetime_LDP_radial} relies on multi-time martingales developed in~\cite{HuangWuYangMultipleSLEsDysonBM, HuangPeltolaWuMultiradialSLEResamplingBP} and a generalized Varadhan's lemma. 
\end{itemize}
With these two ingredients at hand, we extend the large deviation principle from finite-time setting to the entire curve using a generalized contraction principle developed in~\cite{AbuzaidPeltolaLargeDeviationCapacityParameterization}.

\subsection{Large deviation for Dyson Brownian motion}
In this section, we give a first application of the large deviation principle. 
By changing the multi-time parameter to common-time parameter, the driving function of half-watermelon $\SLE_{\kappa}$ becomes Dyson Brownian motion~\cite[Lemma~3.4]{HuangWuYangMultipleSLEsDysonBM} (see Section~\ref{subsec::common_time_chordal}), and the driving function of multi-radial $\SLE_{\kappa}^{\mu}$ becomes Dyson circular ensemble (see Section~\ref{subsec::common_time_radial}). Thus, we recover large deviation principle for Dyson Brownian motion and Dyson circular ensemble from Theorems~\ref{thm::halfwatermelon_LDP} and~\ref{thm::radialSLE_LDP}. 

\paragraph*{Dyson Brownian motion.}
Fix $\kappa\in (0,4]$. 
Dyson Brownian motion with parameter $\beta = 8/\kappa$ is the solution to the SDE system:
\begin{equation}\label{eqn::DysonBM}
	\ud X_t^j = \sqrt{\kappa} \, \ud B_t^j + \sum_{i \neq j} \frac{4}{X_t^j - X_t^i} \, \ud t, \quad 1 \leq j \leq n,
\end{equation}
where $\bs{X}_0 \in \LX_n^{\HH}$, and $\{B^j\}_{1\leq j \leq n}$ are independent standard one-dimensional Brownian motions. When $\kappa \le 4$, we have $\beta \ge 2$ and the solution to \eqref{eqn::DysonBM} exists for all time, see Fig.~\ref{fig::DysonBM}~(a) and~(c) for simulations of Dyson Brownian motion.
Fix $T\in (0,\infty)$. We denote by $\mathrm{C}([0,T], \LX_n^{\HH})$  the space of continuous functions from $[0,T]$ to $\LX_n^{\HH}$ equipped with the topology of uniform convergence.
\begin{proposition}\label{prop::LDP_DysonBM_chordal}
Fix $T\in (0,\infty)$, $n\ge 1$ and $\bs{x}=(x_1, \ldots, x_n)\in\LX_n^{\HH}$.
Let $\bs{X}^\kappa$ be the unique strong solution to the SDE system~\eqref{eqn::DysonBM} with initial data $\bs{X}^\kappa_0=\bs{x}$. The family of laws of Dyson Brownian motion $\{\bs{X}^{\kappa}\}_{\kappa\in (0,4]}$ satisfies large deviation principle in the space $\mathrm{C}([0,T], \LX_n^{\HH})$ as $\kappa\to 0+$ with good rate function
\begin{equation}\label{eqn::rate_DysonBM}
\LIfusion{n}(\bs{X}_{[0,T]}):= \frac{1}{2} \int_0^T \sum_{j=1}^n \left( \dot{X}^j_t - \sum_{i\neq j} \frac{4}{X^j_t - X^i_t} \right)^2 \ud t,
\end{equation}
if $\bs{X}_t=(X^1_t, \ldots, X^n_t)\in \LX_n^{\HH}$ is absolutely continuous on $[0,T]$, and $\LIfusion{n}(\bs{X}_{[0,T]})=\infty$ otherwise. 
\end{proposition}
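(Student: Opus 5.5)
The plan is to obtain Proposition~\ref{prop::LDP_DysonBM_chordal} from Theorem~\ref{thm::halfwatermelon_LDP} by a contraction argument. The bridge is the common-time reparameterization (Section~\ref{subsec::common_time_chordal}, \cite[Lemma~3.4]{HuangWuYangMultipleSLEsDysonBM}): for half-$n$-watermelon $\SLE_\kappa$ in $(\HH;\bs{x},\infty)$, growing all $n$ curves simultaneously with each curve's capacity rate equal to $2$ turns the driving functions $(W^1_t,\ldots,W^n_t)$ into a solution of \eqref{eqn::DysonBM}. The Loewner flow then contributes $\sum_{i\neq j}2/(W^j-W^i)$ on top of the $2/(W^j-W^i)$ drift in \eqref{eqn::halfwatermelon_marginal_SDE}, for a total of $4/(W^j-W^i)$. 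We therefore define a map
\[
\Phi_T:\chamberfusion{n}(\HH;\bs{x},\infty)\to \mathrm{C}([0,T],\LX_n^{\HH}),
\]
which sends $\bs\eta$ to the common-time driving functions on $[0,T]$. By construction, the law of $\bs X^\kappa$ is the pushforward of $\PPfusion{n}$ under $\Phi_T$.

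The first step is a deterministic computation identifying the contracted rate function. We would show that for each path $\bs X_{[0,T]}$,
\[
\inf\{\ratefusion{n}(\bs\eta):\Phi_T(\bs\eta)=\bs X\}=\LIfusion{n}(\bs X).
\]
Using Definition~\ref{Def::multitime_energy_chordal}, the energy of $\bs\eta$ splits into the energy up to common time $T$ plus the energy of the remaining curves in the slit domain. The latter is a conditional energy, which is nonnegative and is minimised (equal to $0$) by the SLE$_0$ continuation, the optimal half-watermelon, from the configuration at time $T$. For the finite-time part, we would rewrite the multi-time energy, via the multi-time martingale/Girsanov density against independent Brownian drivers, as $\frac12\int_0^T\sum_j(\dot W^j_t-\text{drift}_j)^2\,\ud t$ in common time. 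The time change with rate $2$ converts the drift to $\sum_{i\ne j}4/(W^j-W^i)$, which yields \eqref{eqn::rate_DysonBM}. Paths that are not absolutely continuous correspond to infinite energy.

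The second step is to apply a contraction principle. The main obstacle is that $\Phi_T$ need not be continuous on all of $(\chamberfusion{n},\dist_\chamber)$: the driving function is not a continuous function of the curve in the uniform topology. What we would do first is use the generalized contraction principle of \cite{AbuzaidPeltolaLargeDeviationCapacityParameterization}, which only needs $\Phi_T$ continuous on level sets $\{\ratefusion{n}\le M\}$ together with exponential approximation. Alternatively, and more simply, we would apply the finite-time LDP of Proposition~\ref{prop::finite_watermelon_LDP} directly. That proposition is stated on driving functions in multi-time, and the map from multi-time drivers on a compact time box to common-time drivers is continuous, since it solves the ODE system for the time changes $\dot\sigma_j$ with coefficients continuous in the drivers while they stay in $\LX_n^{\HH}$.

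The third step controls degeneration, meaning the possibility that points of $\bs X$ collide or escape to infinity within $[0,T]$. For $\beta\ge2$, Dyson Brownian motion does not collide. For the LDP, we would show exponential tightness: for every $M$ there exists $\delta>0$ such that
\[
\PP\bigl[\min_{t\le T,\,i\neq j}|X^j_t-X^i_t|<\delta\bigr]\le e^{-M/\kappa}\quad\text{for small }\kappa.
\]
This follows from the Bessel-type transience estimate that controls the ``bad event'' in the return estimates (Proposition~\ref{prop::SLE_return_chordal}), applied to the gaps, together with a standard Brownian bound on $\sup|X|$. Combining exponential tightness on compact subsets of $\mathrm{C}([0,T],\LX_n^{\HH})$ with the local LDP from the contraction gives the full LDP with good rate function $\LIfusion{n}$. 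Goodness follows since level sets of $\LIfusion{n}$ are equicontinuous and bounded away from collisions.
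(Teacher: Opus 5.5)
Your proposal follows the paper's proof: you push the LDP of Theorem~\ref{thm::halfwatermelon_LDP} forward to the common-time driving function, and you identify the rate function by dropping the energy after common time $T$ (nonnegative, and zero for the optimal continuation) and rewriting \eqref{eqn::multi_time_energy_chordal_def2} via \eqref{eqn::halfwatermelonSLE0_minimizer_aux2} and $\ud t_j=g'_{\bs t,j}(W^j_{t_j})^{-2}\ud t$, which gives the drift $\sum_{i\neq j}4/(X^j_t-X^i_t)$ (note that the Dyson Brownian motion is $X^j_t=g_{t,j}(W^j_t)$, not $W^j_t$). The only correction is that your ``main obstacle'' is not one here: $\dist_\chamber$ is a uniform metric on curves in their capacity parameterization, so by Carath\'eodory convergence and Loewner--Kufarev the map $\bs\eta\mapsto\bs X_{[0,T]}$ is continuous, and the ordinary contraction principle already gives the full LDP with a good rate function; this makes unnecessary the generalized-contraction detour, the alternative via Proposition~\ref{prop::finite_watermelon_LDP} (which concerns curves stopped at exit times of a domain, not drivers on a fixed time box), and your Step~3 (whose Bessel comparison would need extra care for $n\ge 3$, since outer particles push adjacent ones together).
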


\begin{figure}[ht!]
\begin{subfigure}[t]{0.45\textwidth}
\begin{center}
\includegraphics[width=0.7\textwidth]{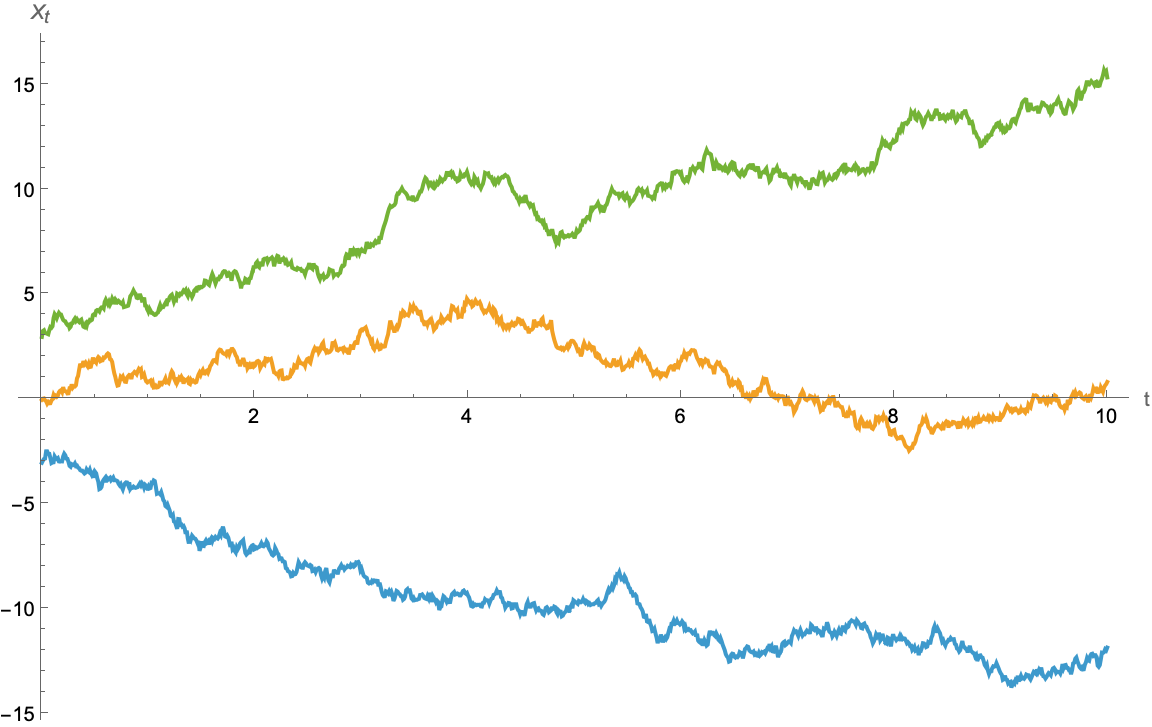}
\end{center}
\caption{Simulation for Dyson Brownian motion~\eqref{eqn::DysonBM} with $\kappa=2$ and $n=3$.}
\end{subfigure}
\hfill
\begin{subfigure}[t]{0.45\textwidth}
\begin{center}
\includegraphics[width=0.7\textwidth]{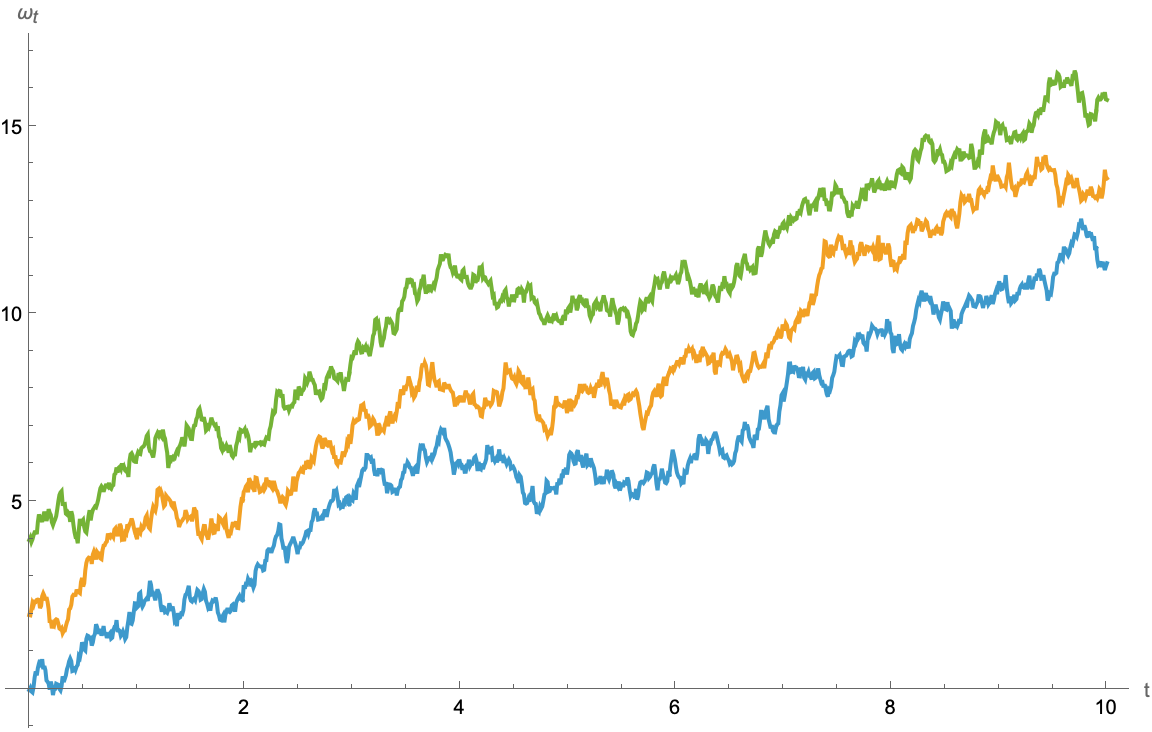}
\end{center}
\caption{Simulation for Dyson circular ensemble~\eqref{eqn::DysonBM_radial} with $\kappa=2$ and $n=3$ and $\mu=1$.}
\end{subfigure}\\
\begin{subfigure}[t]{0.45\textwidth}
\begin{center}
\includegraphics[width=0.7\textwidth]{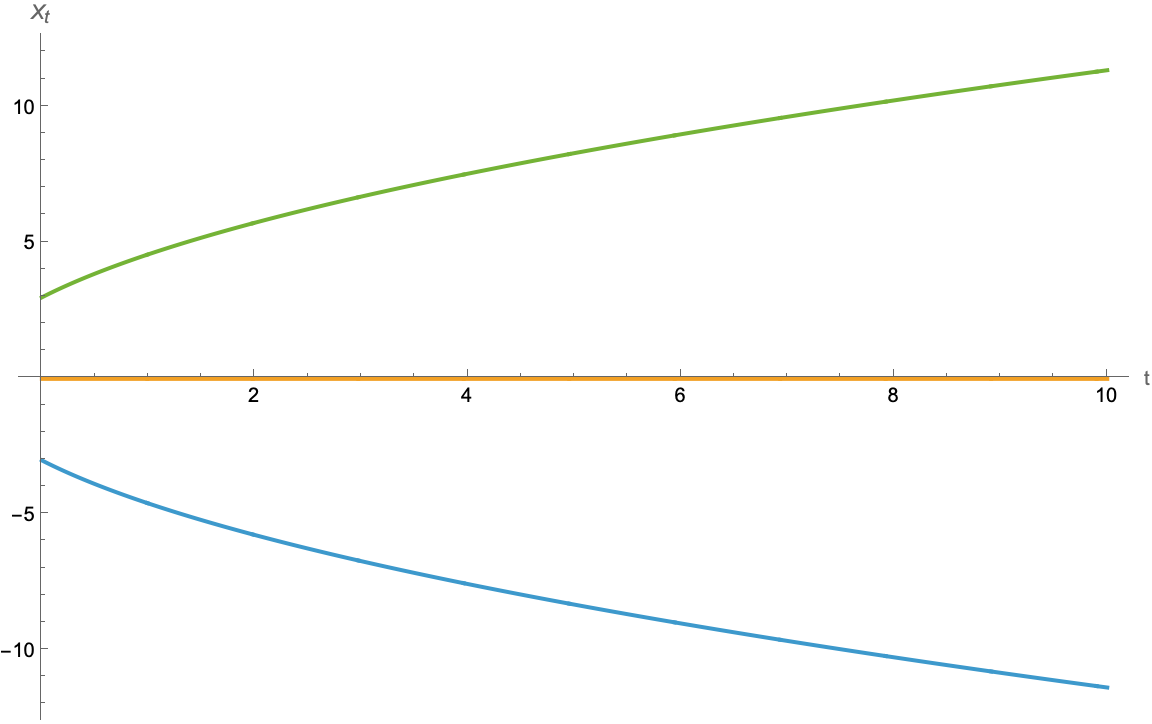}
\end{center}
\caption{Simulation for Dyson Brownian motion~\eqref{eqn::DysonBM} with $\kappa=0$ and $n=3$.}
\end{subfigure}
\hfill
\begin{subfigure}[t]{0.45\textwidth}
\begin{center}
\includegraphics[width=0.7\textwidth]{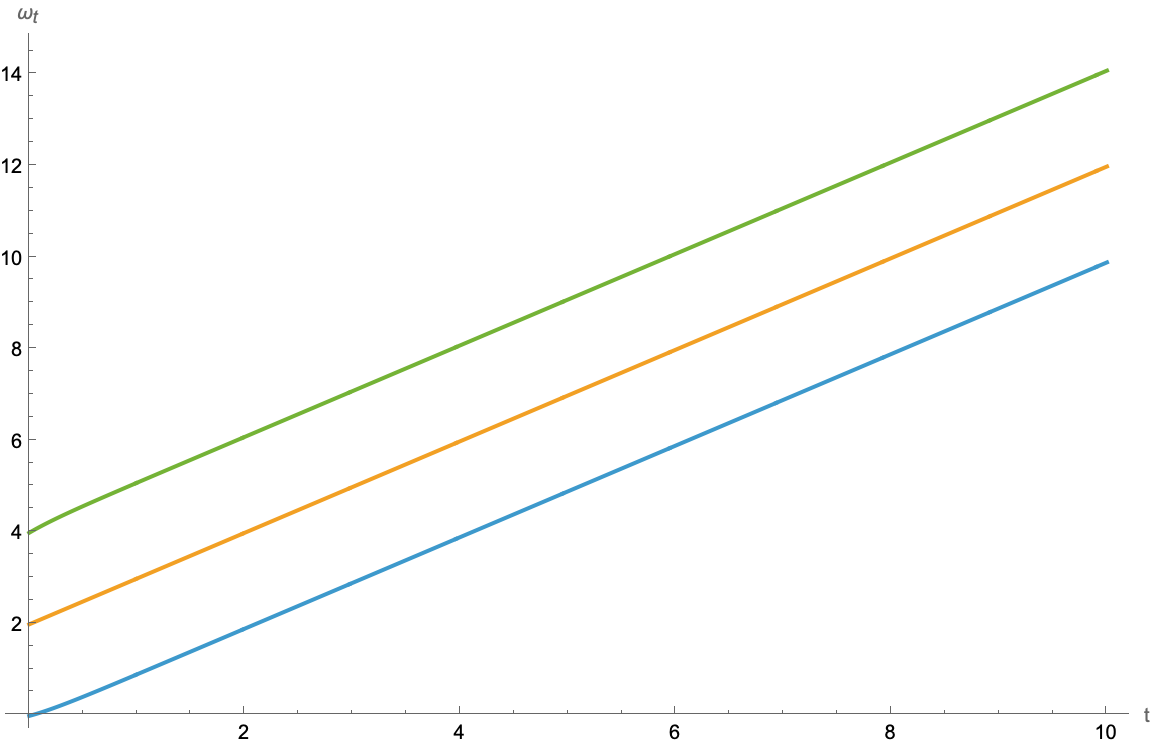}
\end{center}
\caption{Simulation for Dyson circular ensemble~\eqref{eqn::DysonBM_radial} with $\kappa=0$ and $n=3$ and $\mu=1$.}
\end{subfigure}
\caption{\label{fig::DysonBM} Simulations of Dyson Brownian motion and Dyson circular ensemble.}
\end{figure}

\paragraph*{Dyson circular ensemble.}
Fix $\kappa\in (0,4]$ and $\mu\in\R$. 
Dyson circular ensemble with parameter $\beta = 8/\kappa$ and spiraling rate $\mu$ is the solution to the SDE system:
\begin{equation}\label{eqn::DysonBM_radial}
	\ud \mslitdriv_t^j = \sqrt{\kappa} \, \ud B_t^j + \left( \mu + 2 \sum_{i \neq j} \cot \left( \frac{\mslitdriv_t^j-\mslitdriv_t^i}{2} \right) \right) \, \ud t, \quad 1 \leq j \leq n,
\end{equation}
where $\bs{\mslitdriv}_0\in \LX_n^{\U}$, and $\{B^j\}_{1\leq j \leq n}$ are independent standard one-dimensional Brownian motions. When $\kappa \le 4$, we have $\beta \ge 2$ and the solution to~\eqref{eqn::DysonBM_radial} exists for all time, see Fig.~\ref{fig::DysonBM}~(b) and~(d) for simulations of Dyson circular ensemble. 
Fix $T\in (0,\infty)$. We denote by $\mathrm{C}([0,T], \LX_n^{\U})$  the space of continuous functions from $[0,T]$ to $\LX_n^{\U}$ equipped with the topology of uniform convergence.

\begin{proposition}\label{prop::LDP_DysonBM_radial}
Fix $\mu\in \R$ and $T\in (0,\infty)$. 
Fix $n\ge 1$ and $\bs{\theta}=(\theta_1, \ldots, \theta_n)\in\LX_n^{\U}$. 
Let $\bs{\mslitdriv}^\kappa$ be the unique strong solution to the SDE system~\eqref{eqn::DysonBM_radial} with initial data $\bs{\mslitdriv}^\kappa_0=\bs{\theta}$. The family of laws of Dyson circular ensemble $\{\bs{\mslitdriv}^\kappa\}_{\kappa\in (0,4]}$ satisfies large deviation principle in the space $\mathrm{C}([0,T], \LX_n^{\U})$ as $\kappa\to 0+$ with good rate function
\begin{equation}\label{eqn::rate_DysonBM_radial}
\LIradial{n}^{(\mu)}(\bs{\mslitdriv}_{[0,T]})=\frac{1}{2}\int_{0}^{T}\sum_{j=1}^{n} \left( \dot{\mslitdriv}_t^j-  \mu- 2\sum_{i\neq j} \cot\left(\frac{\mslitdriv_{t}^j-\mslitdriv_{t}^{i}}{2}\right)  \right)^2 \ud t, 
\end{equation}
if $\bs{\mslitdriv}_t=(\mslitdriv^1_t, \ldots, \mslitdriv^n_t)$ is absolutely continuous on $[0,T]$, and $\LIradial{n}^{(\mu)}(\bs{\mslitdriv}_{[0,T]})=\infty$ otherwise. 
\end{proposition}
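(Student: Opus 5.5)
The plan is to deduce the proposition from Theorem~\ref{thm::radialSLE_LDP} through the common-time reparametrization, which the paper announces as the intended route (Section~\ref{subsec::common_time_radial}). This mirrors the chordal statement, Proposition~\ref{prop::LDP_DysonBM_chordal}, derived from Theorem~\ref{thm::halfwatermelon_LDP}.

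\textbf{Step 1: identify the Dyson circular ensemble with the common-time driving process.} Take $n$-radial $\SLE_\kappa^{\mu}$ in $(\U;\ee^{\ii\bs\theta};0)$ and grow all $n$ curves simultaneously at equal rates in the multiple radial Loewner equation, so that $\partial_t g_t(z)=g_t(z)\sum_j \frac{\ee^{\ii\omega^j_t}+g_t(z)}{\ee^{\ii\omega^j_t}-g_t(z)}$ up to the usual normalisation. Using the multi-time martingale for $\LZradial{n}^{(\kappa;\mu)}$, the driving functions then solve~\eqref{eqn::DysonBM_radial}. The coefficient $2$ in front of the cotangents comes from two contributions: the drift $\kappa\partial_j\log\LZradial{n}^{(\kappa;\mu)}$ and the Loewner flow of the other tips. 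The drift term $\mu$ survives unchanged.

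If the precise normalisations of the common-time section do not produce exactly~\eqref{eqn::DysonBM_radial}, I will rescale time. If an extra drift of order $\kappa$ appears, I will remove it by an exponential-equivalence argument, which is harmless as $\kappa\to0$.

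\textbf{Step 2: build a continuous map from curves to driving paths.} The common-time stopping at total capacity corresponding to $T$ defines a map $\Phi_T:\chamberradial{n}(\U;\ee^{\ii\bs\theta};0)\to \mathrm C([0,T],\LX_n^{\U})$, sending $\bs\gamma$ to its common-time driving function on $[0,T]$.

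I will show $\Phi_T$ is continuous for $\dist_\chamber$. Convergence in $\dist_\chamber$ gives uniform convergence of each curve in the multi-time parametrisation, and hence Carath\'eodory convergence of the hulls $K_{\bs t}$ locally uniformly in $\bs t$. The common-time parametrisation is defined implicitly through capacities, so it converges as well. Driving functions depend continuously on hulls in the Carath\'eodory sense, uniformly on compact time intervals.

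The point needing care is that $\mathrm C([0,T],\LX_n^{\U})$ is open: the limit paths must stay strictly inside $\LX_n^{\U}$, i.e.\ tips must not collide. This holds because curves in $\chamberradial{n}$ meet only at $z=0$, and a bounded time $T$ does not reach $0$.

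\textbf{Step 3: apply the contraction principle and identify the rate function.} The contraction principle gives an LDP for $\{\bs\omega^\kappa\}$ with rate function
\[
\inf\{\rateradial{n}^{(\mu)}(\bs\gamma):\Phi_T(\bs\gamma)=\bs\omega\}.
\]
It remains to show this equals~\eqref{eqn::rate_DysonBM_radial}.

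\emph{Upper bound.} Given $\bs\omega$ with finite $\LIradial{n}^{(\mu)}$, generate hulls up to time $T$ by the Loewner equation. Then continue them with the zero-energy, $\kappa=0$ flow, i.e.\ the deterministic system~\eqref{eqn::DysonBM_radial} with $\kappa=0$ and no noise. Since the rate function of Definition~\ref{def::multitime_energy_radial} is presumably an additive multi-time Dirichlet energy of the driving functions minus the deterministic drift, the continuation contributes zero cost.

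\emph{Lower bound.} Any $\bs\gamma$ in the preimage costs at least the energy accumulated on $[0,T]$. The identity between the multi-time energy and the common-time integral would follow from the same change-of-variables computation that gives the SDE in Step 1, applied at $\kappa=0$ to absolutely continuous paths.

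\textbf{Main obstacle.} I expect the hardest part to be this last identification, equating the multi-time energy with the common-time functional~\eqref{eqn::rate_DysonBM_radial} and showing the infimum over continuations is attained with zero extra cost, together with continuity of $\Phi_T$.

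If continuity fails on the incomplete space, the fallback is to prove the LDP directly. Girsanov against $n$ independent Brownian motions with drift $\mu$ gives a Radon--Nikodym density of the form $\exp(\kappa^{-1}(\cdots))$ times bounded factors, built from $\log\LZradial{n}^{(\kappa;\mu)}$ and an integral of the cotangent potential. Schilder's theorem combined with the generalized Varadhan lemma then gives the result, with localisation away from collisions handled by the return or transience estimates.
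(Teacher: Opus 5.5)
Your proposal is correct and follows the paper's route. Like the paper, you identify the common-time driving process with the Dyson circular ensemble (Lemma~\ref{lem::DysonBM_driving_function_radial}), rewrite the energy differential~\eqref{eqn::multi_time_energy_radial_def2} via~\eqref{eqn::expansion_mslitdriv} and~\eqref{eqn::common-multi} to obtain~\eqref{eqn::rate_DysonBM_radial}, and contract through the Loewner--Kufarev-continuous map to driving functions. The one difference is the starting point of the contraction: the paper contracts from the truncated (finite-time) curve space, where the rate function already equals the common-time integral and the map is injective. That makes your zero-cost continuation beyond time $T$ unnecessary, though your monotonicity-plus-continuation argument correctly covers that step if you contract from the full-curve LDP.
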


Proposition~\ref{prop::LDP_DysonBM_chordal} is possibly known in literature, but we did not find a proper reference for it. 
Proposition~\ref{prop::LDP_DysonBM_radial} is proved in~\cite{AbuzaidHealeyPeltolaLargeDeviationDysonBM} under the general framework of large deviations for Dyson-type diffusions on the unit circle. In Sections~\ref{subsec::common_time_chordal} and~\ref{subsec::common_time_radial}, we present an alternative proof of these propositions using the large deviation principles in Theorems~\ref{thm::halfwatermelon_LDP} and~\ref{thm::radialSLE_LDP}.

\subsection{Boundary perturbation of the energy}
Another application of the large deviation principle is boundary perturbation property of the energies in Theorems~\ref{thm::halfwatermelon_LDP} and~\ref{thm::radialSLE_LDP}. 

\begin{proposition}
\label{prop::bp_chordal}
Fix $n\ge 1$ and $(n+1)$-polygon $(\Omega; \bs{x}, y)= (\Omega; x_1, \ldots, x_n, y)$. 
Let $U\subset\Omega$ be a simply connected subdomain which coincides with $\Omega$ in a neighborhood of $\{x_1, \ldots, x_n, y\}$. The multi-time Loewner energy~\eqref{eqn::multi_time_energy_chordal} has the following boundary perturbation: 
for $\bs{\eta}\in\chamberfusion{n}(U; \bs{x}, y)$, we have
\begin{equation}\label{eqn::bp_choral}
\ratefusion{n}(U; \bs{x}, y; \bs{\eta})-\ratefusion{n}(\Omega; \bs{x}, y; \bs{\eta})=\LUfusion{n}(\Omega; \bs{x}, y) - \LUfusion{n}(U; \bs{x}, y) + 12 \blm(\Omega;\bs{\eta},\Omega\setminus U),
\end{equation}
where $\LUfusion{n}$ is given by 
\begin{align}\label{eqn::LUfusion_def}
\begin{split}
	\LUfusion{n}(\Omega;\bs{x},y)
	=& \sum_{1\le i<j\le n} \log \Poisson(\Omega;x_i,x_j) - (n+2) \sum_{\ell=1}^{n} \log \Poisson(\Omega;x_{\ell},y),
	\end{split}
\end{align}
and $\Poisson(\Omega; x, y)$ is boundary Poisson kernel (see~\eqref{eqn::bPoisson_H}-\eqref{eqn::bPoisson_cov}), 
and $\blm(\Omega;\bs{\eta},\Omega\setminus U)$ is the Brownian loop measure of the loops in $\Omega$ that intersect both $\bs{\eta}$ and $\Omega\setminus U$ (see~\eqref{eqn::blm_def}).
\end{proposition}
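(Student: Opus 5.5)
Since the abstract announces a ``new derivation of the boundary perturbation property'' as an application of the large deviation principle, the plan is to deduce \eqref{eqn::bp_choral} from Theorem~\ref{thm::halfwatermelon_LDP}. We combine it with the boundary perturbation property of half-watermelon $\SLE_\kappa$ at positive $\kappa$, established in~\cite{HuangPeltolaWuMultiradialSLEResamplingBP}. For $\kappa\in(0,4]$ this property says that $\PPfusion{n}(U;\bs{x},y)$ is absolutely continuous with respect to $\PPfusion{n}(\Omega;\bs{x},y)$. The Radon--Nikodym derivative is
\[
\frac{\ud \PPfusion{n}(U;\bs{x},y)}{\ud \PPfusion{n}(\Omega;\bs{x},y)}(\bs{\eta})
=\one\{\bs{\eta}\subset U\}\,\frac{\LZfusion{n}(\Omega;\bs{x},y)}{\LZfusion{n}(U;\bs{x},y)}\,\exp\big(c(\kappa)\,\blm(\Omega;\bs{\eta},\Omega\setminus U)\big),
\]
where $c(\kappa)=(3\kappa-8)(6-\kappa)/(2\kappa)$ is the central charge. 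Here $\LZfusion{n}(\Omega;\cdot)$ is the conformally covariant partition function built from \eqref{eqn::halfwatermelon_pf_H}. Its boundary exponents are $h=(6-\kappa)/(2\kappa)$ at each $x_j$ and the fused exponent at $y$. Consequently $\kappa\log \LZfusion{n}(\Omega;\bs{x},y)$ is, up to $o(1)$ and constants that cancel in the ratio, a combination of logarithms of boundary Poisson kernels. Its limit as $\kappa\to0$ is $-\LUfusion{n}(\Omega;\bs{x},y)$ from \eqref{eqn::LUfusion_def}, and the sign convention is to be checked. In the same limit $\kappa c(\kappa)\to -24$.

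Fix $\bs{\eta}\in\chamberfusion{n}(U;\bs{x},y)$. Choose a small $\dist_\chamber$-ball $B_\delta(\bs{\eta})$ computed in $U$; since $\bs{\eta}$ stays away from $\partial U\setminus\partial\Omega$, it lies inside $\chamberfusion{n}(U)$. The metrics from $U$ and from $\Omega$ induce the same topology near $\bs{\eta}$, because the uniformizing maps differ by a conformal map that is continuous up to the relevant boundary. Apply the LDP lower and upper bounds in both domains to the open ball and to its closure. On this neighbourhood $\bs{\eta}'\mapsto\blm(\Omega;\bs{\eta}',\Omega\setminus U)$ is continuous, so within $B_\delta$ the density equals a deterministic constant times $\exp\big(c(\kappa)(\blm(\bs{\eta})+o_\delta(1))\big)$. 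Taking $\kappa\log$ and letting $\kappa\to0$, then $\delta\to0$, gives
\[
-\ratefusion{n}(U;\bs{\eta})=-\ratefusion{n}(\Omega;\bs{\eta})+\LUfusion{n}(U)-\LUfusion{n}(\Omega)-12\cdot 2\,\blm\cdots
\]
After fixing constants, this should become \eqref{eqn::bp_choral}. The factor $12$ versus $24$ must be reconciled through the $\kappa\to0$ limit of $c(\kappa)\kappa/2$ and the normalization of $\blm$ in \eqref{eqn::blm_def}. Passing from local infima to the pointwise value uses lower semicontinuity of the good rate functions together with the standard local form of the LDP, $\lim_{\delta}\lim_\kappa\kappa\log\PP[B_\delta(\bs{\eta})]=-\ratefusion{n}(\bs{\eta})$. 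If $\ratefusion{n}(\Omega;\bs{\eta})=\infty$, both sides are infinite.

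The main obstacle is the uniformity of the interchange of limits. We need the Brownian loop term and the topology comparison to be uniform on small balls, and we need the local form of the LDP (via the weak LDP plus goodness). Continuity of $\blm(\Omega;\cdot,\Omega\setminus U)$ in $\dist_\chamber$ at curves bounded away from $\Omega\setminus U$ follows from continuity of the loop measure under Hausdorff perturbation of the hull. Exact bookkeeping of the partition-function exponents, including the fused exponent at $y$ that produces the $(n+2)$, is routine but error-prone.
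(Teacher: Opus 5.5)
Your argument is essentially the paper's: you combine Lemma~\ref{lem::halfwatermelon_bp} with Theorem~\ref{thm::halfwatermelon_LDP} applied in both $\Omega$ and $U$. Your shrinking-ball extraction via the local form of the LDP is just a hands-on substitute for the paper's use of the generalized Varadhan lemma (Lemma~\ref{lem::Varadhan}), after which the paper identifies the resulting rate function with $\ratefusion{n}(U;\bs{x},y;\cdot)$.

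The only slip is in the constants. The density is $\exp\bigl(\frac{\mathfrak{c}}{2}\blm(\Omega;\bs{\eta},\Omega\setminus U)\bigr)$, not $\exp(\mathfrak{c}\,\blm)$, so $\kappa\mathfrak{c}/2\to-12$ and no renormalization of $\blm$ is involved. The sign is fixed by the exact identity $\kappa\log\LZfusion{n}(\Omega;\bs{x},y)=-\LUfusion{n}(\Omega;\bs{x},y)-\frac{\kappa}{2}\sum_{\ell}\log\Poisson(\Omega;x_\ell,y)$, whose last term is $O(\kappa)$. Together these give exactly \eqref{eqn::bp_choral}.
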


\begin{proposition}
\label{prop::bp_radial}
Fix $\mu\in\R$ and $n\ge 1$ and $n$-polygon $(\Omega; \bs{x})=(\Omega; x_1, \ldots, x_n)$ with $z\in\Omega$. 
Let $U\subset\Omega$ be a simply connected subdomain which coincides with $\Omega$ in a neighborhood of $\{x_1, \ldots, x_n, z\}$. The multi-time Loewner energy~\eqref{eqn::multi_time_energy_radial} has the following boundary perturbation: 
for $\bs{\gamma}\in\chamberradial{n}(U; \bs{x}, z)$, we have
\begin{equation}\label{eqn::bp_radial}
\rateradial{n}^{(\mu)}(U; \bs{x}, z; \bs{\gamma})-\rateradial{n}^{(\mu)}(\Omega; \bs{x}, z; \bs{\gamma})=\LVradial{n}^{(\mu)}(\Omega; \bs{x}, z) - \LVradial{n}^{(\mu)}(U; \bs{x}, z) + 12 \blm(\Omega;\bs{\gamma},\Omega\setminus U),
\end{equation}
where $\LVradial{n}^{(\mu)}$ is given by 
\begin{align}\label{eqn::LVradial_def}
\begin{split}
	\LVradial{n}^{(\mu)}(\Omega;\bs{x};z):=& \frac{n^2-4-\mu^2}{2} \log \CR(\Omega;z) - (n+2) \sum_{j=1}^{n} \log \Poisson(\Omega;x_j;z) + \sum_{1\le i<j\le n} \log \Poisson(\Omega;x_i,x_j)\\
	&  +\mu n \arg \varphi'(z) - \mu \sum_{j=1}^n \arg \varphi(x_j), 
\end{split}
\end{align}
and $\CR(\Omega; z)$ is conformal radius of $\Omega$ seen from $z$, 
and $\Poisson(\Omega; x_i, z)$ is Poisson kernel (see~\eqref{eqn::Poisson_H_inner}-\eqref{eqn::Poisson_polygon_inner}), 
and $\Poisson(\Omega; x_i, x_j)$ is boundary Poisson kernel (see~\eqref{eqn::bPoisson_H}-\eqref{eqn::bPoisson_cov}), 
and $\varphi$ is any conformal map from $\Omega$ onto $\U$ with $\varphi(z)=0$, and $\blm(\Omega;\bs{\gamma},\Omega\setminus U)$ is the Brownian loop measure of the loops in $\Omega$ that intersect both $\bs{\gamma}$ and $\Omega\setminus U$ (see~\eqref{eqn::blm_def}).
\end{proposition}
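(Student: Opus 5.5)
We would derive Proposition~\ref{prop::bp_radial} from the large deviation principle of Theorem~\ref{thm::radialSLE_LDP}, together with the boundary perturbation (restriction-type) property of $n$-radial $\SLE_\kappa^\mu$ established in~\cite{HuangPeltolaWuMultiradialSLEResamplingBP}. That property gives the Radon--Nikodym derivative of $\PPradial{n}^{(\kappa;\mu)}(U;\bs{x};z)$ with respect to $\PPradial{n}^{(\kappa;\mu)}(\Omega;\bs{x};z)$, restricted to curves staying in $U$:
\begin{equation*}
\frac{\ud \PPradial{n}^{(\kappa;\mu)}(U;\bs{x};z)}{\ud \PPradial{n}^{(\kappa;\mu)}(\Omega;\bs{x};z)}(\bs{\gamma})=\one_{\{\bs{\gamma}\subset U\}}\frac{\LZ(\Omega;\bs{x};z)}{\LZ(U;\bs{x};z)}\exp\bigl(c(\kappa)\,\blm(\Omega;\bs{\gamma},\Omega\setminus U)\bigr),
\end{equation*}
with central charge $c(\kappa)=(3\kappa-8)(6-\kappa)/(2\kappa)$. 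Here $\LZ(\Omega;\bs{x};z)$ is the conformally covariant extension of $\LZradial{n}^{(\kappa;\mu)}$, built from $\CR$, the Poisson kernels and the $\arg$ terms. Note that $\kappa c(\kappa)\to -24$. We also expect
\begin{equation*}
\kappa\log\LZ(\Omega;\bs{x};z)\to -\tfrac{1}{2}\LVradial{n}^{(\mu)}(\Omega;\bs{x};z)+(\text{terms cancelling between }\Omega,U).
\end{equation*}
The first step is to check this asymptotic directly from the explicit covariance exponents. The boundary exponent $h=(6-\kappa)/(2\kappa)$ gives $\kappa h\to 3$, and the pairwise factor gives $2/\kappa$. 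We would choose the normalization so that the limit is exactly $\LVradial{n}^{(\mu)}$ up to the factor $-1/2$; since $\mu$ enters as $\mu/\kappa$, the factor $\kappa$ makes everything finite.

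Fix $\bs{\gamma}\in\chamberradial{n}(U;\bs{x};z)$. Take a small ball $B_\epsilon(\bs{\gamma})$ in $\dist_\chamber$ for $\Omega$, which is an open set. For small $\epsilon$ it consists of curves staying in $U$, because $\bs{\gamma}$ has positive distance from $\Omega\setminus U$ away from the marked points, where the domains agree. Near $\bs{\gamma}$ the metrics defined via $U$ and via $\Omega$ are comparable: the conformal maps to $\U$ differ by a map that is uniformly continuous on the relevant region, and the multi-time parameterizations are continuous in the curve. So the two balls can be sandwiched between each other. We then write
\begin{equation*}
\PPradial{n}^{(\kappa;\mu)}(U)[B_\epsilon]=\frac{\LZ(\Omega)}{\LZ(U)}\,\E^{\Omega}\bigl[\one_{B_\epsilon}\ee^{c(\kappa)\blm(\Omega;\cdot,\Omega\setminus U)}\bigr],
\end{equation*}
and use continuity of $\bs{\eta}\mapsto\blm(\Omega;\bs{\eta},\Omega\setminus U)$ on $B_\epsilon(\bs{\gamma})$. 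This continuity holds since the curves stay a positive distance from $\Omega\setminus U$ except near the marked points, where the loop mass is controlled. Applying $\kappa\log$, then $\kappa\to0$, then $\epsilon\to0$, the LDP upper bound on the closed ball and the lower bound on the open ball give $-\inf_{B_\epsilon}\rateradial{n}^{(\mu)}(\cdot)\to-\rateradial{n}^{(\mu)}(\cdot;\bs{\gamma})$, by lower semicontinuity and goodness. The result is
\begin{equation*}
-\rateradial{n}^{(\mu)}(U;\bs{\gamma})=\tfrac12\bigl(\LVradial{n}^{(\mu)}(U)-\LVradial{n}^{(\mu)}(\Omega)\bigr)\cdot(\text{normalization})-\rateradial{n}^{(\mu)}(\Omega;\bs{\gamma})-12\,\blm\cdot(\text{normalization}),
\end{equation*}
which rearranges to~\eqref{eqn::bp_radial} once constants are matched.

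\textbf{Main obstacle.} The main obstacle is the local Laplace argument. Getting the upper bound requires the rate function infimum over small balls to converge to its value at $\bs{\gamma}$, which needs lower semicontinuity. Getting the lower bound requires the value at $\bs{\gamma}$ to be approached from inside open balls, which is automatic. The subtle point is that the incomplete metric spaces for $U$ and for $\Omega$ must generate compatible neighborhoods near $\bs{\gamma}$, including near the endpoint $z$ and the starting points. We would handle the endpoint by noting that both domains agree near $z$ and $\bs{x}$, so the local behavior of the multi-time parameterization is comparable there. If $\rateradial{n}^{(\mu)}(\Omega;\bs{\gamma})=\infty$, the same argument shows that both sides are infinite.
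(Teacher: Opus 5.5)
Your route is the paper's: the restriction formula of Lemma~\ref{lem::bp_multiradialkappa} combined with the LDP of Theorem~\ref{thm::radialSLE_LDP} in both domains. The paper runs the Laplace step through the generalized Varadhan lemma (Lemma~\ref{lem::Varadhan}) on the open subset $\chamberradial{n}(U;\bs{x};z)\subset\chamberradial{n}(\Omega;\bs{x};z)$. It then reads off $\rateradial{n}^{(\mu)}(U;\bs{x};z;\cdot)$ by uniqueness of the rate function. Your small-ball argument is that uniqueness step written out. Your remark that it needs the two metrics $\dist_{\chamber}$ to induce the same topology on $\chamberradial{n}(U;\bs{x};z)$ is correct, and the paper uses this only tacitly.

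What fails is the constant bookkeeping, which is the entire content of~\eqref{eqn::bp_radial}. It is wrong in two places.
\begin{itemize}
\item The density in Lemma~\ref{lem::bp_multiradialkappa} is $\exp(\frac{\mathfrak{c}}{2}\blm(\Omega;\bs{\gamma},\Omega\setminus U))$, not $\exp(\mathfrak{c}\,\blm)$. So the relevant limit is $\kappa\mathfrak{c}/2=(6-\kappa)(3\kappa-8)/4\to-12$, not $-24$.
\item $\kappa\log\LZradial{n}^{(\kappa;\mu)}$ tends to $-\LVradial{n}^{(\mu)}$, not $-\frac12\LVradial{n}^{(\mu)}$. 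In~\eqref{eqn::nradial_pf}, split $\frac{(\kappa-4)^2-4n^2}{8}=\frac{4-n^2}{2}+\frac{\kappa(\kappa-8)}{8}$ and $\frac{2n+4-\kappa}{2}=(n+2)-\frac{\kappa}{2}$. This gives the exact identity
\[
\kappa\log\LZradial{n}^{(\kappa;\mu)}(\Omega;\bs{x};z)=-\LVradial{n}^{(\mu)}(\Omega;\bs{x};z)-\frac{\kappa}{2}\sum_{j=1}^n\log\Poisson(\Omega;x_j;z)+\frac{\kappa(\kappa-8)}{8}\log\CR(\Omega;z).
\]
The remainder vanishes because it is $O(\kappa)$ and independent of $\bs{\gamma}$, not because it cancels between $\Omega$ and $U$.
\end{itemize}

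Run with your values, the local argument would put
\[
\tfrac12\bigl(\LVradial{n}^{(\mu)}(\Omega;\bs{x};z)-\LVradial{n}^{(\mu)}(U;\bs{x};z)\bigr)+24\,\blm(\Omega;\bs{\gamma},\Omega\setminus U)
\]
on the right-hand side, which is false. There is also no freedom to ``choose the normalization''. The exponents of $\CR$, $\Poisson$ and the $\arg$ terms are fixed by the restriction formula. The only free parameter is a global multiplicative constant in the partition function, and it cancels in $\LZradial{n}^{(\kappa;\mu)}(\Omega;\bs{x};z)/\LZradial{n}^{(\kappa;\mu)}(U;\bs{x};z)$.

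With both corrections your argument closes.
\begin{itemize}
\item The partition-function ratio contributes $\LVradial{n}^{(\mu)}(U;\bs{x};z)-\LVradial{n}^{(\mu)}(\Omega;\bs{x};z)$.
\item The loop factor contributes $-12\,\blm(\Omega;\bs{\gamma},\Omega\setminus U)$, by continuity of $\blm$ on a small ball where all curves avoid $\Omega\setminus U$.
\item Equating with the ball asymptotics $-\rateradial{n}^{(\mu)}(U;\bs{x};z;\bs{\gamma})$ and $-\rateradial{n}^{(\mu)}(\Omega;\bs{x};z;\bs{\gamma})$ yields exactly~\eqref{eqn::bp_radial}.
\end{itemize}
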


Proposition~\ref{prop::bp_chordal} when $n=1$ is proved in~\cite[Proposition~4.1]{Wang:EnergyLoewnerChain}. 
Both Propositions~\ref{prop::bp_chordal} and~\ref{prop::bp_radial} are proved in~\cite{ChenHuangPeltolaWumultitimeenergy} using direct analysis of the energy. 
In Sections~\ref{subsec::bp_chordal_proof} and~\ref{subsec::bp_radial_proof}, we will give an alternative proof of these two propositions using boundary perturbation property of multiple $\SLE$ proved in~\cite{HuangPeltolaWuMultiradialSLEResamplingBP} (see Lemmas~\ref{lem::halfwatermelon_bp} and~\ref{lem::bp_multiradialkappa}) and the large deviation principles in Theorems~\ref{thm::halfwatermelon_LDP} and~\ref{thm::radialSLE_LDP}. The quantities $\LUfusion{n}$ in~\eqref{eqn::LUfusion_def} and $\LVradial{n}^{(\mu)}$ in~\eqref{eqn::LVradial_def} are semi-classical limits of partition functions for half-watermelon SLE and for multi-radial SLE with spiral, see~\eqref{eqn::LUfusion_sclimit} and~\eqref{eqn::LVradial_sclimit}.

\paragraph*{Outline.} 
In Section~\ref{sec::pre}, we give preliminaries on Poisson kernels and Brownian measures; and we also give estimates on two Bessel-type processes which will be used in the proof of return estimates in Propositions~\ref{prop::SLE_return_chordal} and~\ref{prop::radialSLE_return}. 
In Section~\ref{sec::chordalSLE_return}, we give preliminaries on chordal Loewner chain and prove return estimate for chordal SLE with force points in Proposition~\ref{prop::SLE_return_chordal}. 
In Section~\ref{sec::LDP_halfwatermelon}, we define multi-time chordal Loewner energy and prove Theorem~\ref{thm::halfwatermelon_LDP} and Propositions~\ref{prop::LDP_DysonBM_chordal} and~\ref{prop::bp_chordal}. 
In Section~\ref{sec::return_radial}, we give preliminaries on radial Loewner chain and prove return estimate for radial SLE with spiral in Proposition~\ref{prop::radialSLE_return}. 
In Section~\ref{sec::LDP_radialSLE}, we define multi-time radial Loewner energy and prove Theorem~\ref{thm::radialSLE_LDP} and Propositions~\ref{prop::LDP_DysonBM_radial} and~\ref{prop::bp_radial}. 

\paragraph*{Acknowledgment.}
We thank Eveliina Peltola for helpful discussion on large deviation principle. 
All three authors are supported by Beijing Natural Science Foundation (JQ20001). 
Part of this research was performed while the three authors participated in a program hosted by the Hausdorff Research Institute for Mathematics (HIM), which is supported by the Deutsche Forschungsgemeinschaft (DFG, German Research Foundation) under Germany’s Excellence Strategy EXC-2047/1-390685813. 
H. W. is also supported by New Cornerstone Investigator Program 100001127. H.W. is partly affiliated at Yanqi Lake Beijing Institute of Mathematical Sciences and Applications, Beijing, China.

\section{Preliminaries}
\label{sec::pre}
In this section, we first give preliminaries on Poisson kernels and Brownian measures in Section~\ref{subsec::pre_Poisson}; 
and then give estimates on two Bessel-type processes in Section~\ref{subsec::pre_Bessel}. 
The estimate in Lemma~\ref{lem::chordalBessel_estimate} will be used in the proof of the return estimate for chordal SLE in Proposition~\ref{prop::SLE_return_chordal}. 
The estimate in Lemma~\ref{lem::radialBessel_estimates} will be used in the proof of return estimate for radial SLE in Proposition~\ref{prop::radialSLE_return}. 
\subsection{Poisson kernels and Brownian measures}
\label{subsec::pre_Poisson}
\paragraph*{Poisson kernel.}
For 1-polygon $(\HH;x)$ with $z\in\HH$, the Poisson kernel is defined as 
\begin{equation} \label{eqn::Poisson_H_inner}
	\Poisson(\HH;x;z):=\frac{2\Im(z)}{|z-x|^2}, \quad x\in \R,z\in \HH;
\end{equation}
and for general nice 1-polygon $(\Omega;x)$ with $z\in\Omega$, we extend its definition via conformal covariance:
\begin{equation} \label{eqn::Poisson_polygon_inner}
	\Poisson(\Omega;x;z):= |\varphi'(x)| \Poisson(\HH;\varphi(x);\varphi(z)),
\end{equation}
where $\varphi$ is any conformal map from $\Omega$ to $\HH$. 

\paragraph*{Boundary Poisson kernel.}
 For $2$-polygon $(\HH;x,y)$, the boundary Poisson kernel is defined as 
\begin{equation} \label{eqn::bPoisson_H}
	\Poisson(\HH;x,y):=\frac{1}{(y-x)^2}, \quad x,y\in \R;
\end{equation}
and for general nice 2-polygon $(\Omega;x,y)$, we extend its definition via conformal covariance:
\begin{equation} \label{eqn::bPoisson_cov}
	\Poisson(\Omega;x,y):= |\varphi'(x)| |\varphi'(y)| \Poisson(\HH;\varphi(x),\varphi(y)),
\end{equation}
where $\varphi$ is any conformal map from $\Omega$ to $\HH$.

\paragraph*{Partition function for half-watermelon SLE.} 
Fix nice $(n+1)$-polygon $(\Omega; \bs{x}, y)$, the partition function for  half-$n$-watermelon $\SLE_\kappa$ in $(\Omega;\bs{x},y)$ is defined to be
\begin{equation} \label{eqn::halfwatermelon_pf_def}
	\LZfusion{n}(\Omega;\bs{x},y):= \left(\prod_{1\le i<j\le n} \Poisson(\Omega;x_i,x_j)^{-\frac{1}{\kappa}} \right) \times \left( \prod_{\ell=1}^{n} \Poisson(\Omega;x_{\ell},y)^{\frac{n+2}{\kappa}-\frac{1}{2}} \right).
\end{equation}
The semi-classical limit of $\LZfusion{n}$ is $\LUfusion{n}$ defined by~\eqref{eqn::LUfusion_def}, i.e. 
\begin{align}\label{eqn::LUfusion_sclimit}
	\LUfusion{n}(\Omega;\bs{x},y)=&-\lim_{\kappa\to 0}\kappa\log\LZfusion{n}(\Omega; \bs{x}, y). 
\end{align}
Note that 
$\LZfusion{n}(\HH; \bs{x}, y)$ is well-defined for nice $(n+1)$-polygon $(\HH; \bs{x}, y)$ when $y$ is finite. However, it degenerates when $y\to\infty$. In order to define partition function for half-watermelon $\SLE_{\kappa}$ in $(\HH; \bs{x}, \infty)$, we need to renormalize it properly. The proper renormalization results in $\LZ_{\shuffle_n}^{(\kappa)}$ given by~\eqref{eqn::halfwatermelon_pf_H}.

\paragraph*{Partition function for multi-radial SLE with spiral.}
Fix nice $n$-polygon $(\Omega;\bs{x})$ with $z\in \Omega$.  The partition function for $n$-radial $\SLE_{\kappa}^{\mu}$ in $(\Omega; \bs{x}; z)$ is defined to be
\begin{equation} \label{eqn::nradial_pf}
	\LZradial{n}^{(\kappa; \mu)}(\Omega; \bs{x}; z)= \LZradial{n}^{(\kappa; 0)}(\Omega; \bs{x}; z) \times \CR(\Omega;z)^{\frac{\mu^2}{2\kappa}} \exp\left( -\frac{\mu}{\kappa} n \arg \varphi'(z) + \frac{\mu}{\kappa} \sum_{j=1}^{n} \arg \varphi(x_j) \right),
\end{equation}
with
\begin{equation*}\label{eqn::nradial_pf_nospiral}
	\LZradial{n}^{(\kappa; 0)}(\Omega; \bs{x}; z)= \CR(\Omega;z)^{\frac{(\kappa-4)^2-4n^2}{8\kappa}}\times \prod_{j=1}^n \Poisson(\Omega;x_j;z)^{\frac{2n+4-\kappa}{2\kappa}}\times \prod_{1\le i<j\le n} \Poisson(\Omega;x_i,x_j)^{-\frac{1}{\kappa}}  ,
\end{equation*}
where $\CR(\Omega; z)$ is conformal radius of $\Omega$ seen from $z$, 
and $\Poisson(\Omega; x_i, z)$ is Poisson kernel~\eqref{eqn::Poisson_H_inner}-\eqref{eqn::Poisson_polygon_inner}, 
and $\Poisson(\Omega; x_i, x_j)$ is boundary Poisson kernel~\eqref{eqn::bPoisson_H}-\eqref{eqn::bPoisson_cov}, 
and $\varphi$ is any conformal map from $\Omega$ onto $\U$ with $\varphi(z)=0$. 
The semi-classical limit of $\LZradial{n}^{(\kappa, \mu)}$ is $\LVradial{n}^{(\mu)}$ defined by~\eqref{eqn::LVradial_def}, i.e. 
\begin{align}\label{eqn::LVradial_sclimit}
	\LVradial{n}^{(\mu)}(\Omega;\bs{x};z):=& - \lim_{\kappa\to 0} \kappa \log \LZradial{n}^{(\kappa; \mu)}(\Omega; \bs{x}; z). 
\end{align}
When $(\Omega; \bs{x}; z)=(\U; \ee^{\ii\bs{\theta}}; 0)$, the partition function $\LZradial{n}^{(\kappa; \mu)}(\U; \ee^{\ii\bs{\theta}}; 0)$ becomes the one in~\eqref{eqn::nradial_pf_U}. 

\paragraph*{Brownian excursion measure.}
Consider a domain $\Omega\subsetneq \C$ with smooth boundary. For two disjoint subsets $\Gamma_1,\Gamma_2\subset \partial \Omega$ of the boundary, the Brownian excursion measure between $\Gamma_1$ and $\Gamma_2$ in $\Omega$ is defined as 
\begin{equation} \label{eqn::Def_Brownian excursion measure}
	\LE_{\Omega}(\Gamma_1,\Gamma_2):=\int_{\Gamma_1} \int_{\Gamma_2} \Poisson(\Omega;x,y) |\ud x| |\ud y|,
\end{equation}
where $\Poisson(\Omega;x,y)$ is the boundary Poisson kernel. The Brownian excursion measure is conformally invariant (see~\cite[Proposition~5.8]{LawlerConformallyInvariantProcesses}). Thus the Brownian excursion measure is also well defined when $\partial \Omega$ is not smooth. For two disjoint sets $\Gamma_1$ and $\Gamma_2$, we denote $\LE_{\Omega\setminus (\Gamma_1\cup\Gamma_2)}(\Gamma_1,\Gamma_2)$ by $\LE_{\Omega}(\Gamma_1,\Gamma_2)$, where $\Omega\setminus (\Gamma_1\cup\Gamma_2)$ is the connected component of $\Omega\setminus (\Gamma_1\cup\Gamma_2)$ that intersects both $\partial \Gamma_1$ and $\partial \Gamma_2$. We refer to~\cite[Section~5]{LawlerConformallyInvariantProcesses} and~\cite[Page~4 and Lemma~3.3]{field2015escape} for its useful properties.

\paragraph*{Brownian loop measure.}
Brownian loop measure $\blm^\mathrm{loop}$ is a $\sigma$-finite measure on planar unrooted Brownian loops
--- see ~\cite{LawlerWernerBrownianLoopsoup} for its definition and properties. 
While the total mass of $\blm^\mathrm{loop}$ is infinite, the mass on macroscopic loops is finite: 
if $\Omega$ is a domain and $K_1, K_2 \subset \overline{\Omega}$ are two disjoint compact subsets, 
then the total mass $\blm(\Omega; K_1, K_2)$ of Brownian loops that stay in $\Omega$ and intersect both $K_1$ and $K_2$ is finite.
In general, for $n\ge 2$ disjoint compact subsets $K_1, \ldots, K_n$ of $\overline{\Omega}$, we denote
\begin{align}\label{eqn::blm_def}
	\blm(\Omega; K_1, \ldots, K_n) := \sum_{j=2}^n \blm^{\mathrm{loop}} \big[ \ell\subset\Omega: \ell\cap K_i\neq \emptyset\textnormal{ for at least }j \textnormal{ of the }i\in\{1, \ldots, n\} \big].
\end{align}
See~\cite{LawlerPartitionFunctionsSLE,PeltolaWangSLELDP} for more properties and~\cite{DubedatEulerIntegralsCommutingSLEs, DubedatCommutationSLE, KozdronLawlerMultipleSLEs, PeltolaWuGlobalMultipleSLEs} 
for alternative forms for~\eqref{eqn::blm_def}.

\subsection{Estimates on Bessel-type processes}
\label{subsec::pre_Bessel}

\begin{lemma}\label{lem::chordalBessel_estimate}
Fix $0<\kappa<4$ and $\alpha\ge 1$. Suppose $X_t$ is the solution to the following SDE: 
		\begin{equation}\label{eqn::Zhan_calculation}
			\ud X_t=-\sqrt{\kappa X_t(1-X_t)}\ud B_t+2\left(1-(1+\alpha) X_t\right) \ud t, \qquad X_0\in (0,1],
		\end{equation}
		where $B_t$ is standard one-dimensional Brownian motion. 
		\begin{itemize}
		\item[(1)] Suppose $X_0=1$. For all $t>0$ and $\eps\in (0,1/4)$, 
		\begin{equation}\label{eqn::chordalBessel_estimate1}
		\PP[X_t\le \eps]\le \frac{\Gamma\left({4(\alpha+1)}/{\kappa}\right)}{\Gamma\left({4}/{\kappa}\right)\Gamma\left({4\alpha}/{\kappa}\right)}\frac{\kappa}{4} \eps^{{4}/{\kappa}}, 
		\end{equation}
		where $\Gamma$ is the Gamma function. 
		\item[(2)] Suppose $X_0\in (0,1)$. For any $t_0>0$ and $0<\eps<2^{-\frac{4+2\kappa}{4-\kappa}} X_0$, there exists a constant $C_{\eqref{eqn::finite_time_upper_bound}}\in (-\infty,\infty)$ depending on $\alpha$ such that
		\begin{equation}\label{eqn::finite_time_upper_bound}
			\PP\left[\inf_{0\leq t \leq t_0} X_t \leq \eps \right] \leq \exp\left(C_{\eqref{eqn::finite_time_upper_bound}}/\kappa\right) t_0 X_0^{-{4}/{\kappa}}\eps^{{4}/{\kappa}-1}.
		\end{equation}
		\end{itemize}
\end{lemma}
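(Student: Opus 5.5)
The plan is to exploit the fact that \eqref{eqn::Zhan_calculation} is a Jacobi diffusion on $[0,1]$ with an explicit Beta-type stationary law, and to compare $X$ with its stationary version via monotone coupling. The generator is $\frac{\kappa}{2}x(1-x)f''+2(1-(1+\alpha)x)f'$. Solving the stationary (speed-measure) equation gives the invariant density
\[
m(x)=\frac{\Gamma(a+b)}{\Gamma(a)\Gamma(b)}x^{a-1}(1-x)^{b-1},\qquad a=\frac{4}{\kappa},\quad b=\frac{4\alpha}{\kappa}.
\]
This is exactly the normalizing constant appearing in \eqref{eqn::chordalBessel_estimate1}. Since $\kappa<4$ we have $a>1$, so $0$ is not hit from the interior. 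Since $\alpha\ge1$ we have $b>1$, so $(1-x)^{b-1}\le 1$.

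\emph{Part (1).} Let $\tilde X$ solve the same SDE driven by the same Brownian motion, with $\tilde X_0$ drawn from $m$ independently of $B$. The coefficients are regular on the interior and the diffusion coefficient $\sqrt{\kappa x(1-x)}$ is H\"older-$1/2$, so pathwise uniqueness holds (Yamada--Watanabe) and the comparison theorem applies. Because $X_0=1\ge\tilde X_0$, it gives $X_t\ge \tilde X_t$ for all $t$. Hence
\[
\PP[X_t\le\eps]\le\PP[\tilde X_t\le\eps]=\int_0^\eps m(x)\,\ud x\le\frac{\Gamma(a+b)}{\Gamma(a)\Gamma(b)}\int_0^\eps x^{a-1}\ud x=\frac{\Gamma(a+b)}{\Gamma(a)\Gamma(b)}\frac{\kappa}{4}\eps^{4/\kappa},
\]
which is \eqref{eqn::chordalBessel_estimate1}.

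\emph{Part (2).} I would use the same coupling combined with an occupation-time argument. Start $\tilde X$ from $m$ conditioned on $[X_0,1]$. Comparison again gives $X\ge\tilde X$, so it suffices to bound $\PP[\inf_{[0,t_0]}\tilde X\le\eps]$. The conditioned law is dominated by $m/m([X_0,1])$, and $m([X_0,1])\ge e^{-C/\kappa}X_0^{a}$ because the integrand is at least of order $X_0^{a-1}(1-X_0)^{b-1}$ near $X_0$; the constant depends on $\alpha$ (when $X_0$ is near $1$, use the mass near $1/2$ instead). Therefore for each $s$,
\[
\PP[\tilde X_s\le2\eps]\le e^{C/\kappa}X_0^{-a}\int_0^{2\eps}m\le e^{C/\kappa}X_0^{-a}\eps^{a}.
\]

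Next, let $\tau$ be the first time $\tilde X$ hits $\eps$, and set $\delta=c\eps$. After $\tau$, the process stays in $[0,2\eps]$ during $[\tau,\tau+\delta]$ with probability at least $p\ge e^{-C/\kappa}$. Indeed, near $0$ the drift is at most $2$, so in time $c\eps$ the drift pushes up by at most $2c\eps<\eps/2$, while the martingale part has quadratic variation at most $2\kappa\eps\cdot c\eps$ and is controlled by a Gaussian tail bound. By the strong Markov property,
\[
p\,\delta\,\PP[\tau\le t_0]\le\E\Big[\int_0^{t_0+\delta}\one\{\tilde X_s\le2\eps\}\,\ud s\Big]\le(t_0+\delta)\,e^{C/\kappa}X_0^{-a}\eps^{a}.
\]
Dividing by $p\delta\asymp e^{-C/\kappa}\eps$ yields $e^{C'/\kappa}t_0X_0^{-4/\kappa}\eps^{4/\kappa-1}$, which is \eqref{eqn::finite_time_upper_bound}. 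The extra $\delta$ in $t_0+\delta$ is absorbed, either by assuming $t_0\ge\eps$ or by handling small $t_0$ with a direct argument.

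\emph{Main obstacle.} I expect the delicate points to be the uniformity of the lower bound $p$ on the staying probability, and the lower bound on $m([X_0,1])$, in the regime where $\eps$ is comparable to $X_0$. This is presumably where the explicit threshold $\eps<2^{-\frac{4+2\kappa}{4-\kappa}}X_0$ enters. If the occupation argument gets messy there, I would fall back on a scale-function estimate, using $s'(x)=x^{-a}(1-x)^{-b}$ to get the probability of reaching $\eps$ before $X_0$. I would then count the number of returns from $X_0$ to $X_0/2$ within time $t_0$, which produces the linear factor $t_0$.
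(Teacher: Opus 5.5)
Your part (1) is the paper's argument: dominate $X_t$ by the process started from the stationary $\mathrm{Beta}(4/\kappa,4\alpha/\kappa)$ law and integrate its density. Invoking the Yamada--Watanabe comparison theorem usefully makes explicit the monotone coupling that the paper uses without comment.

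In part (2) you take a different route from the paper, but as written the key comparison points the wrong way. If $\tilde X_0$ is drawn from $m$ conditioned on $[X_0,1]$, then $\tilde X_0\ge X_0$, and the comparison theorem gives $\tilde X_t\ge X_t$. So $\{\inf_{[0,t_0]}X\le\eps\}$ is \emph{not} contained in $\{\inf_{[0,t_0]}\tilde X\le\eps\}$, and nothing transfers to $X$. Relatedly, the bound $m([X_0,1])\ge e^{-C/\kappa}X_0^{4/\kappa}$ fails as $X_0\to1$, since $m([X_0,1])\to0$. Your ``use the mass near $1/2$'' remark only fits the interval $[0,X_0]$.

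The fix is to condition on $[0,X_0]$. Then $\tilde X\le X$. Because the semigroup is positive and preserves $m$, the law of $\tilde X_s$ is dominated by $m/m([0,X_0])$ for every $s$. Using $m([0,X_0])\ge\frac{\Gamma(a+b)}{\Gamma(a)\Gamma(b)}\,2^{1-a-b}X_0^{a}/a$ and $(1-x)^{b-1}\le1$, you get $\PP[\tilde X_s\le2\eps]\le2^{2a+b-1}(\eps/X_0)^{a}$. Your occupation-time step then works; indeed $p\ge1/2$ uniformly in $\kappa\le4$ if $\delta=c\eps$ with $c$ small.

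Even after this repair, what you obtain is $e^{C/\kappa}(t_0+c\eps)X_0^{-4/\kappa}\eps^{4/\kappa-1}$, which is the claim only for $t_0\gtrsim\eps$. This loss is intrinsic to the method: $\tilde X_0$ itself lies below $\eps$ with probability of order $(\eps/X_0)^{4/\kappa}$, so no bound routed through $\tilde X$ can vanish as $t_0\to0$. Since the lemma is stated for every $t_0>0$, the regime $t_0<c\eps$ needs a separate short-time bound for $X$ itself, and ``a direct argument'' does not yet supply it.

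One way to do it: reaching $\eps$ by time $t_0$ forces an exit from $(\eps,2X_0)$ by time $t_0$. Before that exit, the martingale part has quadratic variation at most $2\kappa X_0t_0$, yet it must move by at least $X_0/4$ (for $c$ small depending on $\alpha$, using $\eps<X_0/2$). This gives $2\exp(-X_0/(64\kappa t_0))$, which is below the target.

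The paper avoids both issues with a Green-function bound. For $f\ge0$ solving $\frac{\kappa}{2}x(1-x)f''+2(1-(1+\alpha)x)f'=1$ with $f(X_0)=f'(X_0)=0$, optional stopping gives $\PP[T_\eps\le t_0]\le\E[t_0\wedge T_\eps]/f(\eps)\le t_0/f(\eps)$, which is linear in $t_0$ for all $t_0>0$ at once. An explicit lower bound $f(\eps)\ge\frac{\kappa}{4-\kappa}2^{4(\alpha-1)/\kappa-3}X_0^{4/\kappa}\eps^{1-4/\kappa}$ then finishes. The threshold $\eps<2^{-\frac{4+2\kappa}{4-\kappa}}X_0$ is needed only for that lower bound and plays no role in your route. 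Your corrected argument would suffice for the application in Lemma~\ref{lem::bad_event_estimate}, where $t_0=\frac12\log 24$ is fixed and $\eps\le1$. It does not prove the lemma as stated without the short-time piece.
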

Both estimates~\eqref{eqn::chordalBessel_estimate1} and~\eqref{eqn::finite_time_upper_bound} will be used in the proof of Lemma~\ref{lem::bad_event_estimate} which is a key estimate in the proof the return estimate in Proposition~\ref{prop::SLE_return_chordal}. 
\begin{proof}[Proof of Lemma~\ref{lem::chordalBessel_estimate}~(1)]
From~\cite[Proposition~2.20]{ZhanSLEkappaRhoBubbleMeasures}, the process $(X_t)_{t\geq 0}$ has an invariant distribution with density
		\begin{equation}\label{eqn::invariant_distribution}
			w(x) = \frac{\Gamma\left({4(\alpha+1)}/{\kappa}\right)}{\Gamma\left({4}/{\kappa}\right)\Gamma\left({4\alpha}/{\kappa}\right)}x^{{4}/{\kappa}-1} (1-x)^{{4\alpha}/{\kappa}-1}, \quad x\in (0,1).
		\end{equation}
Let $(\hat{X}_t)_{t\ge 0}$ be a process satisfying~\eqref{eqn::Zhan_calculation} with $\hat{X}_0$ distributed as the invariant density~\eqref{eqn::invariant_distribution}. For all $t>0$ and $\eps\in (0,1)$, we have
\begin{equation*}
	\PP[ X_t\le \eps ] \le \PP[ \hat{X}_t\le \eps ] =\int_{0}^{\eps} w(x) \ud x\le \frac{\Gamma\left({4(\alpha+1)}/{\kappa}\right)}{\Gamma\left({4}/{\kappa}\right)\Gamma\left({4\alpha}/{\kappa}\right)}\frac{\kappa}{4} \eps^{{4}/{\kappa}},
\end{equation*}
which gives~\eqref{eqn::chordalBessel_estimate1} as desired.
\end{proof}
\begin{proof}[Proof of Lemma~\ref{lem::chordalBessel_estimate}~(2)]		
		Define 
$T_{\eps} = \inf\{t: X_t\leq \eps\}$ and set		\[
		f(x):= \frac{2}{\kappa}\int_x^{X_0} \ud y \int_y^{X_0} \ud u \frac{u^{{4}/{\kappa} -1}(1-u)^{{4\alpha}/{\kappa} -1}}{y^{{4}/{\kappa}}(1-y)^{{4\alpha}/{\kappa}}}.
		\]
		Then $f(x)\geq 0$ for $x\in (0,1)$ and $f(x)$ satisifes the following ODE:
		\[
		\frac{\kappa}{2}x(1-x) f''(x) + 2(1-(1+\alpha)x) f'(x) = 1, \quad f(X_0)=0, \quad f'(X_0)=0.
		\]
		It\^o's formula tells that the process $\{f(X_{t\wedge t_0\wedge T_\eps}) - t\wedge t_0\wedge T_\eps\}_{t\geq 0}$ is a bounded martingale. Optional stopping theorem gives
		\[
		\E[f(X_{t_0\wedge T_\eps})] - \E[t_0\wedge T_\eps] = 0.
		\] 
		Since 
		\[
		\E[f(X_{t_0\wedge T_\eps})] \geq \PP[T_\eps\leq t_0] f(\eps),  
		\]
		we have 
		\begin{equation}\label{eqn::Ys_upper_bound_auxiliary}
			\PP\left[\inf_{0\leq t \leq t_0} X_t \leq \eps \right] =  \mathbb{P}[T_\eps\leq t_0] \leq \frac{\E[t_0\wedge T_\eps]}{f(\eps)} \leq \frac{t_0}{f(\eps)}.
		\end{equation}
		\medbreak
		Next, we give a lower bound of $f(\eps)$ as $\eps\to 0$. Observe that 
		\begin{align*}
			f(\eps) = & \frac{2}{\kappa}\int_{\eps}^{X_0} \ud z \int_z^{X_0} \ud u \frac{u^{{4}/{\kappa} -1}(1-u)^{{4\alpha}/{\kappa} -1}}{z^{{4}/{\kappa}}(1-z)^{{4\alpha}/{\kappa}}} \\
			\geq& \frac{2}{\kappa}\int_{\eps}^\frac{X_0}{2} \ud z \int_z^\frac{X_0}{2} \ud u \frac{u^{{4}/{\kappa} -1}(1-u)^{{4\alpha}/{\kappa} -1}}{z^{{4}/{\kappa}}(1-z)^{{4\alpha}/{\kappa}}} \\
			\geq& \frac{2^{{4\alpha}/{\kappa}}}{\kappa} \int_{\eps}^\frac{X_0}{2} \ud z \int_z^\frac{X_0}{2} \ud u \frac{u^{{4}/{\kappa} -1}}{z^{{4}/{\kappa}}}  \\
			=& {2^{{4\alpha}/{\kappa}-2}}\left( \frac{\eps^{1-{4}/{\kappa}}}{(4/\kappa-1)} \left(\frac{X_0}{2}\right)^{{4}/{\kappa}} - \frac{4}{(4-\kappa)}\frac{X_0}{2}+\eps\right)\\
			\geq & \frac{2^{{4\alpha}/{\kappa}-2}\eps^{1-{4}/{\kappa}}}{(4/\kappa-1)}\times \frac{1}{2} \left(\frac{X_0}{2}\right)^{{4}/{\kappa}}\\
			=& \frac{\kappa}{(4 -\kappa)}2^{{4(\alpha-1)}/{\kappa}  -3}X_0^{{4}/{\kappa}} \eps^{1-{4}/{\kappa}}, 
		\end{align*}
where the last inequality is due to $0<\eps<2^{-\frac{4+2\kappa}{4-\kappa}} X_0$. 
		Combining the lower bound of $f(\eps)$ with~\eqref{eqn::Ys_upper_bound_auxiliary}, we obtain~\eqref{eqn::finite_time_upper_bound} with
		\[
		C_{\eqref{eqn::finite_time_upper_bound}} =\sup_{0<\kappa<4} \kappa\log \left(\frac{4-\kappa}{\kappa} 2^{-{4(\alpha-1)}/{\kappa}+3}\right) \in (-\infty, +\infty).
		\]
	\end{proof}

\begin{lemma} \label{lem::radialBessel_estimates}
Fix $\kappa>0$, and $\mu\in\R$, and $0<\kappa<4\alpha$. 
Suppose $X_t$ is the solution to the following SDE:
\begin{equation}\label{eqn::radialBessel_SDE}
	\ud X_t=-\sqrt{\kappa}\ud B_t+\left( \alpha \cot(X_t/2)-\mu\right)\ud t, \qquad X_0\in (0,2\pi),
\end{equation}
where $B_t$ is standard one-dimensional Brownian motion. For any $t_0>0$, there exists a constant $C_{\eqref{eqn::radialBessel_estimates}}\in (0,\infty)$ depending on $\alpha,\mu,X_0$ such that 
\begin{equation}\label{eqn::radialBessel_estimates}
	\PP\left[ \left\{ \min_{0 \le t \le t_0} X_t \le \eps \right\} \bigcup \left\{ \max_{0 \le t \le t_0} X_t \ge 2\pi-\eps \right\}\right] \le \exp\left( C_{\eqref{eqn::radialBessel_estimates}}/\kappa \right) t_0 \eps^{4\alpha/\kappa-1},
\end{equation}
\end{lemma}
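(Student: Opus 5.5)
The plan is to mimic the proof of Lemma~\ref{lem::chordalBessel_estimate}~(2): build a nonnegative Lyapunov-type function $f$ solving the Poisson equation $\LL f=1$ for the generator of~\eqref{eqn::radialBessel_SDE}, then apply optional stopping at the exit time of $(\eps,2\pi-\eps)$. The generator of $X$ is
\[
\LL f(x)=\frac{\kappa}{2}f''(x)+\big(\alpha\cot(x/2)-\mu\big)f'(x).
\]
Its scale density is $s(x)=\exp\big(-\frac{2}{\kappa}\int^x(\alpha\cot(y/2)-\mu)\ud y\big)=\sin(x/2)^{-4\alpha/\kappa}\ee^{2\mu x/\kappa}$, and the speed density is $m(x)=\frac{2}{\kappa}\sin(x/2)^{4\alpha/\kappa}\ee^{-2\mu x/\kappa}$. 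I would set
\[
f(x):=\int_{X_0}^{x}\ud y\, s(y)\int_{X_0}^{y} m(u)\,\ud u ,
\]
which satisfies $\LL f=1$, $f(X_0)=f'(X_0)=0$, and $f\ge 0$ on $(0,2\pi)$ (both inner and outer integrals change sign together). Let $T_\eps$ be the exit time of $(\eps,2\pi-\eps)$. Then $f(X_{t\wedge t_0\wedge T_\eps})-t\wedge t_0\wedge T_\eps$ is a bounded martingale. Optional stopping gives
\[
\PP[T_\eps\le t_0]\min\{f(\eps),f(2\pi-\eps)\}\le \E[f(X_{t_0\wedge T_\eps})]=\E[t_0\wedge T_\eps]\le t_0 .
\]
The event in~\eqref{eqn::radialBessel_estimates} is exactly $\{T_\eps\le t_0\}$, so it remains to bound $f(\eps)$ and $f(2\pi-\eps)$ from below.

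For the lower bound near $0$, take $\eps<X_0/2$ and restrict both integrals to $[\eps,X_0/2]$. On $(0,\pi]$ we have $\sin(x/2)\asymp x$, with constants $x/\pi\le\sin(x/2)\le x/2$, while $\ee^{\pm2\mu x/\kappa}$ is bounded above and below by $\ee^{\pm 4\pi|\mu|/\kappa}$. Hence
\[
f(\eps)\ge \ee^{-c/\kappa}\frac{2}{\kappa}\int_\eps^{X_0/2}\ud y\, y^{-4\alpha/\kappa}\int_y^{X_0/2}u^{4\alpha/\kappa}\ud u .
\]
Computed as in the chordal case, this is $\ge \ee^{-C/\kappa}\eps^{1-4\alpha/\kappa}$ once $\eps$ is below a threshold of the form $c(\alpha,\kappa)X_0$. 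This uses $4\alpha/\kappa>1$, which is guaranteed by $\kappa<4\alpha$. The region near $2\pi$ is symmetric under $x\mapsto 2\pi-x$, which turns $\mu$ into $-\mu$, so the same bound holds for $f(2\pi-\eps)$.

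For $\eps$ above the threshold, the right-hand side of~\eqref{eqn::radialBessel_estimates} is already at least $1$ after enlarging $C$, so the estimate is trivial there. This is also how the dependence on $X_0$ is absorbed into $C$. Combining the cases gives
\[
\PP[T_\eps\le t_0]\le t_0\,\ee^{C/\kappa}\eps^{4\alpha/\kappa-1}.
\]

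The main obstacle is bookkeeping: all constants must be of the form $\ee^{O(1/\kappa)}$ uniformly in $\kappa\in(0,4\alpha)$. Factors like $(4\alpha/\kappa-1)^{-1}$ blow up as $\kappa\to4\alpha$. Such a factor appears in the numerator of the bound for $\PP[T_\eps\le t_0]$, because it arises from the factor $\frac{\kappa}{4\alpha-\kappa}$ in the lower bound on $f(\eps)$. I would handle it as the paper does, by taking $C=\sup_\kappa \kappa\log(\cdots)$. This is finite because $\kappa\log(1/(4\alpha-\kappa))\to0$ as $\kappa\to 4\alpha$. Similarly, the threshold condition on $\eps$ must be chosen so that the subtracted term in the explicit integral is at most half the main term.
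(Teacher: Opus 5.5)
Your argument is the paper's proof. The $f$ you build from the scale and speed densities is exactly the paper's double integral. The optional-stopping step, the comparison $\sin(x/2)\asymp x$, the $\ee^{\pm O(|\mu|)/\kappa}$ control of the exponential factor, and the reflection $x\mapsto 2\pi-x$ (with $\mu\mapsto-\mu$) are all the same. Integrating over $[\eps,X_0/2]$ instead of $[\eps,X_0]$ is cosmetic. Two parts of your bookkeeping need fixing.

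First, you have the $\kappa\to4\alpha$ issue inverted.
\begin{itemize}
\item The factor $\frac{\kappa}{4\alpha-\kappa}$ sits in the \emph{lower bound} for $f(\eps)$. So the probability bound carries $\frac{4\alpha-\kappa}{\kappa}$, which is harmless: it tends to $0$ as $\kappa\to4\alpha$, and $\kappa\log(4\alpha/\kappa)\to0$ as $\kappa\to0$.
\item Your stated justification is also false. As $\kappa\to4\alpha$ we have $\kappa\log\frac{1}{4\alpha-\kappa}\to+\infty$, so a factor $(4\alpha/\kappa-1)^{-1}$ in the numerator could not be absorbed into $\ee^{C/\kappa}$ by a supremum.
\item The genuine non-uniformity is the threshold below which the main term dominates. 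With $\beta=4\alpha/\kappa$ it is $\eps_*=\frac{X_0}{2}(\beta+1)^{-1/(\beta-1)}$, which tends to $0$ as $\kappa\to4\alpha$.
\item Your large-$\eps$ step is what absorbs this threshold, since $\eps_*^{\beta-1}=(X_0/2)^{\beta-1}/(\beta+1)\ge \ee^{-C'/\kappa}$ with $C'$ depending on $\alpha,X_0$.
\end{itemize}

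Second, that large-$\eps$ step has a dependence on $t_0$.
\begin{itemize}
\item The right-hand side contains the factor $t_0$. So ``at least $1$ after enlarging $C$'' forces $C$ to grow like $\log(1/t_0)$ when $t_0<1$.
\item This is unavoidable, not a flaw of your method. For $\eps\ge X_0$ the probability is $1$, and it stays close to $1$ when $X_0-\eps\ll\sqrt{\kappa t_0}$. Meanwhile the right-hand side tends to $0$ as $t_0\to0$. So with $C$ independent of $t_0$ the inequality cannot hold for all $\eps$.
\item The paper's proof silently uses only $\eps$ small enough that the main term of its lower bound dominates. In its application (Lemma~\ref{lem::radial_badevents}) one has $t_0=m\ge1$ and $\eps=m^{-1/2}$, and there your argument works with $C$ independent of $t_0$.
\end{itemize}
You should state the restriction explicitly: either $t_0\ge1$, or $C$ allowed to depend on $t_0$.
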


\begin{proof}
Define
$T_{\eps}=\inf\{t: X_t\le \eps \text{ or } X_t\ge 2\pi-\eps \}$
and set
\begin{equation*}
	f(\theta):=\frac{2}{\kappa} \int_{\theta}^{X_0} \ud y \int_{y}^{X_0} \ud u \, \frac{\sin(u/2)^{4\alpha/\kappa} \exp(-2\mu u/\kappa)}{\sin(y/2)^{4\alpha/\kappa} \exp(-2\mu y/\kappa)}.
\end{equation*}
Then $f(\theta)\ge 0$ for $\theta\in (0,2\pi)$ and $f(\theta)$ satisfies the following ODE:
\begin{equation*}
	\frac{\kappa}{2} f''(\theta)+ f'(\theta) \left( \alpha \cot(\theta/2)-\mu \right)-1=0, \qquad f(X_0)=0, \qquad f'(X_0)=0.
\end{equation*}
It\^o's formula tells that the process $\{f(X_{t\wedge t_0\wedge T_\eps})-t\wedge t_0\wedge T_\eps\}_{t\ge 0}$ is a bounded martingale. Optional stopping theorem gives
\begin{equation} \label{eqn::radialBessel_estimates_aux1}
	\E[f(X_{t_0\wedge T_\eps})]-\E[t_0\wedge T_\eps]=0.
\end{equation}
Since
\begin{equation*}
	\E[f(X_{t_0\wedge T_\eps})]\ge \PP\left[T_\eps\le t_0\right] \left( f(\eps) \wedge f(2\pi-\eps) \right),
\end{equation*}
combining with~\eqref{eqn::radialBessel_estimates_aux1}, we have
\begin{equation} \label{eqn::radialBessel_estimates_aux2}
	\PP\left[ \left\{ \min_{0 \le t \le t_0} X_t \le \eps \right\} \bigcup \left\{ \max_{0 \le t \le t_0} X_t \ge 2\pi-\eps \right\} \right]= \PP\left[T_\eps\le t_0\right] \le \frac{\E[t_0\wedge T_\eps]}{f(\eps) \wedge f(2\pi-\eps)} \le \frac{t_0}{f(\eps) \wedge f(2\pi-\eps)}.
\end{equation}
Note that
\[
	\frac{\sin(X_0/2)}{X_0} u \le \sin (u/2) \le u/2, \text{ for } 0\le u\le X_0.
\]
Thus for $0<\eps<X_0$, we have
\begin{align} \label{eqn::radialBessel_estimates_aux3}
	f(\eps) \ge& \frac{2}{\kappa} \exp(-8|\mu| \pi/\kappa) \left( \frac{2\sin (X_0/2)}{X_0} \right)^{4\alpha/\kappa} \int_{\eps}^{X_0} \ud y \int_{y}^{X_0} \ud u \left( \frac{u}{y} \right)^{4\alpha/\kappa} \notag\\
	=& \frac{2}{4\alpha+\kappa} \exp(-8|\mu| \pi/\kappa) \left( \frac{2\sin (X_0/2)}{X_0} \right)^{4\alpha/\kappa}	\left( \frac{\kappa X_0^{4\alpha/\kappa+1}}{4\alpha-\kappa} \eps^{1-4\alpha/\kappa} - \frac{\kappa X_0^2}{4\alpha-\kappa}+ \frac{\eps^2-X_0^2}{2}  \right)\notag\\
	\ge & \frac{2}{4\alpha+\kappa} \exp(-8|\mu| \pi/\kappa) \left( \frac{2\sin (X_0/2)}{X_0} \right)^{4\alpha/\kappa}	\left( \frac{\kappa X_0^{4\alpha/\kappa+1}}{4\alpha-\kappa} \eps^{1-4\alpha/\kappa} - \frac{4\alpha+\kappa}{8\alpha-2\kappa}X_0^2 \right).
\end{align}
Similarly, for $0<\eps<2\pi-X_0$, we have
\begin{equation} \label{eqn::radialBessel_estimates_aux4}
	f(2\pi-\eps) \ge \frac{2}{4\alpha+\kappa} \exp(-8|\mu| \pi/\kappa) \left( \frac{2\sin (X_0/2)}{2\pi-X_0} \right)^{4\alpha/\kappa}	\left( \frac{\kappa (2\pi-X_0)^{4\alpha/\kappa+1}}{4\alpha-\kappa} \eps^{1-4\alpha/\kappa} - \frac{4\alpha+\kappa}{8\alpha-2\kappa}(2\pi-X_0)^2  \right).
\end{equation}
Plugging~\eqref{eqn::radialBessel_estimates_aux3} and~\eqref{eqn::radialBessel_estimates_aux4} into~\eqref{eqn::radialBessel_estimates_aux2}, we obtain~\eqref{eqn::radialBessel_estimates} as desired.
\end{proof}

\section{Return estimate for chordal SLE with force points}
\label{sec::chordalSLE_return}
The goal of this section is to derive a return estimate for chordal SLE with force points. 
Fix $\kappa>0$ and $\ell, r\geq 0$. Suppose 
\begin{align}\label{eqn::forcepoints_weights}
\begin{split}
\bs{\rho}=(\bs{\rho}^L; \bs{\rho}^R)\in\mathbb{R}_{\geq 0}^{\ell+r}, \qquad &\bs{x}^L = (x^L_\ell, \ldots, x^L_1)\in \mathbb{R}^\ell, \qquad \bs{x}^R = (x^R_1, \ldots, x^R_r)\in \mathbb{R}^r,\\
&\text{where }x_\ell^L<\cdots<x_1^L<0<x_1^R<\cdots<x_r^R. 
\end{split}
\end{align}
The chordal $\SLE_{\kappa}(\bs{\rho}^L; \bs{\rho}^R)$  in $\HH$ from $0$ to $\infty$ with force points $(\bs{x}^L; \bs{x}^R)$ is defined as the Loewner chain $(K_t)_{t\ge 0}$ driven by a continuous function $W: [0,\infty)\to \R$ satisfying the SDE system 
\begin{equation}\label{eqn::chordalSLEkappa_rho_sde}
	\begin{cases}
\displaystyle 	\ud W_t = \sqrt{\kappa} \ud B_t + \sum_{j=1}^{\ell} \frac{\rho_j^L}{W_t - V_t^{L, j}} \ud t + \sum_{j=1}^{r} \frac{\rho_j^R}{W_t - V_t^{R, j}} \ud t, \qquad W_0 = 0, \\
\displaystyle 	\ud V_t^{L, j} = \frac{2}{V_t^{L, j}-W_t} \ud t, \qquad V_0^{L, j}=x_j^L, \qquad 1\le j\le \ell, \\
\displaystyle 	\ud V_t^{R, j} = \frac{2}{V_t^{R, j}-W_t} \ud t, \qquad V_0^{R, j}=x_j^R, \qquad 1\le j\le r.
\end{cases}
\end{equation}
where $B_t$ is a standard one-dimensional Brownian motion. We denote the law of $(K_t)_{t\ge 0}$ by $\PPtworho$. 
In this article, we only consider the case when $\kappa\in (0,4]$ and all weights are non-negative. In such case, the process is well-defined for all time and is almost surely generated by a continuous curve $\eta$, see e.g.~\cite{DubedatSLEDuality, MillerSheffieldIG1}. Moreover, in this case, the curve does not touch the boundary except at the two ends, and the evolution $V_t^{L, j}$ coincides with $g_t(x^L_j)$ for $1\le j\le \ell$ and the evolution $V_t^{R, j}$ coincides with $g_t(x^R_j)$ for $1\le j\le r$, where $g_t$ is the mapping-out function of the chordal Loewner chain (see Section~\ref{subsec::chordalSLE}). The main result of this section is the following return estimate.

\begin{proposition}\label{prop::SLE_return_chordal}
Assume the same notation as in~\eqref{eqn::forcepoints_weights}. 
Suppose $\eta$ is chordal $\SLE_{\kappa}(\bs{\rho}^L;\bs{\rho}^R)$ in $\HH$ from $0$ to $\infty$ with force points $(\bs{x}^L; \bs{x}^R)$. For $s>0$, we denote
\begin{equation} \label{eqn::chordal_return_stoppingtime}
\tau_s=\inf \{ t\ge 0: |\eta_t|=s \}, \qquad \LF_s = \sigma(\{\eta_t: 0\leq t\le \tau_s\}).
\end{equation}
Then, for any $s>0$ and $M\in[0,\infty)$, there exists a constant $S=S(s,M)>s$ such that 
\begin{equation}\label{eqn::SLE_return_estimate}
	\PPtworho \left[ \eta_{[\tau_S,\infty)} \cap \partial B(0,s) \neq \emptyset \right] \le \exp(-M/\kappa), \qquad \kappa\in (0,1).
\end{equation}
\end{proposition}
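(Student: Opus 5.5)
The plan is to compare $\SLE_{\kappa}(\bs{\rho}^L;\bs{\rho}^R)$ with plain chordal $\SLE_\kappa$ by a Radon--Nikodym change of measure, split on a ``good'' event where the force points stay macroscopically far from the driving function and the derivative is bounded, and control the complementary ``bad'' event with the Bessel-type estimates of Lemma~\ref{lem::chordalBessel_estimate}. First I reduce to a local statement. Fix an intermediate radius $R$ with $s<R<S$, to be chosen large, and apply the domain Markov property at $\tau_R$. Given $\LF_R$, the remaining curve $g_{\tau_R}(\eta_{[\tau_R,\infty)})-W_{\tau_R}$ is again chordal $\SLE_\kappa(\bs{\rho})$ with force points $g_{\tau_R}(x^{L/R}_j)-W_{\tau_R}$, together with the extra ``free'' points $g_{\tau_R}(0^\pm)$ of weight zero. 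The event that $\eta$ returns to $B(0,s)$ after $\tau_S$ is contained in the event that the mapped-out curve, after reaching distance comparable to $S$, hits the image of $\partial B(0,s)$. That image is a small hull attached to the real line near the two images of $0^\pm$.

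For plain chordal $\SLE_\kappa$ (no force points), the estimate already exists. The probability of hitting $B(0,s)$ after reaching $\partial B(0,S)$ is at most $C (s/S)^{8/\kappa-1}$ by the cited return estimates (\cite{field2015escape,LawlerMinkowskiSLERealLine,PeltolaWangSLELDP}). This is $\le \exp(-2M/\kappa)$ once $S/s$ is large enough, uniformly in $\kappa\in(0,1)$. To transfer this to force points, I would write the law of $\SLE_\kappa(\bs\rho)$ as plain $\SLE_\kappa$ weighted by the local martingale
\[
M_t=\prod_j |g_t'(x_j)|^{\rho_j(\rho_j+4-\kappa)/(4\kappa)}\,|W_t-g_t(x_j)|^{\rho_j/\kappa}\prod_{i<j}|g_t(x_i)-g_t(x_j)|^{\rho_i\rho_j/(2\kappa)}.
\]
On the good event $G$, the ratios $|W_t-g_t(x_j)|/\sqrt{t}$ stay in a compact interval $[\delta,\delta^{-1}]$, and $g_t'(x_j)$ stays bounded below. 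On $G$, $M$ is bounded above and below by $\exp(\pm C(\delta)/\kappa)$ after normalizing by scale. This gives
\[
\PPtworho[\text{return}\cap G]\le \exp(C(\delta)/\kappa)\,(s/S)^{8/\kappa-1}.
\]
Choosing $S$ large makes this at most $\tfrac12\exp(-M/\kappa)$.

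The main obstacle is the bad event $G^c$, on which some force point (or the image of $0^\pm$) becomes relatively close to $W_t$. Here I would work side by side. For the rightmost relevant pair, the ratio $X_t=(g_t(x^R_1)-W_t)/(g_t(x^R_k)-W_t)$ (or its analogue measured against the image of $0^+$), after a time change by $\log$-capacity, solves an SDE of the form \eqref{eqn::Zhan_calculation} with $\alpha\ge1$ determined by the weights. The weights being nonnegative is exactly what makes the drift repelling, i.e.\ what makes the process transient. Part~(1) of Lemma~\ref{lem::chordalBessel_estimate} then bounds the probability that $X$ is $\le\eps$ at a fixed time, starting from the boundary value $1$. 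Part~(2) bounds the probability of ever going below $\eps$ over a window of length $t_0$, giving $\exp(C/\kappa)t_0\eps^{4/\kappa-1}$. Since the curve passes from radius $R$ to $S$ in log-capacity time of order $\log(S/R)$, a union bound over unit windows costs only a factor $\log(S/R)$. Taking $\eps=\delta$ small enough relative to $M$ then gives $\PP[G^c]\le\tfrac12\exp(-M/\kappa)$.

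The delicate points are twofold. First, the constants $C$ in Lemma~\ref{lem::chordalBessel_estimate} must be uniform in $\kappa$ so that $\delta$ can be chosen depending only on $M$. Second, the case $X_0$ near the boundary must be handled by first running until $X$ leaves a neighborhood of $0$, via part~(1). With both bounds in place, fix $\delta=\delta(M)$ first and then $S=S(s,M)$ to conclude \eqref{eqn::SLE_return_estimate}.
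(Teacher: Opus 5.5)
Your ingredients are the right ones: a Radon--Nikodym comparison with plain $\SLE_\kappa$ on a good event, and Bessel-type estimates on the bad event. The gap is that the good/bad split is set up on the wrong time window, and the order of quantifiers you propose cannot work.

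\textbf{The time window.} The return event concerns all of $\eta_{[\tau_S,\infty)}$, and the curve may travel out to any scale $2^nS$ before coming back to $\partial B(0,s)$. Your union bound for $G^c$ only covers the passage from radius $R$ to radius $S$. To transfer the plain estimate you must control the density up to the return time. That density is not bounded by $\exp(C(\delta)/\kappa)$ uniformly in the horizon, even on your good event. The factor $|W_t-g_t(x_j)|^{\rho_j/\kappa}$ grows like $t^{\rho_j/(2\kappa)}$, which the $g_t'$ factors need not compensate, and ``normalizing by scale'' does not change a ratio between two different scales.

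\textbf{Why a fixed $\delta$ fails.} Nor can you impose the good event for all $t\ge\tau_S$ with a fixed $\delta$. The relevant gap ratio behaves like the positive-recurrent diffusion \eqref{eqn::Zhan_calculation}, with Beta-type invariant density \eqref{eqn::invariant_distribution}. So it drops below any fixed level infinitely often almost surely.

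\textbf{The missing idea.} You need a multi-scale decomposition with shrinking thresholds, as in the paper. Split according to the dyadic window $[\tau_{2^nS},\tau_{2^{n+1}S}]$ in which the return occurs, and take $\eps_n=(n+n_0)^{-2}$. On the good event, bound only the one-window ratio $M_{\tau_{2^{n+1}S}}/M_{\tau_{2^nS}}\le(12/\eps_n)^{\tilde{\rho}/\kappa}$. Applying the plain return estimate conditionally on $\LF_{2^nS}$ gives $c_\kappa(12/\eps_n)^{\tilde{\rho}/\kappa}(s/2^nS)^{8/\kappa-1}$. This is summable because its polynomial growth in $n$ is beaten by $2^{-4n/\kappa}$. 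Meanwhile $\eps_n\to0$ is exactly what makes the bad events summable, with total $\lesssim\exp(C/\kappa)\,n_0^{1-2/\kappa}$. It is $n_0$, not a fixed $\delta$, that you tune to $M$ before choosing $S$.

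\textbf{The one-dimensional reduction.} This also needs an extra step. With several force points, a ratio such as $(g_t(x^R_1)-W_t)/(g_t(x^R_k)-W_t)$ does not solve an autonomous SDE. A reference point on the same side, like $g_t(0^+)$, is also the wrong choice: what pushes $W$ toward the right force points is the left weight. The paper first uses a monotone coupling giving $\Delta^{R,1}_t\ge Y^R_t$, where $(Y^L,Y^R)$ is the system with all the left weight $\overline{\rho}^L$ concentrated at $0^-$ and the right weights removed. Only then does Zhan's computation apply: $Y^R/(Y^R-Y^L)$, time-changed by $\frac12\log(Y^R-Y^L)$, solves \eqref{eqn::Zhan_calculation} with $\alpha=1+\overline{\rho}^L/2$. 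This time change is matched to the $\tau$-scales through $\hat{\tau}_S\le\tau_S\le\hat{\tau}_{12S}$, where $\hat{\tau}$ is defined via $g_t(0^+)-g_t(0^-)$.

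A minor point: $g_t'(x_j)$ tends to $0$ rather than staying bounded below. This is harmless for the upper bound, since its exponent is positive.
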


Proposition~\ref{prop::SLE_return_chordal} will be a key ingredient in the proof of large deviation in Theorem~\ref{thm::halfwatermelon_LDP}. Return estimates for chordal SLE (without force point) were originally proved in~\cite[Theorem~1.1]{field2015escape}, and later adapted to the LDP framework in~\cite[Proposition~A.3]{PeltolaWangSLELDP} and~\cite[Proposition~4.6]{AbuzaidPeltolaLargeDeviationCapacityParameterization} via techniques from~\cite{LawlerMinkowskiSLERealLine}. Return estimates for chordal SLE with a single force point were established in~\cite[Prop~17]{krusell2024rholoewnerenergylargedeviations}. We prove the general case in Proposition~\ref{prop::SLE_return_chordal} in this section. 
To this end, we first establish a monotone coupling in the chordal setting in Lemma~\ref{lem::monotone_coupling} (in Section~\ref{subsec::monotone_coupling}) to reduce the general case of chordal SLE with force points to the case of one force point on each side. 
Then, in Section~\ref{subsec::return_estimate_proof}, we split the return estimates into the estimates on ``good event" and the estimates on ``bad event". 
On the one hand, on ``good event'', the Radon-Nikodym derivative between the chordal SLE with force points and the standard chordal SLE is well-controlled (see Lemma~\ref{lem::martingale_ratio_bound}), thus we can give a control for the probability of return events using the return estimate for standard chordal SLE in~\cite[Proposition~A.3]{PeltolaWangSLELDP}, see Lemma~\ref{lem::good_event_estimate}. On the other hand, the ``bad event" happens with small probability, since the force points are repulsive and the curve is unlikely to get close to the force points, see Lemma~\ref{lem::bad_event_estimate}.  

\medbreak
As a consequence of Proposition~\ref{prop::SLE_return_chordal}, we obtain large deviation principle for chordal SLE with force points. 
Fix $\ell, r\geq 0$ and $(\ell+r+2)$-polygon $(\Omega; \bs{x}^L, a, \bs{x}^R, b)$. 
Fix $\kappa>0$ and $\bs{\rho} = (\bs{\rho}^L; \bs{\rho}^R) \in \mathbb{R}_{\geq 0}^{\ell+r}$.  The $\SLE_{\kappa}(\bs{\rho}^L;\bs{\rho}^R)$ process in $\Omega$ from $a$ to $b$ with force points $\bs{x} = (\bs{x}^L; \bs{x}^R)$ is defined via conformal invariance: it is the preimage of the chordal $\SLE_{\kappa}(\bs{\rho}^L;\bs{\rho}^R)$ process in $\HH$ from $0$ to $\infty$ with force points $(\varphi(\bs{x}^L); \varphi(\bs{x}^R))$, under any conformal map $\varphi: \Omega\to \HH$ satisfying $\varphi(a)=0$ and $\varphi(b)=\infty$. 
When $\kappa\in (0,4]$ and $\bs{\rho} = (\bs{\rho}^L; \bs{\rho}^R) \in \mathbb{R}_{\geq 0}^{\ell+r}$, the curve does not touch the boundary except at the two ends, thus it is a continuous curve in $\chamber(\Omega; a, b)$.
The law of the process is denoted by 
$\PPtworho(\Omega; \bs{x}^L, a, \bs{x}^R, b)$. 
We also write $\PPtworho = \PPtworho(\HH;\bs{x}^L,0, \bs{x}^R,\infty)$ for short. For standard chordal SLE, we just call it chordal $\SLE_{\kappa}$ in $(\Omega; a, b)$.

\begin{proposition}\label{prop::LDP_SLEkapparho} 
Fix $\ell, r\geq 0$, and $(\ell+r+2)$-polygon $(\Omega; \bs{x}^L, a, \bs{x}^R, b)$. Fix $\bs{\rho} = (\bs{\rho}^L; \bs{\rho}^R) \in \mathbb{R}_{\geq 0}^{\ell+r}$.
The family 
$\{\PPtworho(\Omega; \bs{x}^L, a, \bs{x}^R, b)\}_{\kappa\in (0,4]}$
	 of laws of chordal $\SLE_{\kappa}(\bs{\rho}^L;\bs{\rho}^R)$  satisfies large deviation principle in the space $(\chamber(\Omega; a, b), \dist_{\chamber})$ as $\kappa\to 0+$ with good rate function $\rate^{(\bs{\rho})}(\Omega; \bs{x}^L, a, \bs{x}^R, b; \cdot)$ which will be defined in Definition~\ref{def::rho_energy_chordal}.
\end{proposition}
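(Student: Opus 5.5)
By conformal invariance of both the law and the metric $\dist_{\chamber}$ (which is defined through $\varphi_{\U}$ and the capacity parametrization of the image in $\HH$), it suffices to treat $(\Omega; \bs{x}^L, a, \bs{x}^R, b)=(\HH; \bs{x}^L, 0, \bs{x}^R, \infty)$. The plan has three steps, following the scheme already used for $\bs{\rho}=0$ in~\cite{AbuzaidPeltolaLargeDeviationCapacityParameterization}.

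\textbf{Step 1: finite-time LDP for the driving function.} Fix $T<\infty$. Since $g_t(x_j)$ is a deterministic continuous functional of $W_{[0,t]}$ as long as the curve stays off the force points, the pair $(W_{[0,T]},\bs{V}_{[0,T]})$ solves the system~\eqref{eqn::chordalSLEkappa_rho_sde}. This system has small noise $\sqrt{\kappa}\,\ud B_t$ and a drift that is locally Lipschitz on $\{V^{L}<W<V^{R}\}$. I would obtain an LDP for $W_{[0,T]}$ in $\mathrm{C}[0,T]$ with rate
\[
\frac12\int_0^T\Big(\dot W_t-\sum_j\frac{\rho_j}{W_t-V^j_t}\Big)^2\ud t .
\]
There are two routes. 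The first is Freidlin--Wentzell after localizing away from the singular set. The second, closer to the paper's spirit, is Girsanov: compare $\PPtworho$ with chordal $\SLE_\kappa$, whose driving function is $\sqrt{\kappa}B$ and satisfies Schilder's theorem. The Radon--Nikodym derivative is the martingale $M_t=\prod_j |g_t'(x_j)|^{\rho_j(\rho_j+4-\kappa)/(4\kappa)}|W_t-g_t(x_j)|^{\rho_j/\kappa}\times(\text{pairwise terms})$. Multiplied by $\kappa$, $\log M_T$ converges as $\kappa\to0$ to a functional that is continuous on curves staying away from the force points. A generalized Varadhan lemma then transfers the LDP.

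Localization is needed because the tilt is unbounded near the singularity. I would intersect with the event $\{\min_j|W_t-g_t(x_j)|\ge\delta\}$ and bound its complement using a Bessel-type estimate of the kind in Lemma~\ref{lem::chordalBessel_estimate}(2): the gap to a force point behaves like a Bessel process of dimension $>2$ because $\rho\ge0$. This makes the complement super-exponentially small as $\delta\to0$. Pushing forward by the Loewner transform, which is continuous from driving functions to curves in the finite-time uniform topology, gives the LDP for $\eta_{[0,T]}$.

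\textbf{Step 2: return estimate.} Proposition~\ref{prop::SLE_return_chordal} gives, for every $s$ and $M$, an $S$ with
\[
\PPtworho\big[\eta_{[\tau_S,\infty)}\cap\partial B(0,s)\ne\emptyset\big]\le e^{-M/\kappa}.
\]
In the $\U$-picture under $\varphi_{\U}$, this says the tail of the curve after reaching radius $S$ stays within a small neighborhood of $\varphi_{\U}(\infty)$, except on an event with probability $e^{-M/\kappa}$. Hence the finite-time curves are exponentially good approximations of the full curve in $\dist_{\chamber}$.

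\textbf{Step 3: passage to the whole curve and the main obstacle.} Apply the generalized contraction principle of~\cite{AbuzaidPeltolaLargeDeviationCapacityParameterization}, i.e.\ a Dawson--Gärtner type projective-limit argument plus exponential tightness from Step 2, together with goodness of the rate function. This identifies the rate function as
\[
\rate^{(\bs{\rho})}(\eta)=\sup_T(\text{finite-time rate}),
\]
i.e.\ the $\rho$-Loewner energy $\frac12\int_0^\infty(\dot W_t-\sum_j\rho_j/(W_t-V^j_t))^2\ud t$ of Definition~\ref{def::rho_energy_chordal}. Curves touching the boundary or failing to be simple receive infinite energy.

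I expect the main obstacle to be Step 1's handling of the singular drift, specifically in the lower bound. For open sets containing curves that pass near force points, the tilt degenerates. I would first prove the bound for targets at positive distance $\delta$ from the force points, then let $\delta\to0$, using that finite energy forces $W-V^j$ to stay bounded away from $0$ on compact time intervals. Goodness of the rate function, i.e.\ compactness of sublevel sets in the incomplete space $\chamber(\HH;0,\infty)$, would follow from the analogous $\bs{\rho}=0$ statement plus the same comparison.
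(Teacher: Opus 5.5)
Your overall architecture matches the paper's. You tilt chordal $\SLE_\kappa$ by the martingale $M_t$ of Lemma~\ref{lem::SLEkapparho_martingale} and apply a generalized Varadhan lemma for the finite-time LDP. You then pass to the whole curve via Dawson--G\"artner and the generalized contraction principle, fed by the return estimate of Proposition~\ref{prop::SLE_return_chordal}.

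The genuine gap is in Step~1. You first get the tilted LDP for the driving function in $\mathrm{C}[0,T]$ (Schilder plus Varadhan). You then push it forward by the Loewner transform, asserting that $W\mapsto\eta$ is continuous from the uniform norm on drivers to $(\chamber_T,\dist_{\chamber})$. That map is not continuous. A continuous driver need not generate a curve at all. Even among drivers that do generate simple curves, uniform convergence of drivers does not give uniform convergence of traces without additional uniform control, e.g.\ a uniform bound $\|W\|_{1/2}\le c<4$. Only the reverse direction, curves to drivers, is continuous (Carath\'eodory convergence plus Loewner--Kufarev). So the ordinary contraction principle does not produce an LDP in curve space. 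Overcoming exactly this discontinuity is the substance of \cite{AbuzaidPeltolaLargeDeviationCapacityParameterization}.

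The repair is to tilt directly on curve space, as the paper does in Lemma~\ref{lem::rhoLR_finite_LDP_chordal}. Start from the curve-level finite-time LDP for chordal $\SLE_\kappa$ (Lemma~\ref{lem::finite_single_LDP}), set $\Phi(\eta;\kappa)=\kappa\log(M_{T_D}/M_0)$, and apply Lemma~\ref{lem::Varadhan}. Here $\Phi$ is continuous on $\chamber_{T_D}$ precisely because the map from curves to drivers is continuous. The Schilder step then becomes unnecessary, and two parts of your plan simplify.
\begin{itemize}
\item Truncate at the exit time $T_D$ of a bounded domain rather than at a fixed capacity time. Then $|W_{T_D}-g_{T_D}(x_j)|\lesssim R$ by Kemppainen--Schramm. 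Together with $\log g_t'(x_j)\le 0$ and the monotonicity of same-side differences, this bounds $\Phi$ from above, which is condition~(d'). At a fixed capacity time $|W_T|$ is unbounded, so you would have to verify the moment condition~(d) instead.
\item The singular drift is not an obstacle at this stage. Every curve in $\chamber_{T_D}$ stays off $\R\setminus\{0\}$ on $[0,T_D]$, so $\Phi(\cdot;0)$ is finite and continuous on this open subspace and the lower bound is purely local. Near the force points $\Phi\to-\infty$, which only helps the upper bound. No $\delta$-localization or Bessel estimate is needed here; those enter only through the bad events in the return estimate.
\end{itemize}
Goodness follows from $\rate\le\rate^{(\bs{\rho})}+A_0$ and lower semicontinuity.

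In Step~3, the return estimate is not used as exponential tightness. You sum it over all scales $p$ with budgets $2^{-p}e^{-M/\kappa}$. This builds the closed sets $\iota(F_{\overrightarrow{N}})$ on which $\iota^{-1}$ is continuous, which is what Lemma~\ref{lem::generalized_contraction} requires.
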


The rest of this section is organized as follows.  
In Section~\ref{subsec::chordalSLE}, we review the definition and basic properties of chordal SLE with force points.
We introduce the chordal Loewner energy $\rate^{(\bs{\rho})}$ in Definition~\ref{def::rho_energy_chordal}. We also recall the return estimate and the large deviation for standard chordal SLE in Section~\ref{subsec::chordalSLE}.
We prove Proposition~\ref{prop::SLE_return_chordal} in Sections~\ref{subsec::monotone_coupling}-\ref{subsec::chordal_bad}.
The large deviation in Proposition~\ref{prop::LDP_SLEkapparho} will be proved in Section~\ref{subsec::proof_rho_LDP}.

\subsection{Chordal Loewner chain and its energy}
\label{subsec::chordalSLE}
\paragraph*{Chordal Loewner chain.}
An $\HH$-hull is a bounded, relatively closed subset $K$ of $\HH$ such that $\HH\setminus K$ is simply connected. 
Let $g_K$ be
the conformal map from $\HH\setminus K$ onto $\HH$ satisfying the  normalization $\lim_{z\to\infty}|g_K(z)-z|=0$. We refer to $g_K$ as the mapping-out function of $K$. Note that there exists a constant $c_K\ge 0$ such that 
\[g_K(z)=z+\frac{c_K}{z}+o\left(|z|^{-1}\right), \qquad \text{as }z\to\infty.\]
The constant $c_K$ is called the half-plane capacity of $K$ and is denoted by $\mathrm{hcap}(K)$.  

Fix $x\in \mathbb{R}$ and $T\in (0,\infty]$. Let $W: [0,T) \to \R$ be a continuous function with $W_0 = x$.
A chordal Loewner chain driven by $W$ is a family of $\HH$-hulls $(K_t)_{t\in [0,T)}$ such that the mapping-out function $g_t:=g_{K_t}$ satisfies the Loewner equation
\begin{equation*}
	\partial_t g_t(z)=\frac{2}{g_t(z)-W_t}, \qquad g_0(z)=z, \qquad z\in \overline{\HH}.
\end{equation*}
For each $z\in \overline{\HH}$, the flow $t\mapsto g_t(z)$ is well-defined up to the swallowing time 
\[
    \sigma_z := \sup\left\{ t \in [0, T) : \inf_{s \in [0, t]} |g_s(z) - W_s| > 0 \right\}.
\]
The hulls are determined by  $K_t = \{z\in \HH : \sigma_z \le t\}$,
and $(K_t)_{t\in [0,T)}$ is parameterized by half-plane capacity, i.e., $\mathrm{hcap}(K_t)=2t$ for all $t\in [0,T)$. Moreover, $(K_t)_{t\in [0,T)}$ satisfies the local growth property: for $0\le s<t<T$, the diameter of $g_s(K_t\setminus K_s)$ tends to $0$ as $t\downarrow s$ uniformly over $s\in [0,T)$. 

Conversely, any increasing family of $\HH$-hulls $(K_t)_{t\in [0,T)}$ satisfying the local growth property can be represented as a chordal Loewner chain driven by some continuous function $W$. A simple curve $\eta$ in $(\HH; 0,\infty)$ satisfies the local growth property, and the driving function can be obtained easily by $W_t = g_t(\eta_t)$. For more details, refer to~\cite{Berestycki2011LecturesOS, KemppainenSchrammLoewnerevolution} for example.

\paragraph*{Chordal SLE with force points.}
We use the same notation as in~\eqref{eqn::forcepoints_weights}. 
Recall that the chordal $\SLE_{\kappa}(\bs{\rho}^L; \bs{\rho}^R)$ process in $\HH$ from $0$ to $\infty$ with force points $(\bs{x}^L; \bs{x}^R)$ is defined as the Loewner chain $(K_t)_{t\ge 0}$ driven by a continuous function $W: [0,\infty)\to \R$ satisfying the SDE system~\eqref{eqn::chordalSLEkappa_rho_sde}. Its law is absolutely continuous with respect to chordal SLE and the Radon-Nikodym derivative is given by the following lemma. 

\begin{lemma}[{\cite{SchrammWilsonSLECoordinatechanges}}]\label{lem::SLEkapparho_martingale}
Fix $\kappa\in (0,4]$ and assume the same notation as in \eqref{eqn::forcepoints_weights}. The law of chordal $\SLE_{\kappa}(\bs{\rho}^L; \bs{\rho}^R)$ in $\HH$ from $0$ to $\infty$ with force points $(\bs{x}^L; \bs{x}^R)$ is the same as the law of chordal $\SLE_{\kappa}$ in $(\HH;0,\infty)$ tilted by the following martingale, up to the first time $x^L_1$ or $x^R_1$ is swallowed:
		\begin{align} \label{eqn::SLEkapparho_martingale}
			M_t =&\prod_{j=1}^{\ell} \left( g'_t(x_j^L)^{\frac{\rho_j^L (\rho_j^L+4-\kappa)}{4\kappa}} (W_t-g_t(x_j^L))^{\frac{\rho_j^L}{\kappa}}\right) \times \prod_{j=1}^{r} \left( g'_t(x_j^R)^{\frac{\rho_j^R (\rho_j^R+4-\kappa)}{4\kappa}} (g_t(x_j^R)-W_t)^{\frac{\rho_j^R}{\kappa}}\right) \notag \\
			&\times \prod_{\substack{1\le i\le \ell \\ 1\le j\le r}} (g_t(x_j^R)-g_t(x_i^L))^{\frac{\rho_i^L \rho_j^R}{2\kappa}} \times \prod_{1\le i<j\le \ell} (g_t(x_i^L)-g_t(x_j^L))^{\frac{\rho_i^L \rho_j^L}{2\kappa}} \notag \\
			&\times \prod_{1\le i<j\le r} (g_t(x_j^R)-g_t(x_i^R))^{\frac{\rho_i^R \rho_j^R}{2\kappa}}. 
		\end{align}
	\end{lemma}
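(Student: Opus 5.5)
The plan is to verify directly, via It\^o's formula, that $M_t$ is a local martingale under chordal $\SLE_\kappa$, and then to apply Girsanov's theorem. Under chordal $\SLE_\kappa$ we have $W_t=\sqrt{\kappa}B_t$. Write $V^{L,j}_t=g_t(x^L_j)$ and $V^{R,j}_t=g_t(x^R_j)$. Their evolution is governed by
\[
\ud V_t=\frac{2\,\ud t}{V_t-W_t},\qquad \ud \log g_t'(x)=-\frac{2\,\ud t}{(g_t(x)-W_t)^2}.
\]
Both are finite-variation processes up to the first time $x^L_1$ or $x^R_1$ is swallowed, since points further from $0$ are swallowed no earlier. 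We will compute $\ud\log M_t$ term by term.

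The only term carrying a martingale part is $\sum_j \frac{\rho_j}{\kappa}\log|W_t-V^j_t|$. Set $\Phi_t:=\sum_j \frac{\rho_j}{W_t-V^j_t}$, summing over all force points on both sides. The martingale part of $\ud\log M_t$ is then $\frac{1}{\kappa}\Phi_t\,\sqrt{\kappa}\,\ud B_t$, so $\ud\langle \log M\rangle_t=\frac{1}{\kappa}\Phi_t^2\,\ud t$.

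The drift of $\ud\log M_t$ collects three contributions:
\begin{itemize}
\item From $\frac{\rho_j}{\kappa}\log|W-V^j|$: the It\^o term $-\frac{\rho_j}{2}(W-V^j)^{-2}$ and the Loewner term $\frac{\rho_j}{\kappa}\cdot\frac{-2/(V^j-W)}{W-V^j}=\frac{2\rho_j}{\kappa}(W-V^j)^{-2}$.
\item From the derivative exponent: $-\frac{\rho_j(\rho_j+4-\kappa)}{2\kappa}(W-V^j)^{-2}$.
\item From the cross terms $\frac{\rho_i\rho_j}{2\kappa}\log|V^i-V^j|$: using the identity $\frac{2}{V^i-W}-\frac{2}{V^j-W}=\frac{-2(V^i-V^j)}{(V^i-W)(V^j-W)}$, this gives $-\frac{\rho_i\rho_j}{\kappa}\frac{1}{(W-V^i)(W-V^j)}$.
\end{itemize}
In the first two items, the diagonal coefficients of $(W-V^j)^{-2}$ sum to
\[
-\frac{\rho_j}{2}+\frac{2\rho_j}{\kappa}-\frac{\rho_j^2}{2\kappa}-\frac{2\rho_j}{\kappa}+\frac{\rho_j}{2}=-\frac{\rho_j^2}{2\kappa}.
\]
Together with the cross terms, the total drift is exactly $-\frac{1}{2\kappa}\Phi_t^2$. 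Hence $\ud\log M_t=\frac{1}{\sqrt\kappa}\Phi_t\,\ud B_t-\frac12\,\ud\langle\log M\rangle_t$, which shows that $M$ is a positive local martingale with $\ud M_t=M_t\,\frac{\Phi_t}{\sqrt\kappa}\,\ud B_t$.

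Girsanov's theorem then applies. Under the measure tilted by $M$, the process $B_t-\int_0^t\frac{\Phi_s}{\sqrt\kappa}\,\ud s$ is a Brownian motion. Consequently
\[
\ud W_t=\sqrt\kappa\,\ud\tilde B_t+\Phi_t\,\ud t,
\]
which is exactly \eqref{eqn::chordalSLEkappa_rho_sde}.

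The main obstacle is rigor: $M$ is only a local martingale. I would localize with the stopping times
\[
T_\epsilon=\inf\{t:\ |W_t-V^{L,1}_t|\wedge|W_t-V^{R,1}_t|\le\epsilon\ \text{or}\ t\ge 1/\epsilon\}.
\]
On $[0,T_\epsilon]$ the quantities $|W-V^j|$ and $|V^i-V^j|$ are bounded below. They are also bounded above, since $|W_t|$ is controlled and each $V^j_t$ moves at bounded speed. Moreover, the factors $g'_t(x)\in(0,1]$ appear with positive exponent $\frac{\rho_j(\rho_j+4-\kappa)}{4\kappa}$ because $\kappa\le4$ and $\rho_j\ge0$. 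So $M_{\cdot\wedge T_\epsilon}$ is bounded, Girsanov holds on each $\LF_{T_\epsilon}$, and the two laws agree there. Letting $\epsilon\to0$, the times $T_\epsilon$ increase to the first swallowing time of $x^L_1$ or $x^R_1$, which gives the statement of the lemma.
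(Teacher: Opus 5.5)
Your proof is correct. It is the standard It\^o--Girsanov verification from the cited Schramm--Wilson paper; the present paper gives no proof of its own. The drift of $\log M_t$ is exactly $-\frac{1}{2\kappa}\Phi_t^2$, so $M$ is the stochastic exponential of $\kappa^{-1/2}\int\Phi_t\,\ud B_t$, and Girsanov then produces the drift in \eqref{eqn::chordalSLEkappa_rho_sde}.

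One small repair is needed in the localization. If $\ell=0$ or $r=0$, then $W$ is no longer sandwiched between $V^{L,1}$ and $V^{R,1}$, so $M_{\cdot\wedge T_\epsilon}$ need not be bounded. However, $|\Phi_t|\le(\overline{\rho}^L+\overline{\rho}^R)/\epsilon$ on $[0,T_\epsilon]$ and $T_\epsilon\le 1/\epsilon$, so Novikov's criterion still makes the stopped process a true martingale. Alternatively, you can add the condition $|W_t|\ge 1/\epsilon$ to the definition of $T_\epsilon$.
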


	\begin{definition}[Chordal Loewner energy]\label{def::rho_energy_chordal}
Fix $\ell, r\geq 0$ and $(\ell+r+2)$-polygon $(\Omega; \bs{x}^L, a, \bs{x}^R, b)$. Fix $\bs{\rho} = (\bs{\rho}^L; \bs{\rho}^R) \in \mathbb{R}_{\geq 0}^{\ell+r}$.
Let  $\varphi:\Omega \to \mathbb{H}$ be a conformal map  satisfying $\varphi(a)=0$ and $\varphi(b)=\infty$. 
	For $\eta\in \overline{\chamber}(\Omega; a,b)$, we parameterize $\eta$ by half-plane capacity of $\varphi(\eta)$. We denote by $W_t$ the driving function of $\varphi(\eta)$. We define	the \emph{truncated $\bs{\rho}$-Loewner energy} associated with the force points $\bs{x}=(\bs{x}^L; \bs{x}^R)$ to be
	\begin{equation} \label{eqn::rho_energy_chordal}
	{\rate}^{(\bs{\rho})}(\Omega;\bs{x}^L, a, \bs{x}^R, b;\eta_{[0,T]}) \;:=\; \frac{1}{2} \int_{0}^{T} \left(\dot{W}_t-\sum_{j=1}^{\ell} \frac{\rho_j^L}{W_t-g_t(x_j^L)}-\sum_{j=1}^{r} \frac{\rho_j^R}{W_t-g_t(x_j^R)} \right)^2 \ud t,
	\end{equation}
	if $W$ is absolutely continuous; and setting $\rate^{(\bs{\rho})}\left(\Omega;\bs{x}^L, a, \bs{x}^R, b; \cdot\right):= +\infty$ otherwise. The quantity ${\rate}^{(\bs{\rho})}(\Omega;\bs{x}^L, a, \bs{x}^R, b;\eta_{[0,T]}) $ is increasing in $T$. We define the \emph{$\bs{\rho}$-Loewner energy} of $\eta\in \overline{\chamber}(\Omega; a,b)$ to be its limit:
	\begin{equation} \label{eqn::rho_energy_chordal_full}
		\rate^{(\bs{\rho})}(\Omega;\bs{x}^L, a, \bs{x}^R, b;\eta) \; := \; \lim_{T\to +\infty} \rate^{(\bs{\rho})}(\Omega;\bs{x}^L, a, \bs{x}^R, b;\eta_{[0,T]}).
	\end{equation}
\end{definition}

When all weights are zero, the $\bs{\rho}$-Loewner energy reduces to the standard Loewner energy defined in~\cite{Wang:EnergyLoewnerChain,PeltolaWangSLELDP}: 
\begin{equation}
	\label{eqn::LDP_rate_function_standard}
		\rate(\Omega; x, y; \eta_{[0,T]}):= \; \frac{1}{2} \int_{0}^{T} \left(\dot{W}_t\right)^2 \ud t,\qquad 
	\rate(\Omega; x, y; \eta):= \; \frac{1}{2} \int_{0}^{+\infty} \left(\dot{W}_t\right)^2 \ud t, 
\end{equation}
if $W$ is absolutely continuous; and setting $\rate(\Omega; x, y; \cdot):= +\infty$ otherwise. We recall the return estimate for standard chordal SLE and its large deviation below. They will be the foundation for our proof of Proposition~\ref{prop::SLE_return_chordal} and Theorem~\ref{thm::halfwatermelon_LDP}.

		\begin{lemma}[{\cite[Eq.~(A.10)]{PeltolaWangSLELDP}}]\label{lem::standard_return_estimate}
		Suppose $\eta\sim\PPtwo$ is chordal $\SLE_{\kappa}$ in $(\HH; 0,\infty)$.
		For any $S>s>0$ and $\kappa\in (0,4]$, we have 
			\begin{equation}\label{eqn::standard_chordal_return}
				\PPtwo \left[ \eta_{[\tau_S,\infty)} \cap \partial B(0,s) \neq \emptyset \right] \leq c_\kappa \left(\frac{s}{S}\right)^{{8}/{\kappa}-1},
			\end{equation}
			where $c_\kappa>0$ is a constant such that 
			\[\lim_{\kappa\to 0+} \kappa \log c_\kappa \in (-\infty, +\infty).\]
		\end{lemma}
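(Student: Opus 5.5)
The plan is to reduce \eqref{eqn::standard_chordal_return} to a one-point boundary estimate for chordal $\SLE_\kappa$ by applying the domain Markov property at the stopping time $\tau_S$. The Schramm--Wilson martingale of Lemma~\ref{lem::SLEkapparho_martingale} with weight $\rho=\kappa-8$ will then give the exponent $8/\kappa-1$ with a constant that we can track explicitly in $\kappa$.

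\emph{Step 1 (scaling and Markov).} By scale invariance of $\PPtwo$, the probability depends only on $s/S$. We may therefore assume $S\ge 4s$; otherwise we simply enlarge $c_\kappa$ so that $c_\kappa 4^{-(8/\kappa-1)}\ge1$, which still satisfies $\kappa\log c_\kappa\to$ finite. Condition on $\LF_S$. By the domain Markov property, $\tilde\eta:=g_{\tau_S}(\eta_{[\tau_S,\infty)})-W_{\tau_S}$ is a chordal $\SLE_\kappa$ in $(\HH;0,\infty)$. The return event becomes the event that $\tilde\eta$ hits $A:=g_{\tau_S}(\partial B(0,s)\cap(\HH\setminus K_{\tau_S}))-W_{\tau_S}$.

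\emph{Step 2 (geometry of the image).} Since $\eta_{[0,\tau_S]}$ connects $0$ to $\partial B(0,S)$, every component of $\partial B(0,s)\setminus K_{\tau_S}$ must be reached from $\infty$ by passing through the annulus $B(0,S)\setminus B(0,s)$ on one side of the curve. By a Beurling-type estimate, or equivalently a bound on the Brownian excursion measure $\LE$ between $A$ and the tip (using the properties in~\cite{field2015escape}), each component of $A$ lies in a half-disc $B(x,r)\cap\HH$ with $x\in\R$ and $r/|x|\le C\,s/S$ for a universal $C$. At most two such components are relevant, one on each side of the curve, after merging. This step uses only deterministic conformal geometry and produces universal constants.

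\emph{Step 3 (one-point boundary estimate).} For chordal $\SLE_\kappa$ and $x>0$, consider $M_t=g_t'(x)^{a}(g_t(x)-W_t)^{b}$ with $b=(\kappa-8)/\kappa$ and $a=\rho(\rho+4-\kappa)/(4\kappa)$ at $\rho=\kappa-8$. This is a local martingale by Lemma~\ref{lem::SLEkapparho_martingale}. We stop it when the curve first reaches $B(x,r)$. Koebe distortion gives $g_t'(x)\asymp \mathrm{dist}(x,K_t)/(g_t(x)-W_t)$ up to a factor $4$, and hence comparability on the stopping event. Optional stopping then yields
\[
\PPtwo[\eta\cap B(x,r)\ne\emptyset]\le C_0^{1/\kappa}(r/x)^{8/\kappa-1}.
\]
Here the comparison factors are raised to powers of order $1/\kappa$, so $C_0^{1/\kappa}$ has $\kappa\log$ bounded. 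Combining this with Step 2 and a union bound over the at most two components gives \eqref{eqn::standard_chordal_return} with $c_\kappa=2(CC_0')^{8/\kappa}$, which has the required property.

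The main obstacle is Step 2. One must show that the full image of $\partial B(0,s)$ is confined near a real point at relative scale $O(s/S)$, including the pieces of the circle trapped between the curve and $\R$. The first thing to try is to bound $\LE_{\HH\setminus K_{\tau_S}}$ between $\partial B(0,s)$ and $\partial B(0,S)$ by $O(s/S)$, using the standard half-annulus estimate. After the map $g_{\tau_S}$, this translates into the relative-size bound via conformal invariance of the excursion measure.
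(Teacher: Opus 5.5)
Your Steps~1 and~3 are sound. In particular, the martingale $\bigl(g_t'(x)/(g_t(x)-W_t)\bigr)^{8/\kappa-1}$ together with Koebe gives $\PP[\eta\cap B(x,r)\neq\emptyset]\le 4^{8/\kappa-1}(r/|x|)^{8/\kappa-1}$. The gap is in Step~2. There you claim that, after merging, at most two pieces of $A$ matter, each inside a half-disc $B(x,r)\cap\HH$ with $r/|x|\le Cs/S$.

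For a \emph{single} outermost arc $c$ of $\partial B(0,s)\setminus\eta_{[0,\tau_S]}$ the bound is fine. A path from $c$ to the opposite side of the slit, closed up near $0$, winds around the tip, so it must cross $\partial B(0,S)$; hence the relevant excursion measure is $O(s/S)$. But arcs on the same side cannot be merged. Here is a configuration where merging fails:
\begin{itemize}
\item the curve leaves $B(0,s)$;
\item it makes an excursion of diameter $\asymp S$ inside $B(0,S)$ whose right side is largely exposed to $\infty$;
\item it re-enters $B(0,s)$, then exits again and reaches $\partial B(0,S)$.
\end{itemize}
Then there are two outermost arcs on the right side of the curve, one before and one after that excursion. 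Between their images under $g_{\tau_S}-W_{\tau_S}$ lies the image of the exposed right side of the excursion, an interval of length $\asymp S$. Since the whole image lies within $O(S)$ of the tip image, any half-disc covering both pieces has $r/|x|\ge c>0$, and the one-point estimate then gives only an $O(1)$ bound. Repeating such excursions shows that the number of outermost crosscuts on one side can be arbitrarily large. So no union bound over a fixed number of pieces can yield \eqref{eqn::standard_chordal_return}.

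What is missing is a \emph{summable} per-crosscut estimate. Bound the probability that the future curve hits the $j$-th crosscut by $c_\kappa a_j^{8/\kappa-1}$, where $a_j$ is the excursion measure between that crosscut and a fixed path from the tip to $\infty$ outside $B(0,S)$ (e.g.\ the ray $\{t\eta_{\tau_S}: t\ge 1\}$). This is the chordal analogue of Lemma~\ref{lem::intersection_estimate}. Since $8/\kappa-1\ge 1$, you then have
\[
\sum_j a_j^{8/\kappa-1}\le\Bigl(\sum_j a_j\Bigr)^{8/\kappa-1}.
\]
The sum is controlled by
\[
\sum_j a_j\le 2\,\LE_{\HH\setminus\eta_{[0,\tau_S]}}\bigl(\partial B(0,s),\partial B(0,S)\bigr)=O(s/S),
\]
using Field--Lawler's Lemma~3.3 and the half-annulus estimate. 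This is the Field--Lawler argument behind the cited bound. It is also exactly what the paper does in the radial setting in the proof of Lemma~\ref{lem::radial_goodevents}; see \eqref{eqn::intersection_bound_aux2}--\eqref{eqn::intersection_bound_aux3}.

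Your final paragraph already names the right quantity, $\LE_{\HH\setminus K_{\tau_S}}(\partial B(0,s),\partial B(0,S))$. But what it controls is the \emph{sum} of the per-crosscut excursion measures, not the size of one region containing all of $A$.
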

\begin{lemma}[{\cite[Theorem~1.2]{AbuzaidPeltolaLargeDeviationCapacityParameterization}}]\label{lem::LDP_SLEkappa_standard} 
	Fix $2$-polygon $(\Omega;x,y)$. The family $\{\PPtwo(\Omega; x, y)\}_{\kappa\in (0,4]}$  of laws of  chordal $\SLE_{\kappa}$  satisfies large deviation principle in the space $(\chamber(\Omega; x, y), \dist_{\chamber})$ as $\kappa\to 0+$ with good rate function $\rate(\Omega; x, y; \eta)$ in~\eqref{eqn::LDP_rate_function_standard}.
\end{lemma}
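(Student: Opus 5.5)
Since this is the large deviation principle for standard chordal $\SLE_\kappa$ in the capacity parameterization, I would follow the three-stage scheme that the rest of the paper builds on: a finite-time LDP for the driving function, transfer to curves, and a return estimate to reach the whole curve. First, by conformal invariance I reduce to $(\Omega;x,y)=(\HH;0,\infty)$. A conformal map $\psi:\Omega\to\HH$ preserves the capacity parameterization of $\psi(\eta)$ by definition. The metric $\dist_{\chamber}$ is defined through $\varphi_{\U}$, and $\varphi_{\U}\circ\psi^{-1}$ is a homeomorphism of closed disks after composing with a Möbius map. So the push-forward is a homeomorphism of $(\chamber(\Omega;x,y),\dist_{\chamber})$ onto $(\chamber(\HH;0,\infty),\dist_{\chamber})$. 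The rate function \eqref{eqn::LDP_rate_function_standard} is defined in the same way on both sides, so both the LDP and goodness transfer.

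Second, in $\HH$ the driving function is $W=\sqrt{\kappa}B$. Schilder's theorem gives, for each fixed $T$, an LDP for $W|_{[0,T]}$ in $\mathrm{C}([0,T])$ with good rate $\frac12\int_0^T\dot W_t^2\,\ud t$. I would transfer this to $\eta_{[0,T]}$ in the uniform metric after $\varphi_{\U}$ using an extended contraction principle. The Loewner transform $W\mapsto\eta_{[0,T]}$ is not continuous on all of $\mathrm{C}([0,T])$, but it is continuous at finite-energy drivers. Those generate $K$-quasiarcs with $K$ depending only on the energy, and the transform is uniformly continuous on energy sublevel sets. It then suffices to show that, for $\delta>0$ and any driver $w$ with finite energy, the probability that $\|W-w\|_\infty<r$ while the traces differ by more than $\delta$ is at most $\exp(-M/\kappa)$ for arbitrary $M$ once $r$ is small. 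For this I would use small-$\kappa$ SLE tip and modulus estimates: the derivative estimates of Johansson Viklund–Lawler type have exponents of order $1/\kappa$. Combined with the Loewner stability estimate for curves with controlled tip behaviour, they give a superexponentially good approximation. This step yields the finite-time LDP for $\eta_{[0,T]}$.

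Third, I pass to the whole curve. On $[0,\infty]$ the metric compares $\varphi_{\U}(\eta_t)$ uniformly, and the tail after $\tau_S$ lies in $\varphi_{\U}(\HH\setminus B(0,s))$, a small neighbourhood of $\varphi_{\U}(\infty)$, unless the curve returns to $\partial B(0,s)$. By Lemma~\ref{lem::standard_return_estimate}, this return has probability at most $c_\kappa(s/S)^{8/\kappa-1}\le \exp(-M/\kappa)$ once $S=S(s,M)$ is large. Since $\tau_S$ is almost surely bounded in terms of capacity, I would pick $T$ so that $\PP[\tau_S>T]$ is superexponentially small; this follows from the LDP for $\sup_{[0,T]}|W|$ together with the fact that $\hcap(\eta_{[0,\tau_S]})\le S^2$. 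These two bounds make the finite-time projections an exponentially good approximation of the full curve. The generalized contraction principle, of Dawson–Gärtner or approximate-contraction type, then gives the LDP on $\chamber(\HH;0,\infty)$ with rate $\sup_T\frac12\int_0^T\dot W^2=\rate$.

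Goodness of the rate comes from compactness of the sets $\{\rate\le c\}$. These are images of Schilder level sets, which are quasiarcs with uniform constants and, by the deterministic version of the return bound, with uniformly controlled behaviour near $\infty$. I also need the full-curve limit to lie in $\chamber$, which is incomplete; this is handled by noting that finite-energy curves are simple and avoid the boundary. I expect the main obstacle to be the second step, the superexponential continuity of the Loewner transform at finite-energy drivers. The return estimate is already available, whereas the continuity requires quantitative tip estimates whose constants are uniform as $\kappa\to0$.
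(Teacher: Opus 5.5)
The paper gives no proof of this statement; it is quoted from \cite[Theorem~1.2]{AbuzaidPeltolaLargeDeviationCapacityParameterization}. Your Steps~1 and~3 follow the scheme the paper itself uses for Theorems~\ref{thm::halfwatermelon_LDP} and~\ref{thm::radialSLE_LDP}:
\begin{itemize}
\item a finite-time LDP;
\item the projective limit (Lemma~\ref{lem::Dawson-Gartner}) and the generalized contraction principle (Lemma~\ref{lem::generalized_contraction}) on the closed sets $F_{\overrightarrow{N}}$ where $\iota^{-1}$ is continuous, with the return estimate making the complement exponentially negligible;
\item goodness obtained by applying the LDP lower bound to the open return events.
\end{itemize}
The projective-limit formulation also spares you the common ambient space that your ``exponentially good approximation'' variant needs, since stopped curves do not lie in $\chamber(\HH;0,\infty)$.

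The real difference is Step~2. For chordal curves the paper never proves a finite-time LDP on its own. It goes the other way and derives Lemma~\ref{lem::finite_single_LDP} from the present statement by contraction; only in the radial case does it import a finite-time LDP, namely Lemma~\ref{lem::finite_radialSLE_LDP}. So in your route Step~2 carries all the analytic content, and it is still only a sketch.

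Your reduction is sound. Exponential continuity at each finite-energy driver, together with compactness of the Schilder level sets, gives the finite-time LDP: use a finite subcover for the upper bound and one small ball for the lower bound. Continuity of the transform restricted to energy sublevel sets then gives goodness. Two things remain to be supplied:
\begin{itemize}
\item an $\exp(-M/\kappa)$ tail bound for the SLE derivative (tip) estimates, with constants uniform as $\kappa\to0$;
\item a deterministic stability estimate that uses derivative control for both chains.
\end{itemize}

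Finally, you do not need the LDP for $\sup|W|$ to control $\tau_S$. Since $2\tau_S=\hcap(\eta_{[0,\tau_S]})\le\hcap(\overline{B(0,S)}\cap\HH)=S^2$, you have $\tau_S\le S^2/2$ deterministically.
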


\subsection{A monotone coupling} 
\label{subsec::monotone_coupling}
\begin{lemma}[{\cite[Lemma~3.3]{ZhanSLEkappaRhoBubbleMeasures}}]\label{lem::Zhan_calculation}
Fix $\alpha\ge 1$. Suppose $(Y_t^L; Y_t^R)_{t\ge 0}$ is the solution to the following SDE system:
			\begin{align}\label{eqn::XR_coupled_process}
				\begin{cases}
					\displaystyle \ud Y_t^L = -\sqrt{\kappa} \ud B_t + \frac{2\alpha}{Y_t^L} \ud t, \quad & Y_0^L \le 0,  \\
					\displaystyle \ud Y_t^R = -\sqrt{\kappa} \ud B_t + \frac{2}{Y_t^R} \ud t + \frac{2\alpha-2}{Y_t^L} \ud t, \quad &  Y_0^R > 0. 
				\end{cases}
			\end{align}
Set 	\[
		X_s = \frac{Y_{t(s)}^R}{Y_{t(s)}^R - Y_{t(s)}^L}\in [0,1], \qquad s(t) = \frac{1}{2} \log \frac{Y_{t}^R - Y_{t}^L}{Y_0^R-Y_0^L}.
		\] 
		Then $(X_s)_{s\geq 0}$ satisfies the SDE~\eqref{eqn::Zhan_calculation} with initial data $X_0=\frac{Y_0^R}{Y_0^R-Y_0^L}$.
\end{lemma}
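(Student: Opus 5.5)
We compute $\ud X$ directly with It\^o's formula, then perform the time change $s(t)$. Write $D_t = Y^R_t - Y^L_t>0$. Both coordinates are driven by the same Brownian motion $-\sqrt{\kappa}\,B$, so the noise cancels in the difference:
\[
\ud D_t = \Big(\frac{2}{Y^R_t} + \frac{2\alpha-2}{Y^L_t} - \frac{2\alpha}{Y^L_t}\Big)\ud t = \Big(\frac{2}{Y^R_t}-\frac{2}{Y^L_t}\Big)\ud t = \frac{2D_t}{Y^R_t(-Y^L_t)}\,\ud t .
\]
Thus $D_t$ is of finite variation and increasing, so $s(t)=\tfrac12\log(D_t/D_0)$ is strictly increasing. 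Its derivative is
\[
\ud s = \frac{\ud t}{Y^R_t(-Y^L_t)} .
\]

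Next, set $Z_t = Y^R_t/D_t$, so that $1-Z_t = -Y^L_t/D_t$. Since $D$ has no martingale part, the product rule gives
\[
\ud Z_t = \frac{\ud Y^R_t}{D_t} - \frac{Y^R_t\,\ud D_t}{D_t^2}.
\]
The martingale part is $-\sqrt{\kappa}\,\ud B_t/D_t$, with quadratic variation $\kappa\,\ud t/D_t^2$. Rewritten in the new clock,
\[
\frac{\kappa\,\ud t}{D_t^2} = \kappa\,\frac{Y^R_t(-Y^L_t)}{D_t^2}\,\ud s = \kappa Z(1-Z)\,\ud s .
\]
So $\ud Z = -\sqrt{\kappa Z(1-Z)}\,\ud\tilde B_s + (\text{drift})$, where $\tilde B_s := \int_0^{t(s)} \frac{\ud B_u}{\sqrt{Y^R_u(-Y^L_u)}}$ is a standard Brownian motion in the time-changed filtration by L\'evy's characterization.

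For the drift in $t$-time, substitute $Y^R = ZD$ and $Y^L = -(1-Z)D$:
\[
\frac{1}{D}\Big(\frac{2}{ZD} - \frac{2\alpha-2}{(1-Z)D}\Big) - \frac{Z}{D}\cdot\frac{2}{Z(1-Z)D}.
\]
Multiplying by $\ud t/\ud s = Y^R(-Y^L) = Z(1-Z)D^2$ gives
\[
2(1-Z) - (2\alpha-2)Z - 2Z = 2 - 2(1+\alpha)Z,
\]
which is exactly the drift in~\eqref{eqn::Zhan_calculation}. Setting $X_s = Z_{t(s)}$ therefore yields the stated SDE, with $X_0 = Y^R_0/(Y^R_0 - Y^L_0)$.

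The main subtlety is justifying the time change. If $Y^L_0 = 0$, the formulas degenerate at $t=0$: the expression for $\ud s$ blows up and $X_0=1$. I would first carry out the computation on $[\epsilon,\infty)$, or for $Y^L_0<0$, and then pass to the limit. I would also check that $s(t)\to\infty$ as $t\to\infty$ and that $Y^L<0<Y^R$ for all $t>0$. The first follows from the Bessel-type behaviour: $Y^L$ is a Bessel process of dimension $1+4\alpha/\kappa>2$ and $D$ grows. Since the statement is Zhan's Lemma~3.3, these technicalities can be cited rather than reproved.
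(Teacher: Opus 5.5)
Your computation is correct: the noise cancels in $D=Y^R-Y^L$, so $\ud s=\ud t/(Y^R(-Y^L))$, the martingale part of $Z=Y^R/D$ has quadratic variation $\kappa Z(1-Z)\,\ud s$ in the new clock, and the drift becomes $2-2(1+\alpha)Z$, exactly as in~\eqref{eqn::Zhan_calculation}. The paper gives no proof of its own and only cites Zhan's Lemma~3.3, so there is no argument to compare against. Your direct It\^o-plus-time-change verification is the standard one, and the boundary issues you flag are routine for $\kappa\le 4$: $-Y^L/\sqrt{\kappa}$ is a Bessel process of dimension $1+4\alpha/\kappa\ge 2$, so $\int_0^t \ud u/|Y^L_u|<\infty$ even when $Y^L_0=0$ and $D_t\uparrow\infty$; and near $X=0$ the SDE is a squared Bessel process of dimension $8/\kappa\ge 2$, so $Y^R$ stays positive.
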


	\begin{lemma}[Monotone coupling]\label{lem::monotone_coupling} 
	Fix $\kappa\in (0,4]$ and assume the same notation as in~\eqref{eqn::forcepoints_weights}.
		Suppose $(W_t, V_t^{L,\ell}, \ldots, V_t^{L,1}, V_t^{R,1}, \ldots, V_t^{R,r})$ is the solution to the SDE system~\eqref{eqn::chordalSLEkappa_rho_sde}. We define $\bs{\Delta}_t := (\Delta_t^{L,\ell}, \ldots, \Delta_t^{L,1}; \Delta_t^{R,1}, \ldots, \Delta_t^{R,r})$ where
		\[
		\Delta_t^{L,i}:= V_t^{L,i} - W_t \qquad \text{for } 1\leq i \leq \ell, \qquad \text{and} \qquad \Delta_t^{R,j}:= V_t^{R,j} - W_t \qquad \text{for } 1\leq j \leq r. 
		\]
		Suppose $(Y_t^L; Y_t^R)_{t\ge 0}$ is the solution to the SDE system~\eqref{eqn::XR_coupled_process} with $Y_0^L\in [x_1^L,0-]$ and $Y_0^R=x_1^R$ and		
		\begin{equation*}
		\alpha = 1+(\rho_1^L+\cdots+\rho_\ell^L)/2.
		\end{equation*} 
		Then there exists a coupling between $(Y^L_t; Y^R_t)_{t\geq 0}$ and $(\bs{\Delta}_t )_{t\geq 0}$ such that $Y_0^R = \Delta_0^{R,1}$ and $Y_t^R \leq \Delta_t^{R,1}$ for all $t > 0$ almost surely.
	\end{lemma}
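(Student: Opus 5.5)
The plan is to build both processes on one probability space, driven by the same Brownian motion $B$, and compare them with a pathwise comparison theorem for one-dimensional SDEs. Write $\Delta^{R,1}_t = V^{R,1}_t - W_t$. Its dynamics are
\[
\ud \Delta^{R,1}_t = -\sqrt{\kappa}\,\ud B_t + \frac{2}{\Delta^{R,1}_t}\ud t - \sum_{i=1}^{\ell}\frac{\rho_i^L}{W_t-V^{L,i}_t}\ud t - \sum_{j=1}^{r}\frac{\rho_j^R}{W_t-V^{R,j}_t}\ud t .
\]
Since $W_t-V^{R,j}_t<0$, every right force point term is $\ge 0$. Each left term equals $\rho_i^L/\Delta^{L,i}_t$, which is negative because $\Delta^{L,i}_t<0$. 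Hence
\[
\ud \Delta^{R,1}_t \ge -\sqrt{\kappa}\,\ud B_t + \frac{2}{\Delta^{R,1}_t}\ud t + \sum_{i}\frac{\rho_i^L}{\Delta^{L,i}_t}\ud t .
\]

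\textbf{Step 1: a left process below all left force points.} The goal is a process $Y^L$ with $Y^L_t\le \Delta^{L,1}_t$, so that $Y^L_t \le \Delta^{L,i}_t<0$ for all $i$ (using $\Delta^{L,\ell}\le\cdots\le\Delta^{L,1}$). Then $\rho_i^L/\Delta^{L,i}_t \ge \rho_i^L/Y^L_t$, and summing gives $\sum_i\rho_i^L/\Delta^{L,i}_t \ge (2\alpha-2)/Y^L_t$. Define $Y^L$ by the first equation of~\eqref{eqn::XR_coupled_process}, with the same $B$ and $Y^L_0\le x_1^L=\Delta^{L,1}_0$. Compare drifts:
\[
\ud\Delta^{L,1}_t = -\sqrt{\kappa}\,\ud B_t + \frac{2}{\Delta^{L,1}_t}\ud t - \sum_i\frac{\rho_i^L}{\Delta^{L,i}_t}\ud t - \sum_j\frac{\rho_j^R}{\Delta^{R,j}_t}\ud t .
\]
The claim needed is that this drift is at least $2\alpha/\Delta^{L,1}_t$ whenever $\Delta^{L,1}_t=Y^L_t$. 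This is not immediate: the term $-\rho_i^L/\Delta^{L,i}$ is positive but small, while $2\alpha/\Delta^{L,1}$ is very negative. The cleaner argument is to take the difference $D_t = \Delta^{L,1}_t - Y^L_t$. The noise cancels, so $D$ is $C^1$. At any time where $D_t=0$,
\[
\dot D_t = -\sum_i\frac{\rho_i^L}{\Delta^{L,i}_t} - \sum_j\frac{\rho_j^R}{\Delta^{R,j}_t} + \frac{2}{Y^L_t} - \frac{2\alpha}{Y^L_t} = -\sum_i\frac{\rho_i^L}{\Delta^{L,i}_t} - \sum_j\frac{\rho_j^R}{\Delta^{R,j}_t} - \frac{\rho_1^L+\cdots+\rho_\ell^L}{Y^L_t}.
\]
Because $|\Delta^{L,i}|\ge|\Delta^{L,1}|=|Y^L|$, we get $\sum_i\rho_i^L/|\Delta^{L,i}| \le \sum_i\rho_i^L/|Y^L|$, so the left terms contribute $\ge 0$. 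The right terms $-\rho_j^R/\Delta^{R,j}$, however, are negative, so this sign check fails. I therefore expect \textbf{Step 1 to be the main obstacle}.

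The fix I would try first is to use that the statement only requires $Y^L_0\in[x_1^L,0-]$, i.e.\ $Y^L$ starts to the right of $\Delta^{L,1}$ and need not be dominated by it. Indeed the bound actually needed is $\sum_i\rho_i^L/\Delta^{L,i}_t \ge (2\alpha-2)/Y^L_t$, which holds as soon as $Y^L_t \ge \Delta^{L,1}_t$, not $\le$. (Since $\Delta^{L,i}\le \Delta^{L,1}\le Y^L<0$, we get $1/\Delta^{L,i}\ge 1/Y^L$.) So I will instead show $Y^L_t\ge\Delta^{L,1}_t$. With $D_t = Y^L_t-\Delta^{L,1}_t$, at any touching time the computation above (signs reversed) gives
\[
\dot D_t = \frac{2\alpha-2}{Y^L_t} + \sum_i\frac{\rho_i^L}{\Delta^{L,i}_t} + \sum_j\frac{\rho_j^R}{\Delta^{R,j}_t}.
\]
Here the first two terms combine to something $\le 0$, while the last is $\ge 0$. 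So this direction is also not sign-definite.

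\textbf{Step 2: drop the right force point terms.} Faced with this, the way out is to handle the right terms directly. They push $W$ leftward, pushing $\Delta^{L,1}$ toward $0$ and $\Delta^{R,1}$ away from $0$. I would therefore compare with an auxiliary system in which the $\rho^R$ terms are removed; this only decreases $\Delta^{R,1}$, by the same difference-is-$C^1$ argument applied to the whole vector, using the monotone (cooperative) structure of the Loewner flow. For the system with only left force points, Step 1 with $Y^L\le\Delta^{L,1}$ goes through, since the bad $\rho^R$ terms are gone and the left terms have the correct sign.

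\textbf{Step 3: conclude.} Finally $Y^R$ is defined by the second equation of~\eqref{eqn::XR_coupled_process}. Its difference from the auxiliary $\Delta^{R,1}$ is $C^1$, and at touching times the derivative of $\Delta^{R,1}-Y^R$ is $\ge 0$ by the inequality $\sum_i\rho_i^L/\Delta^{L,i}\ge(2\alpha-2)/Y^L$. This gives $Y^R\le\Delta^{R,1}$ for all $t$. Strong existence for all time follows from $\alpha\ge1$ and $\kappa\le4$, the Bessel dimension regime, via Lemma~\ref{lem::Zhan_calculation}.
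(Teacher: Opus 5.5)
There is a genuine gap, located where you suspected, but both your diagnosis and your fix go wrong.

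\textbf{The sign error and what it hides.} Your drift for $\Delta^{L,1}$ has the wrong sign. Since $W_t-V^{L,i}_t=-\Delta^{L,i}_t$, one gets $\ud\Delta^{L,1}_t=-\sqrt{\kappa}\,\ud B_t+\bigl(\frac{2}{\Delta^{L,1}_t}+\sum_i\frac{\rho_i^L}{\Delta^{L,i}_t}+\sum_j\frac{\rho_j^R}{\Delta^{R,j}_t}\bigr)\ud t$, with the same force terms as in $\ud\Delta^{R,1}_t$. Consequently $Y^L\le\Delta^{L,1}$ does propagate from $Y^L_0=x_1^L$, but it is the useless direction. The inequality you need, $Y^L_t\ge\Delta^{L,1}_t$, is not merely ``not sign-definite''. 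At a touching time
\[
\frac{\ud}{\ud t}\bigl(Y^L_t-\Delta^{L,1}_t\bigr)=\sum_{i}\rho_i^L\Bigl(\frac{1}{Y^L_t}-\frac{1}{\Delta^{L,i}_t}\Bigr)-\sum_j\frac{\rho_j^R}{\Delta^{R,j}_t}\le 0,
\]
strictly as soon as some $\rho_j^R>0$, or some $\rho_i^L>0$ with $i\ge2$. So it is lost right after the first contact. For $\ell\ge2$ this happens even with no right force points. Hence, inside your auxiliary system, the inequality you say ``goes through'' is the useless one, and the useful one fails.

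\textbf{Step~2 is the real problem.} The $\Delta$-system is not cooperative: every component's drift contains $\sum_k\rho_k/\Delta^k$ over all force points, and $\partial_{\Delta^k}(\rho_k/\Delta^k)=-\rho_k/(\Delta^k)^2<0$. So there is no vector comparison principle. Deleting the $\rho^R$-terms has two competing effects on $W$: less leftward push, but the left force points end up relatively farther from $W$, hence less rightward push. For $\ell=r=1$ your auxiliary system \emph{is} the $(Y^L,Y^R)$-system with $Y^L_0=x_1^L$, so Step~2 is the whole lemma.

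With the synchronous coupling, Step~2 is in fact false. Linearize in small $\rho^R$ and proceed in two phases.
\begin{enumerate}
\item Keep $W$ very close to $V^{R,1}$ while $G=Y^R-Y^L$ grows from $G_0$ to $G_1\gg G_0$. The strong repulsion keeps $Y^R-\Delta^{R,1}\approx0$. Meanwhile the accumulated $\rho^R$-push leaves $\Delta^{L,1}-Y^L\approx\rho^R G_1/2$, because the left point relaxes slowly.
\item Then move $W$ quickly to the midpoint. The drift of $Y^R-\Delta^{R,1}$ becomes about $2\rho^R(\rho^L-1)/G_1$, which is positive when $\rho^L>1$.
\end{enumerate}
So $Y^R>\Delta^{R,1}$ shortly afterwards. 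This happens with positive probability, since the noise is additive and the solution map is continuous in the driving path.

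\textbf{Comparison with the paper.} The paper's proof uses the same synchronous coupling: $\Delta^{L,i}=Y^L-Z^{L,i}$ and $\Delta^{R,j}=Y^R-Z^{R,j}$, with $Z$ solving an ODE. It asserts that the drift of $Z^{R,1}$ is strictly negative whenever $Z^{R,1}=0$. That drift equals $-\sum_k\rho_k^R/\Delta^{R,k}+\sum_i\rho_i^L\bigl(1/Y^L-1/\Delta^{L,i}\bigr)$. The second sum is $\le0$ only if $Z^{L,i}=Y^L-\Delta^{L,i}\ge0$, which the paper checks at $t=0$ only. This is exactly the non-propagating inequality above. So you have correctly isolated the weak point of the paper's argument too, but neither your Step~2 nor the paper's sign claim closes it.

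\textbf{A comparison that does survive.} It is one-dimensional. Let $X=\Delta^{R,1}/(\Delta^{R,1}-\Delta^{L,1})$, run in its own time $s=\frac12\log\frac{\Delta^{R,1}_t-\Delta^{L,1}_t}{x_1^R-x_1^L}$. Its diffusion coefficient is $\sqrt{\kappa X(1-X)}$. Using only $|\Delta^{L,i}|\ge|\Delta^{L,1}|$ and $\rho_j^R\ge0$, its drift is at least $2(1-(1+\alpha)X)$. Hence $X$ dominates the solution of~\eqref{eqn::Zhan_calculation} started from the same ratio. This is a statement in intrinsic time, not the pathwise-in-$t$ inequality $Y^R_t\le\Delta^{R,1}_t$ claimed in the lemma.
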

	\begin{proof}
		From~\eqref{eqn::chordalSLEkappa_rho_sde}, the process $(\bs{\Delta}_t)_{t\geq 0}$ is governed by the following SDE system:
	\begin{align}\label{eqn::Z_process_system}
		\begin{cases}
			\displaystyle \ud \Delta_t^{L,i} = -\sqrt{\kappa} \ud B_t + \frac{\rho_i^L+2}{\Delta_t^{L,i}} \ud t + \sum_{k\neq i} \frac{\rho_k^L}{\Delta_t^{L,k}} \ud t + \sum_{j=1}^r \frac{\rho_j^R}{\Delta_t^{R,j}} \ud t, \quad & \Delta_0^{L,i} = x_i^L< 0, \quad 1\leq i \leq \ell, \\[10pt]
			\displaystyle \ud \Delta_t^{R,j} = -\sqrt{\kappa} \ud B_t + \frac{\rho_j^R+2}{\Delta_t^{R,j}} \ud t + \sum_{k\neq j} \frac{\rho_k^R}{\Delta_t^{R,k}} \ud t + \sum_{i=1}^\ell \frac{\rho_i^L}{\Delta_t^{L,i}} \ud t, \quad & \Delta_0^{R,j} = x_j^R> 0, \quad 1\leq j \leq r.
		\end{cases}    
	\end{align}
When $\kappa\in(0,4]$ and $\rho_i^L, \rho_j^R \geq 0$ for all $i,j$, the solution $\bs{\Delta}_t$ exists for all time and never hits $0$ (see, e.g.,~\cite[Lemma~15]{DubedatSLEDuality}).

	Given $(Y_t^L; Y_t^R)_{t\geq 0}$. Let $\bs{Z}_t := ( Z_t^{L,\ell}, \ldots, Z_t^{L,1}; Z_t^{R,1}, \ldots, Z_t^{R,r})$ be the solution to the following ODE system: 		 
		 for $1\leq i \leq \ell$ and $1\leq j \leq r$,
		\begin{align*}
			\begin{cases}
				\displaystyle \ud Z_t^{L,i} = \left( \frac{2}{Y_t^L}- \frac{2}{ Y_t^L-Z_t^{L,i}}  \right)\ud t + \sum_{k=1}^\ell \left(\frac{\rho_k^L}{Y_t^L}-\frac{\rho_k^L}{ Y_t^L-Z_t^{L,k}} \right) \ud t - \sum_{j=1}^r \frac{\rho_j^R}{Y^R_t-Z_t^{R,j}}\ud t, \ Z_0^{L,i} = Y_0^L-x_i^L, \\[10pt]
				\displaystyle \ud Z_t^{R,j} = \left( \frac{2}{Y_t^R}-\frac{2}{Y_t^R-Z_t^{R,j}}  \right)\ud t - \sum_{k=1}^r\frac{\rho_k^R}{ Y_t^R-Z_t^{R,k}}\ud t + \sum_{i=1}^\ell\left(\frac{\rho_i^L}{Y_t^L}-\frac{\rho_i^L}{Y_t^L-Z_t^{L,i}}  \right) \ud t, \ Z_0^{R,j} = Y_0^R- x_j^R .
			\end{cases}
		\end{align*}
		Since the coefficients in the RHS of the ODE system for $\bs{Z}_t$ are locally Lipschitz continuous, there exists a unique strong solution $(\bs{Z}_t)_{t\geq 0}$. Set $$\Delta_t^{L,i} = Y_t^L - Z_t^{L,i} \text{ for } 1\leq i\leq\ell, \qquad \Delta_t^{R,j} = Y_t^R - Z_t^{R,j} \text{ for } 1\leq j\leq r. $$
		We find that $\bs{\Delta}_t$ satisfies the SDE system~\eqref{eqn::Z_process_system}. This gives a coupling of $(Y^L_t; Y^R_t)_{t\geq 0}$ and $(\bs{\Delta}_t)_{t\geq 0}$.
		\medbreak
		It remains to verify that $Y^R_t \leq \Delta^{R,1}_t$, which is equivalent to showing $Z_t^{R,1} \leq 0$. The evolution of $Z_t^{R,1}$ is given by
		\begin{equation*}\label{eqn::DeltaR1}
			\ud Z_t^{R,1} = \left( \frac{2}{Y_t^R}-\frac{2}{Y_t^R-Z_t^{R,1}}  \right)\ud t - \sum_{k=1}^r\frac{\rho_k^R}{ Y_t^R-Z_t^{R,k}}\ud t + \sum_{i=1}^\ell\left(\frac{\rho_i^L}{Y_t^L}-\frac{\rho_i^L}{Y_t^L-Z_t^{L,i}}  \right) \ud t.
		\end{equation*}
		Since $x_\ell^L < \dots < x_1^L < 0 < x_1^R < \dots < x_r^R$, the initial data satisfies
		\[
		Z_0^{L,\ell}>\ldots>Z_0^{L,1}\geq 0=Z_0^{R,1}>\ldots>Z_0^{R,r}.
		\]
		We observe that whenever $Z_t^{R,1}=0$, the drift of $Z_t^{R,1}$ is strictly negative. Given that $Z_0^{R,1}=0$, the term $Z_t^{R,1}$ cannot become positive. Thus $Z_t^{R,1} \leq 0$ for all $t>0$. This completes the proof.
	\end{proof}
	
\subsection{Proof of Proposition~\ref{prop::SLE_return_chordal}}\label{subsec::return_estimate_proof}
\begin{proof}[Proof of Proposition~\ref{prop::SLE_return_chordal}]
Recall that the driving function for chordal $\SLE_\kappa(\bs{\rho}^L; \bs{\rho}^R)$ satisfies the SDE system~\eqref{eqn::chordalSLEkappa_rho_sde}. For $1\leq i \leq \ell$ and $1\leq j \leq n$, we denote  
	\begin{align*}\overline{\rho}^L:= \sum_{i=1}^\ell \rho_i^L,\qquad \overline{\rho}^R:= \sum_{j=1}^r \rho_j^R, \qquad  \Delta_t^{L,i} = V_t^{L,i} - W_t  \qquad \Delta_t^{R,j} = V_t^{R,j} - W_t.
	\end{align*}
	We use notations in \eqref{eqn::chordal_return_stoppingtime} and define ``bad events'' to be 
	\begin{align*} 
			E_n^R &= \left\{\exists t\in [\tau_{2^n S}, \tau_{2^{n+1}S}] \text{ s.t. } \left|\Delta_t^{R,1}\right|\leq \eps_n 2^n S\right\},\\ 
			E_n^L &= \left\{\exists t\in [\tau_{2^n S}, \tau_{2^{n+1}S}] \text{ s.t. } \left|\Delta_t^{L,1}\right|\leq \eps_n 2^n S\right\},
		\end{align*}
		for $n\in \mathbb{N}$, with $\eps_n = (n+n_0)^{-2}$ for some large constant $n_0\geq 1$ to be chosen later. Then we have
		\begin{align}
			&\PPtworho \left[ \eta_{[\tau_S,\infty)} \cap \partial B(0,s) \neq \emptyset \right] \notag \\ 
			\leq &\sum_{n=0}^{\infty} \PPtworho \left[E_n^L \cup E_n^R\right]+\sum_{n=0}^{\infty} \PPtworho\left[ \left\{\eta_{[\tau_{2^n S}, \tau_{2^{n+1}S}]}\cap \partial B(0,s) \neq \emptyset\right\}\cap \left(E_n^L \cup E_n^R\right)^c \right]. \label{eqn::return_estimate_decomposition}
		\end{align}
		We will prove in Lemma \ref{lem::good_event_estimate} that there exists a constant $C_{\eqref{eqn::good_event_estimate}}\in (-\infty, +\infty)$ depending on $n_0, \bs{\rho}^L, \bs{\rho}^R$ such that when $S>s>0$ and $\kappa\in (0,4]$, we have
		\begin{equation}\label{eqn::good_event_estimate}
			\sum_{n=0}^{\infty} \PPtworho\left[\left\{ \eta_{[\tau_{2^n S}, \tau_{2^{n+1}S}]}\cap \partial B(0,s) \neq \emptyset\right\}\cap \left(E_n^L \cup E_n^R\right)^c \right] \leq \exp\left(C_{\eqref{eqn::good_event_estimate}}/\kappa\right) \left(\frac{s}{S}\right)^{{8}/{\kappa}-1}.
		\end{equation}
		We will prove in Lemma \ref{lem::bad_event_estimate} that there exists a constant $C_{\eqref{eqn::bad_event_estimate}}\in (-\infty, +\infty)$ depending on $\bs{\rho}^L, \bs{\rho}^R$ such that when $0<\kappa<1$, we have 
		\begin{equation}\label{eqn::bad_event_estimate}
			\sum_{n=0}^{\infty} \PPtworho \left[E_n^L \cup E_n^R\right] \leq 2\exp\left(C_{\eqref{eqn::bad_event_estimate}}/\kappa\right) n_0^{1-2/\kappa}.
		\end{equation}
		Taking 
		\[
		n_0 = \left\lfloor 4 \exp\left(M + C_{\eqref{eqn::bad_event_estimate}}\right)\right\rfloor+1, \qquad S = s 2^{1/4} \exp\left(\frac{C_{\eqref{eqn::good_event_estimate}}+M}{4}\right).
		\]
		Plugging~\eqref{eqn::good_event_estimate} and~\eqref{eqn::bad_event_estimate} into~\eqref{eqn::return_estimate_decomposition}, we obtain 
		\[
		\PPtworho \left[ \eta_{[\tau_S,\infty)} \cap \partial B(0,s) \neq \emptyset \right]\leq \frac{1}{2}\exp\left(-M/\kappa\right) + \frac{1}{2}\exp\left(-M/\kappa\right) = \exp\left(-M/\kappa\right),
		\]
		as desired in \eqref{eqn::SLE_return_estimate}.
\end{proof}

		\begin{lemma}\label{lem::martingale_ratio_bound}
Fix $\kappa\in (0,4]$ and assume $M_t$ is the martingale defined in~\eqref{eqn::SLEkapparho_martingale}. Fix a large radius $S$ such that $S>\max\{|x_\ell^L|, |x_r^R|\}$. On the good event $\left(E_n^L\cup E_n^R\right)^c$, we have
			\begin{align}\label{eqn::martingale_ratio_bound}
			\frac{M_{\tau_{2^{n+1} S}}}{M_{\tau_{2^{n} S}}} \leq   \left(\frac{12}{\eps_n}\right)^{{\tilde{\rho}}/{\kappa}}, \qquad\text{where }\tilde{\rho}:= \overline{\rho}^L + \overline{\rho}^R + \frac{1}{2}\overline{\rho}^L\overline{\rho}^R.
		\end{align}
		\end{lemma}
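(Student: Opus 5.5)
The plan is to bound the ratio $M_{\tau_{2^{n+1}S}}/M_{\tau_{2^nS}}$ factor by factor in the product~\eqref{eqn::SLEkapparho_martingale}. Each factor is either non-increasing in $t$, in which case its ratio is at most $1$, or is controlled by an upper bound on $|\Delta|$ at the later time $\tau_{2^{n+1}S}$ together with the good-event lower bound on $|\Delta|$ at the earlier time $\tau_{2^nS}$. Since $\eps_n\le 1$, the base $12/\eps_n$ is at least $1$. So it suffices to show that each factor's ratio is bounded by $(12/\eps_n)^{e}$, where the exponents $e$ over all factors sum to at most $\tilde\rho/\kappa$.

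\emph{Monotone factors.} Since $\partial_t\log g_t'(x)=-2/(g_t(x)-W_t)^2<0$, the quantity $g_t'(x)$ is non-increasing. The exponent $\rho(\rho+4-\kappa)/(4\kappa)$ is non-negative because $\kappa\le 4$ and $\rho\ge0$. Hence the derivative factors have ratio at most $1$.

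For same-side pairs with $x_i^R<x_j^R$ we have
\[
\partial_t\bigl(g_t(x_j^R)-g_t(x_i^R)\bigr)=\frac{2}{g_t(x_j^R)-W_t}-\frac{2}{g_t(x_i^R)-W_t}<0,
\]
using $g_t(x_j^R)>g_t(x_i^R)>W_t$. Left-side pairs are handled symmetrically. The exponents $\rho\rho'/(2\kappa)$ are non-negative, so these factors also have ratio at most $1$.

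\emph{Upper bound at time $\tau_R$ with $R=2^{n+1}S$.} The hull $K_{\tau_R}$ is contained in $\overline{B(0,R)}$, and all force points satisfy $|x|\le S<R$. I use the standard fact that for an $\HH$-hull $K\subset \overline{B(0,R)}$, the map $g_K$ sends $(K\cup[-R,R])$, viewed through its boundary values, into $[-2R,2R]$. This gives $W_{\tau_R}\in[-2R,2R]$ and $g_{\tau_R}(x)\in[-2R,2R]$ for every force point $x$. Consequently
\[
|\Delta^{L,i}_{\tau_R}|,\ |\Delta^{R,j}_{\tau_R}|\le 4R=8\cdot 2^nS,\qquad g_{\tau_R}(x_j^R)-g_{\tau_R}(x_i^L)\le 4R.
\]

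\emph{Lower bound at time $\tau_{2^nS}$.} On $(E_n^L\cup E_n^R)^c$, evaluating at $t=\tau_{2^nS}$ gives $|\Delta^{R,1}|\ge \eps_n2^nS$ and $|\Delta^{L,1}|\ge \eps_n2^nS$. By ordering, $\Delta^{R,j}\ge\Delta^{R,1}$ and $|\Delta^{L,i}|\ge|\Delta^{L,1}|$. It follows that
\[
g(x_j^R)-g(x_i^L)=\Delta^{R,j}-\Delta^{L,i}\ge 2\eps_n2^nS .
\]
Therefore each factor $|W-g(x)|^{\rho/\kappa}$ has ratio at most $(8/\eps_n)^{\rho/\kappa}$. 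Each cross factor has ratio at most $(8\cdot2^{n}S/(2\eps_n2^nS))^{\rho_i^L\rho_j^R/(2\kappa)}\le(4/\eps_n)^{\rho_i^L\rho_j^R/(2\kappa)}$. Both bases are at most $12/\eps_n$.

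\emph{Conclusion.} Multiplying the factor bounds, the total exponent is
\[
\frac{\overline\rho^L+\overline\rho^R}{\kappa}+\sum_{i,j}\frac{\rho^L_i\rho^R_j}{2\kappa}=\frac{\tilde\rho}{\kappa}.
\]
This yields~\eqref{eqn::martingale_ratio_bound}.

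The only nontrivial ingredient is the geometric bound that $g_{K}$ maps $[-R,R]$ and the hull into $[-2R,2R]$ for a hull in $\overline{B(0,R)}$. I expect to prove it by comparison with the hull $\overline{B(0,R)}\cap\HH$, whose mapping-out function is $z+R^2/z$, using monotonicity of $g_K$ on the real line with respect to hull inclusion. The constant $12$ leaves room if only a cruder bound such as $|g_K(x)|\le 3R$ is available.
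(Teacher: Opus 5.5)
Your proof is correct and follows the paper's argument essentially step for step. The ratio is split factor by factor: the $g_t'$ factors and same-side difference factors are monotone with ratio at most $1$, and the remaining factors are controlled by an upper bound on $|\Delta|$ at $\tau_{2^{n+1}S}$ together with the good-event lower bound $\eps_n 2^n S$ at $\tau_{2^nS}$. The only difference is cosmetic: the paper cites a Kemppainen--Schramm estimate giving $|W_{\tau_R}|, |g_{\tau_R}(x)| \le 3R$ (which is where the constant $12$ comes from), whereas you derive the sharper $[-2R,2R]$ bound by comparison with the half-disk hull. That derivation is valid, but, as you note yourself, it is not needed.
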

				\begin{proof}
		From~\eqref{eqn::SLEkapparho_martingale}, we have 
		\begin{align}
		\frac{M_{\tau_{2^{n+1} S}}}{M_{\tau_{2^{n} S}}}=& \underbrace{\prod_{j=1}^{\ell} \left(\frac{g'_{\tau_{2^{n+1} S}}(x_j^L)}{g'_{\tau_{2^{n} S}}(x_j^L)}\right)^{\frac{\rho_j^L (\rho_j^L+4-\kappa)}{4\kappa}}\times \prod_{j=1}^{r} \left(\frac{g'_{\tau_{2^{n+1} S}}(x_j^R)}{g'_{\tau_{2^{n} S}}(x_j^R)}\right)^{\frac{\rho_j^R (\rho_j^R+4-\kappa)}{4\kappa}}}_{R_1:=}\notag\\
		& \times \underbrace{\prod_{1\leq i < j\leq \ell} \left(\frac{g_{\tau_{2^{n+1} S}}(x_i^L)-g_{\tau_{2^{n+1} S}}(x_j^L)}{g_{\tau_{2^{n} S}}(x_i^L)-g_{\tau_{2^{n} S}}(x_j^L)}\right)^{\frac{\rho_i^L \rho_j^L}{2\kappa}} \times \prod_{1\leq i < j\leq r} \left(\frac{g_{\tau_{2^{n+1} S}}(x_j^R)-g_{\tau_{2^{n+1} S}}(x_i^R)}{g_{\tau_{2^{n} S}}(x_j^R)-g_{\tau_{2^{n} S}}(x_i^R)}\right)^{\frac{\rho_i^R \rho_j^R}{2\kappa}}}_{R_2:=}\notag\\
		& \times \underbrace{\prod_{j=1}^{\ell} \left(\frac{W_{\tau_{2^{n+1} S}}-g_{\tau_{2^{n+1} S}}(x_j^L)}{W_{\tau_{2^{n} S}}-g_{\tau_{2^{n} S}}(x_j^L)}\right)^{\frac{\rho_j^L}{\kappa}}}_{R_3:=} \times \underbrace{\prod_{j=1}^{r} \left(\frac{g_{\tau_{2^{n+1} S}}(x_j^R)-W_{\tau_{2^{n+1} S}}}{g_{\tau_{2^{n} S}}(x_j^R)-W_{\tau_{2^{n} S}}}\right)^{\frac{\rho_j^R}{\kappa}}}_{R_4:=} \notag\\
		&\times \underbrace{\prod_{\substack{1\le i\le \ell \\ 1\le j\le r}} \left(\frac{g_{\tau_{2^{n+1} S}}(x_j^R)-g_{\tau_{2^{n+1} S}}(x_i^L)}{g_{\tau_{2^{n} S}}(x_j^R)-g_{\tau_{2^{n} S}}(x_i^L)}\right)^{\frac{\rho_i^L \rho_j^R}{2\kappa}}}_{R_5:=}\label{eqn::martingale_ratio_bound_auxiliary},
		\end{align}
	 Let us evaluate the terms in RHS of \eqref{eqn::martingale_ratio_bound_auxiliary} one by one.
		\begin{itemize}
			\item For $R_1$ and $R_2$, since $g_t'(x)$ is decreasing in $t$ and $|g_t(x)-g_t(y)|$ is also decreasing in $t$ for $x>y>0$ or $x<y<0$, we have $
			R_1\leq 1$ and $ R_2 \leq 1$.
			\item For $R_3$ and $R_4$, since $S>\max\{|x_\ell^L|, |x_r^R|\}$, by~\cite[Lemma~4.5]{KemppainenSchrammLoewnerevolution}, we have 
			\begin{align}\label{eqn::chordal_good_aux1}
			\left| g_{\tau_{2^{n+1}S}}(x_i^R) - W_{\tau_{2^{n+1}S}} \right|\leq 6S 2^{n+1}, \qquad \left| W_{\tau_{2^{n+1}S}} - g_{\tau_{2^{n+1}S}}(x_i^L) \right|\leq 6S 2^{n+1}.
			\end{align}
			On the good event $\left(E_n^L\cup E_n^R\right)^c$, we have 
			\begin{align}\label{eqn::chordal_good_aux2}
			\left|g_{\tau_{2^{n}S}}(x_i^R) - W_{\tau_{2^{n}S}}\right|\geq \eps_n 2^n S, \qquad \left| g_{\tau_{2^{n}S}}(x_i^L)-W_{\tau_{2^{n}S}} \right|\geq \eps_n 2^n S.
			\end{align}
			Thus,
			\[
			R_3 \leq \left(\frac{12}{\eps_n}\right)^{\overline{\rho}^L / \kappa}, \qquad R_4\leq \left(\frac{12}{\eps_n}\right)^{\overline{\rho}^R / \kappa}.
			\]
			\item For $R_5$, combining~\eqref{eqn::chordal_good_aux1} and~\eqref{eqn::chordal_good_aux2}, we have
			\begin{align*}
				\left|g_{\tau_{2^{n+1}S}}(x_j^R) - g_{\tau_{2^{n+1}S}}(x_i^L)\right|\leq 12 S 2^{n+1}, \qquad
				 \left|g_{\tau_{2^{n}S}}(x_j^R) - g_{\tau_{2^{n}S}}(x_i^L)\right|\geq 2S \eps_n 2^n.
			\end{align*}
			Thus
			\[
			R_5 \leq \left(\frac{12}{\eps_n}\right)^{\overline{\rho}^L \overline{\rho}^R / 2\kappa}.
			\] 
		\end{itemize}
		Combining these estimates, we obtain~\eqref{eqn::martingale_ratio_bound} as desired.
		\end{proof}

		\begin{lemma}
		\label{lem::good_event_estimate} Assume the same notation as in the proof of Proposition~\ref{prop::SLE_return_chordal}.
		The estimate~\eqref{eqn::good_event_estimate} on the good event holds for $\kappa\in (0,4]$.
		\end{lemma}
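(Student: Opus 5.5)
The plan is to handle each summand of~\eqref{eqn::good_event_estimate} by a change of measure to standard chordal $\SLE_\kappa$, then sum over $n$. Write $A_n := \{\eta_{[\tau_{2^n S}, \tau_{2^{n+1}S}]}\cap \partial B(0,s)\neq\emptyset\}$ and $G_n := (E_n^L\cup E_n^R)^c$. By Lemma~\ref{lem::SLEkapparho_martingale}, $\PPtworho$ is $\PPtwo$ tilted by $M_t/M_0$. The force points are never swallowed when $\kappa\le 4$ and the weights are non-negative, so the tilt is valid at the bounded stopping time $\tau_{2^{n+1}S}$. This gives
\[
\PPtworho[A_n\cap G_n] = \E_{\PPtwo}\Big[ \frac{M_{\tau_{2^{n+1}S}}}{M_0}\one_{A_n\cap G_n}\Big].
\]
Next write $M_{\tau_{2^{n+1}S}}/M_0$ as the product of $M_{\tau_{2^{n+1}S}}/M_{\tau_{2^n S}}$ and $M_{\tau_{2^n S}}/M_0$. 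On $G_n$, Lemma~\ref{lem::martingale_ratio_bound} bounds the first factor by $(12/\eps_n)^{\tilde\rho/\kappa}$. This needs $S>\max\{|x^L_\ell|,|x^R_r|\}$. For smaller $S$, I would either absorb the finitely many problematic scales into the constant or note that the right-hand side $\exp(C/\kappa)(s/S)^{8/\kappa-1}$ can be made $\ge 1$ on a bounded range of $S$ by enlarging $C$.

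Then condition on $\LF_{2^nS}$. The event $A_n$ is contained in $\{\eta_{[\tau_{2^nS},\infty)}\cap\partial B(0,s)\neq\emptyset\}$, which depends only on the future after $\tau_{2^nS}$. The key point is to bound the combination
\[
\E_{\PPtwo}\big[(M_{\tau_{2^nS}}/M_0)\,\one\{\eta_{[\tau_{2^nS},\infty)}\cap \partial B(0,s)\neq\emptyset\}\big].
\]
If the indicator were $\LF_{2^nS}$-measurable, this would just be a $\PPtworho$-probability. It is not, so I would instead use the domain Markov property of standard SLE. Given $\LF_{2^nS}$, the curve $g_{\tau_{2^nS}}(\eta_{[\tau_{2^nS},\infty)}) - W_{\tau_{2^nS}}$ is a fresh chordal $\SLE_\kappa$. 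The probability of returning to $B(0,s)$ from scale $2^nS$ is controlled by the standard return estimate Lemma~\ref{lem::standard_return_estimate} applied conformally (as in~\cite{PeltolaWangSLELDP}), and is at most $c_\kappa\, (s/(2^nS))^{8/\kappa-1}$ up to a $\kappa$-independent change of constant. This holds uniformly in $\LF_{2^nS}$, by using the strong form of the estimate (the one behind~(A.10) there) that is uniform over the past.

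With this, $\E_{\PPtwo}[M_{\tau_{2^nS}}/M_0]=1$ by the optional stopping theorem, and each term is bounded by
\[
(12/\eps_n)^{\tilde\rho/\kappa}\,c_\kappa\,(s/S)^{8/\kappa-1}2^{-n(8/\kappa-1)}.
\]
Since $\eps_n=(n+n_0)^{-2}$ grows only polynomially in $n$ while $2^{-n(8/\kappa-1)}$ decays geometrically, with $8/\kappa-1\ge 1$, the sum over $n$ is at most
\[
\exp(C/\kappa)\,(s/S)^{8/\kappa-1}\sum_n (12(n+n_0)^2)^{\tilde\rho/\kappa}2^{-n(8/\kappa-1)}.
\]
Bounding $(n+n_0)^{2\tilde\rho/\kappa}2^{-8n/\kappa+n}\le \big(c(n_0)\,2^{-n}\big)^{1/\kappa}\cdot 2^{n}$-type terms shows that the series is at most $\exp(C'/\kappa)$ for a constant depending on $n_0,\bs\rho$. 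Together with $\lim\kappa\log c_\kappa<\infty$, this yields~\eqref{eqn::good_event_estimate}.

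I expect the main obstacle to be the conditioning step. One has to justify that the standard return estimate holds conditionally on $\LF_{2^nS}$, uniformly over the configuration at time $\tau_{2^nS}$, with a constant of the form $e^{O(1/\kappa)}(s/2^nS)^{8/\kappa-1}$. If the uniform version is not directly available, I would fall back on the multiplicative structure: bound the return probability from the hull at scale $2^nS$ via the excursion-measure estimate of~\cite{field2015escape}, which depends only on the harmonic measure of $B(0,s)$ seen from infinity in $\HH\setminus K_{\tau_{2^nS}}$. That quantity is deterministically bounded by $O(s/2^nS)$.
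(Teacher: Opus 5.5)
This is correct and is essentially the paper's proof. You tilt to standard chordal $\SLE_\kappa$, bound $M_{\tau_{2^{n+1}S}}/M_{\tau_{2^nS}}$ on $(E_n^L\cup E_n^R)^c$ by Lemma~\ref{lem::martingale_ratio_bound}, apply the standard return estimate conditionally on $\LF_{2^nS}$, and sum over $n$. The return estimate must hold uniformly in the past, which the paper also uses implicitly. Tilting from time $0$ and using $\E[M_{\tau_{2^nS}}/M_0]=1$, rather than conditioning under the tilted measure as the paper does, is just the tower property.

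One slip needs fixing. Your intermediate bound by $(c(n_0)2^{-n})^{1/\kappa}2^n$ sums to $c(n_0)^{1/\kappa}\sum_n 2^{n(1-1/\kappa)}$, which diverges for $\kappa\in[1,4]$. Instead use $8/\kappa-1\ge 4/\kappa$, valid for $\kappa\le 4$. Then each term is at most $\big((12(n+n_0)^2)^{\tilde\rho}2^{-4n}\big)^{1/\kappa}$, and this series is $e^{O(1/\kappa)}$ uniformly on $(0,4]$.
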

				\begin{proof}
			By the strong domain Markov property of chordal $\SLE_{\kappa}(\bs{\rho}^L; \bs{\rho}^R)$, given $\LF_{{2^n S}}$, the law of $\eta_{[\tau_{2^n S}, \tau_{2^{n+1}S}]}$ is absolutely continuous with respect to the law of chordal $\SLE_{\kappa}$ in $\HH\setminus \eta_{[0,\tau_{2^n S}]}$ with Radon-Nikodym derivative $M_{\tau_{2^{n+1} S}} / M_{\tau_{2^{n} S}}$, where $M_t$ is the martingale in~\eqref{eqn::SLEkapparho_martingale}. We can compare chordal $\SLE_{\kappa}(\bs{\rho}^L; \bs{\rho}^R)$ process to the standard chordal $\SLE_{\kappa}$ process: 
		\begin{align*}
			& \sum_{n=0}^{\infty} \PPtworho\left[ \left\{\eta_{[\tau_{2^n S}, \tau_{2^{n+1}S}]}\cap \partial B(0,s) \neq \emptyset\right\}\cap\left(E_n^L \cup E_n^R\right)^c \right] \\
			\leq& \sum_{n=0}^{\infty} \Etworho\left[ \PPtworho\left[\left[\left\{\eta_{[\tau_{2^n S}, \tau_{2^{n+1}S}]}\cap \partial B(0,s) \neq \emptyset\right\}\cap\left\{\frac{M_{\tau_{2^{n+1} S}}}{M_{\tau_{2^{n} S}}}\leq \left(\frac{12}{\eps_n}\right)^{{\tilde{\rho}}/{\kappa}}\right\} \right|\LF_{{2^n S}}\right]\right] \tag{due to~\eqref{eqn::martingale_ratio_bound}}\\
			\leq& \sum_{n=0}^{\infty} c_\kappa \left(\frac{s}{2^n S}\right)^{{8}/{\kappa}-1} \left(\frac{12}{\eps_n}\right)^{{\tilde{\rho}}/{\kappa}}  \tag{due to~\eqref{eqn::standard_chordal_return}} \\
			\leq& \underbrace{c_{\kappa} \sum_{n=0}^{\infty} \left(\frac{12}{\eps_n}\right)^{{\tilde{\rho}}/{\kappa}} 2^{-{4n}/{\kappa}}}_{\tilde{c}_{\kappa}:=} \left(\frac{s}{S}\right)^{{8}/{\kappa}-1}, 
		\end{align*}
		where in the last inequality we use $8/\kappa - 1 \ge 4/\kappa$ for $\kappa \le 4$.
		The coefficient
		\[
		\tilde{c}_\kappa= c_\kappa \sum_{n=0}^{\infty} \left(\frac{\left(12(n+n_0)^2\right)^{\tilde{\rho}}}{2^{4n}}\right)^{{1}/{\kappa}}
		\]
		satisfies $\limsup_{\kappa\to 0+} \kappa\log \tilde{c}_\kappa\in (-\infty, +\infty)$. Set 
		\[
		C_{\eqref{eqn::good_event_estimate}} :=\sup_{0<\kappa\leq 4} \kappa\log \tilde{c}_\kappa\in (-\infty, \infty).
		\]
		Then we obtain the estimate~\eqref{eqn::good_event_estimate}
		as desired.
		\end{proof}

\subsection{Estimate on the bad event}\label{subsec::chordal_bad}

To prove the estimate~\eqref{eqn::bad_event_estimate} on the bad event in the proof of Proposition~\ref{prop::SLE_return_chordal}, two essential inputs are the monotone coupling in Lemma~\ref{lem::monotone_coupling} and the Bessel-type estimates~\eqref{eqn::chordalBessel_estimate1} and~\eqref{eqn::finite_time_upper_bound} in Lemma~\ref{lem::chordalBessel_estimate}. 

	\begin{lemma}\label{lem::bad_event_estimate}
		Assume the same notation as in the proof of Proposition~\ref{prop::SLE_return_chordal}. The estimate~\eqref{eqn::bad_event_estimate} on the bad event holds for $\kappa<1$. 
	\end{lemma}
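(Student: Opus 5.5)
By the symmetry $x\mapsto -x$ of the SDE system, it suffices to bound $\sum_n\PPtworho[E_n^R]$ by $\exp(C/\kappa)n_0^{1-2/\kappa}$; the same argument with left and right interchanged handles $E_n^L$, which accounts for the factor $2$ in~\eqref{eqn::bad_event_estimate}. I would reduce $E_n^R$ to an event for the two-sided Bessel-type process of Lemma~\ref{lem::Zhan_calculation}, then use Lemma~\ref{lem::chordalBessel_estimate}.

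\emph{Reduction via the coupling.} Take the monotone coupling of Lemma~\ref{lem::monotone_coupling} with $\alpha=1+\overline{\rho}^L/2\ge1$ and $Y_0^L\in[x_1^L,0-]$. Under it $Y^R_t\le\Delta^{R,1}_t$, so
\[
\{|\Delta^{R,1}_t|\le \eps_n2^nS\}\subset\{Y^R_t\le \eps_n2^nS\}.
\]
Set $X_s=Y^R/(Y^R-Y^L)$ in the time change $s(t)$ of Lemma~\ref{lem::Zhan_calculation}. Then $X$ solves~\eqref{eqn::Zhan_calculation}, and
\[
Y^R_t=X_{s(t)}\,(Y_0^R-Y_0^L)\,e^{2s(t)}.
\]
The event $Y^R_t\le \eps_n2^nS$ therefore becomes
\[
X_{s}\le \eps_n2^nS\,e^{-2s}/(Y_0^R-Y_0^L).
\]

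\emph{Relating time to scale.} During $[\tau_{2^nS},\tau_{2^{n+1}S}]$ I need a two-sided bound on $Y^R_t-Y^L_t$, which is the width of the image of the hull. From below, I would take $Y^L$ to be the image of a boundary point (equivalently, compare with $g_t(x_1^L)$ and choose $Y_0^L=x_1^L$). Then $Y^R-Y^L\ge g_t(x_1^R)-g_t(x_1^L)$, and this is at least comparable to $\diam(K_t)\ge 2^nS$, which follows from the harmonic-measure/capacity estimates as in~\cite{KemppainenSchrammLoewnerevolution}. From above, $Y^R-Y^L\le 12\cdot2^{n+1}S$ by~\eqref{eqn::chordal_good_aux1}. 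Hence on this window $s$ lies in an interval $I_n=[s_n,s_n+c]$ of bounded length $c$, and $e^{2s}(Y_0^R-Y_0^L)\asymp 2^nS$. So $E_n^R$ is contained in
\[
\Big\{\inf_{s\in I_n}X_s\le C\eps_n\Big\},
\]
with $C$ a universal constant.

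\emph{Bessel estimates and summation.} To bound $\PP[\inf_{s\in I_n}X_s\le C\eps_n]$, I would condition at time $s_n$ and split according to the value of $X_{s_n}$.
\begin{itemize}
\item If $X_{s_n}\le \sqrt{\eps_n}$: starting the comparison from $X_0=1$ (monotonicity in the initial condition), part~(1) of Lemma~\ref{lem::chordalBessel_estimate} gives probability at most $e^{C/\kappa}\eps_n^{2/\kappa}$. If instead the comparison must start from the actual initial value $X_0<1$, I would absorb this by applying part~(2) over an initial bounded time interval first.
\item If $X_{s_n}\ge \sqrt{\eps_n}$: part~(2) with $t_0=c$ gives probability at most
\[
e^{C/\kappa}\,\eps_n^{-2/\kappa}\,\eps_n^{4/\kappa-1}=e^{C/\kappa}\,\eps_n^{2/\kappa-1}.
\]
\end{itemize}
Both terms are $\le e^{C/\kappa}(n+n_0)^{-4/\kappa+2}$. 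Summing over $n$ gives $e^{C/\kappa}\,n_0^{3-4/\kappa}$, and for $\kappa<1$ this is $\le e^{C/\kappa}\,n_0^{1-2/\kappa}$, which is~\eqref{eqn::bad_event_estimate}.

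\emph{Main obstacle.} The delicate step is the time/scale comparison: showing that $Y^R-Y^L$ is comparable to $2^nS$ uniformly on $[\tau_{2^nS},\tau_{2^{n+1}S}]$, so that the window $I_n$ has bounded length. The choice of $Y_0^L$ matters here, because $Y^L$ follows the $\alpha$-dynamics rather than the true $g_t(x_1^L)$. I would first try $Y_0^L=x_1^L$ and compare $Y^L$ with $\Delta^{L,1}$ through a second monotone-coupling argument of the same type as in Lemma~\ref{lem::monotone_coupling}.
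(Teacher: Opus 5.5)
Your architecture is the paper's: the union bound over $E_n^L,E_n^R$, the coupling of Lemma~\ref{lem::monotone_coupling} with $\alpha=1+\overline{\rho}^L/2$, Zhan's time change, a split at the start of each scale window into an ``already bad'' part (Lemma~\ref{lem::chordalBessel_estimate}~(1)) and a ``becomes bad'' part (Lemma~\ref{lem::chordalBessel_estimate}~(2) on an $s$-window of bounded length), and the summation using $\kappa<1$. However, the time/scale step you single out is a genuine gap, and neither inequality you propose for it follows.

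\textbf{The width comparison.} The quantity $Y^R_t-Y^L_t$ is not a geometric quantity of $\eta$. By~\eqref{eqn::XR_coupled_process} it equals $\tilde g_t(Y^R_0)-\tilde g_t(Y^L_0)$ for the Loewner chain $\tilde g_t$ of an $\SLE_\kappa(\overline{\rho}^L)$ from $0$ with the single force point $Y^L_0$, driven by the same Brownian motion. The only comparison with $\eta$ that Lemma~\ref{lem::monotone_coupling} provides is $Y^R_t\le\Delta^{R,1}_t$.
\begin{itemize}
\item That inequality points the wrong way for your lower bound $Y^R_t-Y^L_t\ge g_t(x_1^R)-g_t(x_1^L)$.
\item The estimate~\eqref{eqn::chordal_good_aux1} controls $\Delta^{R,1}$ and $|\Delta^{L,1}|$, not $|Y^L|$, so it does not give $Y^R_t-Y^L_t\le 12\cdot 2^{n+1}S$.
\item With $Y^L_0=x_1^L$, apply the drift-at-zero argument of Lemma~\ref{lem::monotone_coupling} to $Z^{L,1}=Y^L-\Delta^{L,1}$. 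It starts at $0$, where its drift is $\sum_{k\ge 2}\rho^L_k(1/Y^L-1/\Delta^{L,k})-\sum_j\rho^R_j/\Delta^{R,j}\le 0$. This yields $|Y^L_t|\ge|\Delta^{L,1}_t|$, the opposite of what your upper bound needs, so a second monotone coupling of that type cannot supply it.
\end{itemize}
Hence neither a deterministic $s_n$ nor a window $I_n$ of bounded length is established, and both of your bullets rest on them.

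\textbf{Part~(1) with $X_0<1$.} With $Y^L_0=x_1^L$ you have $X_0=x_1^R/(x_1^R-x_1^L)<1$. Monotonicity in the initial condition makes this process stochastically \emph{smaller} than the one started at $1$, so comparison with part~(1) bounds $\PP[X_{s_n}\le\sqrt{\eps_n}]$ from below, not above. Running part~(2) first does not return you to a start at $1$, and $s_n\asymp n$ is not a bounded time in any case. The paper takes $Y^L_0=0-$ precisely so that $X_0=1$. It passes between scales via the hitting times $\hat\tau_S$ of $g_t(0+)-g_t(0-)$ and treats $s(\hat\tau_{2^nS})$ as deterministic. That tacitly identifies $Y^R-Y^L$ with the width of the true hull, so the paper does not supply the missing argument either.

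\textbf{A possible repair.}
\begin{enumerate}
\item At a bad time $t\ge\tau_{2^nS}$ you have $\Delta^{R,1}_t\le\eps_n 2^nS$ and $\Delta^{R,1}_t-\Delta^{L,1}_t\ge g_t(0+)-g_t(0-)\ge c\,2^nS$. Together with $|Y^L_t|\ge|\Delta^{L,1}_t|$ and $Y^R_t\le\Delta^{R,1}_t$, this gives $X_{s(t)}\le\eps_n/(c-\eps_n)$. Only this pointwise bound is needed, not a two-sided comparison of widths.
\item Bound $s(t)$ from above probabilistically. Use $\tau_{2^{n+1}S}\le 2^{2n+1}S^2$ and the fact that $-Y^L/\sqrt{\kappa}$ is a Bessel process of dimension $1+4\alpha/\kappa$. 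Then $\sup_{t\le\tau_{2^{n+1}S}}|Y^L_t|\le (n+n_0)2^{n+1}S$ outside an event of probability at most $\exp\big((C-(n+n_0)^2/2)/\kappa\big)$, which is summable and negligible.
\item Apply part~(2) once from $X_0$ over the resulting horizon, of length $O(n+\log(n+n_0)+\log S)$. Its bound is linear in the horizon and of order $(n+n_0)^{2-8/\kappa}$, so the sum is well below $n_0^{1-2/\kappa}$ for $\kappa<1$.
\end{enumerate}
Part~(1) and the $\sqrt{\eps_n}$ split then become unnecessary. The price is constants depending on $x_1^L,x_1^R$ and a factor linear in $\log S$; both are harmless when $S$ and $n_0$ are chosen in Proposition~\ref{prop::SLE_return_chordal}.
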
 
	\begin{proof}
	As 
$\PPtworho \left[E_n^L \cup E_n^R\right]\le \PPtworho \left[E_n^L\right]+\PPtworho \left[E_n^R\right]$,
it suffices to show that there exists a constant $C_{\eqref{eqn::chordal_bad_aux0}}\in (-\infty, +\infty)$ depending on $\bs{\rho}^L$ such that when $0<\kappa<1$, we have
\begin{equation}\label{eqn::chordal_bad_aux0}
\sum_{n=0}^{\infty}\PPtworho \left[E_n^R \right]\le \exp\left(C_{\eqref{eqn::chordal_bad_aux0}}/\kappa\right) n_0^{1-2/\kappa}.
\end{equation}	
We denote by $\PP$ the measure of the coupling in Lemma~\ref{lem::monotone_coupling} with $\alpha = 1+\overline{\rho}^L/2$. Set 
		\[
		\tilde{E}_n^R:= \{\exists t\in [\tau_{2^n S}, \tau_{2^{n+1} S}] \text{ s.t. } |Y_t^R|\leq \eps_n 2^n S\},
		\]
		where $(Y_t^R)_{t\geq 0}$ is the coupled process with $Y_0^L=0-$ and $ Y_0^R = x_1^R>0$. 
		By the coupling in Lemma~\ref{lem::monotone_coupling}, we have the inclusion $E_n^R \subset \tilde{E}_n^R$.
		\medbreak
		Note that the process $g_t(0+) -g_t(0-)$ is increasing in $t$.  Consider the stopping time 
		\begin{equation*}
			\hat{\tau}_S = \inf\{t\geq 0: |g_t(0+) - g_t(0-)| \geq S\}.
		\end{equation*}
	We claim that 
	\begin{equation}\label{eqn::chordal_bad_aux1}
	\hat{\tau}_S \leq \tau_S\le \hat{\tau}_{12S}. 
	\end{equation}
	We first show $\hat{\tau}_S \leq \tau_S$.
From the monotonicity of the harmonic measure, the quantity $|g_{{\tau}_S}(0+)-g_{{\tau}_S}(0-)|$ is bounded from below by the harmonic measure of $\partial B(0,S)\cap \HH$ in $\HH$ seen from $\infty$. The mapping-out function for $\partial B(0,S)\cap \HH$ is given by $g(z)=z+S^2/z$. This tells that the harmonic measure of $\partial B(0,S)\cap \HH$ in $\HH$ seen from $\infty$ is $4S$. Thus, 
$|g_{{\tau}_S}(0+)-g_{{\tau}_S}(0-)|\ge 4S$.
This confirms $\hat{\tau}_S \leq \tau_S$. The equation~\eqref{eqn::chordal_good_aux1} indicates that
$|g_{\tau_S}(0+) - g_{\tau_S}(0-)| \leq 12S$,
This confirms $\tau_S\le \hat{\tau}_{12S}$ and completes the proof of~\eqref{eqn::chordal_bad_aux1}.
\medbreak		
We further set 
		\[
		F_n = F_n^R:= \{\exists t\in [\hat{\tau}_{2^n S}, \tau_{2^{n+1} S}] \text{ s.t. } |Y_t^R|\leq \eps_n 2^n S\}. 
		\]
Then $\tilde{E}_n^R\subset F_n$ as $\hat{\tau}_S\le \tau_S$.
		We decompose $F_n$ by setting
		\begin{equation*}
			F_n^1 = \left\{|Y_{\hat{\tau}_{2^n S}}^R|\leq \sqrt{\eps_n} 2^n S\right\}, \qquad \text{and} \qquad F_n^2 = F_n \setminus F_n^1. 
		\end{equation*}
Then 
		\begin{equation}\label{eqn::EnR_decompose}
			\sum_{n=0}^{\infty} \PPtworho \left[E_n^R \right] 
			\leq  \sum_{n=0}^{\infty} \PP \left[F_n^1 \right]+ \sum_{n=0}^{\infty} \PP \left[F_n^2 \right].
		\end{equation}
		Heuristically, the set $F_n^1$ represents the event where the configuration is ``already bad'' at the starting time $\hat{\tau}_{2^n S}$, whereas $F_n^2$ corresponds to the event that the configuration ``becomes bad'' during the time interval $[\hat{\tau}_{2^n S}, \tau_{2^{n+1}S}]$. 
		\medbreak
First,	 we estimate $\PP \left[F_n^1 \right]$. 
We set
\[
		X_s = \frac{Y_{t(s)}^R}{Y_{t(s)}^R - Y_{t(s)}^L}\in [0,1], \qquad s(t) = \frac{1}{2} \log \frac{Y_{t}^R - Y_{t}^L}{Y_0^R-Y_0^L}.
		\] 
as in Lemma~\ref{lem::Zhan_calculation}. 
Since $Y_0^L = 0-$, the process $(X_s)_{s>0}$ satisfies the SDE~\eqref{eqn::Zhan_calculation} with $X_0=1$. From~\eqref{eqn::chordalBessel_estimate1}, for all $s>0$ and $\eps\in (0,1/4)$, we have 
		\[
		\PP[X_s<\eps]\leq \frac{\Gamma\left({4(\alpha+1)}/{\kappa}\right)}{\Gamma\left({4}/{\kappa}\right)\Gamma\left({4\alpha}/{\kappa}\right)}\frac{\kappa}{4} \eps^{{4}/{\kappa}}. 
		\]
		Since $s(\hat{\tau}_S) = \frac{\kappa}{2} \log \frac{S}{x_1^R}$ is deterministic, we have 
		\begin{align}
		\sum_{n=0}^{\infty} \PP \left[F_n^1 \right] =& \sum_{n=0}^{\infty} \PP\left[X_{s(\hat{\tau}_{2^n S})} \leq \sqrt{\eps_n}\right] \nonumber \\
		\leq& \frac{\Gamma\left({4(\alpha+1)}/{\kappa}\right)}{\Gamma\left({4}/{\kappa}\right)\Gamma\left({4\alpha}/{\kappa}\right)}\frac{\kappa}{4} \sum_{n=0}^{\infty} (n+n_0)^{-{4}/{\kappa}}\leq C_{\eqref{eqn::E1R_estimate_constant}}(\kappa, \alpha) n_0^{1-{4}/{\kappa}}\label{eqn::E1R_estimate_constant},
		\end{align}
		where 
		\begin{equation*}
			C_{\eqref{eqn::E1R_estimate_constant}}(\kappa, \alpha):= \frac{\Gamma\left({4(\alpha+1)}/{\kappa}\right)}{\Gamma\left({4}/{\kappa}\right)\Gamma\left({4\alpha}/{\kappa}\right)}\frac{\kappa}{(4-\kappa)}.
		\end{equation*}
		By Stirling's formula, we have
		\[
		\limsup_{\kappa\to 0} \kappa\log C_{\eqref{eqn::E1R_estimate_constant}} = 4(\alpha+1)\log(\alpha+1) - 4\alpha\log\alpha \in (-\infty, +\infty).
		\]

		\medbreak
		Next, we estimate $\PP \left[F_n^2 \right]$. 
		Note that on the event $F_n^2$, we have $\left|Y_{\hat{\tau}_{2^n S}}^R\right| > \sqrt{\eps_n} 2^n S$. Also note that $\tau_{S}\leq \hat{\tau}_{12 S}$ due to~\eqref{eqn::chordal_bad_aux1}, thus by the strong Markov property of the It\^{o} diffusion, we have 
		\begin{align*}
			\PP \left[F_n^2 \right] \leq& \PP\left[\exists t\in [\hat{\tau}_{2^n S}, \hat{\tau}_{12S2^{n+1}}] \text{ s.t. } |Y_t^R|\leq \eps_n 2^n S; \left|Y_{\hat{\tau}_{2^n S}}^R\right| > \sqrt{\eps_n} 2^n S \right] \\
			\leq& \PP\left[\exists t\in [\hat{\tau}_{2^n S}, \hat{\tau}_{12S2^{n+1}}] \text{ s.t. } X_t \leq \eps_n ; X_{\hat{\tau}_{2^n S}} \geq  \frac{\sqrt{\eps_n}}{12}\right] \\
			\leq& \PP\left[\exists t\in [\hat{\tau}_{2^n S}, \hat{\tau}_{12S2^{n+1}}] \text{ s.t. } X_t \leq \eps_n \;\left|\; X_{\hat{\tau}_{2^n S}} = \frac{\sqrt{\eps_n}}{12} \right.\right] \\
			\leq & \PP\left[\exists s\in [0,T] \text{ s.t. } X_s \leq \eps_n \;\left |\; X_0 = \frac{\sqrt{\eps_n}}{12}\right.\right],
		\end{align*}
		where 
		\[T = \frac{1}{2}\log \frac{12S2^{n+1}}{Y^R_{\hat{\tau}_{2^n S}}-Y^L_{\hat{\tau}_{2^n S}}} = \frac{1}{2}\log 24.\]
		Now we can apply~\eqref{eqn::finite_time_upper_bound} to obtain
		\begin{align}
			\sum_{n=0}^{\infty} \PP \left[F_n^2 \right] &\leq \sum_{n=0}^{\infty} \exp\left(C_{\eqref{eqn::finite_time_upper_bound}}/\kappa\right)  T \left(\frac{\sqrt{\eps_n}}{12}\right)^{-{4}/{\kappa}} \eps_n^{{4}/{\kappa}-1} \nonumber\\
			&= \exp\left(C_{\eqref{eqn::finite_time_upper_bound}}/\kappa\right)  T 12^{{4}/{\kappa}} \sum_{n=0}^{\infty} (n+n_0)^{-{4}/{\kappa}+2} \nonumber \\
			&\leq\exp\left(C_{\eqref{eqn::finite_time_upper_bound}}/\kappa\right)  T 12^{{4}/{\kappa}} \sum_{n=0}^{\infty} (n+n_0)^{-{2}/{\kappa}}\leq C_{\eqref{eqn::E2R_estimate_constant}}(\kappa, \alpha) n_0^{1-2/\kappa} \label{eqn::E2R_estimate_constant}, 
		\end{align}
		where 
		\begin{equation*}
			C_{\eqref{eqn::E2R_estimate_constant}}(\kappa, \alpha) := \exp\left(C_{\eqref{eqn::finite_time_upper_bound}}/\kappa\right)  T 12^{{4}/{\kappa}} \frac{2}{2-\kappa},
		\end{equation*}
		and we have
		\[
		\limsup_{\kappa\to 0} \kappa\log C_{\eqref{eqn::E2R_estimate_constant}}= C_{\eqref{eqn::finite_time_upper_bound}} +  4\log 12  \in (-\infty, +\infty).
		\]
		Combining~\eqref{eqn::E1R_estimate_constant} and~\eqref{eqn::E2R_estimate_constant}, we obtain the desired estimate \eqref{eqn::chordal_bad_aux0} on the bad event $E_n^R$ with 
		\[
		C_{\eqref{eqn::chordal_bad_aux0}}(\alpha) := \sup_{0<\kappa<1}\kappa\log\left(C_{\eqref{eqn::E1R_estimate_constant}}(\kappa, \alpha) + C_{\eqref{eqn::E2R_estimate_constant}}(\kappa, \alpha)\right).
		\]
	\end{proof}

\section{Large deviation for half-watermelon SLE}
\label{sec::LDP_halfwatermelon}
The goal of this section is to prove large deviation in Theorem~\ref{thm::halfwatermelon_LDP}. 
To this end, let us first introduce the large deviation in finite-time setting.

\paragraph{Curve spaces. } Fix $n\geq 1$ and $(n+1)$-polygon $(\Omega; \bs{x}, y)$, we denote by $\chamber_{\bs{T}}=\chamber_{\bs{T}}(\Omega;\bs{x},y)$ the space of the simple multichords $\bs{\eta}_{[\bs{0},\bs{T}]}=(\eta^1_{[0,T_1]}, \ldots, \eta^n_{[0,T_n]})$ in $\Omega$ starting from $\bs{x}$, that is, 
\[
	\eta_{(0,T_j]}^j \subset \Omega \text{ for } 1\le j\le n, \quad \text{and furthermore }\quad \eta_{(0,T_j]}^j \cap \eta_{(0,T_k]}^k = \emptyset \quad \text{ for all } j\neq k.
\]
Here $\bs{\eta}_{[\bs{0}, \bs{T}]}$ is parameterized by $n$-time parameter of $\varphi_{\HH}(\bs{\eta})=(\varphi_{\HH}(\eta^1), \ldots, \varphi_{\HH}(\eta^n))$ (see~\eqref{eqn::multitime_def}), where $\varphi_{\HH}:\Omega\to \HH$ is a conformal map with $\varphi_{\HH}(y)=\infty$.
We equip a metric analogous to~\eqref{eqn::chamberfusion_metric} on the truncated curve space $\chamber_{\bs{T}}$:
\begin{equation} \label{eqn::metric}
	\dist_{\chamber}(\bs{\eta}_{[\bs{0}, \bs{T}]},\tilde{\bs{\eta}}_{[\bs{0},\bs{T}]}):=\sup_{\bs{t}\in [\bs{0}, \bs{\infty}]} \sup_{1\le j\le n} \left| \varphi_{\U} \left(\eta^j_{t_j \wedge T_j}\right)-\varphi_{\U}\left(\tilde{\eta}^j_{t_j \wedge T_j}\right) \right|, \quad \bs{\eta}_{[\bs{0}, \bs{T}]}, \tilde{\bs{\eta}}_{[\bs{0},\bs{T}]}\in \chamber_{\bs{T}}, 
\end{equation}
where $\varphi_{\U}: \Omega\to \mathbb{U}$ is a conformal map. In this way, we obtain an incomplete metric space $(\chamber_{\bs{T}}(\Omega;\bs{x},y), \dist_{\chamber})$. Note that the induced topology on $\chamber_{\bs{T}}$ does not depend on the choice of $\varphi_{\U}$, though the metric does.
\medbreak
Assume $D\subset \Omega$ is a subdomain which agrees with $\Omega$ in a neighborhood of $\bs{x}$ and has a positive distance from $y$. 
We define the exit time of $D$ for the curves:
\begin{equation*} \label{eqn::exittime}
	T_D^j:=\inf \{t_j \colon \eta_{t_j}^j\notin \overline{D}\}, \quad \bs{T}_D :=(T_D^1,\ldots,T_D^n).
\end{equation*}

\begin{proposition} \label{prop::finite_watermelon_LDP}
Fix $n\ge 1$ and $(n+1)$-polygon $(\Omega;\bs{x},y)$. Assume $D\subset \Omega$ is a simply connected subdomain which agrees with $\Omega$ in a neighborhood of $\bs{x}$ and has a positive distance from $y$.
Suppose $\bs{\eta}\sim \PPfusion{n}(\Omega;\bs{x},y)$ is half-$n$-watermelon $\SLE_{\kappa}$ in $(\Omega;\bs{x},y)$. Then the family of laws of $\bs{\eta}_{[\bs{0}, \bs{T}_D]}$ under $\PPfusion{n}(\Omega;\bs{x},y)$ satisfies large deviation principle in the space $(\chamber_{\bs{T}_D}(\Omega; \bs{x}, y), \dist_{\chamber})$ as $\kappa\to 0+$ with good rate function $\ratefusion{n} (\Omega; \bs{x}, y; \cdot)$ which will be defined in Definition~\ref{Def::multitime_energy_chordal}. 
\end{proposition}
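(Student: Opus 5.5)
We plan to prove Proposition~\ref{prop::finite_watermelon_LDP} by comparing the half-watermelon law, stopped at the exit times $\bs{T}_D$, with $n$ independent chordal $\SLE_\kappa$ curves. The comparison is a Radon--Nikodym tilt by the multi-time martingale of~\cite{HuangWuYangMultipleSLEsDysonBM}. We then transfer the LDP through a generalized Varadhan lemma.

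\emph{Step 1 (reference measure).} By conformal invariance we may take $(\Omega;\bs{x},y)=(\HH;\bs{x},\infty)$, with $D$ bounded. We let $\QQ^{(\kappa)}$ be the law of $n$ independent chordal $\SLE_\kappa$ curves, $\eta^j$ in $(\HH;x_j,\infty)$, each stopped at its exit time $T_D^j$. Each marginal stopped at the exit of $D$ satisfies an LDP with rate $\rate(\HH;x_j,\infty;\eta^j_{[0,T_D^j]})$. This follows from Lemma~\ref{lem::LDP_SLEkappa_standard} together with the contraction principle, since truncation at $T_D$ is continuous at curves exiting $D$ transversally. If needed, we instead use the finite-time LDP of~\cite{AbuzaidPeltolaLargeDeviationCapacityParameterization} directly. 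Independence then gives an LDP for the product on the space of $n$-tuples, with the sum of the energies as rate. We must restrict to the subset $\chamber_{\bs{T}_D}$ of pairwise disjoint tuples. This subset is open in the product, so the lower bound is automatic. For the upper bound, the martingale vanishes on collision, which makes the tilted mass near non-disjoint configurations negligible.

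\emph{Step 2 (tilt).} By~\cite{HuangWuYangMultipleSLEsDysonBM}, the half-watermelon law restricted to $\bs{\eta}_{[\bs{0},\bs{T}_D]}$ equals $\QQ^{(\kappa)}$ weighted by $M_{\bs{T}_D}/M_{\bs{0}}$. The multi-time martingale $M_{\bs{t}}$ is built from $\LZ_{\shuffle_n}^{(\kappa)}$, the derivatives $g'$ at the other tips, and a Brownian loop term $\exp(\frac{c}{2}\blm)$ with $c=c(\kappa)\to -\infty$ like $-12/\kappa$. We will check that $\kappa\log(M_{\bs{T}_D}/M_{\bs{0}})\to -H(\bs{\eta}_{[\bs{0},\bs{T}_D]})$, where $H$ is an explicit functional: a combination of $\log$ boundary Poisson kernels of $(\HH\setminus\bs{\eta};\text{tips},\infty)$ and $12\,\blm$ of loops hitting several curves. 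The rate function of Definition~\ref{Def::multitime_energy_chordal} should then be identified as
\[\ratefusion{n}(\bs{\eta}_{[\bs{0},\bs{T}_D]})=\sum_j\rate(\eta^j_{[0,T_D^j]})+H(\bs{\eta}_{[\bs{0},\bs{T}_D]}).\]
This identity is either taken from~\cite{ChenHuangPeltolaWumultitimeenergy} or verified via It\^o/Loewner calculus in the $\kappa\to0$ limit.

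\emph{Step 3 (Varadhan).} We apply a generalized Varadhan lemma: if $\QQ^{(\kappa)}$ satisfies an LDP with rate $I$, and $F_\kappa=\kappa\log$(density) converges to $-H$ uniformly on compacts and is upper semicontinuous in the limit, then the tilted laws satisfy the LDP with rate $I+H-\inf(I+H)$, provided an exponential tightness/moment condition holds. Here $\inf(I+H)=0$, since the $\kappa=0$ half-watermelon exists. The moment condition needs $\limsup\kappa\log\E_{\QQ}[(M_{\bs{T}_D}/M_{\bs 0})^{\gamma}]<\infty$ for some $\gamma>1$. Since everything is stopped inside the bounded $D$ and the curves start at distinct points, the ratio is bounded by $\exp(C/\kappa)$ except near collisions. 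Near collisions the Poisson kernel factor $\prod|\cdot|^{2/\kappa}$ goes to $0$, while the loop-measure and $g'$ factors remain bounded above. Goodness of the rate function follows because $I$ is good and $H$ is bounded below and lower semicontinuous on $\chamber_{\bs{T}_D}$.

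\emph{Main obstacle.} The delicate point is the continuity of $F_\kappa$ and $H$ in the metric $\dist_{\chamber}$, which is uniform over the multi-time parameter. The Brownian loop term and the driving values $g_{\bs{t}}$ at other tips must depend continuously on the truncated tuple. They blow up only as two curves approach each other, which is exactly where the tilt is exponentially small. We would handle this by localizing to the sets $\{\min_{i\ne j}\dist(\eta^i,\eta^j)\ge\delta\}$, on which all quantities are continuous. We would then use the explicit vanishing of $M$ to show that the complement carries superexponentially small mass as $\delta\to0$.
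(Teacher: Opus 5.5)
Your architecture matches the paper's. You reduce to $(\HH;\bs{x},\infty)$, use independent chordal $\SLE_\kappa$'s as reference (product LDP from Lemma~\ref{lem::finite_single_LDP}), tilt by the multi-time martingale of Lemma~\ref{lem::halfwatermelon_mart}, and apply a generalized Varadhan lemma on the open subspace $\chamber_{\bs{T}_D}$ of the product. Your $H$ equals $12\blm_{\bs{T}_D}-3\sum_j\log g'_{\bs{T}_D,j}(W^j_{T_D^j})+\LU_{\shuffle_n}(\bs{X}_{\bs{T}_D})-\LU_{\shuffle_n}(\bs{x})$, so identifying the rate is literally \eqref{eqn::multi_time_energy_chordal_def}. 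Your moment condition is condition (d') of Lemma~\ref{lem::Varadhan}: $\kappa\log(M_{\bs{T}_D}/M_{\bs{0}})$ is uniformly bounded above because $|X^j_{\bs{T}_D}|\le 3R$, $\blm\ge 0$ and $g'\le 1$. Your observation that $H$ is bounded below even gives $\rate_{\oplus n}\le\ratefusion{n}+C$ more directly than \eqref{eqn::finite_watermelon_LDP_aux1}.

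The gap is in goodness. That $I$ is good and $H$ is bounded below and lower semicontinuous \emph{on} $\chamber_{\bs{T}_D}$ only shows that level sets of $\ratefusion{n}$ are relatively compact in $\prod_j\chamber_{T_D^j}(\HH;x_j,\infty)$. Since $\chamber_{\bs{T}_D}$ is open but not closed there, a sequence of disjoint tuples with $\ratefusion{n}\le C$ could converge to a tuple in which two curves touch. The level set would then not be compact in $\chamber_{\bs{T}_D}$. The missing ingredient is that finite multi-time energy forces disjointness, which is the paper's Lemma~\ref{lem::finite_energy_simple_multichord}. For each fixed $(t_2,\ldots,t_n)$, the cross term in \eqref{eqn::multi_time_energy_chordal_def2} telescopes to $-2\sum_{j\ge 2}\log(X^j_{\bs{t}}-X^1_{\bs{t}})|_{0}^{T_1}\le\ratefusion{n}$. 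Hence the driving points stay apart for \emph{all} multi-times $\bs{t}\le\bs{T}$, which keeps the curves apart. Together with lower semicontinuity of the energy on $\overline{\chamber}_{\bs{T}_D}$, this makes the level sets closed in the product, hence compact.

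Using all multi-times is essential, and this also shows that the mechanism in your ``main obstacle'' paragraph is off. If $\eta^1$ approaches a non-tip point of $\eta^2$, the final driving points $X^1_{\bs{T}_D},X^2_{\bs{T}_D}$ need not merge, so $\LZ_{\shuffle_n}^{(\kappa)}(\bs{X}_{\bs{T}_D})$ does not tend to $0$. The decay of the tilt near interior collisions comes instead from $\exp(\frac{\mathfrak{c}}{2}\blm_{\bs{T}_D})$, and for tip approaches also from $(g')^{\mathfrak{b}}$. A superexponential bound on $\{\min_{i\ne j}\dist(\eta^i,\eta^j)<\delta\}$ would therefore need a lower bound on the loop measure in terms of $\delta$, plus separate control of approaches near $\R$; you supply neither. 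The paper avoids this localization altogether. Lemma~\ref{lem::Varadhan} only requires continuity of $\Phi_0$, $\blm_{\bs{T}}$ and $\log g'$ on the open set $X'=\chamber_{\bs{T}_D}$, where these are finite and continuous, together with the uniform upper bound (d').
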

The rest of this section is organized as follows. In Section~\ref{subsec::multitime_index}, we introduce the chordal multi-time parameter, and  recall the definition and basic properties of multi-time chordal Loewner energy from~\cite{ChenHuangPeltolaWumultitimeenergy}. In Section~\ref{sec::LDP_finite_time_halfwatermelon}, we establish the large deviation of half-watermelon $\SLE$ in finite time (Proposition~\ref{prop::finite_watermelon_LDP}). In Section~\ref{subsec::LDP_infinite_time_halfwatermelon}, we extend this result to the infinite-time setting, using Proposition~\ref{prop::finite_watermelon_LDP} and the return estimate in Proposition~\ref{prop::SLE_return_chordal}.
In Section~\ref{subsec::proof_rho_LDP}, we adapt the procedures from  Sections~\ref{sec::LDP_finite_time_halfwatermelon}-\ref{subsec::LDP_infinite_time_halfwatermelon} to prove the LDP for SLE with force points (Proposition~\ref{prop::LDP_SLEkapparho}).

The next two sections present direct consequences of large deviation of half-watermelon $\SLE$. In Section~\ref{subsec::common_time_chordal}, we prove the LDP for Dyson Brownian motion (Proposition~\ref{prop::LDP_DysonBM_chordal}), utilizing the relation between half-watermelon $\SLE$ and Dyson Brownian motion (see Lemma~\ref{lem::DysonBM_driving_function}). In Section~\ref{subsec::bp_chordal_proof}, we prove the boundary perturbation property of multi-time chordal Loewner energy (Proposition~\ref{prop::bp_chordal}) using the boundary perturbation property of half-watermelon SLE proved in~\cite[Proposition~3.4]{HuangPeltolaWuMultiradialSLEResamplingBP} (see Lemma~\ref{lem::halfwatermelon_bp}) and large deviation principle in Theorem~\ref{thm::halfwatermelon_LDP}.

\subsection{Multi-time chordal Loewner energy}\label{subsec::multitime_index}

\paragraph*{Multi-time parameter (chordal).}
Fix $n\ge 1$ and $\bs{x}=(x_1, \ldots, x_n)\in \LX_n^{\HH}$. Consider an $n$-tuple $\bs{\eta}_{[\bs{0},\bs{t}]}=(\eta^1_{[0,t_1]}, \ldots, \eta^n_{[0,t_n]})\in \overline{\chamber}_{\bs{t}}(\HH;\bs{x},\infty)$\footnote{That is, the curves are allowed to touch themselves or each other, but not to cross.}  parameterized by $\bs{t}=(t_1, \ldots, t_n)\in[0,\infty)^n$.
We define the following normalized conformal transformations:
\begin{itemize}
	\item $g_{t_j}^j$ is the conformal map from the unbounded component of $\HH\setminus\eta^j_{[0,t_j]}$ onto $\HH$ with $\lim_{z\to\infty}|g_{t_j}^j(z)-z|=0$, $1\le j\le n$.
	\item $g_{\bs{t}}$ is the conformal map from the unbounded component of $\HH\setminus\cup_{j=1}^n \eta^j_{[0,t_j]}$ onto $\HH$ with $\lim_{z\to\infty}|g_{\bs{t}}(z)-z|=0$.
	\item $g_{\bs{t}, j}$ is the conformal map from the unbounded component of $\HH\setminus g_{t_j}^j\left(\cup_{i\neq j}\eta^i_{[0,t_i]}\right)$ onto $\HH$ with $\lim_{z\to\infty}|g_{\bs{t}, j}(z)-z|=0$, $1\le j\le n$. 
\end{itemize}
Using these notations, we have $g_{\bs{t}}=g_{\bs{t}, j}\circ g_{t_j}^j$ for $1\le j\le n$. We say that the $n$-tuple $\bs{\eta}_{\bs{t}}$ of curves has $n$-time parameter if $g^{j}$ is parameterized by the half-plane capacity: 
\begin{equation}\label{eqn::multitime_def}
	g_{t_j}^j(z)=z+\frac{2t_j}{z}+o\left(|z|^{-1}\right),\qquad \text{as }z\to\infty, \qquad \text{ for } 1\le j\le n.
\end{equation}
Denote by $W^j$ the driving function of each $\eta^j$, and we define the driving function of the $n$-tuple $\bs{\eta}_{\bs{t}}$, started at $\bs{X}_0=(x_1, \ldots, x_n)\in \LX_n^{\HH}$, by
\begin{equation}\label{eqn::multitime_driving}
	\bs{X}_{\bs{t}}=(X^1_{\bs{t}}, \ldots, X^n_{\bs{t}}), \qquad \text{with }X^j_{\bs{t}}=g_{\bs{t}, j}(W^j_{t_j}), \qquad \text{for }1\le j\le n.
\end{equation}
A standard calculation shows that 
 \begin{equation}\label{eqn::halfwatermelonSLE0_minimizer_aux2}
    \ud X_{\bs{t}}^j = \left(g_{\bs{t},j}'\left(W_{t_j}^j\right)\dot{W}_{t_j}^j - 3g_{\bs{t},j}''\left(W_{t_j}^j\right)\right) \ud t_j+ \sum_{\ell\ne j} \frac{2g_{\bs{t},\ell}'\left(W_{t_\ell}^\ell\right)^2}{X_{\bs{t}}^j - X_{\bs{t}}^{\ell}} \ud t_\ell, \quad\text{for }1\le j\le n.
\end{equation}
The mapping-out function $g_{\bs{t}}$ solves the Loewner equation
\begin{equation*}\label{eqn::multitime_Loewner_chordal}
\ud g_{\bs{t}}(z)=\sum_{j=1}^n \partial_{t_j}g_{\bs{t}}(z)\ud t_j=\sum_{j=1}^n \frac{2 g_{\bs{t},j}'(W_{\bs{t}}^j)^2}{g_{\bs{t}}(z)-X_{\bs{t}}^j}\ud t_j.
\end{equation*}

Fix $n\geq 1$ and $(n+1)$-polygon $(\HH; \bs{x}, \infty)$ with $\bs{x}=(x_1, \ldots, x_n)\in \LX_n^{\HH}$. 
Consider an $n$-tuple $\bs{\eta}_{[\bs{0},\bs{t}]}=(\eta^1_{[0,t_1]}, \ldots, \eta^n_{[0,t_n]})$ of disjoint simple curves in $\HH$ parameterized by multi-time parameter. Define 
$\blm_{\bs{t}}$ to be the unique potential solving the exact differential equation (see~\cite[Lemma~2.2]{HuangWuYangMultipleSLEsDysonBM})
\begin{equation}\label{eqn::mt_def}
	\ud \blm_{\bs{t}}=\sum_{j=1}^n \partial_{t_j}\blm_{\bs{t}}\ud t_j=\sum_{j=1}^n -\frac{1}{3}\LS g_{\bs{t}, j}(W^j_{t_j})\ud t_j,\qquad \blm_{\bs{0}}=0,
\end{equation}
with $\LS g=\frac{g'''}{g'}-\frac{3}{2}\left(\frac{g''}{g'}\right)^2$ denoting the Schwarzian derivative of a function $g$. It is proved in~\cite[Lemma~2.3]{HuangWuYangMultipleSLEsDysonBM} that the solution to~\eqref{eqn::mt_def} 
	can be described in terms of Brownian loop measure as
	\begin{equation*}\label{eqn::mt_blm}
		\blm_{\bs{t}} = \blm \left(\HH;\eta_{[0,t_1]}^{1},\ldots,\eta_{[0,t_n]}^{n} \right).
	\end{equation*}
	Consequently, the measure $\blm_{\bs{t}}$ is finite as long as $\eta_{[0,t_1]}^{1},\ldots,\eta_{[0,t_n]}^{n}$ are disjoint.

\paragraph*{Semi-classical limits of BPZ equations. }
We define 
\begin{align}\label{eqn::halfwatermelonSLE0_pf_def_H}
\begin{split}
	\LU_{\shuffle_n}(\bs{x}) = -2 \sum_{1\le i<j\le n}\log (x_j-x_i), \quad \text{for }\bs{x}=(x_1, \ldots, x_n)\in\LX_n^{\HH}. 
\end{split}
\end{align}
The function $\LU_{\shuffle_n}:\LX_n^{\HH}\to \R$ is the semi-classical limit of the partition function $\LZ_{\shuffle_n}^{({\kappa})}$ in~\eqref{eqn::halfwatermelon_pf_H}: 
\[\LU_{\shuffle_n}(\bs{x})=-\lim_{\kappa\to 0}\kappa\log\LZ_{\shuffle_n}^{(\kappa)}(\bs{x});\]
and it satisfies the semi-classical limits of the BPZ equations:
\begin{equation}\label{eqn::BPZ_sc}
	(\partial_j \LU)^2 - \sum_{\ell\neq j} \left( \frac{4}{x_{\ell}-x_j} \partial_{\ell} \LU + \frac{12}{(x_\ell-x_j)^2} \right)=0, \qquad \text{for }1\le j\le n.
\end{equation}

\begin{definition}[Multi-time chordal Loewner energy]\label{Def::multitime_energy_chordal}
Fix $n\ge 2$ and $(n+1)$-polygon $(\Omega; \bs{x}, y) = (\Omega; x_1, \ldots, x_n, y)$. 
Let $\varphi: \Omega\to \HH$ be a conformal map with $\varphi(y)=\infty$. 
For $\bs{\eta}_{[\bs{0},\bs{t}]}\in\overline{\chamber}_{\bs{t}}(\Omega; \bs{x}, y)$, we denote by $W_{t_j}^j$ the driving function of $\varphi(\eta^j_{[0,t_j]})$ and denote by $\bs{X}_{\bs{t}}$ the driving function of the $n$-tuple $\varphi(\bs{\eta}_{[\bs{0},\bs{t}]})$.
We define the \emph{truncated $n$-time chordal Loewner energy} $\ratefusion{n} (\Omega; \bs{x}, y; \bs{\eta}_{[\bs{0}, \bs{t}]})$ to be 
\begin{equation} \label{eqn::multi_time_energy_chordal_def}
\begin{split}
	\ratefusion{n} (\Omega; \bs{x}, y; \bs{\eta}_{[\bs{0}, \bs{t}]}) \; =& \; \sum_{j=1}^{n} \rate(\Omega; x_j, y; \eta_{[0, t_j]}^j) +12 \blm(\Omega; \eta^1_{[0,t_1]}, \ldots, \eta^n_{[0,t_n]})\\
	& -3\sum_{j=1}^{n} \log g'_{\bs{t},j}(W^j_{t_j})+ \LU_{\shuffle_n}(\bs{X}_{\bs{t}}) -\LU_{\shuffle_n}(\bs{X}_{\bs{0}}). 
\end{split}
\end{equation}
Here, the quantity $\rate$ is the chordal Loewner energy defined in~\eqref{eqn::LDP_rate_function_standard}, and $m$ is the Brownian loop measure defined in~\eqref{eqn::mt_def}, and $\LU_{\shuffle_n}(\bs{x})$ is defined in~\eqref{eqn::halfwatermelonSLE0_pf_def_H}. 
Using~\eqref{eqn::BPZ_sc}, it is proved in~\cite{ChenHuangPeltolaWumultitimeenergy} that $\ratefusion{n} (\Omega; \bs{x}, y; \bs{\eta}_{[\bs{0}, \bs{t}]})$ is the unique solution to the following system of exact differential equations:
\begin{equation}
\begin{split}\label{eqn::multi_time_energy_chordal_def2}
	\ud \ratefusion{n}(\Omega; \bs{x}, y; \bs{\eta}_{[\bs{0}, \bs{t}]})
	= \; & \sum_{j=1}^{n} \partial_{t_j} \ratefusion{n}(\Omega; \bs{x}, y; \bs{\eta}_{[\bs{0}, \bs{t}]}) \ud t_j \\
	= \; & \frac{1}{2}\sum_{j=1}^{n} \bigg( \dot{W}_{t_j}^{j} - 2 \sum_{i\neq j} \frac{ g'_{\bs{t},j}(W_{t_j}^{j})}{X_{\bs{t}}^{j} - X_{\bs{t}}^{i}} 
	- 3 \, \frac{g''_{\bs{t},j}(W_{t_j}^{j})}{g'_{\bs{t},j}(W_{t_j}^{j})}\bigg)^2 \ud t_j ,
	\\
	\ratefusion{n}(\Omega; \bs{x}, y; \bs{\eta}_{[\bs{0}, \bs{0}]})
	= \; & 0.
\end{split}
\end{equation}
In particular, the quantity $\ratefusion{n} (\Omega; \bs{x}, y; \bs{\eta}_{[\bs{0}, \bs{t}]})$ is increasing in $\bs{t}$, and we define \emph{$n$-time chordal Loewner energy} of $\bs{\eta}\in\barchamberfusion{n}(\Omega; \bs{x}, y)$ to be its limit: 
\begin{equation}\label{eqn::multi_time_energy_chordal}
	\ratefusion{n}(\Omega; \bs{x}, y; \bs{\eta}):=\lim_{\bs{t}\to +\bs{\infty}} \ratefusion{n}(\Omega; \bs{x}, y; \bs{\eta}_{[\bs{0}, \bs{t}]}). 
\end{equation}
\end{definition}

The following basic property of the energy will be used in the proof of Proposition~\ref{prop::finite_watermelon_LDP}. 

\begin{lemma} \label{lem::finite_energy_simple_multichord}
If $\ratefusion{n}(\Omega;\bs{x},y;\bs{\eta}_{[\bs{0},\bs{T}]})<\infty$, then the component curves $\eta^j_{[0,T_j]}$ are simple and mutually disjoint for $1\le j\le n$.
\end{lemma}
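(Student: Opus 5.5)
We bound each piece of the energy from below and then read off simplicity and disjointness from the pieces that must stay finite. In the formula \eqref{eqn::multi_time_energy_chordal_def}, every term except $\sum_j \rate(\Omega;x_j,y;\eta^j_{[0,T_j]})$ and $12\blm$ is finite as long as the curves are disjoint and the configuration $\bs{X}_{\bs{t}}$ stays in $\LX_n^{\HH}$. Instead of splitting the energy directly, we use the differential characterization \eqref{eqn::multi_time_energy_chordal_def2} together with monotonicity in $\bs{t}$. Finiteness of $\ratefusion{n}$ at $\bs{T}$ gives finiteness at every $\bs{t}\le\bs{T}$, in particular along the axis paths $\bs{t}=(0,\ldots,0,t_j,0,\ldots,0)$ with $t_j\le T_j$.

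\textbf{Step 1 (each curve is simple).} Along the path where only $t_j$ varies and $t_i=0$ for $i\ne j$, the map $g_{\bs{t},j}$ removes only the boundary points $g^j_{t_j}(x_i)$, so it is the identity. The integrand in \eqref{eqn::multi_time_energy_chordal_def2} is then
\[
\tfrac12\Big(\dot W^j_{t_j}-\sum_{i\ne j}\tfrac{2}{W^j_{t_j}-g^j_{t_j}(x_i)}\Big)^2,
\]
which is the $\bs{\rho}$-energy of Definition~\ref{def::rho_energy_chordal} with all weights equal to $2$. Since $\ratefusion{n}$ is increasing in $\bs{t}$, this $\bs\rho$-energy of $\eta^j_{[0,T_j]}$ is finite. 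It follows that $W^j$ is absolutely continuous and that $\int_0^{T_j}\dot W_t^2\,\ud t<\infty$. To see this, use the Cauchy--Schwarz inequality $(a+b)^2\le 2a^2+2b^2$, and note that the repulsive drift term $2/(W-g(x_i))$ is square integrable on $[0,T_j]$. Square integrability holds because $|W_t-g_t(x_i)|$ stays bounded below: by the Loewner flow, $W_t-g_t(x_i)$ cannot reach $0$ when $W$ has finite energy, since finite-energy driving functions are $1/2$-Hölder with small constant. Hence the standard Loewner energy $\rate(\eta^j_{[0,T_j]})$ is finite, and by the known result of Wang~\cite{Wang:EnergyLoewnerChain} (finite Loewner energy implies a simple quasiconformal arc not touching $\R$ except at its start), $\eta^j_{[0,T_j]}$ is simple and $\eta^j_{(0,T_j]}\subset\HH$.

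\textbf{Step 2 (mutual disjointness).} Suppose that two curves $\eta^i$ and $\eta^k$ first meet at some multi-time $\bs{t}^*\le\bs{T}$, where the configuration lies in $\overline{\chamber}_{\bs{t}}$ (curves may touch but not cross). Approach $\bs{t}^*$ along a monotone path. At the meeting point the driving points $X^i_{\bs{t}}$ and $X^k_{\bs{t}}$ of the adjacent curves come together, or the harmonic measure between them degenerates. In either case $\LU_{\shuffle_n}(\bs{X}_{\bs{t}})=-2\sum\log(X^k-X^i)\to+\infty$, while the Brownian loop term $\blm(\HH;\eta^1,\ldots,\eta^n)$ also diverges to $+\infty$ because the curves come close. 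The remaining terms are the individual energies $\rate$, which are bounded by Step 1, and $-3\sum\log g'_{\bs{t},j}(W^j)$. The main obstacle is that this last term could diverge to $-\infty$ and compensate. We plan to control it via the Koebe distortion and the estimate $g'_{\bs t,j}(W^j)\le 1$, which holds because $g_{\bs{t},j}$ maps out a hull, so $-\log g'\ge 0$. All the possibly divergent terms are then nonnegative or diverge to $+\infty$, giving $\ratefusion{n}(\bs\eta_{[\bs 0,\bs t]})\to\infty$ as $\bs t\to\bs t^*$. This contradicts monotonicity and the finiteness at $\bs{T}$.

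Therefore the curves $\eta^j_{[0,T_j]}$ are simple and mutually disjoint, as claimed in Lemma~\ref{lem::finite_energy_simple_multichord}.
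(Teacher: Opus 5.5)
Your overall architecture (simplicity of each curve from the coordinate axes, disjointness from a blow-up of the energy at the first contact) can be made to work, but two steps are not justified as written.

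\textbf{Step 1 is circular.} To make the drift $\sum_{i\ne j}2/(W^j_t-g^j_t(x_i))$ square integrable, you argue that the gaps cannot close ``because finite-energy driving functions are $1/2$-H\"older with small constant''. But finite Dirichlet energy of $W^j$ is exactly what you are trying to prove. A priori you only know $\dot W^j=v+D$ with $v\in L^2$ and $D$ the drift, which is unbounded near a collision, so the repulsion has to be used directly. Write $W=W^j$, $g_t=g^j_t$, expand the square, and use
\[
-\frac{2\dot W_t}{W_t-g_t(x_i)}=-2\,\partial_t\log|W_t-g_t(x_i)|+\frac{4}{(W_t-g_t(x_i))^2}.
\]
This gives, for every $t$ before the first collision,
\[
\tfrac12\int_0^t\dot W_s^2\,\ud s-2\sum_{i\ne j}\log\frac{|W_t-g_t(x_i)|}{|x_j-x_i|}\le E<\infty,
\]
where $E$ is the energy at $\bs T$. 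Since the gaps are bounded above, each log term is bounded below, so no gap can tend to $0$. Then $\int_0^{T_j}\dot W^2<\infty$ follows from the same inequality. This is exactly the paper's $J_1+J_2$ computation \eqref{eqn::finite_energy_simple_multichord_aux1}--\eqref{eqn::finite_energy_simple_multichord_aux3} restricted to the axis.

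\textbf{In Step 2 the claimed mechanism is wrong, and the correct one is unproved.} $\LU_{\shuffle_n}(\bs X_{\bs t})$ need not blow up. Along a general monotone path, the tip of $\eta^i$ may hit an interior point $p=\eta^k_u$ with $u<t^*_k$. Then only the pocket bounded by $\eta^i$, the segment of $\R$ between $x_i$ and $x_k$, and $\eta^k_{[0,u]}$ is cut off. Meanwhile $\eta^k_{[u,t_k^*]}$ stays exposed to $\infty$ and keeps $|X^k_{\bs t}-X^i_{\bs t}|$ bounded away from $0$; in this case it is $g'_{\bs t,i}(W^i_{t_i})$ that tends to $0$.

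What you actually need is that $\LU_{\shuffle_n}(\bs X_{\bs t})$ is bounded below (the $X^j_{\bs t}$ stay bounded) and that $\blm_{\bs t}\to\infty$. The latter does not follow from ``the curves come close''; you must argue as follows.
\begin{itemize}
\item The contact point $p$ lies in $\HH$, since by Step 1 the curves avoid $\R$ except at their distinct starting points.
\item Both $\eta^i_{[0,t_i^*)}$ and $\eta^k_{[0,t^*_k)}$ cross every annulus $\{r<|w-p|<2r\}$ with $r$ small. Hence every loop contained in such an annulus that disconnects its two boundary circles hits both curves.
\item By scale invariance, these loops carry a fixed positive mass per scale. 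Taking the disjoint annuli with $r=4^{-m}$ gives infinite total mass, and monotone convergence transfers this to $\blm_{\bs t}$ as $\bs t\uparrow\bs t^*$.
\end{itemize}

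With these repairs your route is valid and genuinely different from the paper's. The paper freezes $(t_2,\ldots,t_n)$ at arbitrary values and integrates only in $t_1$, which bounds $-2\sum_j\log(X^j_{\bs t}-X^1_{\bs t})$ uniformly over all slices. The freedom to choose the slice (e.g.\ $t_k$ equal to the contact time on $\eta^k$) is what lets the log-gap term detect every contact. It then maps out $\eta^2,\ldots,\eta^n$ and applies the weighted-chain Lemma~\ref{lem::finite_energy_simplechord}, obtaining simplicity of $\eta^1$ in the complement, i.e.\ both properties at once. Your version needs only the single-curve finite-energy result (Wang, Friz--Shekhar) and detects contact along any path through the loop term. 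The price is that you rely on the closed formula \eqref{eqn::multi_time_energy_chordal_def} up to the contact time and on the infinite-loop-mass estimate.
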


The proof of Lemma~\ref{lem::finite_energy_simple_multichord} relies on the following Lemma~\ref{lem::finite_energy_simplechord}.
\begin{lemma} \label{lem::finite_energy_simplechord}
Suppose $T<\infty$ and $W_{[0,T]}$ has finite Dirichlet energy~\eqref{eqn::LDP_rate_function_standard}. Suppose $\lambda:[0,T]\to (0,\infty)$ is a weight function that $0<\inf_{t\in [0,T]} \lambda_t\le \sup_{t\in [0,T]} \lambda_t< \infty$. Consider the chordal Loewner chain with weight function $\lambda$:
\begin{equation} \label{eqn::lambda_Loewnerchain}
	\partial_t g_t(z)=\frac{2\lambda_t}{g_t(z)-W_t}, \quad g_0(z)=z, \quad 0<t\le T,
\end{equation}
then the hull $\eta_{[0,T]}$ generated by the Loewner chain~\eqref{eqn::lambda_Loewnerchain} is a simple curve in $\HH$.
\end{lemma}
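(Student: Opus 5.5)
Lemma~\ref{lem::finite_energy_simplechord} will be reduced to the unweighted case, where it is known that a driving function of finite Dirichlet energy generates a simple curve in $\HH$ (as in~\cite{Wang:EnergyLoewnerChain}). The tool is a time change. Set $s(t)=\int_0^t\lambda_u\,\ud u$; by the bounds on $\lambda$ it is a bi-Lipschitz increasing bijection $[0,T]\to[0,S]$ with $S=s(T)<\infty$. Put $\tilde g_s=g_{t(s)}$ and $\tilde W_s=W_{t(s)}$, where $t(\cdot)$ is the inverse of $s(\cdot)$. Then
\[
\partial_s\tilde g_s(z)=\frac{2}{\tilde g_s(z)-\tilde W_s},
\]
so $(\tilde g_s)_{s\in[0,S]}$ is a standard chordal Loewner chain with the same family of hulls. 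Thus $\eta_{[0,T]}$ is exactly the hull generated by $\tilde W$ on $[0,S]$.

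Next I check that $\tilde W$ has finite Dirichlet energy. Since $W$ is absolutely continuous and $t(\cdot)$ is Lipschitz and monotone, $\tilde W$ is absolutely continuous. Its derivative is $\dot{\tilde W}_s=\dot W_{t(s)}/\lambda_{t(s)}$. Substituting $\ud s=\lambda_t\,\ud t$ gives
\[
\frac12\int_0^S\dot{\tilde W}_s^2\,\ud s=\frac12\int_0^T\frac{\dot W_t^2}{\lambda_t}\,\ud t\le \frac{1}{\inf\lambda}\cdot\frac12\int_0^T\dot W_t^2\,\ud t<\infty.
\]

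It remains to quote the unweighted result: a finite-energy driving function on a finite interval generates a simple curve $\tilde\eta_{[0,S]}$ with $\tilde\eta_{(0,S]}\subset\HH$. The standard argument goes as follows. Finite energy gives $|\tilde W_{s+h}-\tilde W_s|\le \sqrt{2I h}$, where $I$ denotes the energy. By Cauchy--Schwarz on small intervals, the local Hölder-$1/2$ norm tends to $0$, which falls below the Marshall--Rohde/Lind threshold $4$. This yields a quasi-arc, hence a simple curve not touching $\R$ except at its start. Reparametrizing back, $\eta_t=\tilde\eta_{s(t)}$ is simple in $\HH$.

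The main point needing care is this last step: invoking the unweighted statement in the form available. If only "finite total energy implies a simple curve" is quoted for the infinite-time case, the truncated version is obtained by extending $\tilde W$ to be constant after time $S$. This adds zero energy, so the extended curve is simple, and its restriction to $[0,S]$ is simple as well.
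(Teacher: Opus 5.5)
Your proof is correct and follows essentially the same route as the paper. The paper cites \cite[Lemma~3.8]{AbuzaidHealeyPeltolaLargeDeviationDysonBM} to get finite Loewner energy in the half-plane capacity parametrization, which is exactly your time change $s(t)=\int_0^t\lambda_u\,\ud u$ with the bound $\frac12\int_0^T\dot W_t^2/\lambda_t\,\ud t<\infty$; it then invokes \cite[Theorem~2(i)]{FrizShekharSLEtracefiniteenergydrivers} (finite-energy drivers generate simple curves in $\HH$), where you instead sketch the Marshall--Rohde/Lind local H\"older-$1/2$ argument, so your version just makes both cited steps explicit.
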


\begin{proof}
Since $0<\inf_{t\in [0,T]} \lambda_t\le \sup_{t\in [0,T]} \lambda_t< \infty$, the previous result~\cite[Lemma~3.8]{AbuzaidHealeyPeltolaLargeDeviationDysonBM} implies that $\eta_{[0,T]}$ has finite Loewner energy when parameterized by half-plane capacity. Then~\cite[Theorem~2(i)]{FrizShekharSLEtracefiniteenergydrivers} implies that $\eta_{[0,T]}$ is a simple curve in $\HH$.
\end{proof}

\begin{proof}[Proof of Lemma~\ref{lem::finite_energy_simple_multichord}]
It suffices to show the conclusion for $(\Omega;\bs{x},y)=(\HH;\bs{x},\infty)$ and we eliminate them from the notation. We show that $\eta_{(0,T_1]}^1$ is a simple curve in $\HH\setminus \left( \cup_{j=2}^n \eta_{[0,T_j]}^j \right)$. To this end, we will prove that
\begin{equation} \label{eqn::finite_energy_simple_multichord_TODO}
	\inf_{\stackrel{\bs{t}\in [\bs{0},\bs{T}]}{2\le j\le n}} \left| X_{\bs{t}}^j - X_{\bs{t}}^1 \right|>0, \qquad \text{ and } \qquad \inf_{\bs{t}\in [\bs{0},\bs{T}]} g'_{\bs{t},1} (W_{t_1}^1)^2>0.
\end{equation}

Fix $(t_2,\ldots,t_n)$ with $0\le t_j\le T_j$ for $2\le j\le n$ and denote $\bs{s}=(t_2,\ldots,t_n)$. We view $\bs{t}=(t_1,\bs{s})$ as a function in $t_1\in [0,T_1]$. By \eqref{eqn::multi_time_energy_chordal_def2}, we have
\begin{align}
	\ratefusion{n}(\bs{\eta}_{[\bs{0},(T_1,\bs{s})]}) \ge & \frac{1}{2} \int_{0}^{T_1} \left( \dot{W}_{t_1}^1 - g'_{\bs{t},1} (W_{t_1}^1) \sum_{j=2}^{n} \frac{2}{X_{\bs{t}}^1 - X_{\bs{t}}^j} - 3 \frac{g''_{\bs{t},1}(W_{t_1}^1)}{g'_{\bs{t},1}(W_{t_1}^1)} \right)^2 \ud t_1 \nonumber \\
	= & \frac{1}{2} \int_{0}^{T_1} \left( \frac{\partial_{t_1} X_{\bs{t}}^1}{g'_{\bs{t},1} (W_{t_1}^1)} - g'_{\bs{t},1} (W_{t_1}^1) \sum_{j=2}^{n} \frac{2}{X_{\bs{t}}^1 - X_{\bs{t}}^j} \right)^2 \ud t_1 \tag{due to~\eqref{eqn::halfwatermelonSLE0_minimizer_aux2}}\\
	\ge & \underbrace{\frac{1}{2} \int_{0}^{T_1} \left( \frac{\partial_{t_1} X_{\bs{t}}^1}{g'_{\bs{t},1} (W_{t_1}^1)} \right)^2 \ud t_1}_{:=J_1} \underbrace{- \int_{0}^{T_1} \partial_{t_1} X_{\bs{t}}^1 \sum_{j=2}^{n} \frac{2}{X_{\bs{t}}^1 - X_{\bs{t}}^j} \ud t_1}_{:=J_2}. \label{eqn::finite_energy_simple_multichord_aux1}
\end{align}
For $J_1$, we have
\begin{equation} \label{eqn::finite_energy_simple_multichord_aux2}
	J_1\ge \frac{1}{2} \int_{0}^{T_1} \left( \partial_{t_1} X_{\bs{t}}^1 \right)^2 \ud t_1\geq 0.
\end{equation}
For $J_2$, we denote $f(\bs{t})|_{0}^{T_1}:=f(\bs{t})|_{t_1=T_1}-f(\bs{t})|_{t_1=0}$ for functional $f$ on $\bs{t}$, then we have
\begin{align} 
	J_2 = & - \sum_{j=2}^{n} \int_{0}^{T_1} \partial_{t_1} (X_{\bs{t}}^1 - X_{\bs{t}}^j) \frac{2}{X_{\bs{t}}^1 - X_{\bs{t}}^j} \ud t_1 - \sum_{j=2}^{n} \int_{0}^{T_1} \partial_{t_1} (X_{\bs{t}}^j) \frac{2}{X_{\bs{t}}^1 - X_{\bs{t}}^j} \ud t_1 \notag \\
	\ge & - \sum_{j=2}^{n} \int_{0}^{T_1} \partial_{t_1} (X_{\bs{t}}^1 - X_{\bs{t}}^j) \frac{2}{X_{\bs{t}}^1 - X_{\bs{t}}^j} \ud t_1 \tag{due to~\eqref{eqn::halfwatermelonSLE0_minimizer_aux2}} \\
	= & - 2 \sum_{j=2}^{n} \log (X_{\bs{t}}^j - X_{\bs{t}}^1)|_{0}^{T_1}. \label{eqn::finite_energy_simple_multichord_aux3}
\end{align}
Combining~(\ref{eqn::finite_energy_simple_multichord_aux1},\ref{eqn::finite_energy_simple_multichord_aux2},\ref{eqn::finite_energy_simple_multichord_aux3}), we have
\begin{equation*}
	- 2 \sum_{j=2}^{n} \log (X_{\bs{t}}^j - X_{\bs{t}}^1)|_{0}^{T_1} \le \ratefusion{n}(\bs{\eta}_{[\bs{0},(T_1,\bs{s})]}) <\infty.
\end{equation*}
This implies the first control in~\eqref{eqn::finite_energy_simple_multichord_TODO}. The first control in~\eqref{eqn::finite_energy_simple_multichord_TODO} implies that $\eta_{[0,T_1]}^1$ has a positive distance from $\cup_{j=2}^n \eta_{[0,t_j]}^j$, which gives the second control in~\eqref{eqn::finite_energy_simple_multichord_TODO}. Moreover, combing~(\ref{eqn::finite_energy_simple_multichord_aux1},\ref{eqn::finite_energy_simple_multichord_aux2},\ref{eqn::finite_energy_simple_multichord_aux3}) again, the process $X_{(\cdot,T_2,\ldots,T_n)}^1$ has finite Dirichlet energy up to time $T_1$. Since $X_{(\cdot,T_2,\ldots,T_n)}^1$ is the driving function of $g_{(0,T_2,\ldots,T_n)}\left(\eta_{[0,T_1]}^1\right)$ with weight function $g'_{\bs{t},1}(W_{t_1}^1)^2$, combining~\eqref{eqn::finite_energy_simple_multichord_TODO} and Lemma~\ref{lem::finite_energy_simplechord}, the process $g_{(0,T_2,\ldots,T_n)}\left(\eta_{[0,T_1]}^1\right)$ is a simple curve in $\HH$, which implies that $\eta_{[0,T_1]}^1$ is a simple curve in $\HH\setminus \left( \cup_{j=2}^n \eta_{[0,T_j]}^j \right)$. Similarly, the process $\eta_{[0,T_j]}^j$ is a simple curve in $\HH\setminus \left( \cup_{\ell\neq j} \eta_{[0,T_{\ell}]}^{\ell} \right)$ for $2\le j\le n$, and we obtain the desired result.
\end{proof}

\subsection{Large deviation in finite time: proof of Proposition~\ref{prop::finite_watermelon_LDP}}\label{sec::LDP_finite_time_halfwatermelon}
In this section, we prove Proposition~\ref{prop::finite_watermelon_LDP}. The proof relies on the multi-time martingale for half-watermelon SLE presented in Lemma~\ref{lem::halfwatermelon_mart} and the large deviation principle for SLE curves in finite time presented in Lemma~\ref{lem::finite_single_LDP}.

\begin{lemma}[{\cite[Lemma~2.5]{HuangWuYangMultipleSLEsDysonBM}}]
\label{lem::halfwatermelon_mart}
Fix $\kappa\in (0,4], n\ge 2$ and $\bs{x}=(x_1, \ldots, x_n)\in\LX_n^{\HH}$. For each $j\in\{1, \ldots, n\}$, let $\eta^j$ be chordal $\SLE_{\kappa}$ in $(\HH; x_j,\infty)$ and let $\PP_n^{(\kappa)}$ be the probability measure on $\bs{\eta}=(\eta^1, \ldots, \eta^n)$ under which the curves are independent. We parameterize $\bs{\eta}$ by $n$-time parameter $\bs{t}$ and let $\bs{X}_{\bs{t}}$ denote the driving function as in~\eqref{eqn::multitime_driving}. Define the process 
\begin{equation}\label{eqn::halfwatermelon_multitime_mart}
	M_{\bs{t}}(\LZ_{\shuffle_n}^{(\kappa)}):=\one_{\LE_{\emptyset}(\bs{\eta}_{[\bs{0},\bs{t}]})} \prod_{j=1}^n g'_{\bs{t},j}(W^j_{t_j})^{\mathfrak{b}}\times \exp\left(\frac{\mathfrak{c}}{2}\blm_{\bs{t}}\right)\times \LZ_{\shuffle_n}^{(\kappa)}(\bs{X}_{\bs{t}}),
\end{equation}
where $\LE_{\emptyset}(\bs{\eta}_{[\bs{0},\bs{t}]})=\{\eta^i_{[0,t_i]}\cap\eta^j_{[0,t_j]}=\emptyset, \forall i\neq j\}$ is the event that different curves are disjoint, and
\begin{equation}\label{eqn::parameters_b_c}
\mathfrak{b}=\frac{6-\kappa}{2\kappa}, \qquad \mathfrak{c}=\frac{(6-\kappa)(3\kappa-8)}{2\kappa}, 
\end{equation}
and $\blm_{\bs{t}}$ is defined in~\eqref{eqn::mt_def}, and $\LZ_{\shuffle_n}^{(\kappa)}$ is the partition function defined in~\eqref{eqn::halfwatermelon_pf_H}. 
Then $M_{\bs{t}}(\LZ_{\shuffle_n}^{(\kappa)})$ is an $n$-time parameter local martingale with respect to $\PP_n^{(\kappa)}$. Moreover, the law of $\PP_n^{(\kappa)}$ weighted by $M_{\bs{t}}(\LZ_{\shuffle_n}^{(\kappa)})$ is the same as half-$n$-watermelon $\SLE_{\kappa}$ in $(\HH; \bs{x}, \infty)$ when restricted to the event $\LE_{\emptyset}(\bs{\eta}_{[\bs{0},\bs{t}]})$. 
\end{lemma}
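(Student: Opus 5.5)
The plan is to verify the local martingale property one time direction at a time, and then identify the tilted law by Girsanov together with the recursive description in Definition~\ref{def::halfwatermelonSLE}.

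\textbf{Martingale property.} Fix $j$ and freeze $t_i$ for $i\neq j$. Under $\PP_n^{(\kappa)}$ the curve $\eta^j$ is independent of the others, so $W^j$ is $\sqrt{\kappa}$ times a Brownian motion in $t_j$. Work on the event $\LE_\emptyset$, stopped before the curves come within distance $\delta$ of each other. Compute $\ud\log M$ in $t_j$ term by term:
\begin{itemize}
\item The factor $g'_{\bs{t},j}(W^j_{t_j})^{\mathfrak{b}}$: by It\^o and the Loewner equation for $g_{\bs{t},j}$, the drift is $\mathfrak{b}\big(\tfrac{\kappa}{2}-\tfrac{4}{3}\big)\LS g_{\bs{t},j}$ plus the usual $\tfrac{g''}{g'}$ terms.
\item For $i\neq j$, the factors $g'_{\bs{t},i}(W^i_{t_i})^{\mathfrak{b}}$: these are $C^1$ in $t_j$, with $\partial_{t_j}\log g'_{\bs{t},i}(W^i_{t_i})=-2g'_{\bs{t},j}(W^j_{t_j})^2/(X^i_{\bs{t}}-X^j_{\bs{t}})^2$.
\item The factor $\exp(\tfrac{\mathfrak{c}}{2}\blm_{\bs{t}})$: by \eqref{eqn::mt_def} it contributes $-\tfrac{\mathfrak{c}}{6}\LS g_{\bs{t},j}$.
\item The factor $\LZ_{\shuffle_n}^{(\kappa)}(\bs{X}_{\bs{t}})$: use the stochastic analogue of \eqref{eqn::halfwatermelonSLE0_minimizer_aux2}, namely $\ud X^j=g'_{\bs{t},j}\ud W^j+(\tfrac{\kappa}{2}-3)g''_{\bs{t},j}\ud t_j$ and $\ud X^i=2g'^2_{\bs{t},j}/(X^i-X^j)\ud t_j$.
\end{itemize}
After the time change by $g'^2_{\bs{t},j}(W^j_{t_j})$ the total drift has three parts. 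The Schwarzian terms cancel by the choice of $\mathfrak{c}$ relative to $\mathfrak{b}$. The $g''/g'$ cross terms cancel against the It\^o correction from $\partial_j\LZ$ because $\mathfrak{b}=\tfrac{6-\kappa}{2\kappa}$. What remains is exactly the $j$-th chordal BPZ equation \eqref{eqn::BPZ_H}, which $\LZ_{\shuffle_n}^{(\kappa)}$ satisfies by direct check. Hence $M$ is a local martingale in each $t_j$ separately, and therefore an $n$-time local martingale.

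\textbf{Identifying the tilted law.} Apply Girsanov in the $t_j$ direction, with the other curves frozen at time $0$. The martingale part of $\log M$ is $\sqrt{\kappa}\,\partial_j\log\LZ\,\ud B$, so the tilted driving function gains drift $\kappa\partial_j\log\LZ_{\shuffle_n}^{(\kappa)}$. This is \eqref{eqn::halfwatermelon_marginal_SDE}, so $\eta^1$ becomes chordal $\SLE_\kappa(2,\ldots,2)$ with force points $x_2,\ldots,x_n$.

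For the conditional law, grow $\eta^1$ up to $t_1$ and consider the ratio of $M$ at $(t_1,t_2,\ldots,t_n)$ to $M$ at $(t_1,0,\ldots,0)$. After conformally mapping by $g^1_{t_1}$ this ratio is the analogous martingale for $n-1$ curves in $\HH\setminus\eta^1_{[0,t_1]}$, up to the conformal covariance factors which are exactly the $g'^{\mathfrak{b}}$ and $\blm$ terms. By induction on $n$, the conditional law of $(\eta^2,\ldots,\eta^n)$ is half-$(n-1)$-watermelon $\SLE_\kappa$, which matches the recursion in Definition~\ref{def::halfwatermelonSLE}.

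\textbf{Main obstacle.} The delicate step is localization: $M$ is only a local martingale, and it carries the indicator of $\LE_\emptyset$. I would stop at the multi-time stopping set where $\min_{i\neq j}|X^i_{\bs{t}}-X^j_{\bs{t}}|\ge\delta$ and the capacities are bounded. There $M$ is bounded, so the optional stopping argument for multi-time martingales of \cite{HuangWuYangMultipleSLEsDysonBM} applies. Under the tilted measure the driving process, after a common-time change, is Dyson Brownian motion with $\beta\ge2$, which does not collide. So the stopping sets exhaust all of $[\bs{0},\bs{t}]$ as $\delta\to0$, and the indicator equals $1$ almost surely under the tilted law.
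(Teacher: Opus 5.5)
The paper quotes this lemma without proof, so I assess your argument on its own terms.

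Your It\^o computation for the local martingale property is right in substance:
\begin{itemize}
\item the Schwarzian terms cancel because $\mathfrak{c}/6=\mathfrak{b}(\kappa/2-4/3)$;
\item the $(g''/g')^2$ terms already cancel inside $g'_{\bs{t},j}(W^j_{t_j})^{\mathfrak{b}}$;
\item the $g''\,\partial_j\LZ$ terms cancel between the covariation and the $(\kappa/2-3)g''$ drift of $X^j$ (this and the previous cancellation come from the value of $\mathfrak{b}$);
\item what remains is \eqref{eqn::BPZ_H}.
\end{itemize}
Girsanov in the $t_1$-direction also correctly gives \eqref{eqn::halfwatermelon_marginal_SDE}.

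The gap is the induction step for the conditional law. The ratio $M_{(t_1,\bs{s})}/M_{(t_1,\bs{0})}$ is \emph{not} the $(n-1)$-curve martingale in $\HH\setminus\eta^1_{[0,t_1]}$. Use the identities
$\LZ_{\shuffle_n}^{(\kappa)}(w,y_2,\ldots,y_n)=\prod_{j\ge 2}(y_j-w)^{2/\kappa}\,\LZ_{\shuffle_{n-1}}^{(\kappa)}(y_2,\ldots,y_n)$
and
$\blm(\HH;K_1,\ldots,K_n)=\blm(\HH\setminus K_1;K_2,\ldots,K_n)+\sum_{j\ge2}\blm(\HH;K_1,K_j)$.
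Split off the restriction factors, which turn independent $\SLE_\kappa$'s in $\HH$ into independent $\SLE_\kappa$'s in $\HH\setminus\eta^1_{[0,t_1]}$. What is left is the $(n-1)$-curve martingale in $g^1_{t_1}$-coordinates times
\[
g'_{(t_1,\bs{s}),1}\big(W^1_{t_1}\big)^{\mathfrak{b}}\prod_{j\ge 2}\Big(\frac{X^j_{(t_1,\bs{s})}-X^1_{(t_1,\bs{s})}}{g^1_{t_1}(x_j)-W^1_{t_1}}\Big)^{2/\kappa}.
\]
This factor is not identically $1$. It is the weight of a force point of weight $2$ at $W^1_{t_1}$, so under the tilt $g^1_{t_1}(\eta^2)$ feels an extra drift from the tip of $\eta^1_{[0,t_1]}$. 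That is exactly what should happen, since the rest of $\eta^1$ is still to come. But it means the conditional law of $(\eta^2,\ldots,\eta^n)$ given $\eta^1_{[0,t_1]}$ is not half-$(n-1)$-watermelon $\SLE_\kappa$ in $\HH\setminus\eta^1_{[0,t_1]}$, so your induction hypothesis does not apply. Moreover, Definition~\ref{def::halfwatermelonSLE} conditions on the \emph{entire} curve $\eta^1$, i.e.\ on $t_1=\infty$, which no finite-$\bs{t}$ tilt reaches.

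What is missing is an infinite-time limit. Work under $\SLE_\kappa(2,\ldots,2)$ for $\eta^1$ tensored with independent $\SLE_\kappa$'s for the other curves. There $t_1\mapsto M_{(t_1,\bs{s})}/M_{(t_1,\bs{0})}$ is a (local) martingale, and you need to show three things as $t_1\to\infty$:
\begin{itemize}
\item it converges;
\item the displayed factor tends to $1$ (heuristically because $g^1_{t_1}(x_j)-W^1_{t_1}\to\infty$ by transience, while the images $g^1_{t_1}(\eta^j_{[0,s_j]})$ stay of bounded size);
\item the convergence holds in $L^1$.
\end{itemize}
Only then do the restriction formula with respect to the full curve $\eta^1$ and the induction hypothesis in $\HH\setminus\eta^1$ identify the conditional law. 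Alternatively, extend the tilted family to $\bs{t}=\bs{\infty}$ and use the resampling characterization quoted after Definition~\ref{def::halfwatermelonSLE}; this also needs the infinite-time extension.

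Your localization paragraph also needs repair, though this is secondary:
\begin{itemize}
\item Non-collision of Dyson Brownian motion controls the configuration only along the common-time path, not on all of $[\bs{0},\bs{t}]$.
\item A lower bound on the gaps $|X^i_{\bs{t}}-X^j_{\bs{t}}|$ does not keep the tip of one curve off the body of another. That degeneration shows up as $g'_{\bs{t},j}(W^j_{t_j})\to0$ while the gaps stay positive.
\item For $\kappa\in(8/3,4]$ one has $\mathfrak{c}>0$, so bounding $M$ also requires stopping when $\blm_{\bs{t}}$ becomes large.
\end{itemize}
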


If we set $t_1=t\ge 0$ and $t_2=\cdots=t_n=0$, the martingale~\eqref{eqn::halfwatermelon_multitime_mart} becomes 
\begin{equation*}\label{eqn::chordalSLEkappa2_mart}
M_t(\LZ_{\shuffle_n}^{(\kappa)})=\prod_{j=2}^n g_t'(x_j)^{\mathfrak{b}}\times \LZ_{\shuffle_n}^{(\kappa)}(W_t, g_t(x_2), \ldots, g_t(x_n)), 
\end{equation*}
which coincides with \eqref{eqn::SLEkapparho_martingale} with $\rho^L_{\ell}=\cdots=\rho^L_1=0$ and $\rho^R_1=\cdots=\rho^R_r=2$.

\begin{lemma} \label{lem::finite_single_LDP}
Fix $2$-polygon $(\Omega;x,y)$ and assume $D\subset \Omega$ is a simply connected subdomain which agrees with $\Omega$ in a neighborhood of $x$ and has a positive distance from $y$. Suppose $\eta\sim \PPtwo(\Omega; x, y)$  is chordal $\SLE_{\kappa}$ in $(\Omega;x,y)$. Then the family of laws of $\eta_{[0,T_D]}$ under $\PPtwo(\Omega; x, y)$ satisfies large deviation principle in the space $(\chamber_{T_D}(\Omega; x, y), \dist_{\chamber})$ as $\kappa\to 0+$ with good rate function $\rate(\Omega; x, y; \cdot)$ given by~\eqref{eqn::LDP_rate_function_standard}:
\begin{equation*}
	\rate(\Omega; x, y; \eta_{[0, T_D]})	:= \; \frac{1}{2} \int_{0}^{T_D} \left(\dot{W}_t\right)^2 \ud t,
\end{equation*}
if $W$ is absolutely continuous up to $T_D$; and setting $\rate(\Omega; x, y; \eta_{[0, T_D]}):= +\infty$ otherwise.
\end{lemma}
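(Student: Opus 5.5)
This finite-time statement should follow from Schilder's theorem for $\sqrt{\kappa}B$, transferred to curves through the Loewner transform, with the random exit time $T_D$ handled by stopping. By conformal invariance of chordal $\SLE_\kappa$ and of the energy, it suffices to treat $(\Omega;x,y)=(\HH;0,\infty)$ with $D\subset\HH$ a domain agreeing with $\HH$ near $0$ and bounded away from $\infty$. Here the topology on $\chamber_{T_D}$ is that of $\dist_{\chamber}$, i.e. uniform convergence of $\varphi_{\U}(\eta_{t\wedge T_D})$, which near $0$ is equivalent to uniform convergence of $\eta_{t\wedge T_D}$ in $\overline{\HH}$.

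\textbf{Step 1 (driving functions).} Since $D$ is bounded away from $\infty$, there is a deterministic $T_0<\infty$ with $T_D\le T_0$ for every curve: $\hcap$ of a hull contained in $\overline{D}$ is bounded. So everything is determined by $W_{[0,T_0]}=\sqrt{\kappa}B_{[0,T_0]}$. By Schilder's theorem this family satisfies an LDP in $C([0,T_0])$ with good rate $\frac12\int_0^{T_0}\dot W^2\,\ud t$.

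\textbf{Step 2 (contraction to curves).} Consider the map $W\mapsto \eta_{[0,T_D]}$. The Loewner transform is not continuous on all of $C([0,T_0])$. It is, however, continuous at drivers of finite energy: such drivers have $\frac12$-Hölder control, they generate simple curves (Lemma~\ref{lem::finite_energy_simplechord} with $\lambda\equiv1$), and the trace depends continuously on the driver in sup-norm on sets where a uniform modulus is known. To get around the lack of global continuity, I would invoke the extended contraction principle of~\cite{AbuzaidPeltolaLargeDeviationCapacityParameterization}: continuity on the sublevel sets $\{\frac12\int\dot W^2\le L\}$, together with exponential approximation, transfers the LDP. Concretely, I would approximate by piecewise-linear interpolations $W^{(m)}$ of $W$, for which the Loewner map is continuous. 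Superexponential closeness, $\kappa\log\PP[\dist(\eta^{(m)},\eta)>\delta]\to-\infty$ as $m\to\infty$, follows from Hölder bounds on Brownian increments together with quantitative Loewner stability estimates.

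\textbf{Step 3 (stopping at $T_D$).} The map $\eta\mapsto T_D$ is only semicontinuous. I would handle this by showing that the set of finite-energy curves which touch $\partial D$ without exiting is negligible for the rate function: perturb $D$ slightly, use the monotonicity of the rate under restriction, and take infima over a sequence $D_k\uparrow D$. Identifying the rate then gives $\inf\{\frac12\int_0^{T_0}\dot W^2 : W|_{[0,T_D]}\text{ generates }\eta\}=\frac12\int_0^{T_D}\dot W^2$, since the infimum is attained by extending $W$ as a constant after $T_D$. Goodness follows because sublevel sets are images of compact sets under a map that is continuous on them.

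\textbf{Main obstacle.} The delicate point is Step 2 together with the boundary effect at $T_D$: proving the exponential-approximation estimate uniformly, and showing that the discontinuity of the exit time does not spoil the LDP bounds. I expect that this, rather than Schilder's theorem, is the reason the result is attributed to~\cite{AbuzaidPeltolaLargeDeviationCapacityParameterization}, whose finite-time arguments would be imported essentially verbatim.
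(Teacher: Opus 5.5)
Your route is genuinely different from the paper's and much heavier, and as a standalone proof Step~2 has a gap. The paper never returns to the driving function. It takes the full-curve LDP of Lemma~\ref{lem::LDP_SLEkappa_standard}: chordal $\SLE_\kappa$ in $(\chamber(\Omega;x,y),\dist_{\chamber})$ with good rate $\rate$, i.e.\ \cite[Theorem~1.2]{AbuzaidPeltolaLargeDeviationCapacityParameterization}. It then pushes this forward under the truncation $\eta\mapsto\eta_{[0,T_D]}$ by the contraction principle. The resulting rate $\inf\{\rate(\eta)\colon \eta_{[0,T_D]}=\cdot\}$ is identified with the truncated energy by continuing the driver after $T_D$, as in your Step~3. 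So the lemma is a corollary of a result already stated in the paper. Your Steps~1--2 rebuild a finite-time LDP that is never needed.

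\textbf{The gap in Step~2.} Brownian H\"older bounds plus ``quantitative Loewner stability'' do not give $\kappa\log\PP[\dist(\eta^{(m)},\eta)>\delta]\to-\infty$.
\begin{itemize}
\item The Loewner map is not uniformly continuous in the sup norm on drivers.
\item The sup-norm stability results that exist need one of two inputs. The first is a local $\tfrac12$-H\"older norm below $4$ (Lind's criterion), which $\sqrt{\kappa}B$ fails on every interval by L\'evy's modulus. The second is an a priori bound on $|f_t'|$ near the tip of the trace.
\end{itemize}
What you actually need is a derivative or tip-structure estimate for $\SLE_\kappa$ holding off an event of probability at most $e^{-M/\kappa}$, uniformly as $\kappa\to0$. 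That is the substance of the finite-time LDP. Deferring it to \cite{AbuzaidPeltolaLargeDeviationCapacityParameterization} amounts to citing their result. At that point it is simpler to cite the full-curve LDP and contract, as the paper does. Note also that continuity on sublevel sets cannot feed Lemma~\ref{lem::generalized_contraction}(1), since those sets have Wiener measure zero. So everything rests on the unproved exponential-approximation estimate.

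\textbf{Step~3.} Your concern is well founded. $T_D$ is only upper semicontinuous, so truncation is discontinuous at finite-energy curves that touch $\partial D\cap\Omega$ from inside before exiting: a slight outward push makes them exit early. The paper's proof simply asserts that the projection is continuous, so this point is not treated there either. However, your remedy via $D_k\uparrow D$ and monotonicity is only a gesture, not an argument. Your goodness argument (images of compact sublevel sets under a map continuous on them) also fails at exactly these curves. Such curves have finite energy, and $W\mapsto\eta_{[0,T_D]}$ is discontinuous at them already on $\{\tfrac12\int\dot W^2\le L\}$ for any $L$ exceeding their energy.
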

\begin{proof}
By Lemma~\ref{lem::LDP_SLEkappa_standard}, the family $\{\PPtwo(\Omega; x, y)\}_{\kappa\in(0,4]}$ of laws of chordal $\SLE_\kappa$ satisfies large deviation principle in the space $(\chamber(\Omega; x, y), \dist_{\chamber})$ as $\kappa\to 0+$ with good rate function $\rate(\Omega; x, y; \cdot)$. Since the projection
\begin{equation*}
\begin{cases}
	 (\chamber(\Omega; x, y), \dist_{\chamber}) \to (\chamber_{T_D}(\Omega; x, y), \dist_{\chamber}), \\
	 \eta \mapsto \eta_{[0, T_D]},
\end{cases}
\end{equation*}
is a continuous map, the contraction principle implies the desired result.
\end{proof}

\begin{proof}[Proof of Proposition~\ref{prop::finite_watermelon_LDP}]
It suffices to show the conclusion for $(\Omega;\bs{x},y)=(\HH;\bs{x},\infty)$ and we eliminate them from the notation. We simply write 
\begin{align*}
&\chamber_{\bs{t}}=\chamber_{\bs{t}}(\HH;\bs{x},\infty), \quad \ratefusion{n}(\HH;\bs{x},\infty;\bs{\eta}_{[\bs{0}, \bs{t}]})=\ratefusion{n}(\bs{\eta}_{[\bs{0}, \bs{t}]}). 
\end{align*}
For each $j\in \{1,\ldots, n\}$, let $\eta^{j}$ be chordal $\SLE_{\kappa}$ in $(\HH; x_j, \infty)$ and let $\PP_{n}^{(\kappa)}$ be the probability measure on $\bs{\eta}=(\eta^{1}, \ldots, \eta^{n})$ under which the curves are independent. Let $\E_{n}^{(\kappa)}$ be the expectation under the probability measure $\PP_{n}^{(\kappa)}$.
Note that $\chamber_{\bs{T}_D}$ is an open topological subspace of $\prod_{j=1}^{n} \chamber_{T_D^j}(\HH;x_j,\infty)$. 
We have the following two observations. 
\begin{itemize}
	\item A direct consequence of Lemma~\ref{lem::finite_single_LDP} implies that the law of $\bs{\eta}_{[\bs{0},\bs{T}_D]}$ under $\PP_{n}^{(\kappa)}$ satisfies large deviation principle in the space \[\left( \prod_{j=1}^{n} \chamber_{T_D^j}(\HH;x_j,\infty), \dist_{\chamber} \right)\] as $\kappa\to 0+$ with good rate function 
	\begin{equation}\label{eqn::Varadhan_application_aux1}
		\rate_{\oplus n}(\bs{\eta}_{[\bs{0},\bs{T}_D]}):=\frac{1}{2}\sum_{j=1}^n \int_0^{T_D^j}\left(\dot{W}^j_{t_j}\right)^2\ud t_j. 
	\end{equation}
	\item For $\bs{\eta}_{[\bs{0}, \bs{T}]} \in {\chamber}_{\bs{T}}$, we define
\begin{align}
	\Phi(\bs{\eta}_{[\bs{0}, \bs{T}]};\kappa)=&\kappa\log \frac{M_{\bs{T}}(\LZ_{\shuffle_n}^{(\kappa)})}{M_{\bs{0}}(\LZ_{\shuffle_n}^{(\kappa)})}\label{eqn::Def_Phi}\nonumber	\\
	=& \underbrace{- \LU_{\shuffle_n} (\bs{X}_{\bs{T}}) + \LU_{\shuffle_n}(\bs{x}) }_{\Phi_0(\bs{\eta}_{[\bs{0}, \bs{T}]}):=}
	\underbrace{ - \blm_{\bs{T}} }_{\Phi_1(\bs{\eta}_{[\bs{0}, \bs{T}]}):=} \underbrace{ \frac{(6-\kappa)(8-3\kappa)}{4} }_{f_1(\kappa):=} + \underbrace{ (3-\kappa/2) }_{f_2(\kappa):=} \underbrace{ \sum_{j=1}^n \log g_{\bs{T},j}'\left(W_{T_j}^{j}\right) }_{\Phi_2(\bs{\eta}_{[\bs{0}, \bs{T}]}):=}. \notag
\end{align}
By Lemma~\ref{lem::halfwatermelon_mart}, for any subset $A\subset \chamber_{\bs{T}_D}$, we have
\begin{equation*}
	\PPfusion{n} [A]= \E_{n}^{(\kappa)} \left[ \exp\left( \frac{1}{\kappa} \Phi(\bs{\eta}_{[\bs{0}, \bs{T}_D]};\kappa)\right) \one\{\bs{\eta}_{[\bs{0}, \bs{T}_D]}\in A\}\right]. 
\end{equation*}
\end{itemize}
To apply the generalization of Varadhan's lemma (see Lemma~\ref{lem::Varadhan} and Remark~\ref{rmk::Varadhan}), we have the following observations.
\begin{itemize}
\item Assume $\left\{ \bs{\eta}_{[\bs{0}, \bs{T}_D]}^{(n)} \right\}$ is a sequence in $(\chamber_{\bs{T}_D},\dist_{\chamber})$ converging to some $\bs{\eta}_{[\bs{0}, \bs{T}_D]}^{(\infty)}\in \overline{\chamber}_{\bs{T}_D}$ and $\bs{X}_{[\bs{0},\bs{T}_D]}^{(n)}$ is the driving function of $\bs{\eta}_{[\bs{0}, \bs{T}_D]}^{(n)}$ for $n\in \Z_+ \cup \{\infty\}$. 
Then by Carathéodory convergence theorem (see e.g.~\cite[Theorem~1.8]{Pommerenke}), the process $\bs{\eta}_{[\bs{0}, \bs{T}_D]}^{(n)}$ converges to $\bs{\eta}_{[\bs{0}, \bs{T}_D]}^{(\infty)}$ in the Carathéodory sense. By Loewner-Kufarev theorem (see e.g.~\cite[Theorem~8.5]{Berestycki2011LecturesOS}), the process $\bs{X}_{[\bs{0},\bs{T}_D]}^{(n)}$ converges to $\bs{X}_{[\bs{0},\bs{T}_D]}^{(\infty)}$ in the sense of uniform convergence. Hence, tha maps $\Phi_0(\bs{\eta}_{[\bs{0}, \bs{T}_D]})$ and $\Phi_j(\bs{\eta}_{[\bs{0}, \bs{T}_D]})$ are continuous for $1\le j\le 2$ and the condition~(a) in Lemma~\ref{lem::Varadhan} holds.
\item  We have $g_{\bs{T},j}'\left(\xi_{T_j}^{j}\right)\le 1$ and $\blm_{\bs{T}}\ge 0$. Thus $(f_1(\kappa),\Phi_1(\bs{\eta}_{[\bs{0}, \bs{T}_D]}))$ and $(f_2(\kappa),\Phi_2(\bs{\eta}_{[\bs{0}, \bs{T}_D]}))$ satisfy condition~(c) in Lemma~\ref{lem::Varadhan}.
\item Since $D$ has a positive distance from $\infty$, there exists $R>0$ such that $D\subset B(0,R)$. By~\cite[Lemma~4.5]{KemppainenSchrammLoewnerevolution}, we have $|X_{\bs{T}_D}^j|\le 3R$. Thus 
\[
	\Phi_0(\bs{\eta}_{[\bs{0}, \bs{T}_D]})\le \frac{n(n-1)}{2} \log(6R)+ \LU_{\shuffle_n}(\bs{x}),
\]
and condition~(d') in Lemma~\ref{lem::Varadhan} holds.
\item The function $\Phi(\bs{\eta}_{[\bs{0}, \bs{T}_D]};\kappa)$ is continuous in $\kappa$ and we have
\begin{align*} \label{eqn::Def_Phi0}
\begin{split}
	\Phi(\bs{\eta}_{[\bs{0}, \bs{T}_D]};0)=\lim_{\kappa\to 0} \Phi(\bs{\eta}_{[\bs{0}, \bs{T}_D]};\kappa)	= - \LU_{\shuffle_n} (\bs{X}_{\bs{T}_D}) + \LU_{\shuffle_n}(\bs{x}) 
	- 12\blm_{\bs{T}_D}  +  3\sum_{j=1}^n \log g_{\bs{T}_D,j}'\left(\xi_{T_D^j}^{j}\right). 	
\end{split}
\end{align*}
\end{itemize}
Combining the observations above and applying Lemma~\ref{lem::Varadhan}, we obtain that the law of $\bs{\eta}_{[\bs{0},\bs{T}_D]}$ under $\PPfusion{n}$ satisfies large deviation principle in the space $(\chamber_{\bs{T}_D},\dist_{\chamber})$ as $\kappa\to 0+$ with rate function
\begin{equation*}\label{eqn::Varadhan_application_aux2}
	\rate_{\oplus n}(\bs{\eta}_{[\bs{0}, \bs{T}_D]})- \Phi(\bs{\eta}_{[\bs{0}, \bs{T}_D]};0)\stackrel{\eqref{eqn::multi_time_energy_chordal_def}}{=}\ratefusion{n}(\bs{\eta}_{[\bs{0}, \bs{T}_D]})
\end{equation*} 
as desired. 
\medbreak
It remains to show that $\ratefusion{n}$ is a good rate function in the space $(\chamber_{\bs{T}_D}, \dist_{\chamber})$, i.e. for any $C>0$, the level set $\{ \bs{\eta}_{[\bs{0},\bs{T}_D]}: \ratefusion{n}(\bs{\eta}_{[\bs{0},\bs{T}_D]})\le C \}$ is compact. To this end, we compare the function $\ratefusion{n}$ and the good rate function $\rate_{\oplus n}$ defined in~\eqref{eqn::Varadhan_application_aux1}. Since $D$ has a positive distance from $\infty$, there exists $R>0$ such that $D\subset B(0,R)$. By~\cite[Lemma~4.5]{KemppainenSchrammLoewnerevolution}, we have 
\begin{equation*}
	|X_{\bs{t}}^j|\le 3R, \quad \text{for } \bs{t}\in [\bs{0},\bs{T}_D] \text{ and } 1\le j\le n.
\end{equation*}
Plugging into~(\ref{eqn::finite_energy_simple_multichord_aux1},\ref{eqn::finite_energy_simple_multichord_aux3}) and letting $t_2=\cdots=t_n=0$, we have
\begin{align*}
	\ratefusion{n}(\bs{\eta}_{[\bs{0},\bs{T}_D]}) \ge & \rate(\eta_{[0, T_D^1]}^1) + 2\sum_{j=2}^{n} \log(x_j-x_1) - 2n \log(6R),
\end{align*}
which implies
\begin{equation} \label{eqn::finite_watermelon_LDP_aux1}
	\rate_{\oplus n}(\bs{\eta}_{[\bs{0},\bs{T}_D]}) \le n \ratefusion{n}(\bs{\eta}_{[\bs{0},\bs{T}_D]}) + 2n^2 \log(6R) - 4 \sum_{i<j} \log(x_j-x_i),
\end{equation}
by symmetry. Assume $\left\{ \bs{\eta}_{[\bs{0},\bs{T}_D]}^{(k)} \right\}_{k\ge 1}\subset \chamber_{\bs{T}_D}$ is a sequence such that $\ratefusion{n}\left(\bs{\eta}_{[\bs{0},\bs{T}_D]}^{(k)}\right)\le C$ for $k\ge 1$. Since $\rate_{\oplus_n}$ is a good rate function in the space $\left( \prod_{j=1}^{n} \chamber_{T_D^j}(\HH;x_j,\infty), \dist_{\chamber} \right)$, combining with~\eqref{eqn::finite_watermelon_LDP_aux1}, we can pass to a subsequence, still denoted by $\left\{ \bs{\eta}_{[\bs{0},\bs{T}_D]}^{(k)} \right\}_{k\ge 1}$, which converges to some element $\bs{\eta}_{[\bs{0},\bs{T}_D]}^{(\infty)} \in  \overline{\chamber}_{\bs{T}_D}$. Recalling~\eqref{eqn::multi_time_energy_chordal_def}, we have
\begin{align} \label{eqn::finite_watermelon_LDP_aux2}
	\ratefusion{n} (\bs{\eta}_{[\bs{0}, \bs{T}_D]}) \; = \;  \rate_{\oplus n}(\bs{\eta}_{[\bs{0}, \bs{T}_D]}) + 12 \blm_{\bs{T}_D} - 3\sum_{j=1}^{n} \log g'_{\bs{T}_D,j}\left(W^j_{T_D^j}\right) + \LU_{\shuffle_n}(\bs{X}_{\bs{T}_D})  - \LU_{\shuffle_n}(\bs{x}).
\end{align}
On the one hand, the good rate function $\rate_{\oplus_n}$ is lower semicontinuous with respect to $\bs{\eta}_{[\bs{0},\bs{T}_D]}$. On the other hand, the other terms in the RHS of~\eqref{eqn::finite_watermelon_LDP_aux2}, except the first term, are continuous with respect to $\bs{\eta}$ due to Carathéodory convergence theorem (see e.g.~\cite[Theorem~1.8]{Pommerenke}) and Loewner–Kufarev theorem (see e.g.~\cite[Theorem~8.5]{Berestycki2011LecturesOS}). By~\eqref{eqn::finite_watermelon_LDP_aux2}, we have
\begin{equation} \label{eqn::finite_watermelon_LDP_aux3}
	\ratefusion{n} (\bs{\eta}_{[\bs{0},\bs{T}_D]}^{(\infty)}) \le \liminf_{k\to \infty} \ratefusion{n} (\bs{\eta}_{[\bs{0},\bs{T}_D]}^{(k)}) \le C. 
\end{equation}
Eq.~\eqref{eqn::finite_watermelon_LDP_aux3} implies that $\bs{\eta}_{[\bs{0},\bs{T}_D]}^{(\infty)}$ has finite $n$-time energy and Lemma~\ref{lem::finite_energy_simple_multichord} implies that $\bs{\eta}_{[\bs{0},\bs{T}_D]}^{(\infty)}\in \chamber_{\bs{T}_D}$. Thus the level set of $\ratefusion{n}$ is compact and $\ratefusion{n}$ is good.
\end{proof}

\subsection{Large deviation in infinite time: proof of Theorem~\ref{thm::halfwatermelon_LDP}}\label{subsec::LDP_infinite_time_halfwatermelon}

We will prove Theorem~\ref{thm::halfwatermelon_LDP} using Proposition~\ref{prop::finite_watermelon_LDP}, return estimate for chordal $\SLE$ in Proposition~\ref{prop::SLE_return_chordal} and a contraction principle in Lemma~\ref{lem::generalized_contraction}.

\begin{lemma}[{Generalized contraction principle,~\cite[Appendix~C]{AbuzaidPeltolaLargeDeviationCapacityParameterization}}] \label{lem::generalized_contraction}
	Let $X$ and $Y$ be Hausdorff topological spaces and $f:X\to Y$ a measurable map. Suppose that the family $\left\{ \PP^{(\kappa)} \right\}_{\kappa>0}$ of probability measures on $X$ satisfies large deviation principle with rate function $I:X\to [0,\infty]$. Consider the pushforward probability measures $\QQ^{(\kappa)}:=\PP^{(\kappa)} \circ f^{-1}$ of $\PP^{(\kappa)}$ by $f$. Define $J:Y\to [0,\infty]$ by 
	\[
	J(y):=\inf_{x\in f^{-1}(y)} I(x), \qquad y\in Y.
	\]
	\begin{enumerate}
		\item[(1)] Suppose that for each $M\in [0,\infty)$, there exists a closed set $E=E(M)\subset X$ only depending on $M$ such that $f$ restricted to $E$ is continuous and 
		\begin{equation*}
			\limsup_{\kappa\to 0} \kappa \log \PP^{(\kappa)} [X\setminus E] \le -M.
		\end{equation*}
		Then the measures $\QQ^{(\kappa)}$ satisfy the large deviation principle with rate function $J$.
		\item[(2)] If furthermore $I$ is a good rate function and $f$ is continuous on $I^{-1}[0,\infty)$, then $J$ is also a good rate function.
	\end{enumerate}
\end{lemma}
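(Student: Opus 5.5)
The plan is to run the usual proof of the contraction principle, replacing global continuity of $f$ by continuity on the closed sets $E(M)$. First I would record one consequence of the hypotheses. Because $E=E(M)$ is closed, $X\setminus E$ is open, so the LDP lower bound gives
\[
-\inf_{X\setminus E} I\le \liminf_{\kappa\to 0}\kappa\log \PP^{(\kappa)}[X\setminus E]\le -M .
\]
Hence $I\ge M$ on $X\setminus E(M)$, i.e. $\{I<M\}\subset E(M)$.

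\emph{Upper bound.} Let $F\subset Y$ be closed. Since $f|_E$ is continuous and $E$ is closed in $X$, the set $f^{-1}(F)\cap E$ is closed in $X$. Splitting
\[
\QQ^{(\kappa)}[F]\le \PP^{(\kappa)}[f^{-1}(F)\cap E]+\PP^{(\kappa)}[X\setminus E]
\]
and applying the LDP upper bound for $\PP^{(\kappa)}$ to the first term gives
\[
\limsup_{\kappa\to 0}\kappa\log\QQ^{(\kappa)}[F]\le \max\Bigl(-\inf_{f^{-1}(F)} I,\,-M\Bigr)=\max\Bigl(-\inf_F J,\,-M\Bigr).
\]
Letting $M\to\infty$ yields the upper bound.

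\emph{Lower bound.} Let $G\subset Y$ be open, and fix $x\in f^{-1}(G)$ with $I(x)<\infty$. Choose $M>I(x)+1$, so that $x\in E(M)$ by the observation above. Continuity of $f|_E$ gives an open $U\subset X$ with $U\cap E=f^{-1}(G)\cap E\ni x$. Then
\[
\QQ^{(\kappa)}[G]\ge \PP^{(\kappa)}[U]-\PP^{(\kappa)}[X\setminus E].
\]
The LDP lower bound gives $\PP^{(\kappa)}[U]\ge \exp(-(I(x)+\delta)/\kappa)$ for small $\kappa$, while $\PP^{(\kappa)}[X\setminus E]\le \exp(-(M-\delta)/\kappa)$. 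Since $M-\delta>I(x)+\delta$ for small $\delta$, the subtracted term is negligible, so $\liminf\kappa\log\QQ^{(\kappa)}[G]\ge -I(x)$. Taking the infimum over $x$ gives $-\inf_G J$.

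\emph{Goodness in (2).} I would show that $\{J\le a\}=f(\{I\le a\})$.
\begin{itemize}
\item Attainment of the infimum: for $y$ with $J(y)\le a$, take $M>a+1$. The set $f^{-1}(y)\cap E(M)$ is closed, since $\{y\}$ is closed ($Y$ is Hausdorff) and $f|_E$ is continuous.
\item Its intersection with the compact set $\{I\le a+1\}\subset E(M)$ is therefore compact and nonempty, and the lower semicontinuous $I$ attains its minimum there. This gives the claimed identity.
\item Compactness: $f$ is continuous on $I^{-1}[0,\infty)\supset\{I\le a\}$, and the image of a compact set under a continuous map is compact. So $\{J\le a\}$ is compact, hence closed ($Y$ Hausdorff), and $J$ is a good rate function.
\end{itemize}

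The main obstacle is that in part (1) one must also check that $J$ is lower semicontinuous, so that it is a genuine rate function. In the setting of (2) this is automatic from the closed level sets just obtained, which is the case actually used in this paper. In general I would first try to show that the lower semicontinuous regularization of $J$ satisfies the same two bounds, since the bounds only see infima over open and closed sets, and then invoke uniqueness of lower semicontinuous rate functions.
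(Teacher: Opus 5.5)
The paper gives no proof of its own here; it quotes the lemma from Abuzaid--Peltola, Appendix~C. Your argument is the standard one, and each step is correct as written:
\begin{itemize}
\item the observation $\{I<M\}\subset E(M)$, obtained from the lower bound applied to the open set $X\setminus E(M)$;
\item the splitting $\QQ^{(\kappa)}[F]\le \PP^{(\kappa)}[f^{-1}(F)\cap E]+\PP^{(\kappa)}[X\setminus E]$, using that $f^{-1}(F)\cap E$ is closed in $X$;
\item the open set $U$ with $U\cap E=f^{-1}(G)\cap E$ for the lower bound;
\item the identity $\{J\le a\}=f(\{I\le a\})$ for goodness in (2).
\end{itemize}

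Only your last paragraph should change. Lower semicontinuity of $J$ cannot be proved from (1) alone, because it is false in that generality. Take $X=\R^2$, $Y=\R$, $f(x_1,x_2)=x_1$, and let $\PP^{(\kappa)}$ be the $\kappa$-independent law of $(\ee^{Z},\ee^{-Z})$ with $Z$ standard Gaussian. This family satisfies the LDP with the lower semicontinuous but not good rate function $I=0$ on the closed set $S=\{x_1x_2=1,\ x_1>0\}$ and $I=\infty$ off $S$. Hypothesis (1) holds with $E(M)=X$. Yet $J=0$ on $(0,\infty)$ while $J(0)=\infty$, so $J$ is not lower semicontinuous.

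So statement (1) can only mean that the two LDP bounds hold with $J$, which is exactly what you proved. Equivalently, the LDP holds with the lower semicontinuous regularization $\bar J$. It satisfies the same bounds because $\inf_G \bar J=\inf_G J$ for open $G$ and $\inf_F\bar J\le \inf_F J$ for closed $F$. Your regularization-plus-uniqueness plan would therefore produce $\bar J$, not show $\bar J=J$. Uniqueness of rate functions only compares lower semicontinuous ones, so it cannot identify $\bar J$ with the possibly non-lower-semicontinuous $J$.

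Under the additional hypotheses of (2), your level-set identity shows that $J$ itself is good. That is the only situation in which the paper applies the lemma.
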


\paragraph*{Projective system.}
Let $(\Lambda,\le)$ be a partially ordered, right-filtering set (for any $i,j\in \Lambda$, there exists $k\in \Lambda$ such that $i\le k$ and $j\le k$).
\begin{itemize}
	\item A projective system $(X_j,p_{ij})_{i\le j\in \Lambda}$ consists of Hausdorff topological spaces $(X_j)_{j\in \Lambda}$ and continuous maps $p_{ij}:X_j\to X_i$ such that $p_{ik}=p_{ij}\circ p_{jk}$ whenever $i\le j\le k$.
	\item The projective limit of this system, denote by $\varprojlim X_j$, is the subset of the topological product space $\prod_{j\in \Lambda} X_j$, consisting all of the elements $(x_j)_{j\in \Lambda}$ for which $x_i=p_{ij}(x_j)$ whenever $i\le j$, equipped with the topology induced by $\prod_{j\in \Lambda} X_j$.
\end{itemize}

\begin{lemma}[{Dawson-G\"artner Theorem,~\cite[Theorem~4.6.1]{DemboZeitouniLargeDeviations}}] \label{lem::Dawson-Gartner}
Let $(X_j)_{j\in \Lambda}$ be a projective system and $\varprojlim X_j$ its projective limit with projections $p_j:\varprojlim X_j\to X_j$. Let $\left\{  \PP^{(\kappa)} \right\}_{\kappa>0}$ be a family of probability measures on $\varprojlim X_j$. Suppose that for every $j\in \Lambda$, the family $\left\{ \PP^{(\kappa)} \circ p_j^{-1} \right\}_{\kappa>0}$ of pushforward probability measures satisfies large deviation principle with good rate function $I_j:X_j\to [0,\infty]$. Then, the family $\left\{ \PP^{(\kappa)} \right\} _{\kappa>0}$ satisfies large deviation principle with good rate function $I:\varprojlim X_j\to [0,\infty]$, where
\begin{equation*}
	I(x):=\sup_{j\in \Lambda} I_j(p_j(x)), \qquad x\in \varprojlim X_j.
\end{equation*}
\end{lemma}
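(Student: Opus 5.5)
The plan is to run the standard argument of \cite[Theorem~4.6.1]{DemboZeitouniLargeDeviations}: prove goodness of $I$ first, then the lower bound, then the upper bound for closed sets. Two preliminary facts are needed throughout.

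The first is that the cylinder sets $p_j^{-1}(U_j)$, with $j\in\Lambda$ and $U_j\subset X_j$ open, form a base of the topology of $\varprojlim X_j$. This uses right-filtering: a finite intersection $\bigcap_k p_{j_k}^{-1}(U_{j_k})$ can be rewritten as $p_j^{-1}\big(\bigcap_k p_{j_kj}^{-1}(U_{j_k})\big)$ for any $j\ge j_k$ for all $k$.

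The second is the consistency $I_i\circ p_{ij}\le I_j$ for $i\le j$. It holds because $\PP^{(\kappa)}\circ p_i^{-1}$ is the pushforward of $\PP^{(\kappa)}\circ p_j^{-1}$ under the continuous map $p_{ij}$. By the contraction principle and uniqueness of good rate functions, $I_i(z)=\inf\{I_j(y): p_{ij}(y)=z\}$.

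Goodness. Each $I_j\circ p_j$ is lower semicontinuous, so $I=\sup_j I_j\circ p_j$ is as well. For the level sets, write $L_j(\alpha):=I_j^{-1}[0,\alpha]$, which is compact. Then
\[
\{I\le \alpha\}=\varprojlim X_j\cap \prod_{j}L_j(\alpha).
\]
The projective limit is closed in the product, since the $X_j$ are Hausdorff and the $p_{ij}$ are continuous. The product $\prod_j L_j(\alpha)$ is compact by Tychonoff. Hence $\{I\le\alpha\}$ is compact.

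Lower bound. Let $G$ be open and $x\in G$. Using the cylinder base, choose $j$ and an open $U_j$ with $x\in p_j^{-1}(U_j)\subset G$. Then
\[
\liminf_{\kappa\to 0}\kappa\log\PP^{(\kappa)}[G]\ \ge\ \liminf_{\kappa\to 0}\kappa\log \PP^{(\kappa)}\circ p_j^{-1}[U_j]\ \ge\ -I_j(p_j(x))\ \ge\ -I(x).
\]
Optimizing over $x\in G$ gives the lower bound.

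Upper bound. Let $F$ be closed. For every $j$ we have $F\subset p_j^{-1}(\overline{p_j(F)})$, so
\[
\limsup_{\kappa\to 0}\kappa\log \PP^{(\kappa)}[F]\ \le\ -\sup_{j}\,\inf_{\overline{p_j(F)}} I_j.
\]
It therefore suffices to show that if $\alpha<\inf_F I$, then $\overline{p_j(F)}\cap L_j(\alpha)=\emptyset$ for some $j$. This is the main obstacle, because it requires lifting points at individual levels to one consistent point. I would argue by contradiction with a finite-intersection argument in the compact space $K:=\prod_i L_i(\alpha)$.

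Suppose every $\overline{p_j(F)}\cap L_j(\alpha)$ contains a point $y_j$. Each $L_i(\alpha)$ is then nonempty: pick $j\ge i$, and by consistency $p_{ij}(y_j)\in L_i(\alpha)$. For each $j$, define the closed set
\[
B_j:=\{(z_i)\in K:\ z_j\in\overline{p_j(F)},\ z_i=p_{ik}(z_k)\text{ for all } i\le k\le j\}.
\]
Each $B_j$ is nonempty: set $z_i=p_{ij}(y_j)$ for $i\le j$ and take $z_i$ arbitrary in $L_i(\alpha)$ otherwise. The family is also monotone, $B_{j'}\subset B_j$ for $j\le j'$, since $p_{jj'}(\overline{p_{j'}(F)})\subset\overline{p_j(F)}$ by continuity. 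By right-filtering, the family therefore has the finite intersection property, and compactness gives a point $x\in\bigcap_j B_j$.

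This $x$ is a consistent family, so $x\in\varprojlim X_j$, and it satisfies $I(x)\le\alpha$. Moreover $x_j\in\overline{p_j(F)}$ for every $j$, so every basic cylinder neighbourhood $p_j^{-1}(U_j)$ of $x$ meets $F$. Since $F$ is closed, $x\in F$, which contradicts $I(x)\le\alpha<\inf_F I$.
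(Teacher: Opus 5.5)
Your proof is correct and is essentially the standard proof of Dembo--Zeitouni, Theorem~4.6.1, which the paper cites without reproducing. You obtain goodness from $\{I\le\alpha\}=\varprojlim X_j\cap\prod_j I_j^{-1}[0,\alpha]$ plus Tychonoff, the lower bound from the cylinder base, and the upper bound from the consistency $I_i\circ p_{ij}\le I_j$ together with a compactness argument; your finite-intersection construction with the sets $B_j$ simply inlines the two facts Dembo--Zeitouni invoke at that step, namely $F=\varprojlim\overline{p_j(F)}$ for closed $F$ and the nonemptiness of a projective limit of nonempty compact sets.
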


We introduce the projective system that we will use. Without loss of generality, we assume $(\Omega;\bs{x},y)=(\HH;\bs{x},\infty)$. 
We simply denote
\[
	\chamber_{\bs{T}(R)}:=\chamber_{\bs{T}_{\HH\cap B(0,R)}}.
\]
The collection $(\chamber_{\bs{T}(R)})_{R\in [0,\infty)}$ of $n$-time parameter simple curves, equipped with the metric $\dist_{\chamber}$ in~\eqref{eqn::metric}, forms a projective system with restriction maps
\begin{equation*}
	p_{R_1,R_2}:\chamber_{\bs{T}(R_2)} \to \chamber_{\bs{T}(R_1)}, \quad \bs{\eta}_{[\bs{0},\bs{T}_{\HH\cap B(0,R_2)}]} \mapsto \bs{\eta}_{[\bs{0},\bs{T}_{\HH\cap B(0,R_1)}]}, \quad \text{for } R_1\le R_2,
\end{equation*}
which are continuous and satisfy $p_{R_1,R_3}=p_{R_1,R_2} \circ p_{R_2,R_3}$ whenever $R_1\le R_2\le R_3$. Its projective limit,
\begin{align*}
	\overleftarrow{\chamber}:= & \varprojlim \chamber_{\bs{T}(R)}= \left\{ \left( \bs{\eta}_{[\bs{0},\bs{T}_{\HH\cap B(0,R)}]} \right)_{R\in [0,\infty)}: \bs{\eta}_{[\bs{0},\bs{T}_{\HH\cap B(0,R_1)}]}=p_{R_1,R_2}\left( \bs{\eta}_{[\bs{0},\bs{T}_{\HH\cap B(0,R_2)}]} \right) \text{ for all }R_1\le R_2 \right\} \\
	\subset & \prod_{R\in [0,\infty)} \chamber_{\bs{T}(R)},
\end{align*}
is equipped with the topology induced by the product topological space $\prod_{R\in [0,\infty)} \chamber_{\bs{T}(R)}$, the projections are given by
\begin{equation} \label{eqn::projection}
	P_R:\overleftarrow{\chamber} \to \chamber_{\bs{T}(R)}, \quad \left( \bs{\eta}_{[\bs{0},\bs{T}_{\HH\cap B(0,R')}]} \right)_{R'\in [0,\infty)} \mapsto \bs{\eta}_{[\bs{0},\bs{T}_{\HH\cap B(0,R)}]}.
\end{equation}

\begin{lemma} \label{lem::LDP_projection}
Fix $n\ge 2$ and $(n+1)$-polygon $(\HH; \bs{x}, \infty)$. 
The family $\{ \PPfusion{n}(\HH; \bs{x}, \infty)\}_{\kappa\in (0,4]}$ of laws of half-$n$-watermelon $\SLE_{\kappa}$ satisfies large deviation principle in $\overleftarrow{\chamber}$ with the topology induced by the product topological space $\prod_{R\in [0,\infty)} \chamber_{\bs{T}(R)}$ as $\kappa\to 0+$ with good rate function  $\ratefusion{n}(\HH; \bs{x}, \infty; \cdot)$ in~\eqref{eqn::multi_time_energy_chordal}. 
\end{lemma}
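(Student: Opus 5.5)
The plan is to apply the Dawson--G\"artner Theorem (Lemma~\ref{lem::Dawson-Gartner}) to the projective system $(\chamber_{\bs{T}(R)}, p_{R_1,R_2})_{R\in[0,\infty)}$. The index set $[0,\infty)$ with its usual order is right-filtering. Half-$n$-watermelon $\SLE_\kappa$ in $(\HH;\bs{x},\infty)$ takes values in $\chamberfusion{n}(\HH;\bs{x},\infty)$, which embeds into $\overleftarrow{\chamber}$ via $\bs{\eta}\mapsto(\bs{\eta}_{[\bs{0},\bs{T}_{\HH\cap B(0,R)}]})_{R}$. Hence $\PPfusion{n}$ induces a probability measure on $\overleftarrow{\chamber}$, and its pushforward under the projection $P_R$ in~\eqref{eqn::projection} is the law of $\bs{\eta}_{[\bs{0},\bs{T}_{\HH\cap B(0,R)}]}$.

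First, for each fixed $R>0$, I take $D=\HH\cap B(0,R)$. This $D$ is a simply connected subdomain, has positive distance from $\infty$, and agrees with $\HH$ near $\bs{x}$ once $R>\max_j|x_j|$. Proposition~\ref{prop::finite_watermelon_LDP} then gives that the law of $P_R(\bs{\eta})$ satisfies an LDP in $(\chamber_{\bs{T}(R)},\dist_{\chamber})$ with good rate function $\ratefusion{n}(\bs{\eta}_{[\bs{0},\bs{T}(R)]})$. For small $R$ (below $\max_j|x_j|$), some curves have $T_D^j=0$ and the corresponding components are trivial. In that case I will either restrict the index set to $R\ge R_0:=\max_j|x_j|+1$, which is still right-filtering and cofinal, so the projective limit is unchanged, or check directly that the statement degenerates correctly. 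Restricting the index set is the cleaner choice.

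Second, Lemma~\ref{lem::Dawson-Gartner} yields an LDP on $\overleftarrow{\chamber}$ with good rate function $\sup_R \ratefusion{n}(\bs{\eta}_{[\bs{0},\bs{T}(R)]})$. It remains to identify this supremum with $\ratefusion{n}(\HH;\bs{x},\infty;\bs{\eta})$ from~\eqref{eqn::multi_time_energy_chordal}. By~\eqref{eqn::multi_time_energy_chordal_def2}, the truncated energy is nondecreasing in $\bs{t}$, so the supremum over $R$ equals $\lim_{R\to\infty}$. The exit times $\bs{T}(R)\to\bs{\infty}$ componentwise, because each $\eta^j$ goes to $\infty$ and the half-plane capacity of the bounded hull $\eta^j\cap\overline{B(0,R)}$ is finite. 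Monotonicity in each coordinate then gives that the limit along $\bs{T}(R)$ agrees with $\lim_{\bs{t}\to\bs{\infty}}$: any $\bs{t}$ is dominated by $\bs{T}(R)$ for large $R$, and conversely each $\bs{T}(R)$ is finite.

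The main subtlety I expect is bookkeeping about which elements of $\overleftarrow{\chamber}$ correspond to genuine curves, and the convention for $\ratefusion{n}$ on such elements. A general element of $\overleftarrow{\chamber}$ is a consistent family of truncations, i.e.\ an $n$-tuple of simple disjoint curves defined for all finite times. It need not reach $\infty$ in the sense of $\chamberfusion{n}$, for instance if the curves stay bounded. I will define $\ratefusion{n}$ on $\overleftarrow{\chamber}$ as the monotone limit above. In the bounded case each $T(R)$ stays finite, and the sup is still the energy of the whole family, so the definition is consistent. Goodness is automatic from Lemma~\ref{lem::Dawson-Gartner}, since each $\ratefusion{n}$ on $\chamber_{\bs{T}(R)}$ is good by Proposition~\ref{prop::finite_watermelon_LDP}.
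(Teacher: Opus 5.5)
Your proposal is correct and is the paper's proof: the paper simply combines Proposition~\ref{prop::finite_watermelon_LDP} (with $D=\HH\cap B(0,R)$) and the Dawson--G\"artner theorem, Lemma~\ref{lem::Dawson-Gartner}. Your extra bookkeeping only makes explicit what the paper leaves implicit: you restrict to the cofinal indices $R>\max_j|x_j|$ so that $D$ satisfies the hypotheses of Proposition~\ref{prop::finite_watermelon_LDP}, and you use monotonicity of the truncated energy in $\bs{t}$ to identify the Dawson--G\"artner supremum with the limit in~\eqref{eqn::multi_time_energy_chordal} (your aside about curves that ``stay bounded'' is moot, since a curve parameterized by half-plane capacity for all times cannot stay in a bounded set, but this does not affect the argument).
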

\begin{proof}
This is a combination of Proposition~\ref{prop::finite_watermelon_LDP} and Lemma~\ref{lem::Dawson-Gartner}.
\end{proof}

Recall we equip $\chamberfusion{n}(\HH; \bs{x}, \infty)$ with metric $\dist_{\chamber}$ defined in~\eqref{eqn::chamberfusion_metric}. The projections~\eqref{eqn::projection} induce a continuous bijection
\begin{equation*}
	\iota: \chamberfusion{n}(\HH; \bs{x}, \infty) \to \overleftarrow{\chamber}, \quad \bs{\eta} \mapsto \left( \bs{\eta}_{[\bs{0},\bs{T}_{\HH\cap B(0,R)}]} \right)_{R\in [0,\infty)}.
\end{equation*}
However, the map $\iota$ is not a homeomorphism from $\chamberfusion{n}(\HH; \bs{x}, \infty)$ to $\dist_{\chamber}$ (see~\cite[Remark~4.2]{AbuzaidPeltolaLargeDeviationCapacityParameterization} for the radial case, and the chordal case is similar). To apply generalized contraction principle, we consider a smaller space:
\begin{equation*}
	F_{\overrightarrow{N}}:=\bigcap_{p=1}^{\infty} \bigcap_{j=1}^{n} \left\{ \bs{\eta}\in \chamberfusion{n}(\HH; \bs{x}, \infty): \eta_{[T_{\HH\cap B(0,N_p)}^j,\infty)}^{j}  \cap B(0,p) =\emptyset \right\},
\end{equation*}
for any sequence of integers $\overrightarrow{N}=(N_p)_{p\in \N}$ with $N_p>p$.

\begin{lemma} \label{lem::homeomorphisim_FN}
For any sequence of integers $\overrightarrow{N}=(N_p)_{p\in \N}$ with $N_p>p$, the continuous bijection
\begin{equation*}
	\iota|_{F_{\overrightarrow{N}}}: \chamberfusion{n}(\HH; \bs{x}, \infty) \to \overleftarrow{\chamber}, \qquad \bs{\eta} \mapsto \left( \bs{\eta}_{[\bs{0},\bs{T}_{\HH\cap B(0,R)}]} \right)_{R\in [0,\infty)}
\end{equation*}
is a homeomorphism and the set $\iota(F_{\overrightarrow{N}})$ is closed.
\end{lemma}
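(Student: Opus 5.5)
We prove two things: that the inverse of $\iota|_{F_{\overrightarrow{N}}}$ is continuous from $\iota(F_{\overrightarrow{N}})$ (with the product topology) to $(\chamberfusion{n}(\HH;\bs{x},\infty),\dist_{\chamber})$, and that $\iota(F_{\overrightarrow{N}})$ is closed in $\overleftarrow{\chamber}$. This follows the radial argument of~\cite[Section~4]{AbuzaidPeltolaLargeDeviationCapacityParameterization}. Since $\varphi_{\U}$ extends continuously to $\overline{\HH}\cup\{\infty\}$ with $\varphi_{\U}(\infty)$ a single point, the metric $\dist_{\chamber}$ controls curves uniformly near $\infty$. The key observation is that on $F_{\overrightarrow{N}}$ the tails $\eta^j_{[T^j_{\HH\cap B(0,N_p)},\infty)}$ stay outside $B(0,p)$. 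Hence their images under $\varphi_{\U}$ lie in $\varphi_{\U}(\HH\setminus B(0,p))$, a neighbourhood of $\varphi_{\U}(\infty)$ whose diameter $\delta_p$ tends to $0$ as $p\to\infty$.

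For continuity of the inverse, take $\bs{\eta}^{(k)},\bs{\eta}\in F_{\overrightarrow{N}}$ with $\iota(\bs{\eta}^{(k)})\to\iota(\bs{\eta})$. This means that for every $R$ the truncations converge in $\dist_{\chamber}$ on $\chamber_{\bs{T}(R)}$. Fix $\epsilon>0$ and choose $p$ with $\delta_p<\epsilon/3$, then set $R=N_p$.
\begin{itemize}
\item For $\bs{t}$ with all $t_j\le T^j_{\HH\cap B(0,R)}$ (for the respective curve), the truncated distance gives $\sup_j|\varphi_{\U}(\eta^{(k),j}_{t_j})-\varphi_{\U}(\eta^j_{t_j})|<\epsilon/3$ for $k$ large.
\item For larger times, both points lie in $\varphi_{\U}(\HH\setminus B(0,p))$, so they are within $\delta_p$ of each other.
\end{itemize}
One subtlety is that the exit times $T^j_{\HH\cap B(0,R)}$ differ for $\bs{\eta}^{(k)}$ and $\bs{\eta}$, so the time intervals must be matched. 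The truncated metric~\eqref{eqn::metric} uses $t_j\wedge T_j$, where $T_j$ is each curve's own exit time. So for $t_j$ between the two exit times, one curve is frozen at its exit point while the other lies in its tail, and both points are outside $B(0,p)$. It therefore suffices that exit points satisfy $|\eta_{T}|=R\ge p$, which holds. Combining the cases gives $\dist_{\chamber}(\bs{\eta}^{(k)},\bs{\eta})<\epsilon$ for $k$ large.

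For closedness, take $\bs{\eta}^{(k)}\in F_{\overrightarrow{N}}$ with $\iota(\bs{\eta}^{(k)})\to\bs{\zeta}=(\bs{\zeta}_R)_R\in\overleftarrow{\chamber}$. The compatible truncations $\bs{\zeta}_R$ glue into a tuple of curves $\bs{\eta}$ with $n$-time parameter, defined on $[0,\sup_R T_R)$. Three things need checking.
\begin{enumerate}
\item Each $\eta^j$ tends to $\infty$. Passing the constraint $\eta^{(k),j}_{[T_{N_p},\infty)}\cap B(0,p)=\emptyset$ to the limit through the truncations at every level $R>N_p$ shows that $\zeta^j_{R}$ avoids $B(0,p)$ after its $N_p$-exit time. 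Hence $|\eta^j_t|\to\infty$ and $\eta^j$ extends continuously to $\infty$.
\item Simplicity and disjointness hold away from $\infty$, because each $\bs{\zeta}_R\in\chamber_{\bs{T}(R)}$.
\item The constraints defining $F_{\overrightarrow{N}}$ hold for $\bs{\eta}$, which is immediate from point 1 (open/closed issues with $B(0,p)$ can be handled with the closed ball or a slight shift).
\end{enumerate}
Thus $\bs{\eta}\in F_{\overrightarrow{N}}$ and $\iota(\bs{\eta})=\bs{\zeta}$.

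The main obstacle is the time-matching in the continuity step, together with verifying in point 1 that the limit curve reaches $\infty$ in infinite capacity time. Capacity is unbounded because the hull leaves every ball, so $T_R\to\infty$. For the time-matching we try first to use only that exit points have modulus $R\ge p$.
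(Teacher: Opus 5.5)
Your continuity argument for $(\iota|_{F_{\overrightarrow{N}}})^{-1}$ is the paper's. Estimate~\eqref{eqn::homeomorphism_aux1} is exactly what your case analysis gives once the mismatched exit times are handled by the triangle inequality through the frozen exit point, whose modulus is $N_p>p$; the factor $2$ there accounts for the two tails.

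For closedness you take a different route. The paper only observes that $F_{\overrightarrow{N}}$ is closed in the curve space with metric $\dist_{\chamber}$, since each violated constraint is an open condition, and then reduces everything to the homeomorphism. You instead show directly that a limit in $\overleftarrow{\chamber}$ of points of $\iota(F_{\overrightarrow{N}})$ glues to a tuple in $F_{\overrightarrow{N}}$. Your version is the one that addresses the coarser topology of $\overleftarrow{\chamber}$. Closedness in the finer $\dist_{\chamber}$-topology plus a homeomorphism onto the image does not by itself exclude limit points of $\iota(F_{\overrightarrow{N}})$ lying outside it. The paper's shorter observation does the job only once one notes that every violation of a constraint is already visible in a finite truncation.

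In your step~1, the point to write out is not the open-versus-closed ball issue but the semicontinuity of exit times. With $T_{N_p}=\inf\{t:|\eta_t|>N_p\}$, every $t>T_{N_p}(\eta)$ admits $s<t$ with $|\eta_s|>N_p$. Hence $T_{N_p}(\eta^{(k)})<t$ for large $k$, and the bound $|\eta^{(k)}_t|\ge p$ passes to the limit.

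One sentence of yours is false and should be dropped: in the chordal setting, leaving every ball does not force the capacity to be unbounded. The curve $x\mapsto x+\ii x\ee^{-x}$, $x\ge 0$, meets every circle $\partial B(0,R)$, yet its half-plane capacity stays bounded (compare $\hcap(K)$ with the area of $\bigcup_{z\in K}B(z,\Im z)$). Capacity must blow up only in the radial case, where the curve approaches an interior point.

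You do not need the claim. The space $\chamber(\HH;x_j,\infty)$ only requires a simple curve from $x_j$ to $\infty$ meeting $\R$ only at $x_j$, so your glued curve on $[0,\sup_R T_R)$, which tends to $\infty$, is already admissible. Had infinite total capacity been built into the definition, the closedness assertion would actually fail. Follow this curve up to $\Re z=k$ and then continue vertically upward. Since $|\eta_t|$ is increasing along it, this gives elements of $F_{\overrightarrow{N}}$ with $N_p=p+1$ whose images converge in $\overleftarrow{\chamber}$ to the finite-capacity curve.
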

\begin{proof}
First, we observe that $F_{\overrightarrow{N}}$ is a closed subset of $\chamberfusion{n}(\HH; \bs{x}, \infty)$. Indeed, for each $p\in \N$ and $1\le j\le n$, the set
\[
\left\{ \bs{\eta}\in \chamberfusion{n}(\HH; \bs{x}, \infty): \eta_{[T_{\HH\cap B(0,N_p)}^j,\infty)}^{j}  \cap B(0,p) \ne\emptyset \right\}
\] 
is open. It suffices to show that $\iota|_{F_{\overrightarrow{N}}}$ is a homeomorphism. Since $\iota|_{F_{\overrightarrow{N}}}$ is a continuous bijection, it suffices to show that $\left(\iota|_{F_{\overrightarrow{N}}}\right)^{-1}$ is continuous. Assuming
\[
	\left( \bs{\eta}_{[\bs{0},\bs{T}_{\HH\cap B(0,R)}]}^{(\ell)} \right)_{R\in [0,\infty)} \to \left( \bs{\eta}_{[\bs{0},\bs{T}_{\HH\cap B(0,R)}]} \right)_{R\in [0,\infty)} \text{  in  } \overleftarrow{\chamber} \text{  as  } \ell \to \infty,
\]
we need to show that $\bs{\eta}^{(\ell)}\to \bs{\eta} \text{ in } \chamberfusion{n}(\HH; \bs{x}, \infty) \text{ as } \ell\to\infty$. 
On the event $F_{\overrightarrow{N}}$, we have for all $p\in \N$,
\begin{equation}\label{eqn::homeomorphism_aux1}
	\dist_{\chamber}(\bs{\eta}^{(\ell)},\bs{\eta}) \le \dist_{\chamber}(\bs{\eta}_{[\bs{0},\bs{T}_{\HH\cap B(0,N_p)}]}^{(\ell)},\bs{\eta}_{[\bs{0},\bs{T}_{\HH\cap B(0,N_p)}]})+ 2 \sup_{x,y\in \HH\setminus B(0,p)} |\varphi_{\U}(x)-\varphi_{\U}(y)|,
\end{equation}
where $\varphi_{\U}: \HH\to \U$ is a conformal map. Letting $\ell\to \infty$ and $p\to \infty$, we have $\bs{\eta}^{(\ell)}\to \bs{\eta}$ as desired.
\end{proof}

\begin{proof}[{Proof of Theorem~\ref{thm::halfwatermelon_LDP}}]
Without loss of generality, we assume $(\Omega;\bs{x},y)=(\HH;\bs{x},\infty)$ and eliminate them from the notation. First, we derive the large deviation principle. Fix $M\in [0,\infty)$. By \eqref{eqn::SLE_return_estimate} in Proposition~\ref{prop::SLE_return_chordal} (see also \cite[Remark~3.10]{AbuzaidPeltolaLargeDeviationCapacityParameterization}), there exists a sequence of integer $\overrightarrow{N}=(N_p)_{p\in \N}$ with $N_p>p$ (depending on $p$ and $M$) such that for $\kappa\in (0,1)$ and $1\le j\le n$,
\begin{equation*} \label{eqn::LDP_aux1}
	\PPfusion{n} \left[ \eta_{[T_{\HH\cap B(0,N_p)}^j,\infty)}^j \cap \partial B(0,p) \neq \emptyset \right] \le \exp(-(M+p \log 2+ \log n)/\kappa)\le \frac{\exp(-M/\kappa)}{2^p n}.
\end{equation*}
Thus
\begin{equation} \label{eqn::LDP_aux2}
	\PPfusion{n} \left[ \overleftarrow{\chamber} \setminus \iota\left(F_{\overrightarrow{N}}\right) \right] \le \sum_{p=1}^{\infty} \sum_{j=1}^{n} \PPfusion{n} \left[ \eta_{[T_{\HH\cap B(0,N_p)}^j,\infty)}^j \cap \partial B(0,p) \neq \emptyset \right] \le \exp(-M/\kappa).
\end{equation}
From Lemma~\ref{lem::homeomorphisim_FN}, the map $\iota^{-1}$ is a continuous on the closed set $\iota\left(F_{\overrightarrow{N}}\right)$. Combining with~\eqref{eqn::LDP_aux2}, Lemma~\ref{lem::LDP_projection} and applying Lemma~\ref{lem::generalized_contraction}, we obtain the large deviation principle:
\begin{align*}\label{eqn::LDP_aux3}
\begin{split}
	&\liminf_{\kappa\to 0} \kappa \log \PPfusion{n}[O] \ge -\inf_{\bs{\eta}\in O} \ratefusion{n}(\bs{\eta}), \quad \text{for open } O\subset \chamberfusion{n}(\HH;\bs{x},\infty), \\
	&\limsup_{\kappa\to 0} \kappa \log \PPfusion{n}[F] \le -\inf_{\bs{\eta}\in F} \ratefusion{n}(\bs{\eta}), \quad \text{for closed } F\subset \chamberfusion{n}(\HH;\bs{x},\infty).
\end{split}
\end{align*}
\medbreak
Next, we show that $\ratefusion{n}$ is a good rate function on $\chamberfusion{n}(\HH;\bs{x},\infty)$. To apply Lemma~\ref{lem::generalized_contraction}, it suffices to show that $\iota^{-1}$ is continuous on $\left(\ratefusion{n}\right)^{-1}[0,\infty)\subset \overleftarrow{\chamber}$. For $p\ge 1$ and $1\le j\le n$, we have
\begin{align}
	-\inf \left\{ \ratefusion{n}(\bs{\eta}): \eta_{[T_{\HH\cap B(0,N_p)}^j,\infty)}^j \cap \partial B(0,p) \neq \emptyset \right\} \le & \limsup_{\kappa\to 0} \kappa \log \PPfusion{n} \left[ \eta_{[T_{\HH\cap B(0,N_p)}^j,\infty)}^j \cap \partial B(0,p) \neq \emptyset \right]\nonumber \\
	\le & -M \tag{due to~\eqref{eqn::LDP_aux1}},
\end{align}
which implies
\begin{equation*} \label{eqn::return_energy}
	\inf \left\{ \ratefusion{n}(\bs{\eta}): \eta_{[T_{\HH\cap B(0,N_p)}^j,\infty)}^j \cap \partial B(0,p) \neq \emptyset \right\} \ge M.
\end{equation*}
Combining with Lemma~\ref{lem::homeomorphisim_FN}, the map $\iota^{-1}$ is continuous on $(\ratefusion{n})^{-1}[0,M) \subset \iota(F_{\overrightarrow{N}})$. As $M\ge 0$ is arbitrary, we see that $\iota^{-1}$ is continuous on $(\ratefusion{n})^{-1}[0,\infty)$. Since $\ratefusion{n}$ is a good rate function on $\overleftarrow{\chamber}$ by Lemma~\ref{lem::LDP_projection}, applying Lemma~\ref{lem::generalized_contraction} (2), we conclude that $\ratefusion{n}$ is a good rate function on $\chamberfusion{n}(\HH;\bs{x},\infty)$.
\end{proof}

\subsection{Large deviation for chordal SLE with force points: proof of Proposition~\ref{prop::LDP_SLEkapparho}}\label{subsec::proof_rho_LDP}

In this section, we prove Proposition~\ref{prop::LDP_SLEkapparho}. The proof is similar to that of Theorem~\ref{thm::halfwatermelon_LDP} in Sections~\ref{sec::LDP_finite_time_halfwatermelon}-\ref{subsec::LDP_infinite_time_halfwatermelon}: we first prove the large deviation principle for finite-time curves (see Lemma~\ref{lem::rhoLR_finite_LDP_chordal}), and then extend it to infinite-time curves using the return estimate in Proposition~\ref{prop::SLE_return_chordal}.

\medbreak 

The following lemma gives an equivalent definition of the energy ${\rate}^{(\bs{\rho})}(\Omega;\bs{x}^L, a, \bs{x}^R, b;\eta_{[0,T]})$ defined in~\eqref{eqn::rho_energy_chordal}. It will be used in the proof of Proposition~\ref{prop::LDP_SLEkapparho}.

\begin{lemma}
	Assume the same notation as in Definition~\ref{def::rho_energy_chordal}.  For $\eta\in \overline{\chamber}(\Omega; a,b)$, we have 
	\begin{align}\label{eqn::rho_energy_chordal2}
		&{\rate}^{(\bs{\rho})}(\Omega;\bs{x}^L, a, \bs{x}^R, b;\eta_{[0,T]}) \nonumber\\
		=& \frac{1}{2}\int_0^T \left(\dot{W}_t\right)^2 \ud t - \left(\sum_{j=1}^\ell \frac{\rho_j^L(\rho_j^L + 4)}{4}\log |g'_T(x_j^L)| +  \sum_{j=1}^r \frac{\rho_j^R(\rho_j^R + 4)}{4} \log |g'_T(x_j^R)|\right. \nonumber \\
			&+ \sum_{j=1}^\ell \rho_j^L \log \left|\frac{W_T - g_T(x_j^L)}{x_j^L}\right| + \sum_{j=1}^r \rho_j^R \log \left|\frac{g_T(x_j^R)-W_T}{x_j^R}\right| + \sum_{\substack{1\leq i \leq \ell \\ 1\leq j \leq r}} \frac{\rho_i^L \rho_j^R}{2} \log \left|\frac{g_T(x_j^R) - g_T(x_i^L)}{x_j^R - x_i^L} \right| \nonumber \\
			&+ \left. \sum_{1\leq i <j \leq \ell} \frac{\rho_i^L \rho_j^L}{2} \log \left|\frac{g_T(x_j^L) - g_T(x_i^L)}{x_j^L - x_i^L} \right|+ \sum_{1\leq i <j \leq r} \frac{\rho_i^R \rho_j^R}{2} \log \left|\frac{g_T(x_j^R) - g_T(x_i^R)}{x_j^R - x_i^R} \right| \right).
	\end{align}
\end{lemma}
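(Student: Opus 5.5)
We reduce to $(\Omega;\bs{x}^L,a,\bs{x}^R,b)=(\HH;\bs{x}^L,0,\bs{x}^R,\infty)$ with $W_0=0$; both sides are defined through the driving function of $\varphi(\eta)$, so this costs nothing. The plan is to expand the square in~\eqref{eqn::rho_energy_chordal} and identify the cross term and the pure drift term as $\ud t$-derivatives of the logarithmic quantities on the right of~\eqref{eqn::rho_energy_chordal2}. Write $V_t^{L,j}=g_t(x_j^L)$ and $V_t^{R,j}=g_t(x_j^R)$, set $D_t:=\sum_{j}\rho_j^L/(W_t-V_t^{L,j})+\sum_j\rho_j^R/(W_t-V_t^{R,j})$, and expand
\[
\tfrac12(\dot W_t-D_t)^2=\tfrac12\dot W_t^2-\dot W_t D_t+\tfrac12 D_t^2 .
\]
Throughout we assume $\eta$ does not swallow any force point before $T$, so that all the logarithms below are finite and differentiable; finite energy forces this, and otherwise both sides are infinite.

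\textbf{Cross term.} Since $\partial_t V^{\cdot,j}_t=2/(V^{\cdot,j}_t-W_t)$, we have
\[
\frac{\ud}{\ud t}\log|W_t-V^{\cdot,j}_t|=\frac{\dot W_t}{W_t-V^{\cdot,j}_t}+\frac{2}{(W_t-V^{\cdot,j}_t)^2}.
\]
Therefore
\[
-\dot W_t D_t=-\frac{\ud}{\ud t}\sum \rho_j\log|W_t-V^{\cdot,j}_t|+\sum\frac{2\rho_j}{(W_t-V^{\cdot,j}_t)^2}.
\]
Integrating from $0$ to $T$ produces the terms $-\rho_j\log|(W_T-g_T(x_j))/x_j|$, using $W_0=0$.

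\textbf{Drift term.} Write $\tfrac12D_t^2=\tfrac12\sum_j\rho_j^2/(W_t-V_t^j)^2+\sum_{i<j}\rho_i\rho_j/\big((W_t-V_t^i)(W_t-V_t^j)\big)$, summing over all force points on both sides. We use two identities. The first is $\frac{\ud}{\ud t}\log g_t'(x)=-2/(g_t(x)-W_t)^2$. The second is
\[
\frac{\ud}{\ud t}\log|V^i_t-V^j_t|=\frac{-2}{(V^i_t-W_t)(V^j_t-W_t)}.
\]
With these, the diagonal part combines with the $2\rho_j/(W-V^j)^2$ from the cross term into $(\rho_j^2/2+2\rho_j)/(W-V^j)^2=-\frac{\rho_j(\rho_j+4)}{4}\frac{\ud}{\ud t}\log g_t'(x_j)$. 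The off-diagonal part equals $-\frac{\rho_i\rho_j}{2}\frac{\ud}{\ud t}\log|V^i_t-V^j_t|$. Integrating, and using $g_0'=1$ and $V_0=x$, gives exactly the $g'_T$ terms and the pairwise terms in~\eqref{eqn::rho_energy_chordal2}, including the mixed $L$--$R$ pairs. All signs come out as in~\eqref{eqn::rho_energy_chordal2}; the absolute values handle the orientation of the differences.

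\textbf{Justification.} The main point needing care is justifying the integration by parts when $W$ is only absolutely continuous. We will check two things. First, $t\mapsto\log|W_t-V_t^j|$ is absolutely continuous on $[0,T]$: away from $t=0$ the gap $|W_t-V_t^j|$ is bounded below on compacts, and at $t=0$ the force points are at positive distance from $0$. Second, $\dot W D$ is integrable, because $D$ is bounded on $[0,T]$. If $W$ is not absolutely continuous, both sides equal $+\infty$ by convention, since the correction terms are finite.
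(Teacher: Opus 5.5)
Your proposal is correct and follows the paper's proof: you expand the square in \eqref{eqn::rho_energy_chordal}, use the same three Loewner identities for $\partial_t\log|W_t-g_t(x)|$, $\partial_t\log g_t'(x)$ and $\partial_t\log|g_t(x)-g_t(\tilde x)|$ to rewrite the cross and drift terms as exact $t$-derivatives, and integrate from $0$ to $T$. Your remarks on absolute continuity and non-swallowing go slightly beyond what the paper writes; note that the claim that finite $\bs{\rho}$-energy rules out swallowing is asserted rather than proved, though the paper does not address this point either.
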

\begin{proof}	Expanding~\eqref{eqn::rho_energy_chordal}, we obtain
		\begin{align}
			&\rate^{(\bs{\rho})}(\HH;\bs{x}^L, 0, \bs{x}^R, \infty;\eta_{[0,T]}) \nonumber \\
			=& \frac{1}{2}\int_0^T \left(\dot{W}_t\right)^2 \ud t  +  \int_0^T \sum_{j=1}^\ell \left(\frac{(\rho_j^L)^2}{2} \frac{1}{(W_t - g_t(x_j^L))^2} -  \frac{\rho_j^L \dot{W_t}}{W_t - g_t(x_j^L)}\right)\ud t \nonumber \\
			&+ \int_0^T \sum_{j=1}^r \left(\frac{(\rho_j^R)^2}{2} \frac{1}{(W_t - g_t(x_j^R))^2} -  \frac{\rho_j^R \dot{W_t}}{W_t-g_t(x_j^R)}\right) \ud t+\int_0^T \sum_{\substack{1\leq i \leq \ell \\ 1\leq j \leq r}} \frac{\rho_i^L \rho_j^R}{(W_t - g_t(x_i^L))(W_t-g_t(x_j^R))} \ud t \nonumber\\
			&+\int_0^T\sum_{1\leq i<j \leq \ell} \frac{\rho_i^L \rho_j^L}{(W_t - g_t(x_i^L))(W_t - g_t(x_j^L))} \ud t + \int_0^T\sum_{1\leq i<j \leq r} \frac{\rho_i^R \rho_j^R}{(W_t - g_t(x_i^R))(W_t - g_t(x_j^R))} \ud t. \label{eqn::rho_energy_expanded_aux_chordal}
		\end{align}
		Standard calculations of Loewner equation gives that, for any $x,\tilde{x}\in \{\bs{x}^L, \bs{x}^R\}$,
		\begin{align*}
			&\partial_t \log |W_t - g_t(x)| = \frac{\dot{W}_t}{W_t - g_t(x)} + \frac{2}{(W_t - g_t(x))^2},\quad \partial_t \log g_t'(x) = -\frac{2}{(W_t - g_t(x))^2},\\
			& \partial_t \log |g_t(x) - g_t(\tilde{x})| = -\frac{2}{(W_t - g_t(\tilde{x}))(W_t - g_t(x))}.
		\end{align*}
		From these, we have
		\begin{align*}
			\frac{2}{(W_t - g_t(x))^2} &= -\partial_t \log g_t'(x), \\
			\frac{\dot{W}_t}{W_t - g_t(x)} &= \partial_t \log |W_t - g_t(x)| + \partial_t \log g_t'(x),\\
			\frac{2}{(W_t - g_t(x))(W_t - g_t(\tilde{x}))} &= -\partial_t \log |g_t(x) - g_t(\tilde{x})|.
		\end{align*}
		Plugging these into \eqref{eqn::rho_energy_expanded_aux_chordal}, we obtain the desired equality.
	\end{proof}

	\begin{lemma}\label{lem::rhoLR_finite_LDP_chordal} 
		Fix $\ell, r\geq 0$, and $(\ell+r+2)$-polygon $(\Omega; \bs{x}^L, a, \bs{x}^R, b)$. Fix $\bs{\rho} = (\bs{\rho}^L; \bs{\rho}^R) \in \mathbb{R}_{\geq 0}^{\ell+r}$. Assume $D\subset \Omega$ is a simply connected subdomain which agrees with $\Omega$ in a neighborhood of the point $a$ and has a positive distance from the point $b$. Then the family of laws of $\eta_{[0,T_D]}$ under $\PPkapparho(\Omega; \bs{x}^L, a, \bs{x}^R, b)$ satisfies large deviation principle in the space $(\chamber_{T_D}(\Omega; \bs{x}^L, a, \bs{x}^R, b), \dist_{\chamber})$ as $\kappa\to 0+$ with good rate function $\rate^{(\bs{\rho})}(\Omega; \bs{x}^L, a, \bs{x}^R, b;\cdot)$ defined in \eqref{eqn::rho_energy_chordal2}.
	\end{lemma}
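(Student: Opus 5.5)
The plan mirrors the proof of Proposition~\ref{prop::finite_watermelon_LDP}, now with a single curve and the Schramm--Wilson martingale of Lemma~\ref{lem::SLEkapparho_martingale} in place of the multi-time martingale. First I reduce to $(\Omega;\bs{x}^L,a,\bs{x}^R,b)=(\HH;\bs{x}^L,0,\bs{x}^R,\infty)$ by conformal invariance, with $D\subset\HH$ bounded, say $D\subset B(0,R)$. The starting point is Lemma~\ref{lem::finite_single_LDP}: the law of $\eta_{[0,T_D]}$ under chordal $\SLE_\kappa$ satisfies an LDP in $(\chamber_{T_D},\dist_{\chamber})$ with good rate function $\frac12\int_0^{T_D}\dot W_t^2\,\ud t$.

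Next I write the Radon--Nikodym derivative. Since $\rho\ge0$ and $\kappa\le4$, no force point is swallowed and the curve stays off $\R$, so by Lemma~\ref{lem::SLEkapparho_martingale}
\[
\frac{\ud\PPkapparho}{\ud\PPtwo}\Big|_{\eta_{[0,T_D]}}=\frac{M_{T_D}}{M_0}=\exp\Big(\frac1\kappa\Phi(\eta_{[0,T_D]};\kappa)\Big),
\]
where $\Phi=\kappa\log(M_{T_D}/M_0)$. I split $\Phi$ into pieces with $\kappa$-dependent prefactors. The terms $\frac{\rho(\rho+4-\kappa)}{4}\log g'_{T_D}(x)$ are $\le0$ because $g'_t(x)\le1$. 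The $\log|g_{T_D}(x)-g_{T_D}(\tilde x)|$ terms are bounded above. The $\rho\log|W_{T_D}-g_{T_D}(x)|$ terms are bounded above since all relevant quantities are $\le 6R$ by~\cite[Lemma~4.5]{KemppainenSchrammLoewnerevolution}. So conditions (c) and (d') of the generalized Varadhan lemma (Lemma~\ref{lem::Varadhan}) hold.

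For continuity, condition (a), I take curves in $\chamber_{T_D}$ converging in $\dist_{\chamber}$. Carathéodory convergence together with Loewner--Kufarev gives uniform convergence of $W$ and local uniform convergence of $g_t$. The limit curve does not touch $\R$ on $(0,T_D]$, so $g'_{T_D}(x)$, $g_{T_D}(x)$ and $W_{T_D}-g_{T_D}(x)$ converge. I expect this step to be the main obstacle: the force points are real, so I have to make sure $W_{T_D}-g_{T_D}(x)$ stays bounded away from $0$ at the limit. This holds because a simple curve in $\chamber_{T_D}$ touches $\R$ only at $0$, so $g_{T_D}(x^R_1)-W_{T_D}\ge g_{T_D}(0^+)-W_{T_D}>0$, and similarly on the left. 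Varadhan then gives the rate function $\frac12\int\dot W^2-\Phi(\cdot;0)$, and this equals~\eqref{eqn::rho_energy_chordal2} term by term: at $\kappa=0$ the derivative exponent becomes $\rho(\rho+4)/4$, and the normalization by $M_0$ produces the denominators $|x|$ and $|x-\tilde x|$.

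Finally, goodness. I bound $\frac12\int\dot W^2$ by $\rate^{(\bs\rho)}+C(R,\bs{x},\bs\rho)$ using~\eqref{eqn::rho_energy_chordal2}. This needs the subtracted logarithms to be bounded above, which they are: the $g'$ terms are $\le0$, and the others are $\le\log(12R/\min|x|)$. Level sets of $\rate^{(\bs\rho)}$ are then relatively compact in $\overline{\chamber}_{T_D}$, and by lower semicontinuity the limit has finite energy. Finite Dirichlet energy of the driving function makes the limit a simple curve; for the non-touching of $\R$, I use the expression~\eqref{eqn::rho_energy_chordal} directly, which is already finite by the same argument as Lemma~\ref{lem::finite_energy_simple_multichord}. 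Hence the limit lies in $\chamber_{T_D}$ and the level sets are compact.
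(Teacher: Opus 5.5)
Your proposal is correct and follows essentially the same route as the paper. Like the paper, you tilt chordal $\SLE_\kappa$ by the Schramm--Wilson martingale ratio $M_{T_D}/M_0$, apply the generalized Varadhan lemma (Lemma~\ref{lem::Varadhan}) with the $\log g'_{T_D}(x)$ terms handled by condition~(c) and the remaining terms bounded via \cite[Lemma~4.5]{KemppainenSchrammLoewnerevolution}, and read off $\rate^{(\bs\rho)}$ from \eqref{eqn::rho_energy_chordal2}. Your goodness argument, via $\frac12\int_0^{T_D}\dot W_t^2\,\ud t\le \rate^{(\bs\rho)}+A_0$, is a slightly more direct version of the comparison the paper borrows from Proposition~\ref{prop::finite_watermelon_LDP}; two small points there: take $R>\max\{|x_\ell^L|,|x_r^R|\}$ so the $6R$ bound is valid, and your separate argument that the limit does not touch $\R$ is unnecessary, since finite Dirichlet energy already forces the trace to be a simple curve meeting $\R$ only at $0$.
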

	
	\begin{proof}
		It suffices to show the conclusion for $(\Omega; \bs{x}^L, a, \bs{x}^R, b) = (\HH; \bs{x}^L, 0, \bs{x}^R, \infty)$ and we eliminate them from the notation.
		Recall that the law of chordal $\SLE_{\kappa}(\bs{\rho}^L; \bs{\rho}^R)$ in $\HH$ from $0$ to $\infty$ with force points $(\bs{x}^L; \bs{x}^R)$ is the same as the law of chordal $\SLE_{\kappa}$ in $(\HH; 0, \infty)$ tilted by $M_t$ defined in~\eqref{eqn::SLEkapparho_martingale}, up to the first time $x^L_1$ or $x^R_1$ is swallowed. For $\eta_{[0,t]}\in \overline{\chamber}_t(\HH;0,\infty)$, we define 
		\begin{align*}
			\Phi(\eta_{[0,t]};\kappa):=&\kappa \log\frac{M_t}{M_0} \\
			=&
			\sum_{j=1}^\ell \rho_j^L \log \left|\frac{g_t(x_j^L)-W_t}{x_j^L}\right|+\sum_{j=1}^r \rho_j^R \log \left|\frac{g_t(x_j^R)-W_t}{x_j^R}\right| + \sum_{\substack{1\leq i \leq \ell \\ 1\leq j \leq r}} \frac{\rho_i^L \rho_j^R}{2} \log \left|\frac{g_t(x_j^R) - g_t(x_i^L)}{x_j^R-x_i^L}\right|\\
			&+ \sum_{1\leq i<j\leq \ell} \frac{\rho_i^L \rho_j^L}{2} \log \left|\frac{g_t(x_j^L) - g_t(x_i^L)}{x_j^L-x_i^L}\right| + \sum_{1\leq i<j\leq r} \frac{\rho_i^R \rho_j^R}{2} \log \left|\frac{g_t(x_j^R) - g_t(x_i^R)}{x_j^R-x_i^R}\right|\\
			&+ \sum_{i=1}^\ell \frac{\rho_i^L(\rho_i^L+4-\kappa)}{4} \log g_t'(x_i^L) + \sum_{j=1}^r \frac{\rho_j^R(\rho_j^R+4-\kappa)}{4} \log g_t'(x_j^R). 
		\end{align*}
		By Lemma~\ref{lem::SLEkapparho_martingale}, for any subset $A\subset \chamber_{T_D}(\HH;0,\infty)$, we have
		\[
		\PPkapparho[A] = \E^{(\kappa)}\left[ \exp\left( \frac{1}{\kappa} \Phi(\eta_{[0,T_D]};\kappa) \right) \one\{\eta_{[0,T_D]}\in A\} \right],
		\]
		where $\E^{(\kappa)}$ is the expectation with respect to the law of chordal $\SLE_{\kappa}$ in $(\HH;0,\infty)$. 
		Applying the generalization of Varadhan's lemma (see Lemma~\ref{lem::Varadhan} and Remark~\ref{rmk::Varadhan}), we conclude that the family of laws of $\eta_{[0,T_D]}$ under $\PPkapparho$ satisfies large deviation principle in the space $(\chamber_{T_D}(\HH; \bs{x}^L, 0, \bs{x}^R, \infty), \dist_{\chamber})$ with good rate function
		\begin{equation}
			\rate(\eta_{[0,T_D]}) - \Phi(\eta_{[0,T_D]};0) \stackrel{\eqref{eqn::rho_energy_chordal2}}{=}\rate^{(\bs{\rho})}(\eta_{[0,T_D]})
		\end{equation}
		with the same argument as in the proof of Proposition~\ref{prop::finite_watermelon_LDP}.		
	\end{proof}
	
	\begin{proof}[Proof of Proposition~\ref{prop::LDP_SLEkapparho}]
		Without loss of generality, we assume $(\Omega; \bs{x}^L, a, \bs{x}^R, b) = (\HH; \bs{x}^L, 0, \bs{x}^R, \infty)$ and we eliminate them from the notation. By Lemma~\ref{lem::rhoLR_finite_LDP_chordal} and Lemma~\ref{lem::Dawson-Gartner}, we obtain large deviation principle of $\PPkapparho$ on the projective limit $\overleftarrow{\chamber}$ with the topology induced by the product topological space $\prod_{R\in [0,\infty)} \chamber_{{T}(R)}$ as $\kappa\to 0+$ with good rate function $\rate^{(\bs{\rho})}(\Omega;\bs{x}^L, a, \bs{x}^R, b;\eta)$ in~\eqref{eqn::rho_energy_chordal_full}. Combining this with Lemma~\ref{prop::SLE_return_chordal}, we complete the proof with the same argument as in the proof of Theorem~\ref{thm::halfwatermelon_LDP}.
	\end{proof}

\subsection{Common-time parameter and proof of Proposition~\ref{prop::LDP_DysonBM_chordal}}
\label{subsec::common_time_chordal}
\paragraph*{Common-time parameter (chordal).}
Fix $n\ge 1$ and $\bs{x}=(x_1, \ldots, x_n)\in\LX_n^{\HH}$. Consider an $n$-tuple $\bs{\eta}_{[\bs{0},\bs{t}]}=(\eta^1_{[0,t_1]}, \ldots, \eta^n_{[0,t_n]})\in \chamber_{\bs{t}}(\HH;\bs{x},\infty)$ parameterized by $\bs{t}=(t_1, \ldots, t_n)\in[0,\infty)^n$.
Recall that $g_{\bs{t}}$ is the conformal map from $\HH\setminus \cup_{j=1}^n \eta_{[0,t_j]}^j$ onto $\HH$ normalized at $\infty$, i.e.,
\[
g_{\bs{t}}(z) = z +\frac{\aleph_{\bs{t}}}{z}+o\left(|z|^{-1}\right), \quad \text{ as } z\to \infty.
\]
A standard calculation gives
\[
\ud \aleph_{\bs{t}} = 2\sum_{j=1}^n g_{\bs{t},j}'(W_{t_j})^2 \ud t_j.
\]
We say that the $n$-tuple $\bs{\eta}_{[\bs{0},\bs{t}]}$ is parameterized by common-time parameter if
\begin{equation*}
	\partial_{t_j} \aleph_{(t,\ldots,t)} = 2, \quad \text{ for } 1\le j\le n.
\end{equation*}
It is proved in~\cite[Lemma~3.1]{HuangWuYangMultipleSLEsDysonBM} that common-time parameter exists for continuous disjoint simple curves $\bs{\eta}_{[\bs{0},\bs{t}]}$. The relation between common-time parameter and multi-time parameter is given by 
\begin{equation}\label{eqn::common_time_relation_chordal}
	\ud t_j = g_{\bs{t},j}'(W_{t_j}^j)^{-2} \ud t, \quad \text{ for }1\le j\le n,
\end{equation}
Moreover, 
	\begin{equation*}
		t\leq t_j(t) \leq n t, \quad \text{ for } 1\le j\le n.
	\end{equation*}
Under common-time parameter, we denote the $n$-tuple by $\bs{\eta}_{[0,t]} = \bs{\eta}_{[(0,\ldots,0),(t,\ldots,t)]}$. We define the following
normalized conformal transformations:
\begin{itemize}
	\item $g_t^j$ is the conformal map from $\mathbb{H} \backslash \eta_{[0, t]}^j$ onto $\mathbb{H}$ with $\lim _{z \rightarrow \infty}|g_t^j(z)-z|=0,1 \leq j \leq n$.
	\item $g_{t, j}$ is the conformal map from $\mathbb{H} \backslash g_{t}^j\left(\cup_{i \neq j} \eta_{[0, t]}^i\right)$ onto $\mathbb{H}$ with $\lim _{z \rightarrow \infty}\left|g_{t, j}(z)-z\right|=0,1 \leq j \leq n$.
	\item $g_t$ is the conformal map from $\mathbb{H} \backslash \cup_{j=1}^n \eta_{[0, t]}^j$ onto $\mathbb{H}$ with $\lim _{z \rightarrow \infty}\left|g_t(z)-z\right|=0$. Note that $g_t$ satisfies the following expansion at $\infty$:
\[
g_t(z)=z+\frac{2 n t}{z}+o\left(|z|^{-1}\right), \quad \text { as } z \rightarrow \infty. 
\]
\end{itemize}
Denote by $W^j$ the driving function of $\eta^j$ and denote the driving function of the $n$-tuple $\boldsymbol{\eta}_{[0,t]}$, started from $X_0=\left(x_1, \ldots, x_n\right)\in\LX_n^{\HH}$, by

\[
\boldsymbol{X}_t=\left(X_t^1, \ldots, X_t^n\right), \quad \text { with } X_t^j=g_{t, j}\left(W_t^j\right), \quad \text { for } 1 \leq j \leq n .
\]
\medbreak

\begin{lemma}[{\cite[Lemma~3.4]{HuangWuYangMultipleSLEsDysonBM}}] \label{lem::DysonBM_driving_function}
	Under common-time parameter, the driving function $\boldsymbol{X}_t$ of half-$n$-watermelon $\SLE_{\kappa}$ satisfies the SDE of Dyson Brownian motion with parameter $\beta = 8/\kappa$ in~\eqref{eqn::DysonBM}.
\end{lemma}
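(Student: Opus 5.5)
The claim is that under common-time parametrization the driving process $\bs{X}_t$ solves~\eqref{eqn::DysonBM}. I would prove it in two steps. First, compute the semimartingale dynamics of $\bs{X}_{\bs{t}}$ in the multi-time parameter. Then perform the time change~\eqref{eqn::common_time_relation_chordal} and check that the Brownian parts become independent with variance $\kappa\,\ud t$, and that the drifts combine to $\sum_{i\neq j}4/(X^j_t-X^i_t)$.

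\textbf{Step 1: dynamics in one time direction.} Vary only $t_j$, holding the other times fixed. By Lemma~\ref{lem::halfwatermelon_mart} and the resampling/Markov description of half-watermelon SLE, the marginal law of $\eta^j$ in this direction is chordal $\SLE_\kappa$ with driving function $W^j$, tilted by $M_{\bs{t}}(\LZ_{\shuffle_n}^{(\kappa)})$. Under the independent measure $\PP_n^{(\kappa)}$, $W^j$ is $\sqrt{\kappa}$ times a Brownian motion. Itô's formula applied to $X^j_{\bs{t}}=g_{\bs{t},j}(W^j_{t_j})$ is the stochastic analogue of~\eqref{eqn::halfwatermelonSLE0_minimizer_aux2}:
\[
\ud X^j = g'_{\bs{t},j}(W^j)\,\ud W^j + \Big(\tfrac{\kappa}{2}-3\Big) g''_{\bs{t},j}(W^j)\,\ud t_j \quad\text{in the $t_j$-direction,}
\]
while for $\ell\neq j$ the $t_\ell$-direction contributes the drift $2g'_{\bs{t},\ell}(W^\ell)^2/(X^j-X^\ell)\,\ud t_\ell$.

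\textbf{Step 2: effect of the tilt.} Girsanov for the tilt by $M_{\bs{t}}$ adds a drift $\kappa\,\partial_{W^j}\log M_{\bs{t}}\,\ud t_j$ to $W^j$. Here
\[
\partial_{W^j}\log M=\mathfrak b\,\frac{g''_{\bs{t},j}}{g'_{\bs{t},j}}+g'_{\bs{t},j}\,\partial_j\log\LZ_{\shuffle_n}^{(\kappa)}(\bs X)+\tfrac{\mathfrak c}{2}\partial_{W^j}\blm_{\bs{t}} .
\]
I expect the $\blm$ term to have zero $W$-derivative, since $\blm$ depends only on the hulls. The term $\kappa\mathfrak b\,g''/g'=(3-\kappa/2)g''/g'$, once multiplied by $g'$, exactly cancels the Itô correction $(\kappa/2-3)g''$. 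What remains is
\[
\ud X^j=\sqrt{\kappa}\,g'_{\bs{t},j}\,\ud B^j_{t_j}+\kappa g'^2_{\bs{t},j}\,\partial_j\log\LZ_{\shuffle_n}^{(\kappa)}\,\ud t_j+\sum_{\ell\neq j}\frac{2g'^2_{\bs{t},\ell}}{X^j-X^\ell}\,\ud t_\ell ,
\]
with $\kappa\partial_j\log\LZ_{\shuffle_n}^{(\kappa)}=\sum_{i\ne j}2/(X^j-X^i)$.

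\textbf{Step 3: time change.} Substitute $\ud t_\ell=g'^{-2}_{\bs{t},\ell}\,\ud t$ for every $\ell$ simultaneously along the diagonal curve $t\mapsto\bs t(t)$. The martingale part becomes $\sqrt{\kappa}\,g'\,\ud B^j_{t_j(t)}$, whose quadratic variation is $\kappa\,\ud t$. By Lévy's characterization it equals $\sqrt{\kappa}\,\ud\tilde B^j_t$, and the $\tilde B^j$ are independent because they come from the independent curves under $\PP_n^{(\kappa)}$, with tilting preserving zero cross-variation. Both drift terms become $2/(X^j-X^i)$ and $2/(X^j-X^\ell)$ per unit $t$, summing to $\sum_{i\neq j}4/(X^j_t-X^i_t)$, which is~\eqref{eqn::DysonBM}.

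\textbf{Main obstacle.} The delicate point is justifying Itô calculus and Girsanov along a multi-parameter path. I would handle this by noting that the common-time curve is a monotone path in $\bs t$ determined adaptively, and by approximating it with piecewise single-direction increments (a staircase) where the one-parameter formulas apply. Passing to the limit requires the local martingale property of $M_{\bs t}$ along stopping lines (Lemma~\ref{lem::halfwatermelon_mart}), and in practice I would cite~\cite[Lemma~3.4]{HuangWuYangMultipleSLEsDysonBM} for that limit.
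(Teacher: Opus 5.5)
Your proposal is correct and follows the paper's route. The paper quotes this lemma from Huang--Wu--Yang, and for the radial analogue (Lemma~\ref{lem::DysonBM_driving_function_radial}) it argues exactly as you do: it first writes the multi-time SDE $\ud X^j_{\bs t}=\sqrt{\kappa}\,g'_{\bs t,j}(W^j_{t_j})\,\ud B^j+\sum_{i\neq j}\frac{2}{X^j_{\bs t}-X^i_{\bs t}}\bigl(g'_{\bs t,j}(W^j_{t_j})^2\ud t_j+g'_{\bs t,i}(W^i_{t_i})^2\ud t_i\bigr)$ coming from the martingale tilt, and then substitutes $\ud t_\ell=g'_{\bs t,\ell}(W^\ell_{t_\ell})^{-2}\ud t$. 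Your It\^o--Girsanov derivation of that SDE, with $\kappa\mathfrak{b}=3-\kappa/2$ cancelling the $g''$ terms, is correct. The one caveat is that deferring the multi-parameter stochastic-calculus justification to the same Huang--Wu--Yang lemma amounts to citing the statement itself, so that step is not independently proved; the paper does not prove it either.
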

\begin{proof}[Proof of Proposition~\ref{prop::LDP_DysonBM_chordal}]
	From  \eqref{eqn::halfwatermelonSLE0_minimizer_aux2} and \eqref{eqn::multi_time_energy_chordal_def2}, we have 
	\begin{align*}
		\ud \ratefusion{n}(\HH;\bs{x},\infty;\bs{\eta}_{\bs{[0,t]}}) =& \frac{1}{2}\sum_{j=1}^{n} \bigg( \dot{W}_{t_j}^{j} - 2 \sum_{i\neq j} \frac{ g'_{\bs{t},j}(W_{t_j}^{j})}{X_{\bs{t}}^{j} - X_{\bs{t}}^{i}} 
	- 3 \, \frac{g''_{\bs{t},j}(W_{t_j}^{j})}{g'_{\bs{t},j}(W_{t_j}^{j})}\bigg)^2 \ud t_j \\
	=& \frac{1}{2}\sum_{j=1}^n \left(g_{\bs{t},j}'(W_{t_j}^j)^{-2} \partial_{t_j}X^j_{\bs{t}}  - \sum_{i\ne j}\frac{2}{X_{\bs{t}}^j - X_{\bs{t}}^i}\right)^2 g_{\bs{t},j}'(W_{t_j}^j)^2 \ud t_j.
	\end{align*}
	Using the relation between common-time parameter and multi-time parameter~\eqref{eqn::common_time_relation_chordal},
	we obtain from Theorem~\ref{thm::halfwatermelon_LDP} that under common-time parameter, for any finite time $T$, the family of laws of $\bs{\eta}_{[{0},{T}]}$ under $\PPfusion{n}(\HH;\bs{x},\infty)$  satisfies large deviation principle in the space $(\chamber_{T}(\HH;\bs{x},\infty), \dist_\chamber)$ as $\kappa\to 0+$ with good rate function
	\begin{align*}
		\ratefusion{n}(\HH;\bs{x},\infty;\bs{\eta}_{[0,T]}) =& \frac{1}{2}\int_0^T \sum_{j=1}^n \left( \dot{X}_t^j - \sum_{i\ne j} \frac{4}{X_t^j - X_t^i} \right)^2 \ud t.
	\end{align*}
	Here $\chamber_{T}$ is short for $\chamber_{(T,\ldots, T)}$ under common-time parameter. Finally, by Loewner-Kufarev theorem (see e.g.~\cite[Theorem~8.5]{Berestycki2011LecturesOS}), the map $\bs{\eta}_{[0,T]}\mapsto\boldsymbol{X}_{[0,T]}$ from $(\chamber_{T}(\HH;\bs{x},\infty), \dist_\chamber)$ to $\mathrm{C}([0,T],\LX_n^{\HH})$ is continuous. Thus, by the contraction principle and Lemma~\ref{lem::DysonBM_driving_function}, the family of laws of Dyson Brownian motion $\{\bs{X}^{\kappa}\}_{\kappa\in (0,4]}$ satisfies large deviation principle in the space $\mathrm{C}([0,T], \LX_n^{\HH})$ as $\kappa\to 0+$ with good rate function \eqref{eqn::rate_DysonBM} as desired.
\end{proof}

\subsection{Boundary perturbation: proof of Proposition~\ref{prop::bp_chordal}}
\label{subsec::bp_chordal_proof}
In this section, we prove Proposition~\ref{prop::bp_chordal}. We first recall the boundary perturbation of half-$n$-watermelon $\SLE_{\kappa}$ in Lemma~\ref{lem::halfwatermelon_bp}, and then combine it with Theorem~\ref{thm::halfwatermelon_LDP} to prove Proposition~\ref{prop::bp_chordal}.
The boundary perturbation of chordal $\SLE_\kappa$ is originally established in~\cite{LawlerSchrammWernerConformalRestriction,KozdronLawlerMultipleSLEs}, and later extended to multiple $\SLE_{\kappa}$ in~\cite{PeltolaWuGlobalMultipleSLEs, HuangPeltolaWuMultiradialSLEResamplingBP}. 
\begin{lemma}[{\cite[Proposition~3.4]{HuangPeltolaWuMultiradialSLEResamplingBP}}]
\label{lem::halfwatermelon_bp}
Fix $\kappa \in(0,4]$, $n \geq 1$, and nice $(n+1)$-polygon $(\Omega; \bs{x}, y)= (\Omega; x_1, \ldots, x_n, y)$. Let $U\subset\Omega$ be a simply connected subdomain which coincides with $\Omega$ in a neighborhood of $\{x_1, \ldots, x_n, y\}$. Then, the half-$n$-watermelon $\SLE_\kappa$ probability measure in the smaller polygon  $(U ; \boldsymbol{x} , y)$  is absolutely continuous with respect to that in  $(\Omega ; \boldsymbol{x}, y)$, with Radon-Nikodym derivative
\[
\frac{\ud \PPfusion{n}(U ; \boldsymbol{x}, y)}{\ud \PPfusion{n}(\Omega ; \boldsymbol{x}, y)}(\boldsymbol{\eta}) = \frac{\LZfusion{n}(\Omega ; \boldsymbol{x}, y)}{\LZfusion{n}(U ; \boldsymbol{x}, y)} \mathbb{1}\{\boldsymbol{\eta} \cap (\Omega\setminus U)=\emptyset\} \exp \left(\frac{\mathfrak{c}}{2} \blm(\Omega ; \boldsymbol{\eta}, \Omega \setminus U)\right),
\]
where 
$\LZfusion{n}$ is partition function in~\eqref{eqn::halfwatermelon_pf_def}, and the constant
$\mathfrak{c}$ is the central charge defined in~\eqref{eqn::parameters_b_c}, and $\blm(\Omega ; \boldsymbol{\eta}, \Omega\setminus U)$ is the Brownian loop measure in $\Omega$ of those loops that intersect both $\boldsymbol{\eta}$ and $\Omega\setminus U$ (see~\eqref{eqn::blm_def}).	
\end{lemma}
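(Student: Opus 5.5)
The plan is to prove the statement by induction on $n$, using the recursive description of half-watermelon $\SLE_\kappa$ in Definition~\ref{def::halfwatermelonSLE}. By conformal invariance of all quantities involved, I may take $(\Omega;\bs{x},y)$ to be a nice polygon in which $U$ agrees with $\Omega$ near $\{x_1,\ldots,x_n,y\}$. Two features of the right-hand side are unchanged under conformal maps $\Omega\to\Omega'$ that are smooth near the marked points:
\begin{itemize}
\item the ratio $\LZfusion{n}(\Omega;\bs{x},y)/\LZfusion{n}(U;\bs{x},y)$, because the derivative factors from~\eqref{eqn::bPoisson_cov} cancel;
\item the loop term, by conformal invariance of the Brownian loop measure.
\end{itemize}

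\emph{Inductive step.} Suppose the claim holds for $n-1$. I split the Radon--Nikodym derivative into two factors.

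The first factor is the marginal law of $\eta^1$. Under both $\PPfusion{n}(U;\bs{x},y)$ and $\PPfusion{n}(\Omega;\bs{x},y)$, the curve $\eta^1$ is chordal $\SLE_\kappa(2,\ldots,2)$ with force points $(x_2,\ldots,x_n)$. I claim that the derivative of the $U$-law with respect to the $\Omega$-law equals
\[
\frac{\LZfusion{n}(\Omega;\bs{x},y)}{\LZfusion{n}(U;\bs{x},y)}\,\one\{\eta^1\cap(\Omega\setminus U)=\emptyset\}\,\exp\Big(\tfrac{\mathfrak{c}}{2}\blm(\Omega;\eta^1,\Omega\setminus U)\Big)\,\frac{\LZfusion{n-1}(U\setminus\eta^1;x_2,\ldots,x_n,y)}{\LZfusion{n-1}(\Omega\setminus\eta^1;x_2,\ldots,x_n,y)}.
\]

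The second factor is the conditional law given $\eta^1$. Given $\eta^1$, the remaining curves form half-$(n-1)$-watermelon $\SLE_\kappa$ in $\Omega\setminus\eta^1$, respectively in $U\setminus\eta^1$. The domain $U\setminus\eta^1$ still agrees with $\Omega\setminus\eta^1$ near $x_2,\ldots,x_n,y$. The induction hypothesis therefore gives the factor
\[
\frac{\LZfusion{n-1}(\Omega\setminus\eta^1)}{\LZfusion{n-1}(U\setminus\eta^1)}\,\one\,\exp\Big(\tfrac{\mathfrak{c}}{2}\blm\big(\Omega\setminus\eta^1;(\eta^2,\ldots,\eta^n),\Omega\setminus U\big)\Big).
\]

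When the two factors are multiplied, the $(n-1)$-partition functions cancel. The loop terms combine by the cascade identity
\[
\blm(\Omega;\eta^1,\Omega\setminus U)+\blm(\Omega\setminus\eta^1;\eta^2\cup\cdots\cup\eta^n,\Omega\setminus U)=\blm(\Omega;\bs{\eta},\Omega\setminus U).
\]
This identity holds because the first term counts loops hitting $\eta^1$, and the second counts loops avoiding $\eta^1$ but hitting the other curves. This yields the claimed formula. The base case $n=1$ is the classical Lawler--Schramm--Werner/Kozdron--Lawler restriction formula, and it is also the special case of the marginal computation with no force points.

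\emph{Marginal computation.} Map to $(\HH;\bs{x},\infty)$ and let $\varphi$ map $U$ onto $\HH$ fixing $\infty$. Set $h_t=\tilde g_t\circ\varphi\circ g_t^{-1}$, where $\tilde g_t$ maps out $\varphi(\eta^1_{[0,t]})$. Define
\[
N_t=\one\{t<\tau\}\,\exp\Big(\tfrac{\mathfrak{c}}{2}\blm(\Omega;\eta^1_{[0,t]},\Omega\setminus U)\Big)\,h_t'(W_t)^{\mathfrak{b}}\prod_{j\ge2}h_t'(g_t(x_j))^{\mathfrak{b}}\,\frac{\LZ_{\shuffle_n}^{(\kappa)}\big(h_t(W_t),h_t(g_t(x_2)),\ldots\big)}{\LZ_{\shuffle_n}^{(\kappa)}\big(W_t,g_t(x_2),\ldots\big)},
\]
where $\tau$ is the hitting time of $\Omega\setminus U$.

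I will verify by It\^o's formula that $N_t$ is a local martingale for $\SLE_\kappa(2,\ldots,2)$ in $\Omega$. This uses the BPZ equation~\eqref{eqn::BPZ_H} for $\LZ_{\shuffle_n}^{(\kappa)}$ together with the standard relations $\partial_t h_t'(W_t)=\ldots$ and $\ud\,\blm=-\tfrac{1}{6}\LS h_t(W_t)\,\ud t$. The Schwarzian term is exactly cancelled by the $\mathfrak{c}$-term, since $\mathfrak{c}$ is the central charge~\eqref{eqn::parameters_b_c}. Tilting by $N_t$ produces the SDE~\eqref{eqn::halfwatermelon_marginal_SDE} pulled back through $h_t$, i.e.\ $\SLE_\kappa(2,\ldots,2)$ in $U$. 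Finally, I rewrite $N_t$ in terms of Poisson kernels to identify it with the displayed marginal factor, using~\eqref{eqn::halfwatermelon_pf_def} and the covariance rules.

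\emph{Main obstacle.} The hard part is justifying optional stopping and the terminal value as $t\to\infty$ and $t\to\tau$. For $\kappa\in(8/3,4]$ we have $\mathfrak{c}>0$, so the factor $\exp(\tfrac{\mathfrak c}{2}\blm)$ can blow up if $\eta^1$ approaches $\Omega\setminus U$ without hitting it.

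I would first stop at $\tau_\eps$, the time the curve comes within distance $\eps$ of $\Omega\setminus U$. Up to $\tau_\eps$ the martingale is bounded:
\begin{itemize}
\item the loop term is bounded by $\blm(\Omega;\{\dist(\cdot,\Omega\setminus U)\ge\eps\},\Omega\setminus U)$;
\item $h_t'\le1$;
\item the partition-function ratios stay bounded because the marked points stay away from $\Omega\setminus U$.
\end{itemize}
Next I let $\eps\to0$, using that for $\kappa\le4$ the curve a.s.\ does not hit $\Omega\setminus U$ under the $U$-law. On $\{t\to\infty\}$, the ratio of Poisson kernels near $y$ tends to the stated limit, since $U$ and $\Omega$ agree near $y$.
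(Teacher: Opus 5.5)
The paper does not prove this lemma; it is quoted from Huang--Peltola--Wu, Proposition~3.4, so there is no in-text argument to compare with. Judged on its own, your architecture is sound:
\begin{itemize}
\item the factorization into the $\eta^1$-marginal density times the conditional density is correct;
\item the cascade identity for the loop measure holds (loops avoiding $\eta^1$ that meet $\eta^2\cup\cdots\cup\eta^n$ live in the component of $\Omega\setminus\eta^1$ adjacent to $x_2,\ldots,x_n$);
\item the $(n-1)$-partition-function ratios cancel as you say;
\item your $N_t$ is the right process.
\end{itemize}

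\textbf{Gap: boundedness of $N_{t\wedge\tau_\eps}$.} The argument breaks where you claim $N_{t\wedge\tau_\eps}$ is bounded ``because the marked points stay away from $\Omega\setminus U$''. Write $\Omega_t=\Omega\setminus\eta^1_{[0,t]}$, $a_1=\eta^1_t$, and $a_j=x_j$ for $j\ge2$. Let $q_{ij}(t)$, resp.\ $q_{\ell y}(t)$, be the probability that a Brownian excursion in $\Omega_t$ from $a_i$ to $a_j$, resp.\ from $a_\ell$ to $y$, avoids $\Omega\setminus U$. Each $q$ lies in $(0,1]$ and equals a ratio of Poisson kernels. Then your process is
\[
N_t=\one\{t<\tau\}\,e^{\frac{\mathfrak c}{2}\blm_t}\prod_{1\le i<j\le n}q_{ij}(t)^{-1/\kappa}\prod_{\ell=1}^{n}q_{\ell y}(t)^{\frac{n+2}{\kappa}-\frac12},
\]
so $N_0=\LZfusion{n}(U;\bs{x},y)/\LZfusion{n}(\Omega;\bs{x},y)$.

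The factors $q_{ij}^{-1/\kappa}$ are $\ge1$. They are large when $\eta^1_{[0,t]}$ forces excursions from $a_i$ to $a_j$ through a narrow passage alongside $\Omega\setminus U$, and this is unrelated to the distance from $x_j$ to $\Omega\setminus U$. In your $g_t$-coordinates the culprit is $\bigl((h_t(X_j)-h_t(X_i))/(X_j-X_i)\bigr)^{2/\kappa}$. It exceeds $1$ whenever $g_t(\Omega\setminus U)$ lies between $X_i$ and $X_j$. For $2\le i<j$, numerator and denominator can both tend to $0$ as $t\to\infty$. So in the uniformized picture the marked points do \emph{not} stay away from $g_t(\Omega\setminus U)$, and $h_t'\le1$ gives nothing.

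A bound for fixed $\eps$ is plausible, since excursions can use the $\eps$-corridor around $\Omega\setminus U$, but it needs an extremal-length estimate you have not supplied. The simpler repair is to localize also at $\sigma_R=\inf\{t:N_t\ge R\}$ with $R>N_0$:
\begin{itemize}
\item then $N_{t\wedge\tau_\eps\wedge\sigma_R}\le R$;
\item under the $\Omega$-law, the nonnegative supermartingale $N$ converges almost surely;
\item what remains is $\sup_t N_t<\infty$ almost surely under the $U$-law, which follows once you show that $N_t$ has a finite limit there.
\end{itemize}

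\textbf{The terminal value.} That limit is the real content of the marginal step, and ``the ratio of Poisson kernels near $y$ tends to the stated limit'' does not capture it. You need three facts.
\begin{enumerate}
\item $q_{1y}(t)\to1$; this is the one-curve fact $h_t'(W_t)\to1$.
\item For each $j\ge2$, $q_{1j}(t)$ and $q_{jy}(t)$ converge to the \emph{same} limit, namely the avoidance probability for excursions from $x_j$ to $y$ in the component of $\Omega\setminus\eta^1$ adjacent to $x_2,\ldots,x_n$. Then $q_{1j}^{-1/\kappa}$ cancels the surplus $q_{jy}^{1/\kappa}$, which comes from the difference between the exponents $\frac{n+2}{\kappa}-\frac12$ in $\LZfusion{n}$ and $\frac{n+1}{\kappa}-\frac12$ in $\LZfusion{n-1}$.
\item $q_{ij}(t)$ converges for $2\le i<j$.
\end{enumerate}
Only this exponent bookkeeping turns $\lim_t N_t$ into $e^{\frac{\mathfrak c}{2}\blm(\Omega;\eta^1,\Omega\setminus U)}$ times the ratio of $(n-1)$-partition functions in your marginal formula.

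\textbf{Two smaller points.}
\begin{itemize}
\item $(\Omega\setminus\eta^1;x_2,\ldots,x_n,y)$ is not a nice polygon at $y$, so $\LZfusion{n-1}(\Omega\setminus\eta^1;\cdot)$ on its own is undefined. Define the ratio as the corresponding product of $q$'s, and apply the induction hypothesis after uniformizing $\Omega\setminus\eta^1$. You must check that the image of $U\setminus\eta^1$ still agrees with $\HH$ near the image of $y$ (by Carath\'eodory).
\item With the paper's normalization $\partial_t g_t=2/(g_t-W_t)$, one has $\ud\blm_t=-\frac13\LS h_t(W_t)\,\ud t$ (cf.~\eqref{eqn::mt_def}), not $-\frac16$. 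Only with $-\frac13$ does the $\mathfrak c$-term cancel the Schwarzian drift.
\end{itemize}
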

\begin{proof}[Proof of Proposition~\ref{prop::bp_chordal}]
Comparing~\eqref{eqn::LUfusion_def} and~\eqref{eqn::halfwatermelon_pf_def}, we have 
\begin{equation}\label{eqn::bp_chordal_aux0}
\kappa\log\LZfusion{n}(\Omega; \bs{x}, y)=-\LUfusion{n}(\Omega; \bs{x}, y)-\frac{\kappa}{2}\sum_{\ell=1}^n\log\Poisson(\Omega; x_{\ell}, y). 
\end{equation}
Note that $\chamberfusion{n}(U;\bs{x},y)$ is an open subset of $\chamberfusion{n}(\Omega;\bs{x},y)$. For $\bs{\eta}\in\chamberfusion{n}(\Omega;\bs{x},y)$, we define 
\begin{align*}
\Phi(\bs{\eta};\kappa) :=& \kappa\log\frac{\ud \PPfusion{n}(U ; \boldsymbol{x}, y)}{\ud \PPfusion{n}(\Omega ; \boldsymbol{x}, y)}(\bs{\eta}) \notag \\ 
=&\underbrace{-\LUfusion{n}(\Omega ; \boldsymbol{x}, y)+\LUfusion{n}(U ; \boldsymbol{x}, y)}_{\Phi_0(\boldsymbol{\eta}):=} \underbrace{-m(\Omega ; \boldsymbol{\eta}, \Omega \backslash U)}_{\Phi_1(\boldsymbol{\eta}):=} \underbrace{\frac{(6-\kappa)(8-3 \kappa)}{4}}_{f_1(\kappa):=} \notag \\ 
&\underbrace{-\frac{\kappa}{2}}_{f_2(\kappa):=} \underbrace{\sum_{1 \leq \ell \leq n}\left(\log \mathrm{P}\left(\Omega ; x_{\ell}, y\right)-\log \mathrm{P}\left(U ; x_{\ell}, y\right)\right)}_{\Phi_2(\boldsymbol{\eta}):=}, 
\end{align*}
where the second equation is due to~\eqref{eqn::bp_chordal_aux0}. 
	For any subset $A \subset \chamberfusion{n}(U ; \boldsymbol{x}, y)$, 
\begin{equation}\label{eqn::bp_chordal_proof_aux1}
\PPfusion{n}(U ; \boldsymbol{x}, y)[A]=\Efusion{n}(\Omega ; \boldsymbol{x}, y)\left[\exp \left(\frac{1}{\kappa} \Phi(\boldsymbol{\eta} ; \kappa)\right) \mathbb{1}{\{\boldsymbol{\eta} \in A\}}\right].
\end{equation}
To apply Lemma~\ref{lem::Varadhan} and Remark~\ref{rmk::Varadhan}, note that $\Phi_0(\boldsymbol{\eta}), \Phi_2(\boldsymbol{\eta})$ are constants and $\Phi_1(\boldsymbol{\eta})$ is continuous. Thus conditions (a) and (d') of Lemma~\ref{lem::Varadhan} holds; $\left(f_1(\kappa), \Phi_1(\boldsymbol{\eta})\right)$ satisfies condition (c) of Lemma~\ref{lem::Varadhan}; $\left(f_2(\kappa), \Phi_2(\boldsymbol{\eta})\right)$ satisfies condition (b) of Lemma~\ref{lem::Varadhan}. Combining with Theorem~\ref{thm::halfwatermelon_LDP}, the RHS of \eqref{eqn::bp_chordal_proof_aux1} satisfies large deviation with rate function 
\[
\ratefusion{n}(\Omega ; \boldsymbol{x}, y ; \boldsymbol{\eta})-\lim _{\kappa \rightarrow 0+} \Phi(\boldsymbol{\eta} ; \kappa)=\ratefusion{n}(\Omega ; \boldsymbol{x}, y ; \boldsymbol{\eta})+\LUfusion{n}(\Omega ; \boldsymbol{x}, y)-\LUfusion{n}(U ; \boldsymbol{x}, y)+12 m(\Omega ; \boldsymbol{\eta}, \Omega \backslash U) .
\]
Thus we obtain
\[
\ratefusion{n}(U ; \boldsymbol{x}, y ; \boldsymbol{\eta})=\ratefusion{n}(\Omega ; \boldsymbol{x}, y ; \boldsymbol{\eta})+\LUfusion{n}(\Omega ; \boldsymbol{x}, y)-\LUfusion{n}(U ; \boldsymbol{x}, y)+12 m(\Omega ; \boldsymbol{\eta}, \Omega \backslash U),
\]
which gives \eqref{eqn::bp_choral} as desired. 
\end{proof}

\section{Return estimate for radial SLE with spiral}
\label{sec::return_radial}
The goal of this section is to derive a return estimate for radial SLE with force points and spiral. 
Fix $\kappa>0$ and $\mu\in\R$ and $n\ge 1$.
Suppose
\begin{equation} \label{eqn::forcepoints_weights_radial}
	\bs{\rho}=(\rho_2,\ldots,\rho_n)\in \R_{\ge 0}^{n-1}, \qquad \bs{\theta}=(\theta_1,\ldots,\theta_n)\in \LX_n^{\U}.	
\end{equation}
The radial $\SLE_{\kappa}^{\mu}(\bs{\rho})$ in $(\U;\ee^{\ii\bs{\theta}};0)$ is defined as the radial Loewner chain $(K_t)_{t\ge 0}$ driven by a continuous function $\xi:[0,\infty)\to \R$ satisfying the SDE system
\begin{equation}\label{eqn::radialSLEkappa_rho_sde}
\begin{cases}
\displaystyle 	\ud \xi_t = \sqrt{\kappa} \ud B_t + \sum_{j=2}^{n} \frac{\rho_j}{2} \cot \left( \frac{\xi_t-V_t^j}{2} \right) \ud t + \mu \ud t, \qquad \xi_0=\theta_1, \\
\displaystyle 	\ud V_t^j = \cot \left( \frac{V_t^j-\xi_t}{2} \right) \ud t, \qquad V_0^{j}=\theta_j, \qquad 2\le j\le n, 
\end{cases}
\end{equation}
where $B_t$ is a standard one-dimensional Brownian motion. 
We denote the law of $(K_t)_{t\ge 0}$ by $\PPradial{1}^{(\kappa;\bs{\rho};\mu)}$. 
In this article, we only consider the case when $\kappa\in (0,4]$ and all weights are non-negative. In such case, the process is well-defined for all time and is almost surely generated by a continuous curve $\gamma$, see e.g.~\cite{MillerSheffieldIG4, HuangPeltolaWuMultiradialSLEResamplingBP}. Moreover, in this case, the curve does not touch the boundary except at the beginning, and the evolution $V_t^{j}$ coincides with $\covmap_t(\theta_j)$ for $2\le j\le n$, where $\covmap_t$ is the covering map of the radial Loewner chain (see Section~\ref{subsec::pre_radialLoewner}).
The main result of this section is the following return estimate.

\begin{proposition}\label{prop::radialSLE_return}
Assume the same notation as in~\eqref{eqn::forcepoints_weights_radial}. 
Suppose $\gamma$ is radial $\SLE_{\kappa}^{\mu}(\bs{\rho})$ in $(\U;\ee^{\ii\bs{\theta}};0)$. For $s>0$, we denote
\begin{equation}\label{eqn::radial_return_notations}
	\U_s=\ee^{-s} \U,\qquad \tau_s=\inf \{ t\ge 0: |\gamma_t|=\ee^{-s} \},\qquad \LF_s=\sigma(\{ \gamma_t: 0\le t\le \tau_s \}).
\end{equation}
Then, for any $s>0$ and $M\in [0,\infty)$, there is a constant $S=S(s,M)>s$ such that
\begin{equation}\label{eqn::radialSLE_return}
	\PPradial{1}^{(\kappa;\bs{\rho};\mu)} \left[ \gamma_{[\tau_S,\infty)} \cap \partial \U_s \neq \emptyset \right] \le \exp(-M/\kappa), \qquad \text{for } \kappa\in(0,1/5).
\end{equation}
\end{proposition}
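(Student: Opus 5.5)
We follow the scheme of the proof of Proposition~\ref{prop::SLE_return_chordal}, adapted to the radial setting. Here the curve approaches $0$ and the relevant scales are $\ee^{-s}$. The plan is to decompose the return event according to dyadic-type scales. For $k\ge 0$ we set $S_k=S+k$, so that $\tau_{S_k}$ is the hitting time of $\partial\U_{S+k}$. We then define bad events
\[
E_k=\left\{\exists t\in[\tau_{S_k},\tau_{S_{k+1}}]:\ \min_{2\le j\le n}\min\{\covmap_t(\theta_j)-\xi_t,\ 2\pi-(\covmap_t(\theta_j)-\xi_t)\}\le \eps_k\right\},\qquad \eps_k=(k+k_0)^{-2},
\]
and bound
\[
\PP[\gamma_{[\tau_S,\infty)}\cap\partial\U_s\ne\emptyset]\le\sum_k\PP[E_k]+\sum_k\PP[\{\gamma_{[\tau_{S_k},\tau_{S_{k+1}}]}\cap\partial\U_s\neq\emptyset\}\cap E_k^c].
\]
Unlike the chordal case, radial capacity time is comparable to $-\log|\gamma_t|$ up to constants (Koebe: $\tau_s\in[s-\log 4,s]$), so time windows have length $O(1)$.

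\textbf{Good event.} Radial $\SLE_\kappa^\mu(\bs\rho)$ is radial $\SLE_\kappa$ tilted by a Schramm--Wilson type martingale. This martingale is a product of powers of $\covmap_t'(\theta_j)$, of $|\sin((\xi_t-\covmap_t(\theta_j))/2)|^{\rho_j/\kappa}$, of $|\sin((\covmap_t(\theta_i)-\covmap_t(\theta_j))/2)|^{\rho_i\rho_j/(4\kappa)}$, of $\exp(\mu\xi_t/\kappa)$, and of a deterministic exponential-in-time factor $\exp(c(\kappa,\bs\rho,\mu)t/\kappa)$.

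On $E_k^c$ the ratio over a window of length $O(1)$ is bounded by $\exp(C\log(1/\eps_k)/\kappa)$. The terms are controlled as follows:
\begin{itemize}
\item the derivative terms are decreasing, or bounded on a window of length $O(1)$;
\item the sine terms are bounded above by 1 and below via $\eps_k$;
\item the time factor is bounded over the window.
\end{itemize}
The spiral factor $\exp(\mu(\xi_{\tau_{S_{k+1}}}-\xi_{\tau_{S_k}})/\kappa)$ is the annoying one, since $\xi$ is not bounded. To handle it, I would either absorb it via Girsanov as a drift, so that $\mu$ simply shifts to radial $\SLE_\kappa^\mu$, or add to the bad events the event that the driving increment over the window exceeds $(k+k_0)$. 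That event has Gaussian-type probability $\exp(-c(k+k_0)^2/\kappa)$.

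Given the ratio bound, we use the radial return estimate for standard radial $\SLE_\kappa$ from~\cite{LawlerContinuityofradialSLE,AbuzaidPeltolaLargeDeviationCapacityParameterization}, which has the form $c_\kappa \ee^{-\alpha(\kappa)(S_k-s)}$ with $\alpha\kappa\to$ a positive constant. Combined with the Markov property at $\tau_{S_k}$, this gives a summable bound $\exp(C/\kappa)\ee^{-c(S-s)/\kappa}$.

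\textbf{Bad event.} This is the main obstacle. In the chordal case a monotone coupling reduced everything to Zhan's two-point process. Here I would reduce to the nearest force point on each side. Consider $X_t=\covmap_t(\theta_2)-\xi_t$ (and similarly $2\pi$ minus the last gap). Its drift is $\frac{\rho_2+2}{2}\cot(X_t/2)$, plus the contributions of the other repulsive force points and $-\mu$.

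Via a comparison argument like Lemma~\ref{lem::monotone_coupling}, which says that the other repulsive terms only push $X$ away from $0$, we can dominate $X_t$ from below by the process~\eqref{eqn::radialBessel_SDE} with $\alpha=1$ on each window. Lemma~\ref{lem::radialBessel_estimates}, applied at the Markov time $\tau_{S_k}$, then gives $\exp(C/\kappa)\eps_k^{4/\kappa-1}$ starting from a gap $\ge\sqrt{\eps_k}$. This is the analogue of the split into $F^2$.

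The analogue of $F^1$, that the gap is already small at $\tau_{S_k}$, requires a stationary or uniform-in-time estimate. I would try the invariant density, which is proportional to $\sin(x/2)^{4\alpha/\kappa}\ee^{-2\mu x/\kappa}$, to get $\PP[X_t\le\delta]\le\exp(C/\kappa)\delta^{4/\kappa}$, as in Lemma~\ref{lem::chordalBessel_estimate}(1). A possibly cruder exponent, together with the restriction $\kappa<1/5$, should make $\sum_k$ converge like $k_0^{1-c/\kappa}$.

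Choosing $k_0$ and then $S$ large, as in the chordal proof, gives $\exp(-M/\kappa)$.
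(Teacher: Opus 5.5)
Your decomposition into unit windows $[\tau_{S+k},\tau_{S+k+1}]$ and your good-event comparison with standard radial $\SLE_\kappa$ via the martingale~\eqref{eqn::SLEkapparho_martingale_radial} are as in the paper. The bad-event step, however, rests on a comparison that is false. For $X_t=\covmap_t(\theta_2)-\xi_t=\Delta^2_t$ the drift is
\[
\Big(1+\frac{\rho_2}{2}\Big)\cot\Big(\frac{\Delta^2_t}{2}\Big)+\sum_{j=3}^{n}\frac{\rho_j}{2}\cot\Big(\frac{\Delta^j_t}{2}\Big)-\mu .
\]
Since $\Delta^j_t\in(\Delta^2_t,2\pi)$, every term with $\Delta^j_t>\pi$ is negative, and it tends to $-\infty$ as $\Delta^j_t\to2\pi$. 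Geometrically, a force point just clockwise of $\xi_t$ repels $\xi_t$ counterclockwise, i.e.\ toward $V^2_t$. So the other force points do not only push $X$ away from $0$, and there is no comparison of $\Delta^2$ from below with~\eqref{eqn::radialBessel_SDE} for $\alpha=1$. These are the radial counterparts of the left force points in the chordal case. Lemma~\ref{lem::monotone_coupling} does not discard those; it loads their total weight $\overline{\rho}^L$ onto $Y^L$ in Zhan's two-point process. Treating the two sides separately is circular. The harmful drift is controlled only through the clockwise gap $2\pi-\Delta^n_t$, which is the other half of the same bad event. A crude bound of order $\bar\rho/\eps$ on it acts in Lemma~\ref{lem::radialBessel_estimates} like a spiral rate of order $\eps^{-1}$. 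That costs $\exp(C/(\eps\kappa))$ and wipes out the gain $\eps^{4/\kappa-1}$.

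The missing idea is to compare the farthest gap instead, and then use that the force points merge. Since $\Delta^j_t\le\Delta^n_t$ gives $\cot(\Delta^j_t/2)\ge\cot(\Delta^n_t/2)$, the gap $\Delta^n$ dominates the radial Bessel process with $\alpha=1+\bar\rho/2$ (Lemma~\ref{lem::radial_mono_coupling}). This controls $\{\min\Delta^n\le\eps\}$, and by symmetry $\{\max\Delta^2\ge2\pi-\eps\}$. The deterministic decay $0\le\Delta^n_t-\Delta^2_t\le(\Delta^n_0-\Delta^2_0)\ee^{-ct}$ of Lemma~\ref{lem::radial_expdecay} then upgrades this, on $E_{m+1}^c$ and for $m$ large, to $(2m)^{-1/2}\le\Delta^j_t\le2\pi-(2m)^{-1/2}$ for all $j$ and all $t\in[\tau_m,\tau_{m+1}]$. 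The paper also takes the bad events cumulatively from time $0$, with threshold $m^{-1/2}$ and horizon $\tau_m\le m$. Lemma~\ref{lem::radialBessel_estimates} then applies directly from the fixed initial configuration and gives $\exp(C/\kappa)\,m^{3/2-(\bar\rho+2)/\kappa}$. This removes your $F^1$-type step. That step would not work as in the chordal case anyway, since the invariant-density comparison there used that $X_0=1$ is extremal.

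A smaller point on the good event. The martingale also contains $\exp(\frac{\mu}{\kappa}\sum_j\frac{\rho_j}{2}\covmap_t(\theta_j))$. With threshold $k+k_0$ on the driving increment, the spiral factor therefore costs about $\exp(|\mu|(1+\bar\rho/2)(k+k_0)/\kappa)$, which is linear in $k$. The return decay $\exp(-\frac{8-\kappa}{2\kappa}(S+k-s))$ beats this only for small $|\mu|$; a sublinear threshold such as $\sqrt{k+k_0}$ fixes it. Your Gaussian-tail claim is true, but it needs the identity $\ud(\xi_t+\sum_j\frac{\rho_j}{2}V^j_t)=\sqrt{\kappa}\,\ud B_t+\mu\,\ud t$ together with $V^j_t-\xi_t\in(0,2\pi)$. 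The paper avoids this extra event by using $\covmap_t(\theta_2)-2\pi\le\xi_t\le\covmap_t(\theta_2)$ and $|\partial_t\covmap_t(\theta_j)|\le1/\sin(\Delta^j_t/2)\lesssim\sqrt{m}$ on the good event. That is why its threshold is $m^{-1/2}$.
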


Proposition~\ref{prop::radialSLE_return} will be a key ingredient in the proof of large deviation in Theorem~\ref{thm::radialSLE_LDP}. Return estimates for radial SLE (without force point) were originally proved in~\cite[Theorem~1.3]{LawlerContinuityofradialSLE} and~\cite[Theorem~1.2]{field2015escape},  and later adapted to the LDP framework in~\cite[Proposition~4.6]{AbuzaidPeltolaLargeDeviationCapacityParameterization}. Return estimates for radial SLE with a single force point were proved in~\cite[Proposition~18]{krusell2024rholoewnerenergylargedeviations}. We prove the general case in Proposition~\ref{prop::radialSLE_return} in this section.
To this end, we first recall from~\cite{HuangPeltolaWuMultiradialSLEResamplingBP} a monotone coupling in the radial setting in Lemma~\ref{lem::radial_mono_coupling} (in Section~\ref{subsec::mono_radial}) to reduce the general case of radial SLE with force points to the case of one force point. This monotone coupling is analogous to the one in the chordal setting in Lemma~\ref{lem::monotone_coupling}. 
Then, in Section~\ref{subsec::proof_return_estimate}, we split the return estimates into the estimates on ``good event" and the estimates on ``bad event". 
To control the probability of ``bad event", we use the monotone coupling and the estimates for radial Bessel process in Lemma~\ref{lem::radialBessel_estimates}, see Section~\ref{subsec::proof_return_estimate}. 
To control the probability of ``good event", we carefully analyze the Radon-Nikodym derivative between radial $\SLE_{\kappa}^{\mu}(\rho)$ process and standard radial $\SLE_{\kappa}$ process in Section~\ref{subsec::goodevent}. 
The main difference between the chordal setting and the radial setting is that the control on ``good event" is more complicated in the radial setting, while the control on the ``bad event" is more complicated in the chordal setting. 

\medbreak
As a consequence of Proposition~\ref{prop::radialSLE_return}, we obtain large deviation principle for radial SLE with force points. Fix $n\ge 1$ and $n$-polygon $(\Omega;\bs{x})$ with $z\in \Omega$. 
Fix $\kappa>0$ and $\mu\in\R$ and $\bs{\rho}\in \R_{\ge 0}^{n-1}$. The $\SLE_{\kappa}^{\mu}(\bs{\rho})$ process in $(\Omega;\bs{x};z)$ is defined via conformal invariance: it is the preimage of the radial $\SLE_{\kappa}^{\mu}(\bs{\rho})$ in $(\U;\ee^{\ii\bs{\theta}};0)$ under any conformal map $\varphi:\Omega\to \U$ satisfying $\varphi(\bs{x})=\ee^{\ii\bs{\theta}}$ and $\varphi(z)=0$. 
When $\kappa\in (0,4]$ and $\bs{\rho}=(\rho_2, \ldots, \rho_n)\in\R_{\ge 0}^{n-1}$, the curve does not touch the boundary except at the beginning, thus it is a continuous curve in $\chamber(\Omega; x_1; z)$.
The law of the process is denoted by
$\PPradial{1}^{(\kappa;\bs{\rho};\mu)} (\Omega;\bs{x};z)$. 
We also write $\PPradial{1}^{(\kappa;\bs{\rho};\mu)}=\PPradial{1}^{(\kappa;\bs{\rho};\mu)}(\U;\ee^{\ii\bs{\theta}};0)$ for short.
For standard radial SLE, we just call it radial $\SLE_{\kappa}$ in $(\Omega; x; z)$.

\begin{proposition}\label{prop::radialSLE_LDP}
Fix $n\ge 1$ and $n$-polygon $(\Omega;\bs{x})$ with $z\in \Omega$. Fix $\mu\in\R$ and $\bs{\rho}\in \R_{\ge 0}^{n-1}$. The family $\{ \PPradial{1}^{(\kappa;\bs{\rho};\mu)} (\Omega;\bs{x};z) \}_{\kappa\in (0,4]}$ of laws of radial $\SLE_{\kappa}^{\mu}(\bs{\rho})$ satisfies large deviation principle in the space $(\chamber(\Omega;x_1;z),\dist_\chamber)$ as $\kappa\to 0+$ with good rate function $\rateradial{1}^{(\bs{\rho};\mu)}(\Omega; \bs{x}; z, \cdot)$ which will be defined in Definition~\ref{def::rho_energy_radial}.
\end{proposition}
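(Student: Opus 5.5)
We will follow the scheme already used for Proposition~\ref{prop::LDP_SLEkapparho} in Section~\ref{subsec::proof_rho_LDP}, transplanted to the radial setting. Throughout we assume $(\Omega;\bs{x};z)=(\U;\ee^{\ii\bs{\theta}};0)$, since both sides of the LDP transform naturally under the conformal map $\varphi$. The argument has three steps.

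\textbf{Step 1: finite-time LDP.} Let $D\subset\U$ be a simply connected subdomain that agrees with $\U$ near $\ee^{\ii\theta_1}$ and stays at positive distance from $0$; in practice $D=\U\setminus\overline{\U_S}$. We claim that $\gamma_{[0,T_D]}$ satisfies an LDP in $(\chamber_{T_D}(\U;\ee^{\ii\theta_1};0),\dist_\chamber)$.

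\emph{Base case.} For standard radial $\SLE_\kappa$, the LDP with rate $\frac12\int_0^{T_D}\dot\xi_t^2\,\ud t$ follows from \cite[Theorem~1.2]{AbuzaidPeltolaLargeDeviationCapacityParameterization} by contraction, exactly as in Lemma~\ref{lem::finite_single_LDP}.

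\emph{Tilting.} We write the law of $\SLE_\kappa^\mu(\bs{\rho})$ as radial $\SLE_\kappa$ tilted by the radial Schramm--Wilson martingale. This martingale is a product of:
\begin{itemize}
\item powers of $\covmap_t'(\theta_j)$;
\item powers of $|\sin((\xi_t-\covmap_t(\theta_j))/2)|$ and $|\sin((\covmap_t(\theta_i)-\covmap_t(\theta_j))/2)|$;
\item a factor $\ee^{\mu\xi_t/\kappa}$ and an explicit $\ee^{ct/\kappa}$ correction;
\end{itemize}
all with exponents of order $1/\kappa$. We set $\Phi(\gamma_{[0,t]};\kappa):=\kappa\log(M_t/M_0)$.

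\emph{Checking Varadhan's conditions.} We apply the generalized Varadhan lemma (Lemma~\ref{lem::Varadhan}, Remark~\ref{rmk::Varadhan}).
\begin{itemize}
\item $\Phi$ is continuous under Carathéodory/Loewner--Kufarev convergence.
\item The $\covmap'$ terms are bounded by $1$.
\item The $\sin$ terms are bounded above by $1$.
\item $T_D$ is bounded because $D$ avoids a neighbourhood of $0$, so the capacity terms are bounded.
\item The spiral term $\mu\xi_{T_D}$ is the delicate one, since $\xi$ is not a priori bounded. We control it by writing $\mu\xi_t=\mu\int\dot\xi$ and absorbing it into the quadratic energy, i.e. by expanding the square in the rate function as in \eqref{eqn::rho_energy_chordal2}. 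Alternatively, we use that $\E[\ee^{\lambda\sup|\xi|/\kappa}]$ is of order $\ee^{C/\kappa}$ for Brownian motion with variance $\kappa$ on a bounded time interval, which gives the exponential tightness needed in (d').
\end{itemize}

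\emph{Identifying the rate and goodness.} The resulting rate is $\frac12\int\dot\xi^2\,\ud t-\Phi(\cdot;0)$. Expanding via the radial analogues of the identities $\partial_t\log\covmap_t'$, $\partial_t\log\sin(\cdot)$, etc., this equals
\[
\rateradial{1}^{(\bs{\rho};\mu)}=\frac12\int\Bigl(\dot\xi_t-\mu-\sum_j\tfrac{\rho_j}{2}\cot\tfrac{\xi_t-\covmap_t(\theta_j)}{2}\Bigr)^2\,\ud t.
\]
Goodness follows by comparison with the Dirichlet energy, exactly as in the proof of Proposition~\ref{prop::finite_watermelon_LDP}: the extra terms are bounded above on $D$, and by Lemma~\ref{lem::finite_energy_simplechord} finite-energy limits remain simple curves avoiding the boundary.

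\textbf{Step 2: projective limit.} We take the projective system $(\chamber_{T(S)})_{S\ge0}$ with $T(S)=T_{\U\setminus\overline{\U_S}}$. Lemma~\ref{lem::Dawson-Gartner} then gives the LDP on the projective limit $\overleftarrow{\chamber}$, with rate equal to the full-time energy, i.e. the supremum over finite truncations.

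\textbf{Step 3: upgrade to $(\chamber(\U;\ee^{\ii\theta_1};0),\dist_\chamber)$.} We define closed sets $F_{\overrightarrow{N}}$ by the requirement $\gamma_{[\tau_{N_p},\infty)}\cap\partial\U_p=\emptyset$ for all $p$. The homeomorphism statement of Lemma~\ref{lem::homeomorphisim_FN} carries over verbatim, since $\varphi_\U$ is the identity here and the tail of the curve is confined to $\U_p$. Proposition~\ref{prop::radialSLE_return} supplies, for each $M$, integers $N_p$ such that $\PP[\text{return from }\tau_{N_p}\text{ to }\partial\U_p]\le 2^{-p}\ee^{-M/\kappa}$ for $\kappa<1/5$; only small $\kappa$ matters. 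Summing over $p$ yields the exceptional-set bound, and Lemma~\ref{lem::generalized_contraction}(1) gives the LDP. Goodness follows from Lemma~\ref{lem::generalized_contraction}(2) after showing that $\iota^{-1}$ is continuous on finite-energy sets, using the energy lower bound on return events exactly as at the end of the proof of Theorem~\ref{thm::halfwatermelon_LDP}.

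\textbf{Main obstacle.} We expect the hardest point to be the Varadhan upper-bound condition in Step 1, coming from the unbounded spiral term $\mu\xi$ and the $\ee^{ct/\kappa}$ factor in the radial martingale. If the direct bound fails, we will first remove the spiral via Girsanov: radial $\SLE^\mu_\kappa$ differs from radial $\SLE_\kappa$ by a drift $\mu$, and the LDP for $\sqrt\kappa B+\mu t$ (Schilder's theorem shifted by $\mu$) is immediate. We then tilt only by the bounded-above force-point terms.
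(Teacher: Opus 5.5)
Your outline is correct and follows the paper's architecture: a finite-time LDP obtained by tilting radial $\SLE_\kappa$ with the martingale of Lemma~\ref{lem::SLEkapparho_martingale_radial} and applying Lemma~\ref{lem::Varadhan}, with the rate identified via Lemma~\ref{lem::rho_energy_radial_expansion}; then Lemma~\ref{lem::Dawson-Gartner}; then Proposition~\ref{prop::radialSLE_return} together with Lemma~\ref{lem::generalized_contraction}.

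\textbf{Controlling the spiral term.} This is the one genuinely different ingredient. The paper, through the proof of Proposition~\ref{prop::finitetime_LDP_radial}, checks condition~(d) with $\beta=2$. It compares $2\Phi_0$ with the corresponding term for spiral rate $2\mu$, and uses that the associated Radon--Nikodym derivative has expectation one. You instead proceed in three steps:
\begin{itemize}
\item bound every non-spiral part of $\Phi_0$ above by a constant;
\item use $\covmap_t(\theta_j)\in(\xi_t,\xi_t+2\pi)$ to dominate the spiral part by $c\,|\xi_T-\theta_1|$;
\item use $\xi_t-\theta_1=\sqrt{\kappa}B_t$ under the reference measure on a bounded time interval, which gives $\kappa\log\E\bigl[\ee^{\beta c\sup_{[0,S]}|B|/\sqrt{\kappa}}\bigr]\le\beta^2c^2S/2+o(1)$.
\end{itemize}
This is valid and more elementary, since it needs no second martingale. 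The paper's trick is the one it needs anyway in the multi-time case, where the spiral enters through $\mslitdriv^j_{\bs{T}}=\covmap_{\bs{T},j}(\xi^j_{T_j})$ rather than through the Brownian drivers directly. Note that your argument verifies condition~(d), not~(d'): $\Phi$ is not bounded above here, so (d') fails. The $\ee^{ct/\kappa}$ factor you worried about is harmless on bounded time.

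\textbf{Points to fix.}
\begin{itemize}
\item \emph{Goodness.} The spiral term is \emph{not} bounded above on $D$. For the driver $\xi_t=\theta_1+Nt$, Lemma~\ref{lem::taum_Koebe} gives $\xi_{T_D}\ge\theta_1+N(S-\log 4)$. You therefore need
\[
c\,|\xi_T-\theta_1|\le\tfrac14\int_0^T\dot\xi_t^2\,\ud t+c^2T,
\]
as in~\eqref{eqn::LDP_finitetime_aux5}. This is where ``absorbing into the quadratic energy'' belongs, not in the moment condition. Simplicity of limit curves is then already built into the goodness of the Dirichlet energy on $\chamber_T$ (Lemma~\ref{lem::finite_radialSLE_LDP}). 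Lemma~\ref{lem::finite_energy_simplechord} is a chordal statement and is not needed.
\item \emph{Choice of truncation.} In the radial case the paper truncates at fixed capacity times $t$. There $\gamma\mapsto\gamma_{[0,t]}$ is $1$-Lipschitz for $\dist_{\chamber}$, and Step~3 links to it through $\tau_{N_p}\le N_p$ (Lemma~\ref{lem::taum_Koebe}). Your exit-time truncation $\gamma\mapsto\gamma_{[0,T_D]}$ is discontinuous at curves that touch $\partial\U_S$ before entering $\U_S$. So the contraction in your base case, and the continuity of restriction maps required by Lemma~\ref{lem::Dawson-Gartner}, would need extra justification. The chordal template you are copying has the same issue. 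Switching to fixed times removes it at no cost.
\item \emph{Girsanov fallback.} It is unnecessary, and it is not immediate. Schilder's theorem gives an LDP for drivers, but transferring it to curves is exactly the nontrivial content of Lemma~\ref{lem::finite_radialSLE_LDP}, because the Loewner transform is not continuous on $\mathrm{C}([0,T])$.
\end{itemize}
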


The rest of this section is organized as follows.  
In Section~\ref{subsec::pre_radialLoewner}, we review the definition and basic properties of radial SLE with force points. 
We introduce the radial Loewner energy $\rateradial{1}^{(\bs{\rho};\mu)}$ in Definition~\ref{def::rho_energy_radial}. 
We also recall the return estimate and the large deviation for standard radial SLE in Section~\ref{subsec::pre_radialLoewner}.
We prove Proposition~\ref{prop::radialSLE_return} in Sections~\ref{subsec::mono_radial}-\ref{subsec::goodevent}.
The large deviation in Proposition~\ref{prop::radialSLE_LDP} will be proved in Section~\ref{subsec::radial_rho_LDP}.

\subsection{Radial Loewner chain and its energy}
\label{subsec::pre_radialLoewner}
\paragraph*{Radial Loewner chain.}For a simply connected domain $\Omega\subsetneq\C$ and $z\in\Omega$, the conformal radius of $\Omega$ seen from $z$ is defined by $\CR(\Omega; z):=1/\phi'(z)$ where $\phi:\Omega\to \U$ is the conformal map such that $\phi(z)=0$ and $\phi'(z)>0$. In particular, $\CR(\U; 0)=1$. 

A $\U$-hull is a relatively closed subset $K$ of $\U$ such that $\U\setminus K$ is simply connected and containing $0$. 
Let $\mathfrak{g}_K$ be
the conformal map from $\U\setminus K$ onto $\U$ satisfying the normalization $\mathfrak{g}_K(0)=0$ and $\mathfrak{g}'_K(0)=\CR(\U\setminus K; 0)^{-1}>0$. We refer to $\mathfrak{g}_K$ as the mapping-out function of $K$. 

Fix $\theta\in \R$ and $T \in (0,\infty]$. Let $\xi: [0,T) \to \R$ be a continuous function with $\xi_0 = \theta$.
A radial Loewner chain driven by $\xi$ is a family of $\U$-hulls $(K_t)_{t\in [0,T)}$ such that the mapping-out function $\mathfrak{g}_t:=\mathfrak{g}_{K_t}$ satisfies the Loewner equation
\begin{align}\label{eqn::single_radial_Loewner_equation}
	\partial_t \mathfrak{g}_t(z)=\mathfrak{g}_t(z) \frac{\ee^{\ii \xi_t}+\mathfrak{g}_t(z)}{\ee^{\ii \xi_t}-\mathfrak{g}_t(z)}, \qquad \mathfrak{g}_0(z)=z, \qquad z\in \overline{\U}.
\end{align}
It is convenient to use the covering map
$\covmap_t \colon \R \to \R$ 
of $\mathfrak{g}_t$ defined via $\mathfrak{g}_t(\ee^{\ii \theta}) = \exp( \ii \covmap_t(\theta) )$. The covering map satisfies the Loewner equation
\begin{align*}\label{eqn::single_radial_Loewner_equation_cov}
	\partial_t \covmap_t(\theta) = \cot \bigg(\frac{\covmap_t(\theta) - \xi_t}{2}\bigg) , \qquad \covmap_0(\theta)=\theta.
\end{align*}
For each $z\in \overline{\U}$, the flow $t\mapsto \mathfrak{g}_t(z)$ is well-defined up to the swallowing time
\[
	\sigma_z := \sup\left\{ t \in [0, T) : \inf_{s \in [0, t]} |\mathfrak{g}_s(z) - \exp(\ii\xi_s)| > 0 \right\}.
\]
The hulls are determined by  $K_t = \{z\in \HH : \sigma_z \le t\}$,
and $(K_t)_{t\in [0,T)}$ is parameterized by radial capacity, i.e., $\CR(\U\setminus K_t;0)=\ee^{-t}$ for all $t\in [0,T)$. Moreover, $(K_t)_{t\in [0,T)}$ satisfies the local growth property: for $0\le s<t<T$, the diameter of $\mathfrak{g}_s(K_t\setminus K_s)$ tends to $0$ as $t\downarrow s$ uniformly over $s\in [0,T)$. 

Conversely, any increasing family of $\U$-hulls $(K_t)_{t\in [0,T)}$ satisfying the local growth property can be represented as a radial Loewner chain driven by some continuous function $\xi$. A simple curve $\gamma$ in $(\U; \ee^{\ii\theta}; 0)$ satisfies the local growth property, and the driving function can be obtained easily by $\xi_t = \arg\mathfrak{g}_t(\gamma_t)$. 

\begin{lemma}\label{lem::taum_Koebe}
Fix $\theta\in \R$ and suppose $\gamma\in \chamber(\U; \ee^{\ii\theta}; 0)$. We define the hitting time $\tau_s$ as in~\eqref{eqn::radial_return_notations}.
Then
\begin{equation} \label{eqn::taum_Koebe}
	s-\log 4\le \tau_{s}\le s.
\end{equation}
\end{lemma}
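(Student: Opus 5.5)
The plan is to compare $\tau_s$ with the conformal radius through the Koebe one-quarter theorem. Write $K_t=\gamma_{[0,t]}$, so that $\CR(\U\setminus K_t;0)=\ee^{-t}$ under radial capacity parametrization. Since $\gamma$ is a continuous simple curve from $\ee^{\ii\theta}\in\partial\U$ to $0$ with $\gamma_0\in\partial\U$, the function $t\mapsto|\gamma_t|$ is continuous and starts at $1$. Hence $\tau_s$ is well defined for $s>0$, and $|\gamma_{\tau_s}|=\ee^{-s}$.

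\textbf{Upper bound $\tau_s\le s$.} At time $\tau_s$ the hull $K_{\tau_s}$ contains the point $\gamma_{\tau_s}$ with $|\gamma_{\tau_s}|=\ee^{-s}$. Therefore $\mathrm{dist}(0,\partial(\U\setminus K_{\tau_s}))\le \ee^{-s}$. By the Schwarz lemma, or equivalently the easy half of Koebe, the inradius is at least $\CR/4$, so
\[
\ee^{-\tau_s}/4=\CR(\U\setminus K_{\tau_s};0)/4\le \ee^{-s}.
\]
This is the wrong direction for the upper bound; it actually yields $\tau_s\ge s-\log 4$. So Koebe one-quarter gives the lower bound, and I will record it as such.

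\textbf{Lower bound $\tau_s\ge s-\log 4$.} This is exactly the computation just carried out. Apply Koebe one-quarter to the inverse map $f=\mathfrak{g}_{\tau_s}^{-1}:\U\to\U\setminus K_{\tau_s}$, which satisfies $f(0)=0$ and $f'(0)=\ee^{-\tau_s}$. Its image contains the disc $B(0,\ee^{-\tau_s}/4)$. Since $\gamma_{\tau_s}\notin \U\setminus K_{\tau_s}$, this forces $\ee^{-s}\ge \ee^{-\tau_s}/4$, that is, $\tau_s\ge s-\log 4$.

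\textbf{Upper bound, correctly argued.} Before time $\tau_s$, every point of $\gamma_{[0,t]}$ satisfies $|\gamma_u|>\ee^{-s}$ for $u<\tau_s$. Continuity of $|\gamma|$ and the intermediate value theorem guarantee that the first hitting of the circle is at $\tau_s$, so $|\gamma_u|\ge\ee^{-s}$ on $[0,\tau_s]$. It follows that $\ee^{-s}\U\subset\U\setminus K_{\tau_s}$, up to the single boundary point $\gamma_{\tau_s}$, which is harmless since the open disc $\ee^{-s}\U$ lies in the complement. Conformal radius is monotone under inclusion of domains, as a consequence of the Schwarz lemma applied to $\mathfrak{g}_{\tau_s}$ restricted to $\ee^{-s}\U$. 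This gives
\[
\ee^{-\tau_s}=\CR(\U\setminus K_{\tau_s};0)\ge \CR(\ee^{-s}\U;0)=\ee^{-s},
\]
hence $\tau_s\le s$.

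The only delicate point is verifying the inclusion $\ee^{-s}\U\subset \U\setminus K_{\tau_s}$. This holds because $K_{\tau_s}=\gamma_{[0,\tau_s]}$ for a simple curve, so no region is swallowed, and every point of the curve has modulus at least $\ee^{-s}$. Everything else is a direct application of Koebe and Schwarz.
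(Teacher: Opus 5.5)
Your argument is correct and matches the paper's proof: Koebe's one-quarter theorem applied to $\mathfrak{g}_{\tau_s}^{-1}$ gives $\tau_s\ge s-\log 4$, and the inclusion $\ee^{-s}\U\subset\U\setminus\gamma_{[0,\tau_s]}$ combined with the Schwarz lemma gives $\mathfrak{g}'_{\tau_s}(0)\le \ee^{s}$, i.e.\ $\tau_s\le s$. Delete the false-start paragraph labelled ``Upper bound''. Also, the bound inradius $\ge \CR/4$ comes from Koebe's theorem, not the Schwarz lemma; the Schwarz lemma gives the opposite inequality.
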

\begin{proof}
On the one hand, applying Koebe's quarter theorem with $\mathfrak{g}^{-1}_{\tau_s}$, we have $\frac{1}{4}\ee^{-\tau_s}\U\subset \U\setminus \gamma_{[0,\tau_s]}$, which implies the lower bound of~\eqref{eqn::taum_Koebe}. On the other hand, since $\U_s\subset \U \setminus \gamma_{[0,\tau_s]}$, we have $\mathfrak{g}'_{\tau_s}(0)\le \ee^{s}$, which implies the upper bound of~\eqref{eqn::taum_Koebe}.	
\end{proof}

\paragraph*{Radial SLE with force points.}
We use the same notation as in~\eqref{eqn::forcepoints_weights_radial}. Recall that the radial $\SLE_{\kappa}^{\mu}(\bs{\rho})$ in $(\U;\ee^{\ii\bs{\theta}};0)$ is defined as the radial Loewner chain $(K_t)_{t\ge 0}$ driven by a continuous function $\xi:[0,\infty)\to \R$ satisfying the SDE system~\eqref{eqn::radialSLEkappa_rho_sde}. Its law is absolutely continuous with respect to radial SLE and the Radon-Nikodym derivative is given by the following lemma.

\begin{lemma}[{\cite[Lemma~2.3]{HuangPeltolaWuMultiradialSLEResamplingBP}}]\label{lem::SLEkapparho_martingale_radial}
Fix $\kappa\in(0,4]$ and assume the same notation as in~\eqref{eqn::forcepoints_weights_radial}. The law of radial $\SLE_{\kappa}^{\mu}(\bs{\rho})$ in $(\U;\ee^{\ii\bs{\theta}};0)$ is the same as the law of radial $\SLE_{\kappa}$ in $(\U;\ee^{\ii\theta_1};0)$ tilted by the following martingale, up to the first time $\ee^{\ii\theta_2}$ or $\ee^{\ii\theta_n}$ is swallowed: 
\begin{align} \label{eqn::SLEkapparho_martingale_radial}
\begin{split}
	M_t=& \mathfrak{g}'_t(0)^{\frac{\bar{\rho}(\bar{\rho}+4)-4\mu^2}{8\kappa}} \times \prod_{j=2}^{n} \covmap'_t(\theta_j)^{\frac{\rho_j(\rho_j+4-\kappa)}{4\kappa}} \times \prod_{j=2}^{n}  \sin \left( \frac{\covmap_{t}(\theta_j)-\xi_t}{2} \right)^{\frac{\rho_j}{\kappa}} \\
	& \times \prod_{2\le j<\ell\le n} \sin\left( \frac{\covmap_t(\theta_\ell)-\covmap_t(\theta_j)}{2} \right)^{\frac{\rho_\ell \rho_j}{2\kappa}} \times \exp\left( \frac{\mu}{\kappa} \xi_t + \frac{\mu}{\kappa} \sum_{j=2}^{n}\frac{\rho_j}{2} \covmap_t(\theta_j) \right),
\end{split}
\end{align}
where $\bar{\rho}:=\rho_2+\cdots+\rho_n$.
\end{lemma}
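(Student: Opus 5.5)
The martingale statement in Lemma~\ref{lem::SLEkapparho_martingale_radial} will be verified by a direct It\^o computation under radial $\SLE_\kappa$, followed by Girsanov's theorem. Under radial $\SLE_\kappa$ in $(\U;\ee^{\ii\theta_1};0)$ we have $\ud\xi_t=\sqrt{\kappa}\,\ud B_t$. For $\theta\neq\xi_t$ (mod $2\pi$) the radial Loewner flow gives the deterministic evolutions
\[
\partial_t\covmap_t(\theta)=\cot\Big(\frac{\covmap_t(\theta)-\xi_t}{2}\Big),\qquad \partial_t\log\covmap_t'(\theta)=-\frac{1}{2\sin^2\big((\covmap_t(\theta)-\xi_t)/2\big)},\qquad \partial_t\log\mathfrak{g}_t'(0)=1 .
\]
The last identity holds because $\mathfrak{g}_t'(0)=\ee^{t}$.

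Write $\Delta_j=\covmap_t(\theta_j)-\xi_t$, $c_j=\cot(\Delta_j/2)$ and $s_j=\sin(\Delta_j/2)$. The plan is to compute $\ud\log M_t$ factor by factor.

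\begin{itemize}
\item The term $\frac{\rho_j}{\kappa}\log s_j$ has martingale part $-\frac{\rho_j}{2\sqrt\kappa}c_j\,\ud B_t$. Its drift is $\frac{\rho_j}{\kappa}\big(\tfrac12 c_j^2-\tfrac{\kappa}{8}(1+c_j^2)\big)$, using $\partial^2_\xi\log s_j=-\tfrac14\csc^2$.
\item The $\covmap'$ term has drift $-\frac{\rho_j(\rho_j+4-\kappa)}{8\kappa}(1+c_j^2)$.
\item The cross term $\frac{\rho_j\rho_\ell}{2\kappa}\log\sin\frac{\covmap_t(\theta_\ell)-\covmap_t(\theta_j)}{2}$ has drift $\frac{\rho_j\rho_\ell}{4\kappa}\cot\big(\tfrac{\covmap_\ell-\covmap_j}{2}\big)(c_\ell-c_j)$. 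By the cotangent identity $\cot(a-b)(\cot b-\cot a)=1+\cot a\cot b$ (with the appropriate signs), this equals $-\frac{\rho_j\rho_\ell}{4\kappa}(1+c_jc_\ell)$.
\item The spiral part $\frac{\mu}{\kappa}\xi_t$ contributes $\frac{\mu}{\sqrt\kappa}\,\ud B_t$. The part $\frac{\mu\rho_j}{2\kappa}\covmap_t(\theta_j)$ contributes drift $\frac{\mu\rho_j}{2\kappa}c_j$.
\item The $\mathfrak{g}'_t(0)$ exponent contributes the constant $\frac{\bar\rho(\bar\rho+4)-4\mu^2}{8\kappa}$.
\end{itemize}

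To show that $M$ is a local martingale, I will check that
\[
\text{drift of }\log M+\tfrac12\,\ud\langle\log M\rangle=0 ,
\qquad
\ud\langle\log M\rangle=\frac{1}{\kappa}\Big(\mu-\sum_j\tfrac{\rho_j}{2}c_j\Big)^2\ud t .
\]
I will collect terms by type.

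\begin{itemize}
\item \textbf{The $c_j^2$ terms.} Their coefficients are $\frac{\rho_j}{2\kappa}-\frac{\rho_j}{8}-\frac{\rho_j(\rho_j+4-\kappa)}{8\kappa}+\frac{\rho_j^2}{8\kappa}=0$.
\item \textbf{The $c_jc_\ell$ terms.} They cancel between $-\frac{\rho_j\rho_\ell}{4\kappa}$ and the quadratic variation term $+\frac{\rho_j\rho_\ell}{4\kappa}$.
\item \textbf{The linear $c_j$ terms.} They cancel between $\frac{\mu\rho_j}{2\kappa}$ and the cross term $-\frac{\mu\rho_j}{2\kappa}$ from the quadratic variation.
\item \textbf{The constants.} They are
\[
-\sum_j\frac{\rho_j}{8}-\sum_j\frac{\rho_j(\rho_j+4-\kappa)}{8\kappa}-\sum_{j<\ell}\frac{\rho_j\rho_\ell}{4\kappa}+\frac{\mu^2}{2\kappa}+\frac{\bar\rho(\bar\rho+4)-4\mu^2}{8\kappa}
=-\frac{\bar\rho^2+4\bar\rho}{8\kappa}+\frac{\bar\rho(\bar\rho+4)}{8\kappa}=0 .
\]
\end{itemize}

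This is the key bookkeeping step, and the point where the prescribed exponent of $\mathfrak{g}'_t(0)$ is forced. I expect the main obstacle to be getting the signs in the cotangent identity right, since $\covmap_t(\theta_\ell)>\covmap_t(\theta_j)$ are both in $(\xi_t,\xi_t+2\pi)$.

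Once $M$ is a positive local martingale up to the first time $\ee^{\ii\theta_2}$ or $\ee^{\ii\theta_n}$ is swallowed, Girsanov's theorem applies. Localizing with stopping times at which all $|s_j|$ stay bounded below, the tilted law has $\ud\xi_t=\sqrt\kappa\,\ud\tilde B_t+\kappa\,\partial_\xi\log M_t\,\ud t$. Here
\[
\kappa\,\partial_\xi\log M_t=\mu-\sum_j\frac{\rho_j}{2}c_j=\mu+\sum_j\frac{\rho_j}{2}\cot\Big(\frac{\xi_t-V^j_t}{2}\Big),
\]
which is exactly \eqref{eqn::radialSLEkappa_rho_sde} with $V^j_t=\covmap_t(\theta_j)$. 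Uniqueness in law of that SDE up to the swallowing time then identifies the tilted measure with $\PPradial{1}^{(\kappa;\bs{\rho};\mu)}$. Letting the localization increase to the swallowing time completes the argument.
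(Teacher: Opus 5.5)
Your proposal is correct. The paper does not prove this lemma itself but imports it from \cite[Lemma~2.3]{HuangPeltolaWuMultiradialSLEResamplingBP}, and your argument is the standard direct verification of such a statement: compute $\ud\log M_t$ by It\^o's formula under radial $\SLE_\kappa$, then apply Girsanov. Your bookkeeping checks out in every piece: the cancellation of the $c_j^2$, $c_jc_\ell$ and linear $c_j$ terms; the identity $\cot(a-b)(\cot a-\cot b)=-(1+\cot a\cot b)$, which carries no sign ambiguity; the constant term that forces the exponent $\frac{\bar\rho(\bar\rho+4)-4\mu^2}{8\kappa}$ of $\mathfrak{g}'_t(0)$; and the resulting tilted drift $\mu+\sum_j\frac{\rho_j}{2}\cot\big(\frac{\xi_t-V^j_t}{2}\big)$.
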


\begin{definition}[Radial Loewner energy] \label{def::rho_energy_radial}
Fix $n\ge 1$ and $n$-polygon $(\Omega;\bs{x})$ with $z\in \Omega$. Fix $\mu\in\R$ and $\bs{\rho}\in \R_{\ge 0}^{n-1}$. Let $\varphi:\Omega\to\U$ be a conformal map satisfying $\varphi(\bs{x})=\ee^{\ii\bs{\theta}}$ and $\varphi(z)=0$. For $\gamma\in \overline{\chamber}(\Omega;x_1;z)$, we parameterize $\gamma$ by radial capacity of $\varphi(\gamma)$. We denote by $\xi_t$ the driving function of $\varphi(\gamma)$. We define the \emph{truncated radial $(\bs{\rho};\mu)$-Loewner energy} associated with the force points $(x_2,\ldots,x_n)$ to be
\begin{equation} \label{eqn::rho_energy_radial_noAC}
	\rateradial{1}^{(\bs{\rho};\mu)}(\Omega;\bs{x};z;\gamma_{[0,T]}) \; := \; \frac{1}{2} \int_{0}^{T} \left(\dot{\xi}_t-\mu- \sum_{j=2}^{n} \frac{\rho_j}{2} \cot \left( \frac{\xi_t-\covmap_t(\theta_j)}{2} \right) \right)^2 \ud t, 
\end{equation}
if $\xi$ is absolutely continuous; and setting $\rateradial{1}^{(\bs{\rho};\mu)}(\Omega;\bs{x};z;\cdot)=+\infty$ otherwise. The quantity $\rateradial{1}^{(\bs{\rho};\mu)} (\Omega; \bs{x}; z; \gamma_{[0,T]})$ is increasing in $T$. We define the \emph{radial $(\bs{\rho};\mu)$-Loewner energy} of $\gamma\in \overline{\chamber}(\Omega;x_1;z)$ to be its limit:
\begin{equation} \label{eqn::rho_energy_radial}
	\rateradial{1}^{(\bs{\rho};\mu)}(\Omega;\bs{x};z;\gamma) \; := \; \lim_{T\to +\infty} \rateradial{1}^{(\bs{\rho};\mu)}(\Omega;\bs{x};z;\gamma_{[0,T]}).
\end{equation}
\end{definition}

When all weights are zero and $\mu=0$, the radial $(\bs{\rho};\mu)$-Loewner energy reduces to the standard radial Loewner energy defined in~\cite{AbuzaidPeltolaLargeDeviationCapacityParameterization}:
\begin{equation}\label{eqn::energy_radial_single}
	\rateradial{1}(\Omega;x;z;\gamma_{[0,T]}):=\frac{1}{2} \int_{0}^{T} \left(\dot{\xi}_t \right)^2 \ud t,\qquad \rateradial{1}(\Omega;x;z;\gamma):=\frac{1}{2} \int_{0}^{+\infty} \left(\dot{\xi}_t \right)^2 \ud t,
\end{equation}
if $\xi$ is absolutely continuous; and setting $\rateradial{1}(\Omega;x;z;\cdot)=+\infty$ otherwise.  We recall the return estimate for standard radial SLE and its large deviation below. They will be the foundation for our proof of Proposition~\ref{prop::radialSLE_return} and Theorem~\ref{thm::radialSLE_LDP}.

\begin{lemma}[{\cite[Proposition~4.8]{AbuzaidPeltolaLargeDeviationCapacityParameterization}}] \label{lem::intersection_estimate}
Suppose $\gamma\sim\PPradial{1}^{(\kappa)}$ is radial $\SLE_{\kappa}$ in $(\U; \ee^{\ii\theta_1}; 0)$. 
Fix $t_0\in (0,\infty)$. There exists a universal constant $C_{\eqref{eqn::intersection_estimate}}\in (0,\infty)$ such that for any fixed simple curve $\gamma_{\mathrm{fix}}$ from $\ee^{\ii\theta_1}$ to $0$ in $\U$ and any crosscut $\eta$ of $\U$ (a simple curve in $\U$ whose prime ends lying on $\partial\U$) disjoint from $\gamma_{\mathrm{fix}}$, we have
\begin{equation} \label{eqn::intersection_estimate}
	\PPradial{1}^{(\kappa)} [\gamma_{[0,t_0]}\cap \eta\neq \emptyset] \le \exp\left( C_{\eqref{eqn::intersection_estimate}}/\kappa \right) \LE_{\U}(\eta,\gamma_{\mathrm{fix}})^{8/\kappa-1}, \qquad \text{ for } \kappa\in (0,4],
\end{equation}
where $\LE_{\U}(\eta,\gamma_{\mathrm{fix}})$ is the Brownian excursion measure defined in~\eqref{eqn::Def_Brownian excursion measure}. 
\end{lemma}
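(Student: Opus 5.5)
The plan is to reduce the estimate to the standard chordal boundary one-arm estimate by a change of measure, and to pay only a factor $\exp(O(1/\kappa))$ coming from the finite time horizon $t_0$. The crosscut $\eta$ is disjoint from $\gamma_{\mathrm{fix}}$, which joins $\ee^{\ii\theta_1}$ to $0$. Hence $\eta$ separates from $\{\ee^{\ii\theta_1},0\}$ a subdomain $D_\eta\subset\U$ whose boundary arc lies in $\partial\U\setminus\{\ee^{\ii\theta_1}\}$. The event $\{\gamma_{[0,t_0]}\cap\eta\neq\emptyset\}$ is then the event that the curve enters $D_\eta$ before time $t_0$.

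\textbf{Step 1 (change of measure).} I will fix an auxiliary target $y\in\partial\U$ lying on the far side of $\gamma_{\mathrm{fix}}$ from $\eta$, i.e.\ outside $\overline{D_\eta}$. I use the radial--chordal coordinate change: radial $\SLE_\kappa$ in $(\U;\ee^{\ii\theta_1};0)$ equals chordal $\SLE_\kappa$ in $(\U;\ee^{\ii\theta_1},y)$ weighted by a local martingale. This martingale is a product of powers of $\mathfrak{g}_t'(0)$, $\covmap_t'(y)$ and $\sin((\covmap_t(y)-\xi_t)/2)$, with exponents of order $1/\kappa$.

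On $[0,t_0]$, localised to the event that $y$ is not yet nearly swallowed, each factor is bounded above by $\exp(C/\kappa)$ with $C$ depending only on $t_0$, since $\mathfrak{g}'_t(0)=\ee^{t}\le \ee^{t_0}$. For the complementary event, that the curve comes within distance $\delta$ of $y$ before $t_0$, I would bound the probability by the radial Bessel-type estimate of Lemma~\ref{lem::radialBessel_estimates}. The relevant process is $\covmap_t(y)-\xi_t$, with $\alpha=1$ and $\mu=0$. This gives a bound of the form $\exp(C/\kappa)\,\delta^{4/\kappa-1}$.

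\textbf{Step 2 (chordal one-arm estimate).} For chordal $\SLE_\kappa$ from $\ee^{\ii\theta_1}$ to $y$, I invoke the boundary hitting estimate for a crosscut $\eta$ (the one behind Lemma~\ref{lem::standard_return_estimate}, from~\cite{field2015escape,LawlerMinkowskiSLERealLine}):
\[
\PP[\text{chordal SLE hits }\eta]\le c_\kappa\,\LE_{\U}(\eta,\Gamma)^{8/\kappa-1},
\]
where $\Gamma$ is a boundary-to-boundary curve separating $\eta$ from the two endpoints of the chordal curve, and $\kappa\log c_\kappa$ stays bounded. I then compare $\LE_\U(\eta,\Gamma)$ with $\LE_\U(\eta,\gamma_{\mathrm{fix}})$, using monotonicity of the excursion measure together with the fact that $\gamma_{\mathrm{fix}}$ separates $\eta$ from $y$.

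\textbf{Step 3 (combination).} The two contributions are then added. The bad-event term $\exp(C/\kappa)\delta^{4/\kappa-1}$ must also be absorbed into the right-hand side, which forces $\delta$ to be chosen as a power of $\LE_\U(\eta,\gamma_{\mathrm{fix}})$; the resulting exponent then has to be compared with $8/\kappa-1$.

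\textbf{Main obstacle.} I expect the hardest point to be the choice of $y$ and the geometric comparison between the excursion measures. The bound must hold uniformly over all $\gamma_{\mathrm{fix}}$ and $\eta$, with a universal constant, while $y$ must be far enough from both $\eta$ and $\ee^{\ii\theta_1}$ for the martingale in Step 1 to stay bounded. If no uniformly good $y$ exists, my fallback is to avoid the chordal comparison altogether. I would instead map out $\gamma_{\mathrm{fix}}$ and run the Field--Lawler escape argument~\cite{field2015escape} directly for radial SLE: a one-point boundary martingale with exponent $8/\kappa-1$, stopped at $t_0$, which gives the same bound with constant $\exp(C/\kappa)$.
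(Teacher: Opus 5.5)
The paper gives no proof of this lemma; it is quoted from \cite[Proposition~4.8]{AbuzaidPeltolaLargeDeviationCapacityParameterization}. Your proposal therefore has to stand on its own, and as written Steps~1--3 do not close: there is a genuine gap.

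\textbf{Steps 1 and 3 pull $\delta$ in opposite directions.} Chordal $\SLE_\kappa$ toward $y$ is radial $\SLE_\kappa(\kappa-6)$ with force point $y$. Writing $\Delta_t=\covmap_t(y)-\xi_t$, this gives
\[
\frac{\ud\PP_{\mathrm{rad}}}{\ud\PP_{\mathrm{chord}}}\Big|_{\LF_t}=\ee^{\frac{(6-\kappa)(\kappa-2)}{8\kappa}t}\,\covmap_t'(y)^{-\frac{6-\kappa}{2\kappa}}\Big(\frac{\sin(\Delta_t/2)}{\sin(\Delta_0/2)}\Big)^{\frac{6-\kappa}{\kappa}} .
\]
The factor $\covmap_t'(y)^{-(6-\kappa)/(2\kappa)}$ is unbounded. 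Since $\partial_t\log\covmap_t'(y)=-\tfrac12\csc^2(\Delta_t/2)$, on $\{\Delta_s\in[\delta,2\pi-\delta],\ s\le t_0\}$ you only get a bound of order $\exp(c\,t_0/(\kappa\delta^2))$, times the $y$-dependent factor $\sin(\Delta_0/2)^{-(6-\kappa)/\kappa}$. So the constant in Step~1 does not depend only on $t_0$; it blows up as $\delta\to0$.

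The bad-event term $\ee^{C/\kappa}t_0\,\delta^{4/\kappa-1}$ from Lemma~\ref{lem::radialBessel_estimates} bounds the probability of approaching a point $y\notin\overline{D_\eta}$, which does not involve $\eta$ at all. To make it at most $\ee^{C/\kappa}\LE^{8/\kappa-1}$ with $\LE=\LE_\U(\eta,\gamma_{\mathrm{fix}})$ small, you need $\delta\lesssim\LE^{(8-\kappa)/(4-\kappa)}$. The good-event prefactor then becomes $\exp\bigl(c\,t_0\,\LE^{-2(8-\kappa)/(4-\kappa)}/\kappa\bigr)$, which is not of the form $\ee^{C/\kappa}$. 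With $\delta$ fixed instead, the bad-event term is an $\eta$-independent constant.

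The obvious variants fail too:
\begin{itemize}
\item Localizing on $\{\covmap_t'(y)\ge\delta'\}$ with $\delta'=\LE^{p}$ costs a factor $\LE^{-p(6-\kappa)/(2\kappa)}$. That is an exponent loss of order $1/\kappa$, which no factor $\ee^{C/\kappa}$ absorbs uniformly as $\LE\to0$.
\item Placing $y$ on the arc cut off by $\eta$ removes the bad event. But then the chordal curve crosses $\eta$ almost surely, and Step~2 yields nothing.
\end{itemize}
So comparison with a single fixed chordal target cannot give the sharp exponent $8/\kappa-1$ with an $\ee^{C/\kappa}$ prefactor uniformly in $\eta$.

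\textbf{Step 2 rests on a false geometric claim.} $\gamma_{\mathrm{fix}}$ runs from $\ee^{\ii\theta_1}$ to the \emph{interior} point $0$. Hence $\U\setminus\gamma_{\mathrm{fix}}$ is connected, and $\gamma_{\mathrm{fix}}$ does not separate $\eta$ from $y$. If you complete it to a chord $\Gamma$ ending at $y$, monotonicity gives $\LE_\U(\eta,\Gamma)\ge\LE_\U(\eta,\gamma_{\mathrm{fix}})$, which is the wrong direction. You would need a separate argument, uniform over all $\gamma_{\mathrm{fix}}$, that the added piece carries little excursion mass from $\eta$.

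\textbf{The fallback.} It points in the right direction, but it is where all the content lies, and it is one sentence. ``Mapping out $\gamma_{\mathrm{fix}}$'' is not meaningful: the radial curve lives in $\U$, and $\gamma_{\mathrm{fix}}$ is only a reference set for the excursion measure. A complete argument along these lines needs two ingredients.
\begin{itemize}
\item A radial boundary one-arm estimate with constant $\ee^{O(1/\kappa)}$. One candidate is the $\rho=\kappa-8$ analogue of the martingale in Lemma~\ref{lem::SLEkapparho_martingale_radial}; the local-martingale property does not need $\rho\ge0$. This needs the usual care at the hitting time, because $\covmap_t'(y)/\sin(\Delta_t/2)$ is bounded by $C/\mathrm{dist}(y,\gamma_{[0,t]})$ but is not comparable to it.
\item A geometric lemma saying that small $\LE_\U(\eta,\gamma_{\mathrm{fix}})$ forces $\eta$ into a boundary ball whose radius is $\lesssim\LE$ times its distance to $\ee^{\ii\theta_1}$. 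Here $\gamma_{\mathrm{fix}}$ acts as a barrier near $\ee^{\ii\theta_1}$ and $0$.
\end{itemize}
Neither ingredient is present in your proposal.
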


\begin{lemma}[{\cite[Proposition~3.2]{AbuzaidPeltolaLargeDeviationCapacityParameterization}}]\label{lem::finite_radialSLE_LDP}
Fix $1$-polygon $(\Omega;x;z)$ and $T>0$. Suppose $\gamma\sim \PPradial{1}^{(\kappa)}(\Omega; x; z)$ is radial $\SLE_{\kappa}$ in $(\Omega;x;z)$. Then the family of laws of $\gamma_{[0,T]}$ under $\PPradial{1}^{(\kappa)}(\Omega; x; z)$ satisfies large deviation principle in the space $(\chamber_T(\Omega;x;z), \dist_{\chamber})$ as $\kappa\to 0+$ with good rate function $\rateradial{1}(\Omega;x;z;\cdot)$ given by~\eqref{eqn::energy_radial_single}. 
\end{lemma}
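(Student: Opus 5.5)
We would first reduce to the normalized domain $(\U;\ee^{\ii\theta};0)$. Let $\varphi:\Omega\to\U$ be a conformal map with $\varphi(z)=0$ and $\varphi(x)=\ee^{\ii\theta}$. Both the law and the radial capacity parameterization are conformally invariant, and $\dist_{\chamber}$ is defined through $\varphi$. So $\varphi$ induces an isometry $\chamber_T(\Omega;x;z)\to\chamber_T(\U;\ee^{\ii\theta};0)$ that carries the energy $\rateradial{1}$ to itself. In $\U$, radial $\SLE_\kappa$ is the radial Loewner chain driven by $\xi_t=\theta+\sqrt{\kappa}B_t$, $t\in[0,T]$. By Schilder's theorem, the laws of $\xi_{[0,T]}$ satisfy a large deviation principle in $\mathrm{C}([0,T],\R)$, uniform topology, with good rate function $\frac12\int_0^T\dot\xi_t^2\,\ud t$. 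This is finite exactly for absolutely continuous $\xi$ with $\xi_0=\theta$ and $\dot\xi\in L^2$.

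The natural step is to push this forward under the Loewner transform $\mathcal{L}:\xi_{[0,T]}\mapsto\gamma_{[0,T]}$. The obstacle is that $\mathcal{L}$ is not continuous on all of $\mathrm{C}([0,T])$ into $(\chamber_T,\dist_{\chamber})$: a general continuous driver need not even generate a simple curve. We therefore plan to use the generalized contraction principle, Lemma~\ref{lem::generalized_contraction}.

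For each $M$, we would construct a closed set $E=E(M)\subset \mathrm{C}([0,T])$ with the following two properties.
\begin{itemize}
\item[(i)] The complement satisfies $\limsup_{\kappa\to0}\kappa\log\PP[\xi\notin E]\le -M$.
\item[(ii)] The map $\mathcal{L}|_E$ is continuous into $(\chamber_T,\dist_{\chamber})$ and takes values in simple curves meeting $\partial\U$ only at $\ee^{\ii\theta}$.
\end{itemize}
Our choice would be a uniform local $\tfrac12$-Hölder bound. Define
\[
E=\big\{\xi:\ |\xi_t-\xi_s|\le \sigma|t-s|^{1/2}\ \text{for all } s,t\in[0,T] \text{ with } |t-s|\le\delta\big\},
\]
with $\sigma<4$ fixed and $\delta=\delta(M)$ small.

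For (i), a union bound over $O(T/\delta)$ windows combined with the Gaussian tail of the Brownian modulus of continuity gives $\PP[\xi\notin E]\le C(T/\delta)\exp(-c\sigma^2/\kappa)$. Since the exponent must exceed $M$ while $\sigma<4$ is fixed, we would use the finer Lévy-modulus estimate, or a chaining bound on a slightly weaker Hölder-type modulus, letting $\delta\to0$ to buy the exponent $M$. We expect this to be the main technical obstacle.

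For (ii), we would invoke the Lind--Marshall--Rohde theory: drivers with small local $\tfrac12$-Hölder norm generate quasiarcs with uniform quasiconformality constants. The resulting equicontinuity, together with Carathéodory convergence of hulls, should upgrade uniform convergence of drivers to uniform convergence of the curves in capacity parameterization. This is the radial analogue of the chordal continuity argument of Kemppainen--Schramm type.

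Given (i) and (ii), Lemma~\ref{lem::generalized_contraction}(1) yields the large deviation principle with rate $J(\gamma)=\inf\{\frac12\int_0^T\dot\xi^2:\mathcal{L}(\xi)=\gamma\}$. Since $\mathcal{L}$ is injective on simple curves (recall $\xi_t=\arg\mathfrak{g}_t(\gamma_t)$), $J$ equals $\rateradial{1}$.

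For goodness we would use Lemma~\ref{lem::generalized_contraction}(2). It suffices that $\mathcal{L}$ is continuous on $\{\frac12\int\dot\xi^2<\infty\}$. By Cauchy--Schwarz, $|\xi_t-\xi_s|\le (2I|t-s|)^{1/2}$ on the level set $\{\frac12\int_0^T\dot\xi^2\le I\}$, and absolute continuity of $\int\dot\xi^2$ makes the local Hölder norm tend to $0$ as $\delta\to 0$. Hence each finite-energy level set lies in some $E$ as above, where $\mathcal{L}$ is continuous. Finally, finite-energy drivers generate simple curves in $\U$ by the Friz--Shekhar result already used in Lemma~\ref{lem::finite_energy_simplechord}. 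So the rate function vanishes nowhere outside $\chamber_T$, and the level sets are compact images of compact sets.
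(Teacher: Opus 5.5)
The paper gives no proof of this lemma; it is imported from Abuzaid--Peltola. Measured against what such a proof must contain, your plan has a genuine gap, and it sits at the step you called a technical obstacle.

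\textbf{The set $E$ is a null set.} Brownian paths are almost surely not locally $\tfrac12$-H\"older: by L\'evy's modulus of continuity, $\sup_{|t-s|\le h}|B_t-B_s|/\sqrt h\to\infty$ as $h\downarrow 0$ almost surely. Hence $\PP[\theta+\sqrt\kappa B\notin E]=1$ for every $\kappa>0$, every $\sigma<\infty$ and every $\delta>0$. Condition (i) of Lemma~\ref{lem::generalized_contraction} therefore fails outright, not merely with a bad exponent. Your union bound over $O(T/\delta)$ windows only controls oscillation at scale $\delta$, not at all smaller scales, so the asserted bound $C(T/\delta)\ee^{-c\sigma^2/\kappa}$ is false.

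\textbf{Weakening the modulus breaks (ii).} You could relax to a modulus Brownian paths do satisfy, such as $\sigma\sqrt{h\log(1/h)}$ or a H\"older exponent below $\tfrac12$. That repairs (i) with any rate $M$ by shrinking $\delta$. But then (ii) collapses. The threshold $4$ in the Lind--Marshall--Rohde criterion is sharp. Any such modulus $\omega$ has $\omega(h)/\sqrt h\to\infty$, so it allows local $\tfrac12$-H\"older norm above $4$ at short time scales. There you have no deterministic reason for $\mathcal L$ to produce simple curves, let alone to be continuous; in the chordal picture the sharpness examples can be rescaled into exactly this regime. So no condition on the driver's modulus alone gives a closed set that is both exponentially likely and a set on which $\mathcal L$ is continuous into $\chamber_T$. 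That $\SLE_\kappa$ with $\kappa\le 4$ yields simple curves is a probabilistic fact, not a consequence of deterministic quasislit theory.

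\textbf{What is missing: an $\SLE$-specific exponential estimate.} One standard route defines $E(M)$ through geometric quantities of the Loewner chain rather than the driver's modulus. For instance, impose uniform bounds $|\hat f_t'(\ii y)|\le Cy^{-\beta}$ with $\beta<1$ on the inverse Loewner maps near the tip (in a half-plane chart), for $t\in[0,T]$ and small $y$. Such sets have the three properties you need:
\begin{enumerate}
\item They are closed in $\mathrm{C}([0,T])$, because for fixed $y>0$ the quantity $\hat f_t'(\ii y)$ depends continuously on the driver.
\item On them, uniform convergence of drivers implies uniform convergence of the capacity-parameterized traces, by the Johansson Viklund--Rohde--Wong convergence criterion.
\item Derivative estimates for $\SLE_\kappa$, whose exponents grow like $1/\kappa$ as $\kappa\to 0$, give $\PP[E(M)^c]\le\ee^{-M/\kappa}$.
\end{enumerate}

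\textbf{A smaller flaw in the goodness argument.} On $\{\tfrac12\int_0^T\dot\xi^2\le I\}$ the local $\tfrac12$-H\"older constant is bounded only by $\sqrt{2I}$, not uniformly small; concentrating $\dot\xi$ on a very short interval attains it. So for $2I>\sigma^2$ the level set lies in no single $E$. Pointwise membership of each driver in some $E$ does not give continuity of $\mathcal L$ on the union. Continuity on level sets needs a separate input. One option is to split time where the cumulative energy crosses multiples of a constant below $8$, so that each piece is a quasislit and curves of energy at most $I$ are uniformly quasiconformal.
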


\subsection{A monotone coupling}
\label{subsec::mono_radial}

\begin{lemma}[{\cite[Lemma~A.6]{HuangPeltolaWuMultiradialSLEResamplingBP}}]\label{lem::radial_mono_coupling}
Fix $\kappa\in (0,4]$ and assume the same notation as in~\eqref{eqn::forcepoints_weights_radial}. 
Suppose $(\xi_t, V_t^2, \ldots, V_t^n)$ is the solution to the SDE system~\eqref{eqn::radialSLEkappa_rho_sde}.  
We define $\bs{\Delta}_t:=(\Delta_t^{2},\ldots,\Delta_t^{n})$ where
\[
\Delta_{t}^{j}:=V_t^{j}-\xi_t,  
\qquad  2\le j\le n.
\]
Suppose $(X_t)_{t\geq 0}$ is the solution to the SDE~\eqref{eqn::radialBessel_SDE} with $X_0=\Delta_0^{n}$ and
	\begin{equation*}
		\alpha=1+(\rho_2+\cdots+\rho_n)/2=1+\bar{\rho}/2. 
	\end{equation*}
Then there exists a coupling between $(X_t)_{t\geq 0}$ and $(\bs{\Delta}_t)_{t\geq 0}$ such that $X_0=\Delta_0^{n}$ and $X_t\le \Delta_t^{n}$ for all $t>0$ almost surely. 
\end{lemma}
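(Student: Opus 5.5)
We construct the coupling exactly as in the chordal Lemma~\ref{lem::monotone_coupling}: take $X$ as the driving process and recover $\bs{\Delta}$ from it by solving a pathwise ODE. Note first that, by~\eqref{eqn::radialSLEkappa_rho_sde}, the differences $\Delta^j_t = V^j_t-\xi_t\in(0,2\pi)$ satisfy
\begin{equation*}
\ud \Delta^j_t = -\sqrt{\kappa}\,\ud B_t + \cot\Big(\frac{\Delta^j_t}{2}\Big)\ud t + \sum_{k=2}^n \frac{\rho_k}{2}\cot\Big(\frac{\Delta^k_t}{2}\Big)\ud t - \mu\,\ud t, \qquad 2\le j\le n,
\end{equation*}
using $\cot$ odd. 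This is the same type of system as~\eqref{eqn::Z_process_system}, with a common noise $-\sqrt{\kappa}\,\ud B_t$. When $\kappa\le 4$ and $\rho_k\ge 0$, the solution lives for all time in the ordered region $0<\Delta^2_t<\cdots<\Delta^n_t<2\pi$.

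\textbf{Step 1: the pathwise ODE.} Let $X$ solve~\eqref{eqn::radialBessel_SDE} with the same Brownian motion $B$ and with $\alpha=1+\bar\rho/2$. Define $Z^j_t := X_t-\Delta^j_t$. The noise cancels, so $\bs Z$ solves the ODE
\begin{equation*}
\ud Z^j_t = \Big(\alpha\cot\frac{X_t}{2} - \cot\frac{X_t-Z^j_t}{2} - \sum_{k}\frac{\rho_k}{2}\cot\frac{X_t-Z^k_t}{2}\Big)\ud t,\qquad Z^j_0=X_0-\theta_j+\theta_1.
\end{equation*}
The coefficients are locally Lipschitz on the region where all arguments lie in $(0,2\pi)$. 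So there is a unique solution driven by the path of $X$ up to the exit time of that region. We then set $\Delta^j:=X-Z^j$. This $\bs\Delta$ solves the SDE above, which gives the coupling.

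\textbf{Step 2: the comparison.} The claim $X_t\le\Delta^n_t$ is equivalent to $Z^n_t\le 0$, and $Z^n_0=0$. At any time with $Z^n_t=0$ we have $\Delta^n_t=X_t$ and $\Delta^k_t\le \Delta^n_t$ for all $k$. Since $\cot(\cdot/2)$ is decreasing on $(0,2\pi)$, this gives $\cot(\Delta^k_t/2)\ge\cot(X_t/2)$. The drift of $Z^n$ at such a time is then
\begin{equation*}
\alpha\cot\frac{X_t}{2}-\cot\frac{X_t}{2}-\sum_k\frac{\rho_k}{2}\cot\frac{\Delta^k_t}{2}\le\Big(\alpha-1-\frac{\bar\rho}{2}\Big)\cot\frac{X_t}{2}=0.
\end{equation*}
Equality holds only when all $\Delta^k$ coincide with $\Delta^n$, which the ordering $\Delta^2<\cdots<\Delta^n$ rules out for $n\ge 3$. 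Hence the drift is strictly negative, except in the degenerate case $n=2$, where $\Delta^2=X$ exactly and the claim is trivial. A standard first-crossing argument then shows $Z^n$ never becomes positive.

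\textbf{Main obstacle: global existence.} We must check that neither process exits its domain, so that the coupling holds for all $t>0$ rather than only up to an explosion time. Here we expect to use the following.
\begin{itemize}
\item The known global well-posedness of~\eqref{eqn::radialSLEkappa_rho_sde} for $\kappa\le4$ and $\rho\ge0$.
\item The non-hitting of $\{0,2\pi\}$ by $X$. This follows from a Feller test, since $\alpha\ge1$ gives $4\alpha/\kappa\ge 1$ when $\kappa\le4$; see also Lemma~\ref{lem::radialBessel_estimates}.
\end{itemize}
Combining these, $\bs Z$ stays in the admissible region: the ODE solution coincides with $X-\bs\Delta$ for the genuine strong solution $\bs\Delta$, which never exits. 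If pathwise uniqueness is delicate near the boundary, we would instead stop at exit times of compact subregions and pass to the limit.
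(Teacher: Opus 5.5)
Your proposal is correct and follows the paper's route. The paper takes this radial coupling from \cite[Lemma~A.6]{HuangPeltolaWuMultiradialSLEResamplingBP} as the analogue of its chordal Lemma~\ref{lem::monotone_coupling}, which it proves exactly this way: common Brownian motion, a pathwise ODE for $\bs{Z}=X-\bs{\Delta}$, and the sign of the drift of $Z^n$. Your handling of global existence (Feller test for $X$, well-posedness of $\bs{\Delta}$, identifying the ODE solution with $X-\bs{\Delta}$) is, if anything, more careful than the paper's.

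One slip needs fixing. At $Z^n_t=0$ the drift equals $\sum_{k<n}\frac{\rho_k}{2}\big(\cot\frac{X_t}{2}-\cot\frac{\Delta^k_t}{2}\big)$, and this vanishes whenever $\rho_2=\cdots=\rho_{n-1}=0$, for any $n$, not only $n=2$. So drop the strict-negativity-at-zeros argument. Instead, rewrite the drift as $(1+\frac{\rho_n}{2})\big(\cot\frac{X_t}{2}-\cot\frac{\Delta^n_t}{2}\big)+\sum_{k<n}\frac{\rho_k}{2}\big(\cot\frac{X_t}{2}-\cot\frac{\Delta^k_t}{2}\big)$. It is strictly negative whenever $Z^n_t>0$, since then $\Delta^k_t\le\Delta^n_t<X_t$. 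That alone prevents $Z^n$ from ever becoming positive.
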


The following lemma shows that $V_t^p-V_t^2$ decays exponentially fast. 
Using the exponential decay, we are able to reduce the multiple force points case to the single force point case. 

\begin{lemma}[{\cite[Lemma~A.8]{HuangPeltolaWuMultiradialSLEResamplingBP}}]\label{lem::radial_expdecay}
Assume the same notation as in Lemma~\ref{lem::radial_mono_coupling}. 
The difference $\Delta_{t}^{n}-\Delta_{t}^{2}$ decays exponentially fast to $0$ as $t\to\infty$: 
\begin{equation}\label{eqn::radial_expdecay}
	0\le \Delta_t^{n}-\Delta_t^{2}\le \left(\Delta_0^{n}-\Delta_0^{2}\right)\exp(-C_{\eqref{eqn::radial_expdecay}}t), 
\end{equation}
where \[C_{\eqref{eqn::radial_expdecay}}=\inf\left\{\frac{\sin(u/2)}{u}: 0\le u\le \Delta_0^{n}-\Delta_0^{2}\right\}>0.\]
\end{lemma}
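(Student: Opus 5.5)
We need to prove Lemma (radial_expdecay): $0\le \Delta^n_t-\Delta^2_t\le (\Delta^n_0-\Delta^2_0)e^{-Ct}$ with $C=\inf\{\sin(u/2)/u: 0\le u\le \Delta_0^n-\Delta_0^2\}$.

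The approach is a direct ODE comparison, since the difference $D_t:=\Delta_t^n-\Delta_t^2=V_t^n-V_t^2$ involves no Brownian term: the common drift $\xi_t$ cancels. From \eqref{eqn::radialSLEkappa_rho_sde},
\[
\partial_t D_t=\cot\Big(\frac{\Delta_t^n}{2}\Big)-\cot\Big(\frac{\Delta_t^2}{2}\Big),
\]
a pathwise ODE. Because $\kappa\le 4$ and the weights are nonnegative, the force points are never swallowed, so we have $0<\Delta_t^2\le\Delta_t^n<2\pi$ for all $t$. The ordering $\Delta_t^2\le \Delta_t^n$ is preserved because the covering map $\covmap_t$ is increasing; this gives the lower bound $D_t\ge 0$.

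For the upper bound, use the identity $\cot a-\cot b=\frac{\sin(b-a)}{\sin a\sin b}$ with $a=\Delta^n_t/2$ and $b=\Delta^2_t/2$. This gives
\[
\partial_t D_t=-\frac{\sin(D_t/2)}{\sin(\Delta_t^n/2)\sin(\Delta_t^2/2)}\le -\sin(D_t/2),
\]
since $0<\sin(\Delta^j_t/2)\le 1$ and $\sin(D_t/2)\ge 0$ for $D_t\in[0,2\pi)$. In particular $D_t$ is nonincreasing, so $D_t\in[0,D_0]$ for all $t$.

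On $[0,D_0]$ we have $\sin(u/2)\ge C u$ by the definition of $C$. Note that $C>0$, because $D_0<2\pi$ and $\sin(u/2)/u\to 1/2$ as $u\to0$. Hence $\partial_t D_t\le -C D_t$, and Grönwall's inequality (or differentiating $e^{Ct}D_t$) yields \eqref{eqn::radial_expdecay}.

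The main obstacle is only justifying that the denominators stay positive for all time, i.e.\ that no force point is swallowed, so that the ODE is valid globally. This follows from the non-swallowing property for $\kappa\in(0,4]$ with $\bs{\rho}\ge 0$ already recalled after \eqref{eqn::radialSLEkappa_rho_sde}; alternatively, Lemma~\ref{lem::radial_mono_coupling} keeps $\Delta^n_t$ away from the endpoints.
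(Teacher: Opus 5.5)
Your argument is correct. The paper gives no proof of its own here and simply cites \cite[Lemma~A.8]{HuangPeltolaWuMultiradialSLEResamplingBP}, and your pathwise comparison $\partial_t D_t=-\sin(D_t/2)/\bigl(\sin(\Delta_t^n/2)\sin(\Delta_t^2/2)\bigr)\le-\sin(D_t/2)\le -C D_t$, with $D_t=\Delta_t^n-\Delta_t^2$, is the natural argument behind that lemma; the stated constant $C=\inf_{0\le u\le D_0}\sin(u/2)/u$ is exactly what this argument yields.

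One small correction concerns your fallback justification via Lemma~\ref{lem::radial_mono_coupling}. That coupling only gives the lower bound $\Delta_t^n\ge X_t>0$, so it does not keep $\Delta_t^2$ away from $0$ or $\Delta_t^n$ away from $2\pi$. It is your first justification, the non-swallowing property for $\kappa\le 4$ and $\bs{\rho}\ge 0$, that keeps the ODE valid for all time.
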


\subsection{Proof of Proposition~\ref{prop::radialSLE_return}}
\label{subsec::proof_return_estimate}

\begin{proof}[Proof of Proposition~\ref{prop::radialSLE_return}]
Recall that the driving function for radial $\SLE_{\kappa}^{\mu}(\bs{\rho})$ satisfies the SDE system~\eqref{eqn::radialSLEkappa_rho_sde}. For $2\leq j \leq n$, we denote 
\[
	\bs{\rho}:=(\rho_2,\ldots,\rho_n),\qquad \bar{\rho}=\sum_{j=2}^{n} \rho_j, \qquad
	\Delta_{t}^{j}:=V_t^{j}-\xi_t.
\]
We use notations in~\eqref{eqn::radial_return_notations} and define ``bad events" to be
\begin{equation*}
	E_j:=\left\{ \min_{0 \le t \le \tau_{j}} \Delta_t^{n}\le j^{-1/2} \right\} \bigcup \left\{ \max_{0 \le t \le \tau_{j}} \Delta_t^{2} \ge 2\pi-j^{-1/2} \right\}, \qquad \text{ for } j\in \N. 
\end{equation*}
Then we have 
\begin{align}\label{eqn::radialSLE_return_aux1}
	&\PPradial{1}^{(\kappa;\bs{\rho};\mu)} \left[ \gamma_{[\tau_{S},\infty)} \cap \partial \U_s \neq \emptyset \right] \notag\\
	\le &\sum_{m=0}^{\infty} \PPradial{1}^{(\kappa;\bs{\rho};\mu)} [E_{S+m+1}]+\sum_{m=0}^{\infty} \PPradial{1}^{(\kappa;\bs{\rho};\mu)} \left[ \{\gamma_{[\tau_{S+m},\tau_{S+m+1}]} \cap \partial \U_s \neq \emptyset \}\cap E_{S+m+1}^c\right]. 
\end{align}
We will prove in Lemma~\ref{lem::radial_badevents} that when $\kappa<(2\bar{\rho}+4)/5$, we have
\begin{align}\label{eqn::radial_badevents}
	\sum_{m=0}^{\infty} \PPradial{1}^{(\kappa;\bs{\rho};\mu)} [E_{S+m+1}] \le  2\exp\left(C_{\eqref{eqn::radialBessel_estimates}} /\kappa\right) \frac{S^{5/2-(\bar{\rho}+2)/\kappa}}{(\bar{\rho}+2)/\kappa-5/2}.
\end{align}
We will prove in Lemma~\ref{lem::radial_goodevents} that there exist constants $S_{\eqref{eqn::radial_goodevents}}\in \N$ and $C_{\eqref{eqn::radial_goodevents}}\in (0,\infty)$ depending on $s,\mu,\bs{\rho}$ such that when $S>S_{\eqref{eqn::radial_goodevents}}$ and $\kappa<1$, we have
\begin{align}\label{eqn::radial_goodevents}
\sum_{m=0}^{\infty} \PPradial{1}^{(\kappa;\bs{\rho};\mu)} \left[ \{\gamma_{[\tau_{S+m},\tau_{S+m+1}]} \cap \partial \U_s \neq \emptyset \}\cap E_{S+m+1}^c\right]
\le \frac{\exp\left( \frac{1}{\kappa} (C_{\eqref{eqn::radial_goodevents}}-S) \right)}{1-\ee^{-1}}.
\end{align}

When $\kappa\le 1/5$, we have $(\bar{\rho}+2)/\kappa-5/2> 1/\kappa > 1$. Plugging~\eqref{eqn::radial_badevents} and~\eqref{eqn::radial_goodevents} into~\eqref{eqn::radialSLE_return_aux1}, we obtain
\begin{align*}
	\PPradial{1}^{(\kappa;\bs{\rho};\mu)} \left[ \gamma_{[\tau_{S},\infty)} \cap \partial \U_s \neq \emptyset \right]
	\le& 2\exp\left(C_{\eqref{eqn::radialBessel_estimates}} /\kappa\right) \frac{S^{5/2-(\bar{\rho}+2)/\kappa}}{(\bar{\rho}+2)/\kappa-5/2} + \frac{\exp\left( \frac{1}{\kappa} (C_{\eqref{eqn::radial_goodevents}}-S)  \right)}{1-\ee^{-1}}\\
	\le & \exp \left( \frac{1}{\kappa} \left( C_{\eqref{eqn::radialBessel_estimates}}+C_{\eqref{eqn::radial_goodevents}}+\kappa \log 4 -\log S \right)\right). 
\end{align*}
Taking 
\[ 
	S:=S_{\eqref{eqn::radial_goodevents}} \vee \exp \left( C_{\eqref{eqn::radialBessel_estimates}}+C_{\eqref{eqn::radial_goodevents}}+ \log 4 +M \right),
\]
we obtain~\eqref{eqn::radialSLE_return} as desired.
\end{proof}

\begin{lemma}\label{lem::radial_badevents}
Assume the same notation as in the proof of Proposition~\ref{prop::radialSLE_return}. The estimate~\eqref{eqn::radial_badevents} on the bad event holds for $\kappa<(2\bar{\rho}+4)/5$. 
\end{lemma}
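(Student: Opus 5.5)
We bound each bad event $E_j$, $j=S+m+1$, by a Bessel-type hitting probability over a time window of length at most $j$, and then sum over $m$. Since $\tau_j\le j$ by Lemma~\ref{lem::taum_Koebe}, we have
\[
E_j\subset\Big\{\min_{0\le t\le j}\Delta^n_t\le j^{-1/2}\Big\}\cup\Big\{\max_{0\le t\le j}\Delta^2_t\ge 2\pi-j^{-1/2}\Big\}.
\]
For the first event, we use the monotone coupling of Lemma~\ref{lem::radial_mono_coupling} with $\alpha=1+\bar\rho/2$. Under this coupling $X_t\le\Delta^n_t$, so $\{\min_{[0,j]}\Delta^n_t\le \eps\}\subset\{\min_{[0,j]}X_t\le\eps\}$. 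Applying~\eqref{eqn::radialBessel_estimates} with $t_0=j$ and $\eps=j^{-1/2}$ gives the bound
\[
\exp\big(C_{\eqref{eqn::radialBessel_estimates}}/\kappa\big)\, j\, j^{-(4\alpha/\kappa-1)/2}.
\]
Since $4\alpha/\kappa=(2\bar\rho+4)/\kappa$, this equals $\exp(C/\kappa)\, j^{3/2-(\bar\rho+2)/\kappa}$. Lemma~\ref{lem::radialBessel_estimates} requires $0<\kappa<4\alpha$, and this holds under the stated assumption $\kappa<(2\bar\rho+4)/5$.

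For the second event ($\Delta^2$ approaching $2\pi$), we use symmetry. Reflecting the configuration by $\theta\mapsto-\theta$ (so $\mu\mapsto-\mu$, and the force points reverse order), $2\pi-\Delta^2_t$ plays the role of the outermost gap. The analogous coupling then dominates it from below by a radial Bessel process of the same $\alpha$ with drift $+\mu$, started at $2\pi-\Delta_0^2$. Alternatively, one can directly use the upper-barrier half of~\eqref{eqn::radialBessel_estimates}, which already controls $\max X_t\ge2\pi-\eps$; the coupling inequality would then have to go in the other direction, so the reflection argument is the cleaner choice. The constant $C_{\eqref{eqn::radialBessel_estimates}}$ may be taken as the maximum over the two cases (depending on $\alpha,\mu$ and both initial gaps). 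The result is the same bound for the second event, so
\[
\PP[E_j]\le 2\exp(C/\kappa)\, j^{3/2-(\bar\rho+2)/\kappa}.
\]

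Finally we sum over $m\ge0$, i.e.\ over $j\ge S+1$:
\[
\sum_{j\ge S+1} j^{3/2-(\bar\rho+2)/\kappa}\le\int_S^\infty x^{3/2-(\bar\rho+2)/\kappa}\,dx=\frac{S^{5/2-(\bar\rho+2)/\kappa}}{(\bar\rho+2)/\kappa-5/2},
\]
which is finite precisely when $(\bar\rho+2)/\kappa>5/2$, i.e.\ $\kappa<(2\bar\rho+4)/5$. This yields~\eqref{eqn::radial_badevents}.

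The main obstacle is making the coupling rigorous for the upper barrier: Lemma~\ref{lem::radial_mono_coupling} only gives $X_t\le\Delta^n_t$, so the reflection has to be set up carefully, including checking that $C_{\eqref{eqn::radialBessel_estimates}}$ is uniform in $t_0$. It is, because the proof of Lemma~\ref{lem::radialBessel_estimates} gives a constant depending only on $\alpha,\mu,X_0$, and $\eps<X_0$ holds for $j$ large.
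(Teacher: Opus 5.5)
Your proposal is correct and follows the paper's proof essentially step by step. Both use the monotone coupling with $\alpha=1+\bar{\rho}/2$, the Bessel estimate~\eqref{eqn::radialBessel_estimates} with $t_0=j$ (via $\tau_j\le j$ from Lemma~\ref{lem::taum_Koebe}) and $\eps=j^{-1/2}$, symmetry for the $\Delta^2$ side, and an integral comparison for the sum.

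Your reflection argument correctly fills in the paper's one-line ``by symmetry''. The reflected curve is radial $\mathrm{SLE}_\kappa^{-\mu}(\bs{\rho})$ with reversed force points, so $2\pi-\Delta^2_t$ becomes the outermost gap and is dominated from below by the Bessel-type process with drift $\alpha\cot(X/2)+\mu$.
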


\begin{proof}
We denote by $\PP$ the measure of the coupling in Lemma~\ref{lem::radial_mono_coupling} with $\alpha=1+\bar{\rho}/2$. From the coupling, we have  $X_0=\Delta_0^{n}$ and $X_t\le \Delta_t^{n}$ for all $t$. Consequently, when $\kappa<4+2\bar{\rho}$, we have
\begin{align*}
	\PPradial{1}^{(\kappa;\bs{\rho};\mu)} \left[ \min_{0 \le t \le \tau_{m}} \Delta_t^{n}\le m^{-1/2} \right] \le & \PP \left[ \min_{0 \le t \le \tau_{m}} X_t\le m^{-1/2} \right].
\end{align*}
Combining with~\eqref{eqn::radialBessel_estimates} and~\eqref{eqn::taum_Koebe}, we have
\begin{align}
	\PPradial{1}^{(\kappa;\bs{\rho};\mu)} \left[ \min_{0 \le t \le \tau_{m}} \Delta_t^{n}\le m^{-1/2} \right] \le & \exp\left(C_{\eqref{eqn::radialBessel_estimates}} /\kappa\right) m^{3/2-(\bar{\rho}+2)/\kappa}, \qquad\text{ for } m\in \N.  \label{eqn::radialSLE_return_aux2}
\end{align}
By symmetry, we have 
\begin{equation} \label{eqn::radialSLE_return_aux3}
	\PPradial{1}^{(\kappa;\bs{\rho};\mu)} \left[ \max_{0 \le t \le \tau_{m}} \Delta_t^{2}\ge 2\pi-m^{-1/2}  \right] \le \exp\left(C_{\eqref{eqn::radialBessel_estimates}} /\kappa\right) m^{3/2-(\bar{\rho}+2)/\kappa}, \qquad\text{ for } m\in \N.
\end{equation}
When $\kappa<(2\bar{\rho}+4)/5$, combining~\eqref{eqn::radialSLE_return_aux2} and~\eqref{eqn::radialSLE_return_aux3}, we have
\begin{align*}
	\sum_{m=0}^{\infty} \PPradial{1}^{(\kappa;\bs{\rho};\mu)} [E_{S+m+1}]\le & 2\exp\left(C_{\eqref{eqn::radialBessel_estimates}} /\kappa\right) \sum_{m=0}^{\infty}  (S+m+1)^{3/2-(\bar{\rho}+2)/\kappa} \notag\\
	\le & 2\exp\left(C_{\eqref{eqn::radialBessel_estimates}} /\kappa\right) \int_S^\infty x^{3/2-(\bar{\rho}+2)/\kappa} \, \ud x \\
	=& 2\exp\left(C_{\eqref{eqn::radialBessel_estimates}} /\kappa\right) \frac{S^{5/2-(\bar{\rho}+2)/\kappa}}{(\bar{\rho}+2)/\kappa-5/2},
\end{align*}
which gives~\eqref{eqn::radial_badevents} as desired.
\end{proof}

\subsection{Estimate on the good event}
\label{subsec::goodevent}
To prove the estimate~\eqref{eqn::radial_goodevents} on the good event in the proof of Proposition~\ref{prop::radialSLE_return}, two essential inputs are the return estimate in Lemma~\ref{lem::intersection_estimate} and Lemma~\ref{lem::bound_finitetime_RN} where we control the Radon-Nikodym derivative between radial $\SLE_{\kappa}^{\mu}(\bs{\rho})$ and radial $\SLE_{\kappa}$.

\begin{lemma} \label{lem::bound_finitetime_RN}
Assume the same notation as in the proof of Proposition~\ref{prop::radialSLE_return}. 
Suppose $M_t$ is the martingale defined in~\eqref{eqn::SLEkapparho_martingale_radial}. 
There exist constants $C_{\eqref{eqn::bound_finitetime_RN}},C'_{\eqref{eqn::bound_finitetime_RN}}\in (0,\infty)$ depending on $\mu,\bs{\rho}$ and there exists a constant $m_{\eqref{eqn::bound_finitetime_RN}}$ depending on $ \Delta_0^{2},\ldots, \Delta_0^{n}, \mu,\bs{\rho}$ such that for all $\kappa \in (0,4]$ and $m>m_{\eqref{eqn::bound_finitetime_RN}}$, we have 
\begin{equation} \label{eqn::bound_finitetime_RN}
	\frac{M_{\tau_{m+1}} }{M_{\tau_{m}}}\le \exp \left( \frac{1}{\kappa} \left( C'_{\eqref{eqn::bound_finitetime_RN}}+C_{\eqref{eqn::bound_finitetime_RN}} \sqrt{m} \right)\right), \text{ a.s. on } E_{m+1}^c.
\end{equation}
\end{lemma}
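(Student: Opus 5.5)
We propose to bound the ratio $M_{\tau_{m+1}}/M_{\tau_m}$ factor by factor, using the explicit form~\eqref{eqn::SLEkapparho_martingale_radial}. Write it as a product of five factors:
\begin{itemize}
\item[(i)] the conformal radius factor $\mathfrak{g}'_t(0)^{(\bar\rho(\bar\rho+4)-4\mu^2)/(8\kappa)}$;
\item[(ii)] the derivative factors $\covmap_t'(\theta_j)^{\rho_j(\rho_j+4-\kappa)/(4\kappa)}$;
\item[(iii)] the sine factors $\sin((\covmap_t(\theta_j)-\xi_t)/2)^{\rho_j/\kappa}$;
\item[(iv)] the mutual sine factors $\sin((\covmap_t(\theta_\ell)-\covmap_t(\theta_j))/2)^{\rho_\ell\rho_j/(2\kappa)}$;
\item[(v)] the exponential spiral factor $\exp(\frac{\mu}{\kappa}\xi_t+\frac{\mu}{\kappa}\sum_j\frac{\rho_j}{2}\covmap_t(\theta_j))$.
\end{itemize}
Each factor is raised to a power of order $1/\kappa$. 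It therefore suffices to show that $\kappa\log$ of each ratio between times $\tau_m$ and $\tau_{m+1}$ is at most $C'+C\sqrt m$ on $E_{m+1}^c$.

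Factor (i) is easy. Since $\mathfrak{g}_t'(0)=\ee^{t}$ and, by Lemma~\ref{lem::taum_Koebe}, $\tau_{m+1}-\tau_m\le 1+\log 4$, its contribution is bounded by a constant. Factor (ii) contributes at most $1$: $\covmap_t'(\theta_j)$ is decreasing in $t$ (from $\partial_t\log\covmap'_t=-\frac{1}{2}\sin^{-2}(\cdot)$) and the exponent is nonnegative for $\kappa\le4$. Factor (iv) is handled similarly. The differences $\covmap_t(\theta_\ell)-\covmap_t(\theta_j)\in(0,2\pi)$ decrease in $t$ by Lemma~\ref{lem::radial_expdecay}, so the numerators shrink. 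Near the endpoints of $(0,2\pi)$ sine is not monotone, but on $E_{m+1}^c$ all points $\covmap_t(\theta_j)-\xi_t$ lie in $[(m+1)^{-1/2},2\pi-(m+1)^{-1/2}]$, so the differences are below $2\pi-2(m+1)^{-1/2}$. Hence the numerator is at most $1$, while the denominator at $\tau_m$ is at least comparable to $\Delta^n_{\tau_m}-\Delta^2_{\tau_m}$. We only need an upper bound on the ratio, so a small denominator is harmful. Here we would instead use the pairwise lower bound $\sin\ge$ value at time $\tau_{m+1}$ times the one-step decay, together with the exponential-decay rate. Concretely, $\partial_t\log\sin((V^\ell-V^j)/2)$ is bounded below by $-C/\min(\sin^2)$ over the interval, which on $E^c_{m+1}$ is $\gtrsim -C(m+1)$.

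The cleanest route is to differentiate $\log$ of each factor in $t$ and integrate over $[\tau_m,\tau_{m+1}]$, a time interval of bounded length. The $\sin$ factors (iii) give $\frac{\rho_j}{\kappa}\log\frac{\sin(\Delta^j_{\tau_{m+1}}/2)}{\sin(\Delta^j_{\tau_m}/2)}$. On $E^c_{m+1}$ the numerator is at most $1$ and the denominator is at least $\sin((m+1)^{-1/2}/2)$, so this is at most $\frac{\bar\rho}{\kappa}(\frac12\log(m+1)+C)$, which is bounded by $C\sqrt m/\kappa$. Factor (iv) would need a genuine bound on the drift of the differences; I expect this is where $\sqrt m$ truly enters. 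Precisely, $|\partial_t(V^\ell-V^j)|\le|\cot(\Delta^\ell/2)|+|\cot(\Delta^j/2)|\lesssim\sqrt{m+1}$ on $E^c_{m+1}$. Combined with the exponential decay lower bound inherited from Lemma~\ref{lem::radial_expdecay} (a reverse-direction estimate, $\partial_t\log(V^\ell-V^j)\ge -C\sqrt{m}$ via $\frac{\cot a-\cot b}{b-a}$ bounded by $\sin^{-1}a\sin^{-1}b\lesssim m$), this would give a $\log$-ratio bounded by $Cm$. That is too weak, so we would instead need the sharper route for (iv).

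The main obstacle is the spiral factor (v). Here $\xi_t$ itself is unbounded: it is a Brownian-type path, not controlled by the event $E_{m+1}^c$, so $\frac{\mu}{\kappa}(\xi_{\tau_{m+1}}-\xi_{\tau_m})$ has no a.s. bound. The fix is to regroup terms. We write $\xi_t+\sum_j\frac{\rho_j}{2}\covmap_t(\theta_j)=(1+\frac{\bar\rho}{2})\xi_t+\sum_j\frac{\rho_j}{2}\Delta^j_t$. The $\Delta^j$ part is bounded by $2\pi\bar\rho$. For the $\xi$ part, we use that $\covmap_t(\theta_j)$ has derivative $\cot(\Delta^j_t/2)$, which is $O(\sqrt{m+1})$ in absolute value on $E^c_{m+1}$. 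Thus $\covmap_{\tau_{m+1}}(\theta_j)-\covmap_{\tau_m}(\theta_j)=O(\sqrt m)$, and $\xi=\covmap(\theta_j)-\Delta^j$ differs by $O(\sqrt m)+2\pi$. This produces exactly the $C\sqrt m/\kappa$ term in~\eqref{eqn::bound_finitetime_RN}, with $C$ depending on $\mu,\bar\rho$. The threshold $m_{\eqref{eqn::bound_finitetime_RN}}$ absorbs the initial configuration, for instance in ensuring that the exponential decay of Lemma~\ref{lem::radial_expdecay} has made all differences small. The same time-integration device, $\log$-derivatives bounded by $O(\sqrt m)$ times interval length $O(1)$, should also be used for (iv) if the direct monotonicity argument fails.
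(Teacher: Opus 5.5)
Your overall strategy (bound $M_{\tau_{m+1}}/M_{\tau_m}$ factor by factor) is the paper's. Your treatment of (i), (ii), (iii) and (v) matches it, including the regrouping for the spiral factor: the paper uses $\covmap_t(\theta_2)-2\pi\le\xi_t\le\covmap_t(\theta_2)$ and integrates $\partial_t\covmap_t(\theta_j)=\cot(\Delta^j_t/2)$ over an interval of length at most $1+\log 4$.

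\textbf{The gap is factor (iv), which you never close.} Your log-derivative route gives only $O(m)$, as you yourself note. Your fallback (``log-derivatives bounded by $O(\sqrt m)$'') is contradicted by that same computation. The missing idea uses Lemma~\ref{lem::radial_expdecay} together with $\tau_m\ge m-\log 4$ from Lemma~\ref{lem::taum_Koebe}: once $m$ exceeds a threshold depending on $\Delta_0^n-\Delta_0^2$, one has $\Delta^n_{\tau_m}-\Delta^2_{\tau_m}\le\pi$. Each pairwise difference $\Delta^\ell_t-\Delta^j_t$ with $\ell>j$ is nonincreasing in $t$, since its derivative is $\cot(\Delta^\ell_t/2)-\cot(\Delta^j_t/2)<0$. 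Moreover $x\mapsto\sin(x/2)$ is increasing on $[0,\pi]$. Hence every ratio in (iv) is at most $1$, and its exponent $\rho_\ell\rho_j/(2\kappa)$ is nonnegative, so the whole factor is at most $1$.

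In your own formulation, $\partial_t\log\sin\bigl((\Delta^\ell_t-\Delta^j_t)/2\bigr)=-\cos\bigl((\Delta^\ell_t-\Delta^j_t)/2\bigr)\big/\bigl(2\sin(\Delta^\ell_t/2)\sin(\Delta^j_t/2)\bigr)\le 0$ as soon as the difference is at most $\pi$. It is the sign of this derivative that matters, not its size. Your remark that on $E_{m+1}^c$ the differences stay below $2\pi-2(m+1)^{-1/2}$ does not help. The non-monotonicity of sine is caused by differences exceeding $\pi$, not by differences close to $2\pi$. This is exactly why $m_{\eqref{eqn::bound_finitetime_RN}}$ must depend on $\Delta_0^2,\ldots,\Delta_0^n$.

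\textbf{A smaller inaccuracy affects (iii) and (v).} $E_{m+1}^c$ does not place all $\Delta^j_t$ in $[(m+1)^{-1/2},2\pi-(m+1)^{-1/2}]$. By definition it only bounds $\Delta^n$ from below and $\Delta^2$ from above. You need more than that:
\begin{itemize}
\item in (iii), a lower bound on $\sin(\Delta^j_{\tau_m}/2)$ for every $j$, including $j=2$ and including $\Delta^n$ staying away from $2\pi$;
\item in (v), the bound $|\cot(\Delta^j_t/2)|=O(\sqrt m)$ for all $j$.
\end{itemize}
Both again require the exponential decay. Writing $c>0$ for the rate in~\eqref{eqn::radial_expdecay}, one has $\Delta^j_t\ge\Delta^n_t-(\Delta^n_t-\Delta^2_t)\ge(m+1)^{-1/2}-(\Delta^n_0-\Delta^2_0)\ee^{-ct}$, and symmetrically from above. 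For $t\ge\tau_m\ge m-\log 4$ and $m$ large, this yields $(2m)^{-1/2}\le\Delta^j_t\le 2\pi-(2m)^{-1/2}$. You allude to the threshold ``making all differences small'' but never invoke it at the steps where it is actually used.
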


\begin{proof}
From~\eqref{eqn::SLEkapparho_martingale_radial}, we have
\begin{align} \label{eqn::bound_finitetime_RN_aux1}
\begin{split}
	\frac{M_{\tau_{m+1}}}{M_{\tau_{m}}}=&\underbrace{\left( \frac{\mathfrak{g}'_{\tau_{m+1}}(0)}{\mathfrak{g}'_{\tau_{m}}(0)} \right)^{\frac{\bar{\rho}(4+\bar{\rho})-4\mu^2}{8\kappa}}}_{R_1:=}  \underbrace{\prod_{j=2}^{n} \left( \frac{h'_{\tau_{m+1}} (\theta_j)}{h'_{\tau_{m}} (\theta_j)} \right)^{\frac{\rho_j (\rho_j+4-\kappa)}{4\kappa}} }_{R_2:=}  \underbrace{\prod_{2\le i<j\le n} \left( \frac{\sin \left( \left( \Delta_{\tau_{m+1}}^{j}-\Delta_{\tau_{m+1}}^{i} \right)/2 \right)}{\sin \left( \left( \Delta_{\tau_{m}}^{j}-\Delta_{\tau_{m}}^{i} \right)/2 \right)} \right)^{\frac{\rho_i \rho_j}{2\kappa}} }_{R_3:=}\\
	& \times \underbrace{\prod_{j=2}^{n} \left(\frac{\sin \left( \Delta_{\tau_{m+1}}^{j}/2 \right)}{\sin \left( \Delta_{\tau_{m}}^{j}/2 \right)}\right)^{\frac{\rho_j}{\kappa}} }_{R_4:=} \times \underbrace{\exp \left(\frac{\mu}{\kappa} \left(\xi_{\tau_{m+1}}-\xi_{\tau_{m}}+\sum_{j=2}^{n} \frac{\rho_j}{2} \left( \covmap_{\tau_{m+1}}(\theta_j)-\covmap_{\tau_{m}}(\theta_j) \right)\right) \right) }_{R_5:=}.
\end{split}
\end{align}
Let us evaluate the five terms in RHS of~\eqref{eqn::bound_finitetime_RN_aux1} one by one.
\begin{itemize}
	\item For $R_1$, since $\mathfrak{g}'_t(0)=\ee^t$, combining with~\eqref{eqn::radialSLE_return_aux2}, we have
	\begin{equation} \label{eqn::bound_R1}
		R_1\le \exp\left( (1+\log 4) \left| \frac{\bar{\rho}(4+\bar{\rho})-4\mu^2}{8\kappa} \right| \right).
	\end{equation}
	\item For $R_2$, since $\covmap'_{t}(\theta_j)$ is decreasing in $t$, we have $R_2\le 1$. 
	\item For $R_3$, let \begin{equation}\label{eqn::bound_R3}
	m_{\eqref{eqn::bound_R3}}:=\log 4+\frac{1}{c_{\eqref{eqn::radial_expdecay}}}\log\left( \left(  \Delta_0^{n}-\Delta_0^{2} \right)/\pi \right). 
	\end{equation}
	When $m>m_{\eqref{eqn::bound_R3}}$, Lemma~\ref{lem::radial_expdecay} and~\eqref{eqn::radialSLE_return_aux2} gives
	\[ \Delta_{\tau_{m+1}}^{j}-\Delta_{\tau_{m+1}}^{i} \le \Delta_{\tau_{m}}^{j}-\Delta_{\tau_{m}}^{i} \le \pi, \qquad \text{ for } i<j, \]
	which implies $R_3\le 1$. 
\item For $R_5$, since
	\[ \covmap_t(\theta_2)-2\pi \le \xi_t \le \covmap_t(\theta_2), \qquad \text{ and } \partial_t \covmap_t(\theta_j)=\cot(\Delta_t^{j}/2), \] we have 
	\begin{align} \label{eqn::bound_R5}
		R_5 \le &\exp(2\pi |\mu|/\kappa) \exp \left( \frac{(1+\rho_2/2)|\mu|}{\kappa} \left( \covmap_{\tau_{m+1}}(\theta_2)-\covmap_{\tau_{m}}(\theta_2) \right) +\sum_{j=3}^{n} \frac{\rho_j|\mu|/2}{\kappa} \left( \covmap_{\tau_{m+1}}(\theta_j)-\covmap_{\tau_{m}}(\theta_j) \right) \right), \notag\\
		\le & \exp(2\pi |\mu|/\kappa) \exp\left( \frac{(1+\bar{\rho}/2)|\mu|}{\kappa} (1+\log 4) \sup_{\stackrel{t\in [\tau_{m},\tau_{m+1}]}{2\le j\le n}} \frac{1}{\sin(\Delta_t^{j}/2)} \right).
	\end{align}
	\item For $R_4$ and $R_5$, let $m_{\eqref{eqn::bound_R4R5}}$ be a constant depending on $\Delta_0^{n}-\Delta_0^{2}$ such that
	\[ (m+1)^{-1/2}- (2m)^{-1/2}\ge \left(\Delta_0^{n}- \Delta_0^{2}\right) \exp(-c_{\eqref{eqn::radial_expdecay}}(m-\log 4))\qquad \text{ for all } m\ge m_{\eqref{eqn::bound_R4R5}}. \]
	On $E_{m+1}^c$, we have
	\[ (m+1)^{-1/2}\le \Delta_t^{n} \text{ and } \Delta_t^{2} \le 2\pi-(m+1)^{-1/2} \text{ for } t\in [0,\tau_{m+1}]. \] 
	Combining with Lemma~\ref{lem::radial_expdecay}, for $t\in [\tau_m,\tau_{m+1}]$ and $2\le j\le n$, we have
	\begin{align} \label{eqn::bound_R4R5_aux1}
		\begin{split}
			\Delta_t^{j}\ge &(m+1)^{-1/2}-\left(\Delta_0^{n}-\Delta_0^{2}\right)\exp(-c_{\eqref{eqn::radial_expdecay}}t), \\
			\Delta_t^{j}	\le &  2\pi-(m+1)^{-1/2}+\left(\Delta_0^{n}-\Delta_0^{2}\right)\exp(-c_{\eqref{eqn::radial_expdecay}}t).
		\end{split}
	\end{align}
	When $m>m_{\eqref{eqn::bound_R4R5}}$, Plugging~\eqref{eqn::radial_expdecay} into~\eqref{eqn::bound_R4R5_aux1}, we have 
	\begin{equation} \label{eqn::bound_R4R5_aux2}
		(2m)^{-1/2} \le \Delta_t^{j} \le  2\pi-(2m)^{-1/2}, \qquad \text{ for } 2\le j\le n \text{ and } t\in [\tau_m,\tau_{m+1}].
	\end{equation}
	Combining~\eqref{eqn::bound_R5} and~\eqref{eqn::bound_R4R5_aux2}, we have a.s. on $E_{m+1}^c$,
	\begin{align} \label{eqn::bound_R4R5}
		R_4 R_5
		\le & \sin((2m)^{-1/2}/2)^{-\bar{\rho}/\kappa} \exp(2\pi |\mu|/\kappa) \exp\left( \frac{(1+\bar{\rho}/2)|\mu|}{\kappa} (1+\log 4)  \frac{1}{\sin((2m)^{-1/2}/2)} \right) \notag\\
		\le & \left( \pi \sqrt{2m} \right)^{\bar{\rho}/\kappa} \exp(2\pi |\mu|/\kappa) \exp\left( \frac{(1+\bar{\rho}/2)|\mu|}{\kappa} (1+\log 4) \pi \sqrt{2m} \right).
	\end{align}
\end{itemize}
Combining~\eqref{eqn::bound_R1} and $R_2\le 1, R_3\le 1$ and~\eqref{eqn::bound_R4R5}, when $m>(m_{\eqref{eqn::bound_R3}} \vee m_{\eqref{eqn::bound_R4R5}})$, we have a.s. on $E_{m+1}^c$,
\begin{align*}
	\frac{M_{\tau_{m+1}}}{M_{\tau_{m}} }
	\le& \left( \pi \sqrt{2m} \right)^{\bar{\rho}/\kappa} \exp(2\pi |\mu|/\kappa) \exp\left( (1+\log 4) \left| \frac{\bar{\rho}(4+\bar{\rho})-4\mu^2}{8\kappa} \right| \right) \\
	&\times \exp \left( \frac{(1+\bar{\rho}/2)|\mu|}{\kappa} (1+\log 4) \pi \sqrt{2m} \right).
\end{align*}
Letting 
\begin{align*}
	& m_{\eqref{eqn::bound_finitetime_RN}}:=m_{\eqref{eqn::bound_R3}} \vee m_{\eqref{eqn::bound_R4R5}}, \\
	& C'_{\eqref{eqn::bound_finitetime_RN}}:=2\pi |\mu|+\frac{1+\log 4}{8} \left| \bar{\rho}(4+\bar{\rho})-4\mu^2 \right|, \qquad  C_{\eqref{eqn::bound_finitetime_RN}}:=\bar{\rho} \log(\sqrt{2}\pi) + (1+\bar{\rho}/2) |\mu|(1+\log 4) \sqrt{2} \pi,
\end{align*}  
we obtain~\eqref{eqn::bound_finitetime_RN} as desired.
\end{proof}

\begin{lemma}\label{lem::radial_goodevents}
	Assume the same notation as in the proof of Proposition~\ref{prop::radialSLE_return}. The estimate~\eqref{eqn::radial_goodevents} on the good event holds for $\kappa<1$. 
\end{lemma}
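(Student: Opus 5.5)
We follow the pattern of Lemma~\ref{lem::good_event_estimate}: compare with standard radial $\SLE_\kappa$ piece by piece on $[\tau_{S+m},\tau_{S+m+1}]$, and sum a geometric series in $m$.

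\textbf{Conditioning and change of measure.} Fix $m\ge 0$ and write $k:=S+m$. By the strong domain Markov property, given $\LF_{k}$ the law of $\gamma_{[\tau_{k},\tau_{k+1}]}$ under $\PPradial{1}^{(\kappa;\bs{\rho};\mu)}$ is the law of radial $\SLE_\kappa$ in $\U\setminus\gamma_{[0,\tau_{k}]}$ weighted by $M_{\tau_{k+1}}/M_{\tau_{k}}$. We choose $S_{\eqref{eqn::radial_goodevents}}>m_{\eqref{eqn::bound_finitetime_RN}}$, so that Lemma~\ref{lem::bound_finitetime_RN} bounds this ratio on $E_{k+1}^c$ by
\[
\exp\left(\left(C'_{\eqref{eqn::bound_finitetime_RN}}+C_{\eqref{eqn::bound_finitetime_RN}}\sqrt{k}\right)/\kappa\right).
\]
Hence each summand in~\eqref{eqn::radial_goodevents} is at most this factor times
\[
\sup\,\PPradial{1}^{(\kappa)}\left[\tilde\gamma_{[0,t_0]}\cap \mathfrak{g}_{\tau_{k}}(\partial\U_s)\neq\emptyset\right].
\]
Here $\tilde\gamma$ is radial $\SLE_\kappa$ in $\U$ started from $\ee^{\ii\xi_{\tau_{k}}}$. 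By Lemma~\ref{lem::taum_Koebe}, $\tau_{k+1}-\tau_{k}\le 1+\log 4=:t_0$, so the interval $[\tau_k,\tau_{k+1}]$ maps to a time window of length at most $t_0$.

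\textbf{Applying Lemma~\ref{lem::intersection_estimate}.} The image $\mathfrak{g}_{\tau_{k}}(\partial\U_s\setminus\gamma_{[0,\tau_k]})$ is a union of crosscuts $\eta$ of $\U$. We take $\gamma_{\mathrm{fix}}$ to be the image under $\mathfrak{g}_{\tau_{k}}$ of $\gamma_{[\tau_{k},\infty)}$ replaced by a fixed curve from $\gamma_{\tau_k}$ to $0$ inside $\U_{k}$, for instance a radial segment. This curve is disjoint from $\partial\U_s$ once $k>s+\log 4$. Lemma~\ref{lem::intersection_estimate} then bounds the probability by
\[
\exp\left(C_{\eqref{eqn::intersection_estimate}}/\kappa\right)\LE_{\U}(\eta,\gamma_{\mathrm{fix}})^{8/\kappa-1}.
\]
By conformal invariance, this excursion measure equals the excursion measure in $\U\setminus\gamma_{[0,\tau_k]}$ between $\partial\U_s$ and a set inside $\U_k$. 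The standard annulus bound (see \cite{field2015escape}, Lemma~3.3) gives $\LE\lesssim \ee^{-(k-s)}$, where $\lesssim$ means up to a universal multiplicative constant. Its power $8/\kappa-1\ge 7/\kappa$ therefore produces a factor $\exp(-7(k-s)/\kappa)$ up to $\exp(O(1)/\kappa)$. A few arcs of $\partial\U_s$ may give several crosscuts; their number is not controlled deterministically, so instead we apply the estimate to the single crosscut family through monotonicity of $\LE$ in the first argument.

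\textbf{Summation.} Combining the two steps, the $m$-th summand is at most
\[
\exp\left(\tfrac1\kappa\left(C+C_{\eqref{eqn::bound_finitetime_RN}}\sqrt{S+m}-7(S+m-s)\right)\right).
\]
For $S$ large, depending on $s,\mu,\bs{\rho}$, we have $C_{\eqref{eqn::bound_finitetime_RN}}\sqrt{x}\le 5x$ for $x\ge S$. The exponent is then at most $(C_{\eqref{eqn::radial_goodevents}}-S-m)/\kappa$ after absorbing $7s$ and the constants into $C_{\eqref{eqn::radial_goodevents}}$. Summing over $m$ with $\kappa<1$ gives $\sum_m \ee^{-m/\kappa}\le (1-\ee^{-1})^{-1}$, which is~\eqref{eqn::radial_goodevents}.

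\textbf{Main obstacle.} The delicate step is the excursion-measure estimate. We need $\LE_\U(\eta,\gamma_{\mathrm{fix}})\le C\ee^{-c(k-s)}$ with $c$ large enough that $c(8/\kappa-1)$ beats the sub-linear loss $C\sqrt{k}/\kappa$ from Lemma~\ref{lem::bound_finitetime_RN}, uniformly over all past configurations $\gamma_{[0,\tau_k]}$. This requires a Beurling-type argument: any excursion from $\partial\U_s$ to $\U_k$ in the slit domain must cross the annulus $\U_s\setminus\U_k$. We would first try the bound $\LE\le C\ee^{-(k-s)/2}$ from the Beurling estimate. This is still enough, since $(8/\kappa-1)/2$ exceeds $1/\kappa$ plus the $\sqrt{k}$ loss once $k$ is large.
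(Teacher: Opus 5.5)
Your skeleton is the paper's: condition on $\LF_{S+m}$, bound $M_{\tau_{S+m+1}}/M_{\tau_{S+m}}$ on $E_{S+m+1}^c$ by Lemma~\ref{lem::bound_finitetime_RN}, take the time window $[0,1+\log 4]$ from Lemma~\ref{lem::taum_Koebe}, and apply Lemma~\ref{lem::intersection_estimate} with $\gamma_{\mathrm{fix}}$ the image of the radial segment from $\gamma_{\tau_{S+m}}$ to $0$. Then use conformal invariance and monotonicity of $\LE$, exponential decay, and a geometric sum.

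\textbf{The gap: passing from single crosscuts to all of $\partial\U_s$.} Lemma~\ref{lem::intersection_estimate} bounds the probability of hitting a \emph{single} crosscut. The set $\partial\U_s\setminus\gamma_{[0,\tau_{S+m}]}$ may consist of countably many crosscuts $\eta_j$. Neither that lemma nor monotonicity of $\LE$ in the first argument lets you treat $\bigcup_j\mathfrak{g}_{\tau_{S+m}}(\eta_j)$ as one crosscut. A union bound with the crude estimate $\LE(\eta_j,\cdot)\le\LE(\partial\U_s,\cdot)$ for each $j$ fails for the reason you give yourself: the number of terms is uncontrolled. The missing idea is to do the union bound over $j$ and then use $p:=8/\kappa-1\ge 1$. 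Write $D_m:=\U\setminus\gamma_{[0,\tau_{S+m}]}$, and first replace $\hat\gamma_{\mathrm{fix}}$ by $\partial\U_{S+m}$ using monotonicity in the second argument. Then
\[
\sum_j \LE_{D_m}(\eta_j,\partial\U_{S+m})^{p}\le\Big(\sum_j \LE_{D_m}(\eta_j,\partial\U_{S+m})\Big)^{p}\le\Big(2\,\LE_{D_m}(\partial\U_s,\partial\U_{S+m})\Big)^{p},
\]
where the first inequality is $\ell^p\le\ell^1$ and the second is \cite[Lemma~3.3]{field2015escape}. This is what makes the estimate independent of how many crosscuts there are, and it is absent from your argument.

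\textbf{A secondary error: the decay rate.} Your first decay claim, $\LE\lesssim\ee^{-(k-s)}$, is false, and it is not the content of \cite[Lemma~3.3]{field2015escape}. When the curve crosses the annulus between radii $\ee^{-k}$ and $\ee^{-s}$, the excursion measure between the two circles in the slit domain is only bounded by $C\ee^{-(k-s)/2}$. This is sharp for a radial slit: in logarithmic coordinates that domain is a rectangle of height $2\pi$ and length $k-s$. The correct bound is \cite[Eq.~(2.5)]{field2015escape}, which the paper uses. So the exponent $-7(k-s)$ in your summation is unjustified and must be replaced by $-\frac{8-\kappa}{2}(k-s)\le-\frac72(k-s)$. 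As your last paragraph anticipates, this still closes. For $k\ge S\ge 3s\vee 4C_{\eqref{eqn::bound_finitetime_RN}}^2$ one has $k-s\ge 2k/3$ and $C_{\eqref{eqn::bound_finitetime_RN}}\sqrt{k}\le k/2$. Hence $C_{\eqref{eqn::bound_finitetime_RN}}\sqrt{k}-\frac72(k-s)\le -k$, which gives~\eqref{eqn::radial_goodevents} after the geometric sum.
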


\begin{proof}
Note that $\partial \U_s \cap (\U\setminus \gamma_{[0,\tau_S])}$ is a union of crosscuts of the form
\[
	\eta_j=\{ \exp(-s+\ii \theta): \vartheta_j^{\min}<\theta<\vartheta_j^{\max} \}.
\]
The event $\gamma_{[\tau_{S+m},\tau_{S+m+1}]} \cap \partial \U_s \neq \emptyset$ implies that there exists $j$ such that $\gamma_{[\tau_{S+m},\tau_{S+m+1}]} \cap \eta_j \neq \emptyset$, thus
\begin{align}\label{eqn::radialSLE_return_aux4}
\begin{split}
&\sum_{m=0}^{\infty} \PPradial{1}^{(\kappa;\bs{\rho};\mu)} \left[ \{\gamma_{[\tau_{S+m},\tau_{S+m+1}]} \cap \partial \U_s \neq \emptyset \}\cap E_{S+m+1}^c\right]\\
\le& \sup_{\{\eta_j\}} \sum_{m=0}^{\infty} \sum_j \PPradial{1}^{(\kappa;\bs{\rho};\mu)} \left[ \left\{\gamma_{[\tau_{S+m},\tau_{S+m+1}]}\cap \eta_j \neq \emptyset\right\}\cap E_{S+m+1}^c    \right] \\
	\le& \sup_{\{\eta_j\}} \sum_{m=0}^{\infty} \sum_j \PPradial{1}^{(\kappa;\bs{\rho};\mu)} \left[ \left\{\mathfrak{g}_{\tau_{S+m}} \left( \gamma_{[\tau_{S+m},\tau_{S+m+1}]} \right) \cap \mathfrak{g}_{\tau_{S+m}} \left( \eta_j \right) \neq \emptyset\right\}\cap E_{S+m+1}^c  \right].
\end{split}
\end{align}

We denote by $\PPradial{1}^{(\kappa)}$ the law of radial $\SLE_{\kappa}$ in $(\U;\ee^{\ii \xi_{\tau_{S+m}}};0)$. The Radon-Nikodym derivative between radial $\SLE_{\kappa}^{\mu}(\bs{\rho})$ and $\PPradial{1}^{(\kappa)}$ is given by $M_t$ in~\eqref{eqn::SLEkapparho_martingale_radial} in Lemma~\ref{lem::SLEkapparho_martingale_radial}. For $\kappa\le 4$, we have
\begin{align}
	&\PPradial{1}^{(\kappa;\bs{\rho};\mu)} \left[ \left\{\mathfrak{g}_{\tau_{S+m}} \left( \gamma_{[\tau_{S+m},\tau_{S+m+1}]} \right) \cap \mathfrak{g}_{\tau_{S+m}} \left( \eta_j \right) \neq \emptyset\right\}\cap E_{S+m+1}^c   \cond \LF_{S+m} \right] \notag\\
	=&\Eradial{1}^{(\kappa)} \left[\frac{M_{\tau_{S+m+1}}}{M_{\tau_{S+m}}}\one\left\{\mathfrak{g}_{\tau_{S+m}} \left( \gamma_{[\tau_{S+m},\tau_{S+m+1}]} \right) \cap \mathfrak{g}_{\tau_{S+m}} \left( \eta_j \right) \neq \emptyset\right\} \one\{E_{S+m+1}^c\}   \cond \LF_{S+m}\right]\notag\\
	\le & \exp \left( \frac{1}{\kappa} \left( C'_{\eqref{eqn::bound_finitetime_RN}}+C_{\eqref{eqn::bound_finitetime_RN}} \sqrt{S+m} \right)\right)\PPradial{1}^{(\kappa)}\left[ \left\{\mathfrak{g}_{\tau_{S+m}} \left( \gamma_{[\tau_{S+m},\tau_{S+m+1}]} \right) \cap \mathfrak{g}_{\tau_{S+m}} \left( \eta_j \right) \neq \emptyset\right\}\cap E_{S+m+1}^c   \cond \LF_{S+m} \right] \notag \\
	\le &\exp \left( \frac{1}{\kappa} \left( C'_{\eqref{eqn::bound_finitetime_RN}}+C_{\eqref{eqn::bound_finitetime_RN}} \sqrt{S+m} \right)\right) \PPradial{1}^{(\kappa)} \left[ \gamma_{[0,1+\log 4]} \cap \mathfrak{g}_{\tau_{S+m}} \left( \eta_j \right) \neq \emptyset \right],  
	\label{eqn::intersection_bound_aux1}
\end{align}
where the first inequality is due to~\eqref{eqn::bound_finitetime_RN}. 
Let $\hat{\gamma}_{\mathrm{fix}}\subset \overline{\U}_{S+m}$ be the straight line segment from $\gamma_{\tau_{S+m}}$ to $0$ and let $\gamma_{\mathrm{fix}}:=\mathfrak{g}_{\tau_{S+m}}(\hat{\gamma}_{\mathrm{fix}})$. Applying~\eqref{eqn::intersection_estimate} in Lemma~\ref{lem::intersection_estimate}, we have
\begin{align}\label{eqn::intersection_bound_aux2}
	\begin{split}
		\PPradial{1}^{(\kappa)} \left[ \gamma_{[0,1+\log 4]} \cap \mathfrak{g}_{\tau_{S+m}} \left( \eta_j \right) \neq \emptyset \right] \le & \exp\left( C_{\eqref{eqn::intersection_estimate}}/\kappa \right) \left( \LE_{\U} (\mathfrak{g}_{\tau_{S+m}} ( \eta_j ),\gamma_{\mathrm{fix}}) \right)^{8/\kappa-1} \\
		= & \exp\left( C_{\eqref{eqn::intersection_estimate}}/\kappa \right) \left( \LE_{\U\setminus \gamma_{[0,\tau_{S+m}]}} (\eta_j ,\hat{\gamma}_{\mathrm{fix}}) \right)^{8/\kappa-1}  \\
		\le & \exp\left( C_{\eqref{eqn::intersection_estimate}}/\kappa \right) \left( \LE_{\U\setminus \gamma_{[0,\tau_{S+m}]}} (\eta_j,\partial \U_{S+m}) \right)^{8/\kappa-1},
	\end{split}
\end{align}
where the second line is due to the conformal invariance of $\LE$ and the third line is due to the monotonicity of $\LE$. We also note that
\begin{align} \label{eqn::intersection_bound_aux3}
	\begin{split}
		\sum_{j} \left( \LE_{\U\setminus \gamma_{[0,\tau_{S+m}]}} (\eta_j,\partial \U_{S+m}) \right)^{8/\kappa-1} \le & \left( \sum_j \LE_{\U\setminus \gamma_{[0,\tau_{S+m}]}} (\eta_j,\partial \U_{S+m}) \right)^{8/\kappa-1} \\
		\le & \left( 2 \LE_{\U\setminus \gamma_{[0,\tau_{S+m}]}} (\partial \U_{s},\partial \U_{S+m}) \right)^{8/\kappa-1} \\
		\le & \left( C_{\eqref{eqn::intersection_bound_aux3}} \exp(-(S+m-s)/2) \right)^{8/\kappa-1} ,
	\end{split}
\end{align}
where the first inequality is due to $L^p$ norm monotonicity, the second inequality is due to~\cite[Lemma~3.3]{field2015escape}, the third inequality is due to~\cite[Eq.~(2.5) and scaling]{field2015escape}, and $C_{\eqref{eqn::intersection_bound_aux3}}$ is a universal constant.
Combing~\eqref{eqn::intersection_bound_aux1}-\eqref{eqn::intersection_bound_aux3} we have
\begin{align*}
	&\sum_{j} \PPradial{1}^{(\kappa;\bs{\rho};\mu)} \left[ \left\{\mathfrak{g}_{\tau_{S+m}} \left( \gamma_{[\tau_{S+m},\tau_{S+m+1}]} \right) \cap \mathfrak{g}_{\tau_{S+m}} \left( \eta_j \right) \neq \emptyset\right\}\cap E_{S+m+1}^c \right] \notag\\
	\le & \exp\left( \frac{1}{\kappa} \left( C_{\eqref{eqn::intersection_estimate}}+C'_{\eqref{eqn::bound_finitetime_RN}}+(8-\kappa) \log C_{\eqref{eqn::intersection_bound_aux3}} \right) \right)  \exp\left( -\frac{8-\kappa}{2\kappa}(S+m-s)+\frac{C_{\eqref{eqn::bound_finitetime_RN}}}{\kappa} \sqrt{S+m} \right).
\end{align*}
Letting
\[ 
S_{\eqref{eqn::radial_goodevents}}=(4C_{\eqref{eqn::bound_finitetime_RN}}^2) \vee (3s) \vee m_{\eqref{eqn::bound_finitetime_RN}}, \qquad C_{\eqref{eqn::radial_goodevents}}=C_{\eqref{eqn::intersection_estimate}}+C'_{\eqref{eqn::bound_finitetime_RN}}+8\log C_{\eqref{eqn::intersection_bound_aux3}},	\]
we have
\begin{align} \label{eqn::intersection_bound}
\begin{split}
	&\sum_{j} \PPradial{1}^{(\kappa;\bs{\rho};\mu)} \left[ \left\{\mathfrak{g}_{\tau_{S+m}} \left( \gamma_{[\tau_{S+m},\tau_{S+m+1}]} \right) \cap \mathfrak{g}_{\tau_{S+m}} \left( \eta_j \right) \neq \emptyset\right\}\cap E_{S+m+1}^c    \right] \le \exp \left( \frac{1}{\kappa} \left(C_{\eqref{eqn::radial_goodevents}}-(S+m) \right) \right).
\end{split}
\end{align}

Plugging~\eqref{eqn::intersection_bound} into~\eqref{eqn::radialSLE_return_aux4}, when $\kappa<1$ and $S>S_{\eqref{eqn::radial_goodevents}}$, we have
\begin{align*} 
\begin{split}
\sum_{m=0}^{\infty} \PPradial{1}^{(\kappa;\bs{\rho};\mu)} \left[ \{\gamma_{[\tau_{S+m},\tau_{S+m+1}]} \cap \partial \U_s \neq \emptyset \}\cap E_{S+m+1}^c\right]
\le &\sum_{m=0}^{\infty} \exp \left( \frac{1}{\kappa} \left(C_{\eqref{eqn::radial_goodevents}}-(S+m) \right) \right) \\
=& \frac{\exp\left( \frac{1}{\kappa} (C_{\eqref{eqn::radial_goodevents}}-S)  \right)}{1-\exp(-1/\kappa)} \\
\le& \frac{\exp\left( \frac{1}{\kappa} (C_{\eqref{eqn::radial_goodevents}}-S)  \right)}{1-\ee^{-1}},
\end{split}
\end{align*}
which gives~\eqref{eqn::radial_goodevents} as desired.
\end{proof}
\section{Large deviation for multi-radial SLE with spiral}
\label{sec::LDP_radialSLE}
The goal of this section is to prove large deviation in Theorem~\ref{thm::radialSLE_LDP}. Analogously to the chordal case, we first show the large deviation for radial SLE in finite time in Proposition~\ref{prop::finitetime_LDP_radial} and then prove Theorem~\ref{thm::radialSLE_LDP} using Proposition~\ref{prop::finitetime_LDP_radial} and return estimate in Proposition~\ref{prop::radialSLE_return}. We first introduce notations for finite-time curve space.

\paragraph*{Curve spaces.} Fix $n\geq 1$ and $n$-polygon $(\Omega; \bs{x})$ with $z\in\Omega$, we denote by $\chamber_{\bs{T}}=\chamber_{\bs{T}}(\Omega; \bs{x}; z)$ the space of simple multichords $\bs{\gamma}_{[\bs{0},\bs{T}]}=(\gamma^1_{[0,T_1]}, \ldots, \gamma^n_{[0,T_n]})$ in $\Omega$ starting from $\bs{x}$, that is, 
\[
	\gamma_{(0,T_j]}^j \subset \Omega \text{ for } 1\le j\le n, \quad \text{and furthermore }\quad \gamma_{(0,T_j]}^j \cap \gamma_{(0,T_k]}^k = \emptyset \quad \text{ for all } j\neq k.
\]
Here $\bs{\gamma}_{[\bs{0}, \bs{T}]}$ is parameterized by $n$-time parameter of $\varphi_{\U}(\bs{\gamma})=(\varphi_{\U}(\gamma^1), \ldots, \varphi_{\U}(\gamma^n))$ (see~\eqref{eqn::multitime_def_radial}), where $\varphi_{\U}:\Omega\to \U$ is a conformal map with $\varphi_{\U}(z)=0$.
We equip a metric analogous to \eqref{eqn::dist_chamber} on the truncated curve space $\chamber_{\bs{T}}$:
\begin{equation} \label{eqn::para_curve_metric}
	\dist_{\chamber}(\bs{\gamma}_{[\bs{0},\bs{T}]}, \tilde{\bs{\gamma}}_{[\bs{0},\bs{T}]}):=\sup_{\bs{t}\in [\bs{0},\bs{\infty}]} \sup_{1\le j\le n} \left| \varphi_{\U}\left(\gamma^j_{t_j\wedge T_j}\right)-\varphi_{\U}\left(\tilde{\gamma}^j_{t_j\wedge T_j}\right) \right|, \quad  \bs{\gamma}_{[\bs{0},\bs{T}]}, \tilde{\bs{\gamma}}_{[\bs{0},\bs{T}]}\in \chamber_{\bs{T}}.
\end{equation}
In this way, we obtain an incomplete metric space $(\chamber_{\bs{T}}(\Omega;\bs{x};z), \dist_{\chamber})$.
This space differs from its chordal counterpart $(\chamber_{T}(\Omega; \bs{x}, y), \dist_{\chamber})$, since the reference point $z$ is in the interior of $\Omega$ instead of on the boundary, and thus the multi-time parameterization is different. 

\begin{proposition}
	\label{prop::finitetime_LDP_radial}
Fix $\mu\in\R$ and $n\ge 1$ and $n$-polygon $(\Omega; \bs{x})$ with $z\in\Omega$. Suppose $\bs{\gamma}\sim \PPradial{n}^{(\kappa; \mu)}(\Omega; \bs{x}; z)$ is $n$-radial $\SLE_{\kappa}^{\mu}$ in $(\Omega; \bs{x}; z)$. Then the family of laws of $\bs{\gamma}_{[\bs{0},\bs{T}]}$ under $\PPradial{n}^{(\kappa; \mu)}(\Omega; \bs{x}; z)$ satisfies large deviation principle in the space $(\chamber_{\bs{T}}(\Omega;\bs{x};z),\dist_{\chamber})$ as $\kappa\to 0+$ with good rate function $\rateradial{n}^{(\mu)}(\Omega; \bs{x}; z; \cdot)$ which will be defined in Definition~\ref{def::multitime_energy_radial}.
\end{proposition}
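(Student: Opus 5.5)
We will mirror the chordal proof of Proposition~\ref{prop::finite_watermelon_LDP}: reduce to $(\U;\ee^{\ii\bs{\theta}};0)$ by conformal invariance, compare with independent radial $\SLE_\kappa$'s via the multi-time martingale, and transfer the LDP through the generalized Varadhan lemma (Lemma~\ref{lem::Varadhan} and Remark~\ref{rmk::Varadhan}).

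Let $\PP_n^{(\kappa)}$ denote the law of $n$ independent radial $\SLE_\kappa$ curves $\gamma^j$ in $(\U;\ee^{\ii\theta_j};0)$, parameterized by the radial $n$-time parameter~\eqref{eqn::multitime_def_radial}. By Lemma~\ref{lem::finite_radialSLE_LDP} and independence, $\bs{\gamma}_{[\bs{0},\bs{T}]}$ satisfies an LDP on $\prod_j\chamber_{T_j}(\U;\ee^{\ii\theta_j};0)$ with good rate function $\rate_{\oplus n}=\frac12\sum_j\int_0^{T_j}(\dot\xi^j_{t_j})^2\ud t_j$. The space $\chamber_{\bs{T}}$ is an open subset of this product. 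Next we invoke the multi-time martingale~\eqref{eqn::nradial_multitime_mart} from~\cite{HuangPeltolaWuMultiradialSLEResamplingBP}. It has the form $\one_{\LE_\emptyset}\prod_j \covmap'_{\bs{t},j}(\xi^j_{t_j})^{\mathfrak b}\exp(\frac{\mathfrak c}{2}\blm_{\bs t})\times(\text{a factor }\exp(c(\kappa)\sum_j t_j)\text{ from the conformal radius})\times\LZradial{n}^{(\kappa;\mu)}(\bs{X}_{\bs t})$, and weighting $\PP_n^{(\kappa)}$ by it gives $n$-radial $\SLE^\mu_\kappa$ on $\chamber_{\bs T}$. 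Set $\Phi(\cdot;\kappa)=\kappa\log(M_{\bs T}/M_{\bs 0})$ and split it as in the chordal proof. The pieces are:
\begin{itemize}
\item $\Phi_0=-\LVradial{n}^{(\mu)}$-type terms evaluated at $\bs X_{\bs T}$ minus their values at $\bs\theta$: the log-sine interactions, $\frac{\mu}{\kappa}\sum X^j$, and the deterministic time factor;
\item $-\blm_{\bs T}$ paired with $f_1(\kappa)\to 12$;
\item $\sum_j\log\covmap'_{\bs T,j}$ paired with $f_2(\kappa)\to3$.
\end{itemize}
Condition~(c) holds because $\blm\ge0$ and $\covmap'_{\bs{t},j}\le1$ (suitably normalized). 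Condition~(d') holds since $\sin\le1$ and, with $\bs T$ finite, the spiral term $\mu\sum X^j_{\bs T}$ is bounded above: each $X^j$ moves by a controlled amount on a finite time interval relative to $\xi^j$ and the cot drift. This point needs care, see below. Continuity (a) follows from Carath\'eodory convergence and Loewner--Kufarev, exactly as before. Varadhan then yields rate function $\rate_{\oplus n}-\Phi(\cdot;0)$, which we identify with $\rateradial{n}^{(\mu)}$ through its defining formula (the radial analogue of~\eqref{eqn::multi_time_energy_chordal_def}).

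Goodness is handled as in the chordal case. We first prove a radial analogue of Lemma~\ref{lem::finite_energy_simple_multichord}: finite energy forces the curves to be simple and disjoint. Integrating $\partial_{t_1}$ of the energy with $\bs{s}$ fixed, the cross term becomes $-2\sum_j\log\sin((X^j-X^1)/2)$ plus bounded $\mu$ and $\cot$ contributions, which gives separation. Combined with Lemma~\ref{lem::finite_energy_simplechord}, this yields a bound $\rate_{\oplus n}\le n\rateradial{n}^{(\mu)}+C(\bs T,\bs\theta,\mu)$. Compactness of the sublevel sets then follows from goodness of $\rate_{\oplus n}$, lower semicontinuity, and the simplicity lemma.

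The main obstacle is upper-boundedness of $\Phi(\cdot;0)$, needed for~(d'). Unlike the chordal $|X|\le3R$ bound, the terms $\mu X^j$ are unbounded a priori along arbitrary paths. I would control them by writing $X^j_{\bs T}-\theta_j$ as an integral of $\dot\xi^j$ times $\covmap'$ plus cot drifts. The $\dot\xi$ part gives $|\mu|\sqrt{2T_j\rate_{\oplus n}}$, which can be absorbed into $\rate_{\oplus n}$ via $ab\le \epsilon a^2+b^2/(4\epsilon)$. This uses the $\rate$-dependent variant of~(d') allowed in Remark~\ref{rmk::Varadhan}. The cot drift part has the same sign structure as the interaction, and I would bound it through the same separation estimate.
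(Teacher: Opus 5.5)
Your architecture (reduce to $(\U;\ee^{\ii\bs\theta};0)$, take the LDP for independent radial $\SLE_\kappa$'s, tilt by~\eqref{eqn::nradial_multitime_mart}, apply generalized Varadhan, then prove goodness via a simplicity lemma and a comparison with $\LJ_{\oplus n}$) matches the paper's. However, the step that makes the Varadhan upper bound work is missing.

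\textbf{Condition~(d') is not available as you claim.} Already for $n=1$ one has exactly $\Phi(\gamma;\kappa)=\mu(\xi_{T}-\theta_1)-\tfrac{\mu^2}{2}T$. This is unbounded above on $\chamber_{T}$: take $\xi_t=\theta_1+Kt$. For $n\ge2$ the spiral term $\mu\sum_j\mslitdriv^j_{\bs T}$ is likewise unbounded above, since curves can wind arbitrarily often before capacity time $T_j$. A uniform bound could then only come from a delicate trade-off against the $-\blm_{\bs T}$ and $\log\covmap'_{\bs T,j}$ terms, which you do not attempt. In particular, it is false that a finite $\bs T$ controls how far $X^j$ moves.

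\textbf{Your repair does not work either.} Remark~\ref{rmk::Varadhan} explicitly keeps (d') unchanged, so there is no ``$\rate$-dependent variant'' to invoke. More fundamentally, a pathwise bound $\Phi\le\epsilon\LJ_{\oplus n}+C$ says nothing about the Varadhan upper bound. That bound needs exponential-moment (tail) control of $\Phi(\cdot;\kappa)$ under the reference measures $\mathsf P_n^{(\kappa)}$, and on their support $\LJ_{\oplus n}=\infty$ almost surely, because the drivers are $\theta_j+\sqrt\kappa B^j$. For the same reason, writing $\mslitdriv^j_{\bs T}-\theta_j$ via $\int\covmap'\dot\xi^j$ is meaningless for SLE samples. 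The cot drifts are also not bounded under $\mathsf P_n^{(\kappa)}$, since the curves can come arbitrarily close. The separation estimate you cite only applies to finite-energy configurations.

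\textbf{The missing idea is to verify the moment condition~(d) instead.} The paper does this with $\beta=2$ by doubling the spiral. Set
$\Phi_0^{\mu}=\tfrac12(n^2-4-\mu^2)\log\mathfrak g'_{\bs T}(0)-\LVradial{n}^{(\mu)}(\bs\mslitdriv_{\bs T})+\LVradial{n}^{(\mu)}(\bs\theta)+\tfrac32\sum_jT_j$. Three facts then give $2\Phi_0^{\mu}\le\Phi_0^{2\mu}+C(\bs T,\bs\theta,\mu,n)$:
\begin{itemize}
\item the identity $2\LVradial{n}^{(\mu)}-\LVradial{n}^{(2\mu)}=\LVradial{n}^{(0)}$;
\item the sign $-\LVradial{n}^{(0)}(\bs\mslitdriv_{\bs T})=2\sum_{i<j}\log\sin(\cdot)\le0$;
\item Koebe's bound $\max_jT_j\le\log\mathfrak g'_{\bs T}(0)\le\log4+\max_jT_j$.
\end{itemize}
The remaining pieces do not involve $\mu$: these are $-\blm_{\bs T}$, $\sum_j\log\covmap'_{\bs T,j}$, and $\sum_jT_j-\log\mathfrak g'_{\bs T}(0)$ with a coefficient vanishing at $\kappa=0$. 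Hence the expectation in~(d) is at most
$\ee^{C/\kappa}\,\mathsf E_n^{(\kappa)}\bigl[M_{\bs T}(\LZradial{n}^{(\kappa;2\mu)})/M_{\bs 0}(\LZradial{n}^{(\kappa;2\mu)})\bigr]=\ee^{C/\kappa}$,
because the doubled-spiral martingale is the density of $n$-radial $\SLE_\kappa^{2\mu}$.

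\textbf{Two smaller points.} The conformal-radius factor is not a deterministic time factor. $|\mathfrak g'_{\bs T}(0)|$ carries an $O(1/\kappa)$ exponent, which produces the $\tfrac12(n^2-4-\mu^2)\log\mathfrak g'_{\bs T}(0)$ term needed to match~\eqref{eqn::multi_time_energy_radial_def}, plus a bounded remainder of type~(b). Your goodness argument is fine, provided you use Lemma~\ref{lem::finite_Dirichlet_energy_simple} in place of its chordal counterpart and absorb the $\mu$-term via $\tfrac14\int(\dot\xi^2-4\mu\dot\xi)\ge-\mu^2T_1$.
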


The rest of this section is organized as follows. In Section~\ref{subsec::multitime_radial}, we introduce the radial multi-time parameter, and recall the definition and basic properties of multi-time radial Loewner energy from~\cite{ChenHuangPeltolaWumultitimeenergy}. In Section~\ref{subsec:::LDP_finite_time_multiradial}, we establish the large deviation of $n$-radial $\SLE$ with spiral in finite time (Proposition~\ref{prop::finitetime_LDP_radial}). In Section~\ref{subsec:::LDP_infinite_time_multiradial}, we extend this result to the infinite-time setting, using Proposition~\ref{prop::finitetime_LDP_radial} and the return estimate in Proposition~\ref{prop::radialSLE_return}. 
In Section~\ref{subsec::radial_rho_LDP}, we adapt the procedures from  Sections~\ref{subsec:::LDP_finite_time_multiradial}-\ref{subsec:::LDP_infinite_time_multiradial} to prove the LDP for radial SLE with force points (Proposition~\ref{prop::radialSLE_LDP}).

The next two sections present direct consequences of large deviation of multi-radial SLE. In Section~\ref{subsec::common_time_radial}, we prove the LDP for Dyson circular ensemble (Proposition~\ref{prop::LDP_DysonBM_radial}), utilizing the relation between $n$-radial $\SLE$ with spiral and Dyson circular ensemble (see Lemma~\ref{lem::DysonBM_driving_function_radial}). In Section~\ref{subsec::bp_radial_proof}, we prove the boundary perturbation property of multi-time radial Loewner energy (Proposition~\ref{prop::bp_radial}) using the boundary perturbation property of multi-radial SLE proved in~\cite[Proposition~1.4]{HuangPeltolaWuMultiradialSLEResamplingBP} (see Lemma~\ref{lem::bp_multiradialkappa}) and large deviation principle in Theorem~\ref{thm::radialSLE_LDP}.

\subsection{Multi-time radial Loewner energy}
\label{subsec::multitime_radial}
\paragraph*{Multi-time parameter (radial).} Fix $n\ge 1$ and $\bs{\theta}=(\theta_1,\ldots,\theta_n)\in \LX_n^{\U}$. 
Consider an $n$-tuple $\bs{\gamma}_{[\bs{0},\bs{t}]}=(\gamma^1_{[0,t_1]}, \ldots, \gamma^n_{[0,t_n]})\in \overline{\chamber}_{\bs{t}}(\U;\ee^{\ii \bs{\theta}};0)$\footnote{That is, the curves are allowed to touch themselves or each other, but not to cross.} parameterized by $\bs{t}=(t_1, \ldots, t_n)\in[0,\infty)^n$.
We define the following normalized conformal transformations:
\begin{itemize}
	\item $\mathfrak{g}_{t_j}^j$ is the conformal map from the component of $\U\setminus\gamma^j_{[0,t_j]}$ that contains $0$ onto $\U$ with $\mathfrak{g}_{t_j}^j(0)=0$ and $(\mathfrak{g}_{t_j}^j)'(0)>0$, where $1\le j\le n$.
	\item $\mathfrak{g}_{\bs{t}}$ is the conformal map from the component of $\U\setminus\cup_{j=1}^n \gamma^j_{[0,t_j]}$ that contains $0$ onto $\U$ with $\mathfrak{g}_{\bs{t}}(0)=0$ and $\mathfrak{g}_{\bs{t}}'(0)>0$.
	\item $\mathfrak{g}_{\bs{t}, j}$ is the conformal map from the component of $\U \setminus \mathfrak{g}_{t_j}^j\left(\cup_{i\neq j}\gamma^i_{[0,t_i]}\right)$ that contains $0$ onto $\U$ with $\mathfrak{g}_{\bs{t}, j}(0)=0$ and $\mathfrak{g}'_{\bs{t}, j}(0)> 0$, where $1\le j\le n$. 
\end{itemize}
Using these notations, we have $\mathfrak{g}_{\bs{t}}=\mathfrak{g}_{\bs{t}, j}\circ \mathfrak{g}_{t_j}^j$ for $1\le j\le n$. Let $\covmap_{t_j}^{j},\covmap_{\bs{t}},\covmap_{\bs{t},j}$ be the covering maps of $\mathfrak{g}_{t_j}^{j},\mathfrak{g}_{\bs{t}},\mathfrak{g}_{\bs{t},j}$, respectively. 
We say that the $n$-tuple $\bs{\gamma}_{\bs{t}}$ of curves has $n$-time parameter if $\mathfrak{g}^{j}$ is parameterized by capacity:
\begin{equation}\label{eqn::multitime_def_radial}
	(\mathfrak{g}_{t_j}^j)'(0)= \ee^{t_j},\quad \text{ for }1\leq j \leq n.
\end{equation}
Denote by $\xi^j$ the driving function of each $\gamma^j$, and we define the driving function of the $n$-tuple $\bs{\gamma}_{\bs{t}}$, started at $\bs{\mslitdriv}_0=(\theta_1, \ldots, \theta_n)\in\LX_n^{\U}$, by
\begin{equation}\label{eqn::multitime_driving_radial}
	\bs{\mslitdriv}_{\bs{t}}=(\bs{\mslitdriv}^1_{\bs{t}}, \ldots, \bs{\mslitdriv}^n_{\bs{t}}), \qquad \text{with }\bs{\mslitdriv}^j_{\bs{t}}=\covmap_{\bs{t}, j}(\xi^j_{t_j}), \qquad \text{for }1\le j\le n.
\end{equation}
A standard calculation shows that 
\begin{equation}\label{eqn::expansion_mslitdriv}
	\ud \mslitdriv_{\bs{t}}^j = \left( \covmap_{\bs{t},j}'\left(\xi_{t_j}^j\right) \dot{\xi}_{t_j}^j - 3\covmap_{\bs{t},j}''\left(\xi_{t_j}^j\right)\right) \ud t_j+ \sum_{\ell\ne j} \cot \left( \frac{\mslitdriv_{\bs{t}}^j - \mslitdriv_{\bs{t}}^{\ell}}{2} \right) \covmap_{\bs{t},\ell}'\left(\xi_{t_\ell}^\ell\right)^2 \ud t_\ell,\quad \text{ for } 1\leq j \leq n.
\end{equation}
The mapping-out function $\mathfrak{g}_{\bs{t}}$ solves the Loewner equation
\begin{equation*}
	\ud \mathfrak{g}_{\bs{t}}(z)=\sum_{j=1}^n \partial_{t_j}\mathfrak{g}_{\bs{t}}(z)\ud t_j=\mathfrak{g}_{\bs{t}}(z)\sum_{j=1}^n(\covmap'_{\bs{t},j}(\xi_{t_j}^j))^2\left(\frac{\ee^{\ii\mslitdriv_{\bs{t}}^{j}}+\mathfrak{g}_{\bs{t}}(z)}{\ee^{\ii\mslitdriv_{\bs{t}}^{j}}-\mathfrak{g}_{\bs{t}}(z)} \right)\ud t_j. 
\end{equation*}

Fix $n\geq 1$ and $n$-polygon $(\U; \ee^{\ii \bs{\theta}};0)$ with $\bs{\theta}=(\theta_1, \ldots, \theta_n)\in\LX_n^{\U}$. 
Consider an $n$-tuple $\bs{\gamma}_{[\bs{0},\bs{t}]}=(\gamma^1_{[0,t_1]}, \ldots, \gamma^n_{[0,t_n]})$ of disjoint simple curves in $\U$ parameterized by multi-time parameter. Define 
$\blm_{\bs{t}}$ to be the unique potential solving the exact differential equation (see~\cite[Lemma~2.6]{HuangPeltolaWuMultiradialSLEResamplingBP})
\begin{align}\label{eqn::mt_def_radial_mart}
	\ud  \blm_{\bs{t}} 
	= \sum_{j=1}^{n} \partial_{t_j} \blm_{\bs{t}} \ud t_j
	= \sum_{j=1}^{n} \Big( \! -\frac{1}{3} \LS \covmap_{\bs{t},j}(\xi_{t_j}^{j})+\frac{1}{6} \big(1- (\covmap'_{\bs{t},j}(\xi_{t_j}^{j}))^2 \big) \Big) \ud t_j, 
	\qquad \blm_{\bs{0}} = 0 ,
\end{align}
with $\LS\covmap=\frac{\covmap'''}{\covmap'}-\frac{3}{2}\big(\frac{\covmap''}{\covmap'}\big)^2$ denoting the Schwarzian derivative of a function $\covmap$. It is proved in~\cite[Lemma~2.7]{HuangPeltolaWuMultiradialSLEResamplingBP} that the solution to~\eqref{eqn::mt_def_radial_mart} 
can be described in terms of Brownian loop measure as
\begin{align*}\label{eqn::mt_blm_radial}
	\blm_{\bs{t}} = \blm \left(\U;\gamma_{[0,t_1]}^{1},\ldots,\gamma_{[0,t_n]}^{n} \right).
\end{align*}
Consequently, the measure $\blm_{\bs{t}}$ is finite as long as $\gamma_{[0,t_1]}^{1},\ldots,\gamma_{[0,t_n]}^{n}$ are disjoint.

\paragraph*{Semi-classical limits of BPZ equations.}
We define 
\begin{equation}\label{eqn::multi_time_energy_radial_LV}
	\LVradial{n}^{(\mu)}(\bs{\theta})=-2 \sum_{1\le i<j\le n} \log \sin\left( \frac{\theta_j-\theta_i}{2} \right) - \mu \sum_{j=1}^{n} \theta_j, \quad \text{for } \bs{\theta} = (\theta_1, \ldots, \theta_n)\in \LX_n^{\U}.
\end{equation}
The function $\LVradial{n}^{(\mu)}:\LX_n^{\U}\to \R$ is the semi-classical limit of the partition function $\LZradial{n}^{(\kappa;\mu)}$ in~\eqref{eqn::nradial_pf_U}:
\begin{equation*}
	\LVradial{n}^{(\mu)}(\bs{\theta}) = -\lim_{\kappa\to 0} \kappa \log \LZradial{n}^{(\kappa;\mu)}(\bs{\theta});
\end{equation*}
and it satisfies the semi-classical limits of the BPZ equations:
\begin{equation}\label{eqn::radialBPZ_semiclassical_mu}
	(\partial_j\LV)^2-\sum_{\ell\neq j}\left(2\cot\left(\frac{\theta_{\ell}-\theta_j}{2}\right)\partial_{\ell}\LV+\frac{3}{\sin^2\left(\frac{\theta_{\ell}-\theta_j}{2}\right)}\right)=\mu^2+1-n^2, \qquad\text{for }1\le j\le n. 
\end{equation}

\begin{definition}[Multi-time radial Loewner energy]
\label{def::multitime_energy_radial}
Fix $\mu\in\R$ and $n\ge 2$ and $n$-polygon $(\Omega; \bs{x}) = (\Omega; x_1, \ldots, x_n)$ with $z\in \Omega$.
Let $\varphi: \Omega\to \U$ be a conformal map with $\varphi(z)=0$. 
For $\bs{\gamma}_{[\bs{0},\bs{t}]}\in \overline{\chamber}_{\bs{t}}(\Omega; \bs{x};z)$, we denote by $\xi_{t_j}^j$ the driving function of $\varphi(\gamma^j_{[0,t_j]})$ and denote by $\bs{\mslitdriv}_{\bs{t}}$ the driving function of the $n$-tuple $\varphi(\bs{\gamma}_{[\bs{0},\bs{t}]})$.
We define the \emph{truncated $n$-time radial Loewner energy} $\rateradial{n}^{(\mu)} (\Omega; \bs{x}; z; \bs{\gamma}_{[\bs{0}, \bs{t}]})$ to be 
\begin{equation} \label{eqn::multi_time_energy_radial_def}
\begin{split}
	\rateradial{n}^{(\mu)} (\Omega; \bs{x}; z; \bs{\gamma}_{[\bs{0}, \bs{t}]}) \; =& \; \sum_{j=1}^{n} \rateradial{1}(\Omega; x_j; z; \gamma_{[0, t_j]}^j) +12 \blm(\Omega; \gamma^1_{[0,t_1]}, \ldots, \gamma^n_{[0,t_n]})-\frac{3}{2}\sum_{j=1}^n t_j\\
	&-\frac{1}{2}(n^2-4-\mu^2)\log \mathfrak{g}'_{\bs{t}}(0)
	-3\sum_{j=1}^n \log\covmap'_{\bs{t}, j}(\xi^j_{t_j})+\LVradial{n}^{(\mu)}\left(\bs{\mslitdriv}_{\bs{t}}\right)-\LVradial{n}^{(\mu)}(\bs{\theta}).
\end{split}
\end{equation}
Here, the quantity $\rateradial{1}$ is the radial Loewner energy defined in~\eqref{eqn::energy_radial_single}, and $\blm$ is the Brownian loop measure defined in~\eqref{eqn::mt_def_radial_mart}, and $\LVradial{n}^{(\mu)}(\bs{\theta})$ is defined in \eqref{eqn::multi_time_energy_radial_LV}. Using~\eqref{eqn::radialBPZ_semiclassical_mu}, it is proved in~\cite{ChenHuangPeltolaWumultitimeenergy} that $\rateradial{n}^{(\mu)} (\Omega; \bs{x}; z; \bs{\gamma}_{[\bs{0}, \bs{t}]})$ is the unique solution to the following system of exact differential equations:
\begin{align} \label{eqn::multi_time_energy_radial_def2}
\begin{split}
	\ud \rateradial{n}^{(\mu)}(\Omega; \bs{x}; z; \bs{\gamma}_{[\bs{0},\bs{t}]}) 
	= \; & \sum_{j=1}^{n} \partial_{t_j} \rateradial{n}^{(\mu)}(\Omega; \bs{x}; z; \bs{\gamma}_{[\bs{0},\bs{t}]}) \ud t_j
	\\
	= \; &  \frac{1}{2}\sum_{j=1}^{n} \bigg( \dot{\xi}_{t_j}^{j} - \covmap'_{\bs{t},j}(\xi_{t_j}^{j}) \bigg( \mu+\sum_{i\neq j} \cot \bigg(\frac{\mslitdriv_{\bs{t}}^{j} - \mslitdriv_{\bs{t}}^{i}}{2}\bigg) \bigg) 
	- 3 \, \frac{\covmap''_{\bs{t},j}(\xi_{t_j}^{j})}{\covmap'_{\bs{t},j}(\xi_{t_j}^{j})}\bigg)^2 \ud t_j ,
	\\
	\rateradial{n}^{(\mu)}(\Omega; \bs{x}; z; \bs{\gamma}_{[\bs{0},\bs{0}]}) 
	= \; & 0.
\end{split}
\end{align}
In particular, the quantity $\rateradial{n}^{(\mu)} (\Omega; \bs{x}; z; \bs{\gamma}_{[\bs{0}, \bs{t}]})$ is increasing in $\bs{t}$, and we define \emph{$n$-time radial Loewner energy} of $\bs{\gamma} \in \overline{\chamber}_{n\mathrm{\textnormal{-}rad}}(\Omega; \bs{x}; z)$ to be its limit: 
	\begin{equation}\label{eqn::multi_time_energy_radial}
		\rateradial{n}^{(\mu)}(\Omega; \bs{x}; z; \bs{\gamma}):=\lim_{\bs{t}\to +\bs{\infty}} \rateradial{n}^{(\mu)}(\Omega; \bs{x}; z; \bs{\gamma}_{[\bs{0}, \bs{t}]}). 
	\end{equation}
\end{definition}

The following basic property of the energy will be used in the proof of Proposition~\ref{prop::finitetime_LDP_radial}.

\begin{lemma}\label{lem::finite_energy_simple}
If $\rateradial{n}^{(\mu)}(\Omega;\bs{x};z;\bs{\gamma}_{[\bs{0},\bs{T}]})<\infty$, then the component curves $\gamma_{[0,T_j]}^j$ are simple and mutually disjoint for $1\le j\le n$.
\end{lemma}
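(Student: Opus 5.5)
We will mirror the proof of Lemma~\ref{lem::finite_energy_simple_multichord}, adapted to the radial setting. By conformal invariance it suffices to treat $(\Omega;\bs{x};z)=(\U;\ee^{\ii\bs{\theta}};0)$. By symmetry we treat $j=1$. The aim is to show that $\gamma^1_{(0,T_1]}$ is a simple curve in $\U\setminus\cup_{j\ge2}\gamma^j_{[0,T_j]}$, and for this we establish the radial analogue of~\eqref{eqn::finite_energy_simple_multichord_TODO}:
\begin{equation*}
\inf_{\bs{t}\in[\bs{0},\bs{T}],\,2\le j\le n}\min\left\{\mslitdriv^j_{\bs{t}}-\mslitdriv^1_{\bs{t}},\;2\pi-(\mslitdriv^j_{\bs{t}}-\mslitdriv^1_{\bs{t}})\right\}>0,
\qquad
\inf_{\bs{t}\in[\bs{0},\bs{T}]}\covmap'_{\bs{t},1}(\xi^1_{t_1})>0 .
\end{equation*}

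\textbf{Step 1: lower bound for the energy.} Fix $\bs{s}=(t_2,\ldots,t_n)$ and view $\bs{t}=(t_1,\bs{s})$ as a function of $t_1$. By~\eqref{eqn::multi_time_energy_radial_def2} and~\eqref{eqn::expansion_mslitdriv}, the $t_1$-integrand can be rewritten as
\begin{equation*}
\frac12\left(\frac{\partial_{t_1}\mslitdriv^1_{\bs{t}}}{\covmap'_{\bs{t},1}}-\covmap'_{\bs{t},1}\Big(\mu+\sum_{i\ne1}\cot\tfrac{\mslitdriv^1_{\bs{t}}-\mslitdriv^i_{\bs{t}}}{2}\Big)\right)^2 .
\end{equation*}
Expanding the square and dropping the nonnegative term $\tfrac12\covmap'^2(\cdots)^2$ gives
\begin{equation*}
\rateradial{n}^{(\mu)}\ge J_1+J_2,
\end{equation*}
where
\begin{equation*}
J_1=\tfrac12\int\left(\partial_{t_1}\mslitdriv^1/\covmap'\right)^2\ge\tfrac12\int(\partial_{t_1}\mslitdriv^1)^2 \qquad\text{(since $\covmap'\le1$)},
\end{equation*}
and $J_2=-\int\partial_{t_1}\mslitdriv^1\,(\mu+\sum\cot)\,\ud t_1$.

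\textbf{Step 2: estimate $J_2$.} The $\mu$ part is $-\mu(\mslitdriv^1|_0^{T_1})$. It is bounded below by $-\tfrac14\int(\partial_{t_1}\mslitdriv^1)^2-\mu^2T_1$, so it can be absorbed into $J_1$.

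For the cotangent part, write
\begin{equation*}
\partial_{t_1}\mslitdriv^1=\partial_{t_1}(\mslitdriv^1-\mslitdriv^i)+\partial_{t_1}\mslitdriv^i .
\end{equation*}
The first piece integrates exactly:
\begin{equation*}
-\int\partial_{t_1}(\mslitdriv^1-\mslitdriv^i)\cot\tfrac{\mslitdriv^1-\mslitdriv^i}{2}\,\ud t_1=-2\log\left|\sin\tfrac{\mslitdriv^i-\mslitdriv^1}{2}\right|\Big|_0^{T_1}.
\end{equation*}
For the second piece, \eqref{eqn::expansion_mslitdriv} gives
\begin{equation*}
\partial_{t_1}\mslitdriv^i=\cot\tfrac{\mslitdriv^i-\mslitdriv^1}{2}\,\covmap'^2_{\bs{t},1}.
\end{equation*}
Hence
\begin{equation*}
-\partial_{t_1}\mslitdriv^i\cot\tfrac{\mslitdriv^1-\mslitdriv^i}{2}=\covmap'^2\cot^2\ge0,
\end{equation*}
which plays the role of the inequality used in~\eqref{eqn::finite_energy_simple_multichord_aux3}; the sign is favorable because of the oddness of $\cot$.

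\textbf{Step 3: separation and simplicity.} Combining Steps 1 and 2,
\begin{equation*}
-2\sum_{i\ge2}\log\left|\sin\tfrac{\mslitdriv^i_{\bs{t}}-\mslitdriv^1_{\bs{t}}}{2}\right|\Big|_0^{T_1}\le\rateradial{n}^{(\mu)}+\mu^2T_1<\infty .
\end{equation*}
This keeps each gap $\mslitdriv^i-\mslitdriv^1$ away from both $0$ and $2\pi$, uniformly in $\bs{t}$, which is the first separation bound above. That separation gives a positive distance of $\gamma^1_{[0,T_1]}$ from the other curves, and hence the positive lower bound on $\covmap'_{\bs{t},1}(\xi^1_{t_1})$ by the Koebe distortion/Schwarz reflection argument. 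With both bounds in hand, $J_1$ and the inequality above show that $t_1\mapsto\mslitdriv^1_{(t_1,T_2,\ldots,T_n)}$ has finite Dirichlet energy.

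Next we need a radial analogue of Lemma~\ref{lem::finite_energy_simplechord}. The curve $\mathfrak{g}_{(0,T_2,\ldots,T_n)}(\gamma^1)$ is driven by $\mslitdriv^1$ under a radial Loewner chain with weight $\covmap'^2_{\bs{t},1}$, which is bounded above and below. To show it is simple, we would localize: near its tip, radial and chordal Loewner chains are related by a smooth coordinate change that preserves finite energy. Alternatively, we can map to $\HH$ with a boundary point away from the curve, use the chordal comparison~\cite{AbuzaidHealeyPeltolaLargeDeviationDysonBM}, and conclude via~\cite{FrizShekharSLEtracefiniteenergydrivers}. Both routes rest on the finite-energy results of~\cite{AbuzaidPeltolaLargeDeviationCapacityParameterization}.

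I expect this transfer to a radial weighted chain to be the main obstacle; the rest is a direct computation. Symmetry in $j$ then finishes the proof.
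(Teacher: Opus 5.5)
Your proposal follows the paper's proof essentially line for line. It has the same reduction to $(\U;\ee^{\ii\bs{\theta}};0)$ and the same lower bound by $J_1+J_2$. It uses the same favorable sign of the cross term coming from $\partial_{t_1}\omega^i_{\bs t}=\cot\big(\tfrac{\omega^i_{\bs t}-\omega^1_{\bs t}}{2}\big)\covmap'_{\bs t,1}(\xi^1_{t_1})^2$. It follows the same chain: separation $\Rightarrow$ lower bound on $\covmap'_{\bs t,1}$ $\Rightarrow$ finite Dirichlet energy of $\omega^1_{(\cdot,T_2,\ldots,T_n)}$. Your Young-inequality absorption of the $-\mu\,\omega^1|_0^{T_1}$ term is slightly more careful than the paper, which simply leaves that term in the bound.

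The step you flag as the main obstacle is not proved in the paper but cited. The radial weighted-Loewner-chain statement is exactly Lemma~\ref{lem::finite_Dirichlet_energy_simple}, i.e.\ \cite[Theorem~3.9]{AbuzaidHealeyPeltolaLargeDeviationDysonBM}, so no localization or transfer to $\HH$ is needed.
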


The proof of Lemma~\ref{lem::finite_energy_simple} relies on~\cite[Theorem~3.9]{AbuzaidHealeyPeltolaLargeDeviationDysonBM}: 
\begin{lemma}[{\cite[Theorem~3.9]{AbuzaidHealeyPeltolaLargeDeviationDysonBM}}]
	\label{lem::finite_Dirichlet_energy_simple}
Suppose $T<\infty$ and $\xi_{[0,T]}$ has finite Dirichlet energy~\eqref{eqn::energy_radial_single}. Suppose that $\lambda:[0,T]\to (0,\infty)$ is a weight function that $\inf_{t\in [0,T]} |\lambda_t|>0$. Consider the radial Loewner chain with driving function $\xi$ and weight function $\lambda$:
	\begin{equation} \label{eqn::radial_chain_weight}
		\partial_t \mathfrak{g}_t(z)=\lambda_t \mathfrak{g}_t(z) \frac{\exp(\ii \xi_t)-\mathfrak{g}_t(z)}{\exp(\ii \xi_t)+\mathfrak{g}_t(z)}, \qquad  0\le t\le T,
	\end{equation}
	then the hull $\gamma_{[0,T]}$ generated by the Loewner chain~\eqref{eqn::radial_chain_weight} is a simple curve in $\U$.
\end{lemma}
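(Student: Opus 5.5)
The plan is to remove the weight by a time change and then reduce to the known statement for finite-energy radial (or chordal) drivers.

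\emph{Time change.} Set $s(t):=\int_0^t\lambda_u\,\ud u$. This is a $C^1$-increasing bijection $[0,T]\to[0,S]$ with $S<\infty$, since $\lambda>0$ and $\lambda$ is integrable; it will be good to check this integrability in the intended applications, where $\lambda=\covmap'_{\bs{t},j}(\xi^j_{t_j})^2\le 1$. Put $\tilde{\mathfrak{g}}_s:=\mathfrak{g}_{t(s)}$ and $\tilde\xi_s:=\xi_{t(s)}$. Then $\tilde{\mathfrak{g}}$ solves the unweighted radial Loewner equation~\eqref{eqn::radial_chain_weight} with $\lambda\equiv1$ and driving function $\tilde\xi$, and it generates the same family of hulls. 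So it suffices to treat the unweighted chain.

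\emph{Energy transfer.} The new driver has energy
\[
\frac12\int_0^S(\dot{\tilde\xi}_s)^2\,\ud s=\frac12\int_0^T\frac{\dot\xi_t^2}{\lambda_t}\,\ud t\le\frac{1}{\inf_{[0,T]}\lambda}\cdot\frac12\int_0^T\dot\xi_t^2\,\ud t<\infty.
\]
Only the lower bound $\inf\lambda>0$ is used here.

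\emph{Simple curve for a finite-energy radial driver.} This step carries the real content. By Cauchy--Schwarz,
\[
|\tilde\xi_b-\tilde\xi_a|\le\sqrt{2E[a,b]}\,\sqrt{b-a},
\]
where $E[a,b]$ is the energy on $[a,b]$. Absolute continuity of the energy integral then gives
\[
\sup_{|b-a|\le\delta}\frac{|\tilde\xi_b-\tilde\xi_a|}{\sqrt{b-a}}\to0 \quad\text{as }\delta\to0.
\]
So on short time windows the local $\mathrm{Lip}_{1/2}$ norm of $\tilde\xi$ is below the Marshall--Rohde/Lind threshold. I would split $[0,S]$ into finitely many windows $[s_k,s_{k+1}]$ and proceed window by window.

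On each window, map the current domain $\U\setminus K_{s_k}$ to $\U$ by $\tilde{\mathfrak{g}}_{s_k}$, and then to $\HH$ by a M\"obius map sending $\ee^{\ii\tilde\xi_{s_k}}$ to $0$. Before the hull reaches a fixed neighbourhood of the interior point, the radial chain becomes a chordal chain with a smooth time change. Its driving function is a smooth perturbation of $\tilde\xi$, so it again has finite energy, as in the radial--chordal comparison in~\cite{AbuzaidPeltolaLargeDeviationCapacityParameterization}. Then~\cite[Theorem~2(i)]{FrizShekharSLEtracefiniteenergydrivers} gives that each increment $\tilde{\mathfrak{g}}_{s_k}(K_{s_{k+1}}\setminus K_{s_k})$ is a simple curve in $\U$, touching $\partial\U$ only at its base point.

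\emph{Gluing.} Pulling back under $\tilde{\mathfrak{g}}_{s_k}^{-1}$, each increment is a simple arc in $\U\setminus K_{s_k}$ attached only at the previous tip. Concatenating the arcs gives a simple curve $\gamma_{[0,T]}$ that meets $\partial\U$ only at $\ee^{\ii\theta}$.

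\emph{Main obstacle.} I expect the difficulty to lie in the radial-to-chordal transfer, specifically in verifying that the coordinate change keeps the energy finite uniformly over the windows and that the pieces glue without touching earlier parts of the curve. If the transfer proves awkward, the first thing I would try is to use~\cite[Theorem~3.9]{AbuzaidHealeyPeltolaLargeDeviationDysonBM} directly for the unweighted radial chain.
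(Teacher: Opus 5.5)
Your argument is correct, but the paper gives no proof of this lemma to follow: it imports the statement verbatim from \cite[Theorem~3.9]{AbuzaidHealeyPeltolaLargeDeviationDysonBM}, which is what your fallback amounts to. The closest in-paper argument is the chordal analogue, Lemma~\ref{lem::finite_energy_simplechord}. That proof has the same two-step shape as yours. First, a time change, with finiteness of the capacity-parameterized energy taken from \cite[Lemma~3.8]{AbuzaidHealeyPeltolaLargeDeviationDysonBM}. Second, an application of \cite[Theorem~2(i)]{FrizShekharSLEtracefiniteenergydrivers}.

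Doing the time change by hand makes the role of each hypothesis visible. The identity $\frac12\int_0^S\dot{\tilde\xi}_s^2\,\ud s=\frac12\int_0^T\dot\xi_t^2\lambda_t^{-1}\,\ud t$ shows that only $\inf\lambda>0$ enters the energy. Integrability of $\lambda$ is needed only to have $S<\infty$. That is implicit in the chain being defined up to time $T$, and automatic in the application, where $\lambda=\covmap'_{\bs{t},1}(\xi^1_{t_1})^2\le 1$. In particular, the bound $\sup\lambda<\infty$ imposed in the chordal lemma plays no role beyond guaranteeing finite total capacity.

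For the radial step you replace the citation by localization. The re-rooted chain $\tilde{\mathfrak g}_{s_k+u}\circ\tilde{\mathfrak g}_{s_k}^{-1}$ is again a radial chain, driven by $\tilde\xi_{s_k+\cdot}$. You transfer it to a chordal chain, apply Friz--Shekhar there, and glue inductively. On each window the chordal driver has finite energy. Its derivative is $\dot{\tilde\xi}$ times a factor bounded above and below, plus a bounded term, the same structure as in~\eqref{eqn::expansion_mslitdriv}. This gives a proof that is self-contained modulo the chordal theorem and the standard radial--chordal coordinate change. The citation route is simply shorter.

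Four points to tighten:
\begin{itemize}
\item The transfer to $\HH$ on a window is valid as long as the increment stays away from the boundary point you send to $\infty$, say $-\ee^{\ii\tilde\xi_{s_k}}$. The interior point is not what matters. Uniform local growth makes all increments on windows of length $\le\delta$ uniformly small, which settles this.
\item There are only finitely many windows, so you only need finite chordal energy on each one. The uniformity over windows that you list as the main obstacle is not actually required.
\item The gluing works for two reasons. Friz--Shekhar gives a simple curve meeting $\R$ only at its base point. By induction, $\tilde{\mathfrak g}_{s_k}^{-1}$ extends continuously to $\overline{\U}$, because the boundary of $\U\setminus K_{s_k}$ is locally connected, and it sends $\ee^{\ii\tilde\xi_{s_k}}$ to the tip.
\item The smallness of the local $\mathrm{Lip}_{1/2}$ norm is not used on this route. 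It is needed only if you replace Friz--Shekhar by the Marshall--Rohde/Lind criterion.
\end{itemize}
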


\begin{proof}[Proof of Lemma~\ref{lem::finite_energy_simple}]
It suffices to show the conclusion for $(\Omega;\bs{x};z)=(\U;\ee^{\ii \bs{\theta}};0)$ and we eliminate them from the notation. We show that $\gamma_{[0,T_1]}^{1}$ is a simple curve in $\U\setminus \left( \bigcup_{j=2}^{n} \gamma^j_{[0,T_j]} \right)$. 
To this end, we will prove that 
\begin{equation} \label{eqn::finite_energy_simple_aux4}
	\inf_{\stackrel{\bs{t}\in [\bs{0},\bs{T}]}{2\le j\le n}} \left| \mslitdriv_{\bs{t}}^{j}-\mslitdriv^1_{\bs{t}} \right|>0, \qquad \text{ and } \qquad \inf_{\bs{t}\in [\bs{0},\bs{T}]} \covmap'_{\bs{t},1}\left(\xi_{t_1}^{1} \right)^2>0.
\end{equation}
	
Fix $(t_2,\ldots,t_n)$ with $0\le t_j\le T_j$ for $2\le j\le n$ and denote $\bs{s}=(t_2, \ldots, t_n)$. We view $\bs{t}=(t_1,\bs{s})$ as a function in $t_1\in [0,T_1]$. By \eqref{eqn::multi_time_energy_radial_def2}, we have
\begin{align}
	\rateradial{n}^{(\mu)}(\bs{\gamma}_{[\bs{0}, (T_1, \bs{s})]}) \ge & \frac{1}{2} \int_0^{T_1}\left( \dot{\xi}_{t_1}^{1} - \covmap'_{\bs{t},1}\left(\xi_{t_1}^{1}\right) \left( \mu+\sum_{j=2}^n \cot\left(\frac{\mslitdriv^1_{\bs{t}}-\mslitdriv_{\bs{t}}^{j}}{2}\right)\right)-3\frac{\covmap''_{\bs{t},1}\left(\xi_{t_1}^{1}\right)}{\covmap'_{\bs{t},1}\left(\xi_{t_1}^{1}\right)}\right)^2 \ud t_1 \notag \\
	= & \frac{1}{2} \int_0^{T_1}\left( \frac{ \partial_{t_1} \mslitdriv^1_{\bs{t}} }{ \covmap'_{\bs{t},1}\left(\xi_{t_1}^{1}\right) } - \covmap'_{\bs{t},1}\left(\xi_{t_1}^{1}\right) \left( \mu+\sum_{j=2}^n \cot\left(\frac{\mslitdriv^1_{\bs{t}}-\mslitdriv_{\bs{t}}^{j}}{2}\right)\right)\right)^2 \ud t_1 \tag{due to~\eqref{eqn::expansion_mslitdriv}} \\
	\ge & \underbrace{\frac{1}{2} \int_0^{T_1} \left( \frac{ \partial_{t_1} \mslitdriv^1_{\bs{t}} }{ \covmap'_{\bs{t},1}\left(\xi_{t_1}^{1}\right) } \right)^2 \ud t_1}_{:=J_1} \underbrace{- \int_0^{T_1} \partial_{t_1} \mslitdriv^1_{\bs{t}} \left( \mu+\sum_{j=2}^n \cot\left(\frac{\mslitdriv^1_{\bs{t}}-\mslitdriv_{\bs{t}}^{j}}{2}\right)\right) \ud t_1}_{:=J_2}.   \label{eqn::finite_energy_simple_aux1}
\end{align}
For $J_1$, we have
\begin{equation} \label{eqn::finite_energy_simple_aux2}
	J_1\ge \frac{1}{2} \int_0^{T_1} \left(  \partial_{t_1} \mslitdriv^1_{\bs{t}} \right)^2 \ud t_1 \ge 0.
\end{equation}
For $J_2$, we denote $f(\bs{t})|_{0}^{T_1}:=f(\bs{t})|_{t_1=T_1}-f(\bs{t})|_{t_1=0}$ for functional $f$ on $\bs{t}$, then we have
\begin{align}
	J_2= & -\mu \mslitdriv^1_{\bs{t}}|_{0}^{T_1} - \sum_{j=2}^{n}  \int_{0}^{T_1} \cot\left(\frac{\mslitdriv^1_{\bs{t}}-\mslitdriv_{\bs{t}}^{j}}{2}\right) \partial_{t_1} \left( \mslitdriv^1_{\bs{t}}-\mslitdriv_{\bs{t}}^{j} \right) \ud t_1 - \sum_{j=2}^{n} \int_{0}^{T_1} \cot\left(\frac{\mslitdriv^1_{\bs{t}}-\mslitdriv_{\bs{t}}^{j}}{2}\right) \partial_{t_1} \mslitdriv_{\bs{t}}^{j} \ud t_1 \notag \\
	\ge & -\mu \mslitdriv^1_{\bs{t}}|_{0}^{T_1} - \sum_{j=2}^{n}  \int_{0}^{T_1} \cot\left(\frac{\mslitdriv^1_{\bs{t}}-\mslitdriv_{\bs{t}}^{j}}{2}\right) \partial_{t_1} \left( \mslitdriv^1_{\bs{t}}-\mslitdriv_{\bs{t}}^{j} \right) \ud t_1 \tag{due to~\eqref{eqn::expansion_mslitdriv}} \\
	= & -\mu \mslitdriv^1_{\bs{t}}|_{0}^{T_1} -2 \sum_{j=2}^n \log \sin \left(\frac{\mslitdriv_{\bs{t}}^{j}-\mslitdriv^1_{\bs{t}}}{2}\right) |_{0}^{T_1}. \label{eqn::finite_energy_simple_aux3}
\end{align}
Combining~(\ref{eqn::finite_energy_simple_aux1},\ref{eqn::finite_energy_simple_aux2},\ref{eqn::finite_energy_simple_aux3}), we have 
	\[-\mu \mslitdriv^1_{\bs{t}}|_{0}^{T_1} -2 \sum_{j=2}^n \log \sin \left(\frac{\mslitdriv_{\bs{t}}^{j}-\mslitdriv^1_{\bs{t}}}{2}\right) |_{0}^{T_1}\le\rateradial{n}^{(\mu)}(\bs{\gamma}_{[\bs{0},(T_1,\bs{s})]})<\infty.\]
This implies the first control in~\eqref{eqn::finite_energy_simple_aux4}.
The first control in~\eqref{eqn::finite_energy_simple_aux4} implies that $\gamma^1_{[0,T_1]}$ has a positive distance from $\bigcup_{j=2}^{n} \gamma^j_{[0,T_j]}$, which gives the second control in~\eqref{eqn::finite_energy_simple_aux4}.
	Moreover, combining~(\ref{eqn::finite_energy_simple_aux1},\ref{eqn::finite_energy_simple_aux2},\ref{eqn::finite_energy_simple_aux3}) again, the process $\mslitdriv_{(\cdot,T_2,\ldots,T_n)}^{1}$ has finite Dirichlet energy up to time $T_1$. Since $\mslitdriv_{(\cdot,T_2,\ldots,T_n)}^{1}$ is the driving function of $\mathfrak{g}_{(0,T_2,\ldots,T_n)}\left( \gamma_{[0,T_1]}^{1} \right)$ with weight function $\covmap'_{\bs{t},1}\left(\xi_{t_1}^{1} \right)^2$, combining~\eqref{eqn::finite_energy_simple_aux4} and Lemma~\ref{lem::finite_Dirichlet_energy_simple}, the process $\mathfrak{g}_{(0,T_2,\ldots,T_n)}\left( \gamma_{[0,T_1]}^{1} \right)$ is a simple curve in $\U$, which implies that $\gamma_{[0,T_1]}^{1}$ is a simple curve in $\U\setminus \left( \bigcup_{j=2}^{n} \gamma^j_{[0,T_j]} \right)$. Similarly, the process $\gamma_{[0,T_j]}^j$ is a simple curve in $\U\setminus \left( \cup_{\ell\neq j} \gamma_{[0,T_{\ell}]}^{\ell} \right)$ for $2\le j\le n$, and we obtain the desired result.
\end{proof}

\subsection{Large deviation in finite time: proof of Proposition~\ref{prop::finitetime_LDP_radial}}
\label{subsec:::LDP_finite_time_multiradial}
In this section, we prove Proposition~\ref{prop::finitetime_LDP_radial}. The proof relies on the multi-time martingale for $n$-radial SLE with spiral presented in Lemma~\ref{lem::nradial_mart} and the large deviation principle for SLE curves in finite time presented in Lemma~\ref{lem::finite_radialSLE_LDP}.

\begin{lemma}[{\cite[Lemma~2.11]{HuangPeltolaWuMultiradialSLEResamplingBP}}]
\label{lem::nradial_mart}
Fix $\kappa\in (0,4], \mu\in\R$ and $n\ge 2$ and $\bs{\theta}=(\theta_1, \ldots, \theta_n)\in \LX_n^{\U}$. For each $j\in \{1,\ldots, n\}$, let $\gamma^{j}$ be radial $\SLE_{\kappa}$ in $(\U; \ee^{\ii\theta_j}; 0)$ and let $\mathsf{P}_n^{(\kappa)}$ be the probability measure on $\bs{\gamma}=(\gamma^{1}, \ldots, \gamma^{n})$ under which the curves are independent. 
We parameterize $\bs{\gamma}$ by $n$-time parameter $\bs{t}$ and let $\bs{\mslitdriv}_{\bs{t}}$ denote the driving function as in~\eqref{eqn::multitime_driving_radial}. Define the process 
\begin{equation}\label{eqn::nradial_multitime_mart}
	M_{\bs{t}}(\LZradial{n}^{(\kappa; \mu)})
	:= \one_{\LE_{\emptyset}(\bs{\gamma}_{[\bs{0},\bs{t}]})} \, \exp\bigg(\frac{\mathfrak{c}}{2}\blm_{\bs{t}} - \tilde{\mathfrak{b}}\sum_{j=1}^{n}t_j\bigg) 
	\; |\mathfrak{g}_{\bs{t}}'(0)|^{\tilde{\mathfrak{b}}+\frac{n^2-1-\mu^2}{2\kappa}} 
	\Big(\prod_{j=1}^{n} \covmap_{\bs{t},j}'(\xi_{t_j}^{j}) \Big)^{\mathfrak{b}}
	\LZradial{n}^{(\kappa; \mu)}(\bs{\mslitdriv}_{\bs{t}}),
\end{equation}
where $\LE_{\emptyset}(\bs{\gamma}_{[\bs{0},\bs{t}]}) = \{ \gamma_{[0,t_j]}^{j} \cap \gamma_{[0,t_i]}^{i}=\emptyset, \, \forall i\neq j\}$ is the event that different curves are disjoint, the parameters $\mathfrak{b}, \mathfrak{c}$ are defined in~\eqref{eqn::parameters_b_c}, and
\begin{equation*}\label{eqn::parameter_btilde}
\tilde{\mathfrak{b}}=\frac{(6-\kappa)(\kappa-2)}{8\kappa},
\end{equation*}
and $\blm_{\bs{t}}$ is defined in~\eqref{eqn::mt_def_radial_mart}, and $\LZradial{n}^{(\kappa; \mu)}$ is the partition function defined in~\eqref{eqn::nradial_pf_U}. Then $M_{\bs{t}}(\LZradial{n}^{(\kappa; \mu)})$ is an $n$-time parameter local martingale with respect to $\mathsf{P}_n^{(\kappa)}$. Moreover, the law of $\mathsf{P}_n^{(\kappa)}$ weighted by $M_{\bs{t}}(\LZradial{n}^{(\kappa; \mu)})$ is the same as $n$-radial $\SLE_{\kappa}^{\mu}$ in $(\U; \ee^{\ii\bs{\theta}};0)$ when restricted to the event $\LE_{\emptyset}(\bs{\gamma}_{[\bs{0},\bs{t}]})$. 
\end{lemma}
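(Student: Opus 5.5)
The plan is a direct It\^o computation in one time coordinate at a time, followed by a Girsanov identification. Recall that a multi-time local martingale means $t_j\mapsto M_{\bs{t}}$ is a local martingale for each $j$ with the other times frozen, so by symmetry it suffices to treat $j=1$. Fix $\bs{s}=(t_2,\ldots,t_n)$ and view all quantities as functions of $t_1$ on the event $\LE_{\emptyset}$. Under $\mathsf{P}_n^{(\kappa)}$, conditionally on $\gamma^2,\ldots,\gamma^n$, the curve $\gamma^1$ is radial $\SLE_\kappa$, so $\ud\xi^1_{t_1}=\sqrt{\kappa}\,\ud B_{t_1}$.

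First I will collect the $t_1$-derivatives of every factor, writing $h=\covmap_{\bs{t},1}$ evaluated at $\xi^1_{t_1}$.
\begin{itemize}
\item From the Loewner equation for $\mathfrak{g}_{\bs{t}}$ one has $\partial_{t_1}\log\mathfrak{g}'_{\bs{t}}(0)=h'^2$.
\item By \eqref{eqn::mt_def_radial_mart}, $\partial_{t_1}\blm_{\bs{t}}=-\tfrac13\LS h+\tfrac16(1-h'^2)$.
\item For $i\ge2$, the radial Loewner flow gives $\partial_{t_1}\log\covmap'_{\bs{t},i}(\xi^i_{t_i})$ and $\partial_{t_1}\mslitdriv^i_{\bs{t}}=\cot\big((\mslitdriv^i-\mslitdriv^1)/2\big)h'^2$. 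The former is a $\sin^{-2}$ term times $h'^2$, plus terms that will cancel.
\item It\^o's formula applied to $\log h'$ and to $\mslitdriv^1=h(\xi^1)$ gives
\[
\ud\mslitdriv^1=h'\,\ud\xi^1+\big(\tfrac{\kappa}{2}-3\big)h''\,\ud t_1,
\]
together with a drift for $\log h'$ involving $h'''/h'$, $(h''/h')^2$ and $h'^2-1$. These come from the standard identity for $\partial_t\covmap'_{\bs{t},1}$ under radial Loewner evolution.
\end{itemize}

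Next I will substitute into $\ud\log M$ and check the cancellations. The $h'''$ and $(h'')^2$ terms should cancel between $\mathfrak{b}\,\ud\log h'$, the It\^o correction $\tfrac{\kappa}{2}\mathfrak{b}^2(h''/h')^2$, and $\tfrac{\mathfrak{c}}{2}\cdot(-\tfrac13\LS h)$; this is where the values of $\mathfrak{b}$ and $\mathfrak{c}$ in \eqref{eqn::parameters_b_c} are used. The terms linear in $h''$ cancel against the cross variation $\kappa\,\mathfrak{b}\,(h''/h')\,h'\,\partial_1\log\LZ$. The pure constants should cancel among $-\tilde{\mathfrak{b}}$ from $\exp(-\tilde{\mathfrak{b}}\sum t_j)$, the $\tfrac{\mathfrak{c}}{12}$ from $\tfrac16(1-h'^2)$, and the $-1$ part of the $\log h'$ drift; this is where $\tilde{\mathfrak{b}}$ is used. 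What remains is $h'^2$ times
\[
\frac{\kappa}{2}\frac{\partial_1^2\LZ}{\LZ}+\sum_{i\ne1}\cot\Big(\frac{\mslitdriv^i-\mslitdriv^1}{2}\Big)\frac{\partial_i\LZ}{\LZ}-\sum_{i\ne 1}\frac{(6-\kappa)/\kappa}{4\sin^2(\cdot)}+\Big(\tilde{\mathfrak{b}}+\frac{n^2-1-\mu^2}{2\kappa}\Big)-\frac{\mathfrak{c}}{12}+\ldots,
\]
where the $\sin^{-2}$ terms come from $\mathfrak{b}\,\ud\log\covmap'_{\bs{t},i}$. This bracket should vanish exactly by the radial BPZ equation \eqref{eqn::BPZ_U} for $\LZradial{n}^{(\kappa;\mu)}$. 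I expect this constant and coefficient bookkeeping to be the main obstacle, since it is the only place where the specific exponents must match, and I would organize it by grouping the terms by powers of $h'$.

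Finally, for the identification of the weighted law, apply Girsanov in $t_1$ with the other times set to $0$. The new drift of $\xi^1$ is $\kappa\,\partial_1\log\LZradial{n}^{(\kappa;\mu)}$ evaluated at the covering-map images, which is \eqref{eqn::multiradial_marginal_SDE}, so the marginal of $\gamma^1$ is radial $\SLE_\kappa^\mu(2,\ldots,2)$. Then, conditionally on $\gamma^1_{[0,t_1]}$, the martingale in the remaining times factorizes, after mapping out by $\mathfrak{g}^1_{t_1}$ and using conformal covariance of $\blm$ and of the partition function, into the same type of multi-time martingale in the slit domain. Iterating, and letting $t_1\to\infty$ where the $\mu$- and $\mathfrak{g}'(0)$-dependence degenerates into the half-watermelon partition function of Lemma~\ref{lem::halfwatermelon_mart}, recovers the recursive characterization in Definition~\ref{def::multiradialSLEspiral}. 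Consistency across different orders of growth is guaranteed by the multi-time martingale property just established (the commutation relation).
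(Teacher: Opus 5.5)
Your It\^o computation for the local martingale property is correct. The paper gives no proof of this lemma and imports it from \cite[Lemma~2.11]{HuangPeltolaWuMultiradialSLEResamplingBP}; your computation is the standard argument for that lemma. The bookkeeping closes as you predict. Take $h=\covmap_{\bs{t},1}$ evaluated at $\xi^1_{t_1}$, and look at the drift of $\log M$ plus half its quadratic variation:
\begin{itemize}
\item the coefficient of $h'''/h'$ is $-\frac{\mathfrak{c}}{6}+\mathfrak{b}(\frac{\kappa}{2}-\frac{4}{3})=0$;
\item the coefficient of $(h''/h')^2$ is $\frac{\mathfrak{c}}{4}+\frac{\mathfrak{b}(1-\kappa)}{2}+\frac{\kappa\mathfrak{b}^2}{2}=0$;
\item the pure constant is $\frac{\mathfrak{c}}{12}-\tilde{\mathfrak{b}}+\frac{\mathfrak{b}}{6}=0$;
\item the same identity reduces the $h'^2$-coefficient to the BPZ operator plus $\frac{n^2-1-\mu^2}{2\kappa}$, which cancels the right-hand side of \eqref{eqn::BPZ_U}.
\end{itemize}
One minor correction: $\partial_{t_1}\log\covmap'_{\bs{t},i}(\xi^i_{t_i})=-h'^2/\bigl(2\sin^2((\mslitdriv^i_{\bs{t}}-\mslitdriv^1_{\bs{t}})/2)\bigr)$ exactly, so there are no extra terms to cancel. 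Your Girsanov step identifying the marginal of $\gamma^1$ as radial $\SLE_\kappa^\mu(2,\ldots,2)$ is also fine.

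The gap is in the ``Moreover'' part. In this paper $n$-radial $\SLE_\kappa^\mu$ is \emph{defined} by the recursion in Definition~\ref{def::multiradialSLEspiral}. So you must show that, under the weighted measure, the conditional law of $(\gamma^2,\ldots,\gamma^n)$ given the \emph{whole} curve $\gamma^1$ is half-$(n-1)$-watermelon $\SLE_\kappa$ in $(\U\setminus\gamma^1;x_2,\ldots,x_n,0)$.

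Girsanov only gives the conditional law given $\gamma^1_{[0,t_1]}$ for finite $t_1$. With $\bs{s}=(t_2,\ldots,t_n)$ fixed, this is independent radial $\SLE_\kappa$'s in $\U$ weighted by $D_{t_1}:=M_{(t_1,\bs{s})}/M_{(t_1,\bs{0})}$. Your sentence about letting $t_1\to\infty$ ``where the dependence degenerates into the half-watermelon partition function'' is exactly the step that needs an argument. As $t_1\to\infty$, $\mathfrak{g}'_{\bs{t}}(0)\ge\ee^{t_1}\to\infty$. The points $\mslitdriv^2_{\bs{t}},\ldots,\mslitdriv^n_{\bs{t}}$ and the images of $\gamma^2,\ldots,\gamma^n$ are squeezed into an arc of vanishing length (cf.\ Lemma~\ref{lem::radial_expdecay}). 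Hence each factor of $D_{t_1}$ degenerates separately. You would need two things:
\begin{itemize}
\item[(i)] a fusion asymptotic showing that $D_{t_1}$ converges to the Radon--Nikodym derivative of the half-watermelon law in $\U\setminus\gamma^1$ with respect to independent radial $\SLE_\kappa$'s in $\U$; this also requires comparing chordal $\SLE_\kappa$ aimed at the prime end $0$ of $\U\setminus\gamma^1$ with radial $\SLE_\kappa$ in $\U$;
\item[(ii)] that $t_1\mapsto D_{t_1}$ is a uniformly integrable martingale under $\gamma^1\sim$ radial $\SLE_\kappa^\mu(2,\ldots,2)$, so that the limit identifies the conditional law. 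A priori it is only a nonnegative local martingale.
\end{itemize}
Neither follows from the multi-time local martingale property. This is the content of \cite[Theorem~1.3]{HuangPeltolaWuMultiradialSLEResamplingBP}, which the paper invokes for the equivalence of the two descriptions.

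Two smaller points. First, ``consistency is guaranteed by the multi-time martingale property'' needs genuine martingales after a localization under which $M$ is bounded, not merely local ones. Second, rewriting the conditional weight as ``the same type of martingale in the slit domain'' uses the boundary perturbation formula for radial $\SLE_\kappa$, not conformal covariance alone. Coordinatewise Girsanov would avoid this second issue.
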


If we set $t_1=t\ge 0$ and $t_2=\cdots=t_n=0$, the martingale~\eqref{eqn::nradial_multitime_mart} becomes
\begin{equation*}\label{eqn::multiradialSLE_mart_marginal}
M_t(\LZradial{n}^{(\kappa; \mu)})=\mathfrak{g}_t'(0)^{\frac{n^2-1-\mu^2}{2\kappa}}\times\prod_{j=2}^n \covmap_t(\theta_j)^{\mathfrak{b}}\times\LZradial{n}^{(\kappa; \mu)}(\xi_t, \covmap_t(\theta_2), \ldots, \covmap_t(\theta_n)),
\end{equation*}
which coincides with~\eqref{eqn::SLEkapparho_martingale_radial} with $\rho_2=\cdots=\rho_n=2$.

\begin{proof}[Proof of Proposition~\ref{prop::finitetime_LDP_radial}]
It suffices to show the conclusion for $(\Omega;\bs{x};z)=(\U;\ee^{\ii \bs{\theta}};0)$ and we eliminate them from the notation. 
We simply write 
\[ \chamber_{\bs{t}}=\chamber_{\bs{t}}(\U;\ee^{\ii\bs{\theta}};0), \quad \rateradial{n}^{(\mu)}(\U;\ee^{\ii\bs{\theta}};0;\bs{\gamma}_{[\bs{0},\bs{t}]})=\rateradial{n}^{(\mu)}(\bs{\gamma}_{[\bs{0},\bs{t}]}). \]
For each $j\in \{1,\ldots, n\}$, let $\gamma^{j}$ be radial $\SLE_{\kappa}$ in $(\U; \ee^{\ii\theta_j}; 0)$ and let $\mathsf{P}_{n}^{(\kappa)}$ be the probability measure on $\bs{\gamma}=(\gamma^{1}, \ldots, \gamma^{n})$ under which the curves are independent. Let $\mathsf{E}_{n}^{(\kappa)}$ be the expectation under the probability measure $\mathsf{P}_{n}^{(\kappa)}$.
Note that $\chamber_{\bs{T}}$ is an open topological subspace of $\prod_{j=1}^{n} \chamber_{T_j}(\U;e^{\ii\theta_j};0)$. 
We have the following two observations. 
\begin{itemize}
	\item A direct consequence of Lemma~\ref{lem::finite_radialSLE_LDP} implies that the law of $\bs{\gamma}_{[\bs{0},\bs{T}]}$ under $\mathsf{P}_{n}^{(\kappa)}$ satisfies large deviation principle in the space \[\left( \prod_{j=1}^{n} \chamber_{T_j}(\U;\ee^{\ii\theta_j};0),\dist_{\chamber} \right)\] as $\kappa\to 0+$ with good rate function 
	\begin{equation}\label{eqn::LDP_finitetime_aux1}
 		\LJ_{\oplus n}(\bs{\gamma}_{[\bs{0},\bs{T}]}):=\frac{1}{2}\sum_{j=1}^n \int_0^{T_j}\left(\dot{\xi}^j_{t_j}\right)^2\ud t_j. 
	\end{equation}
 	\item For $\bs{\gamma}_{[\bs{0},\bs{T}]}\in {\chamber}_{\bs{T}}$, we define
\begin{align*} 
	\Phi(\bs{\gamma}_{[\bs{0},\bs{T}]}; \kappa)=&\kappa\log \frac{	M_{\bs{T}}(\LZradial{n}^{(\kappa;\mu)})}{	M_{\bs{0}}(\LZradial{n}^{(\kappa;\mu)})}\\
 	=&\underbrace{\frac{1}{2}(n^2-4-\mu^2) \log \mathfrak{g}_{\bs{T}}'(0) - \LVradial{n}^{(\mu)} \left(\bs{\mslitdriv}_{\bs{T}}\right) + \LVradial{n}^{(\mu)} (\bs{\theta}) + \frac{3}{2} \sum_{j=1}^n T_j}_{\Phi_0^{\mu}(\bs{\gamma}_{[\bs{0}, \bs{T}]}):=}\\
 	&+ \underbrace{ \kappa (\kappa-8) }_{f_1(\kappa):=} \underbrace{ \left( \sum_{j=1}^n T_j - \log \mathfrak{g}_{\bs{T}}'(0) \right) }_{\Phi_1(\bs{\gamma}_{[\bs{0}, \bs{T}]}):=}  \underbrace{ - \blm_{\bs{T}} }_{\Phi_2(\bs{\gamma}_{[\bs{0}, \bs{T}]}):=} \underbrace{ \frac{(6-\kappa)(8-3\kappa)}{4} }_{f_2(\kappa):=} \\
 	&+ \underbrace{ (3-\kappa/2) }_{f_3(\kappa):=} \underbrace{ \sum_{j=1}^n \log \covmap_{\bs{T},j}'\left(\xi_{T_j}^{j}\right) }_{\Phi_3(\bs{\gamma}_{[\bs{0}, \bs{T}]}):=} .
\end{align*}
By Lemma~\ref{lem::nradial_mart}, for any subset $A\subset \chamber_{\bs{T}}$, we have 
\[
\PPradial{n}^{(\kappa;\mu)}[A] = \mathsf{E}_{n}^{(\kappa)}\left[ \exp\left(\frac{1}{\kappa}\Phi(\bs{\gamma}_{[\bs{0},\bs{T}]}; \kappa)\right) \mathbb{1}\{\bs{\gamma}_{[\bs{0},\bs{T}]}\in A\}\right]. 
\]
\end{itemize}
To apply the generalization of Varadhan's Lemma (see Lemma~\ref{lem::Varadhan} and Remark~\ref{rmk::Varadhan}), we have the following observations.
\begin{itemize}
 	\item Assume $\left\{ \bs{\gamma}_{[\bs{0}, \bs{T}]}^{(n)} \right\}$ is a sequence in $(\chamber_{\bs{T}},\dist_{\chamber})$ converging to some $\bs{\gamma}_{[\bs{0}, \bs{T}]}^{(\infty)}\in \overline{\chamber}_{\bs{T}}$ and $\bs{\mslitdriv}_{[\bs{0},\bs{T}]}^{(n)}$ is the driving function of $\bs{\gamma}_{[\bs{0}, \bs{T}]}^{(n)}$ for $n\in \Z_+ \cup \{\infty\}$. Then by Carathéodory convergence theorem (see e.g.~\cite[Theorem~1.8]{Pommerenke}), the process $\bs{\gamma}_{[\bs{0}, \bs{T}]}^{(n)}$ converges to $\bs{\gamma}_{[\bs{0}, \bs{T}]}^{(\infty)}$ in the Carathéodory sense. By Loewner-Kufarev theorem (see e.g.~\cite[Theorem~8.5]{Berestycki2011LecturesOS}), the process $\bs{\mslitdriv}_{[\bs{0},\bs{T}]}^{(n)}$ converges to $\bs{\mslitdriv}_{[\bs{0},\bs{T}]}^{(\infty)}$ in the sense of uniform convergence. Hence $\Phi_0^{\mu}(\bs{\gamma}_{[\bs{0}, \bs{T}]})$ and $\Phi_j(\bs{\gamma}_{[\bs{0}, \bs{T}]})$ are continuous for $1\le j\le 3$ and the condition~(a) in Lemma~\ref{lem::Varadhan} holds.
 	\item By Koebe's quarter theorem we have
 	\begin{equation} \label{eqn::Varadhan_application_radial_aux2}
 		\max_{1\le j\le n} T_j \le \mathfrak{g}_{\bs{T}}'\left(0\right) \le \log 4+\max_{1\le j\le n} T_j.
 	\end{equation}
 	Thus $(f_1(\kappa),\Phi_1(\bs{\gamma}_{[\bs{0}, \bs{T}]}))$ satisfies condition~(b) in Lemma~\ref{lem::Varadhan}. 
	\item We have $\covmap_{\bs{T},j}'\left(\xi_{T_j}^{j}\right)\le 1$ and $\blm_{\bs{T}}\ge 0$. Thus $(f_2(\kappa),\Phi_2(\bs{\gamma}_{[\bs{0}, \bs{T}]}))$ and $(f_3(\kappa),\Phi_3(\bs{\gamma}_{[\bs{0}, \bs{T}]}))$ satisfy condition~(c) in Lemma~\ref{lem::Varadhan}.

 	\item We verify the condition~(d) in Lemma~\ref{lem::Varadhan} with $\beta=2$. We emphasize that we need to keep track of $\mu$ in our calculation. We have
 	\begin{align} \label{eqn::Varadhan_application_radial_aux3}
 		2 \Phi_0^{\mu}(\bs{\gamma}_{[\bs{0}, \bs{T}]}) = & (n^2-4-\mu^2)\log \mathfrak{g}_{\bs{T}}'(0) - 2\LVradial{n}^{(\mu)} \left(\bs{\mslitdriv}_{\bs{T}}\right) + 2 \LVradial{n}^{(\mu)}(\bs{\theta}) + 3 \sum_{j=1}^n T_j \notag\\
 		= & \Phi_0^{2\mu}(\bs{\gamma}_{[\bs{0}, \bs{T}]}) 
 		- \LVradial{n}^{(0)} \left(\bs{\mslitdriv}_{\bs{T}}\right) + \LVradial{n}^{(0)} (\bs{\theta}) + \frac{1}{2}(n^2-4+2\mu^2) \log \mathfrak{g}_{\bs{T}}'(0)+ \frac{3}{2} \sum_{j=1}^n T_j \notag\\
 		\le & \Phi_0^{2\mu}(\bs{\gamma}_{[\bs{0}, \bs{T}]}) + \LVradial{n}^{(0)}(\bs{\theta})+ \frac{1}{2}(n^2+3n-4+2\mu^2) \left(\log 4 + \max_{1\le j\le n} T_j\right),
 	\end{align}
 	where the inequality is due to~\eqref{eqn::Varadhan_application_radial_aux2}. Now, we are ready to check the condition~(d) in Lemma~\ref{lem::Varadhan} with $\beta=2$:
 	\begin{align}
 		& \limsup_{\kappa\to 0} \kappa \log \mathsf{E}_{n}^{(\kappa)} \left[ \exp \left( \frac{1}{\kappa} \left( 2 \Phi_0^{\mu}(\gamma)+\sum_{j=1}^{3} f_j(\kappa) \Phi_j(\gamma) \right) \right) \one\{\gamma\in X'\} \right] \notag \\
 		\le & \LVradial{n}^{(\mu)}(\bs{\theta})+ \frac{1}{2}(n^2+3n-4+2\mu^2) \left(\log 4 + \max_{1\le j\le n} T_j\right) \nonumber \\
		&+  \limsup_{\kappa\to 0} \kappa \log \mathsf{E}_{n}^{(\kappa)} \left[ M_{\bs{T}}(\LZradial{n}^{(\kappa;2\mu)})/M_{\bs{0}}(\LZradial{n}^{(\kappa;2\mu)}) \right] \tag{due to~\eqref{eqn::Varadhan_application_radial_aux3}} \\
 		= & \LVradial{n}^{(\mu)}(\bs{\theta})+ \frac{1}{2}(n^2+3n-4+2\mu^2) \left(\log 4 + \max_{1\le j\le n} T_j\right) <\infty, \notag
 	\end{align}
 	where we use the fact that $\mathsf{E}_{n}^{(\kappa)} \left[ M_{\bs{T}}(\LZradial{n}^{(\kappa;2\mu)})/M_{\bs{0}}(\LZradial{n}^{(\kappa;2\mu)}) \right]=\E_{n\mathrm{\textnormal{-}rad}}^{(\kappa;2\mu)}[1]=1$.
 	\item The function $\Phi(\bs{\gamma}_{[\bs{0},\bs{T}]};\kappa)$ is continuous in $\kappa$ and we have 
 	\begin{align*} \label{eqn::Def_Phi0_radial}
 	\begin{split}
 		\Phi(\bs{\gamma}_{[\bs{0},\bs{T}]}; 0)
 		=&\frac{n^2-4-\mu^2}{2} \log \mathfrak{g}_{\bs{T}}'(0) - \LVradial{n}^{(\mu)} \left(\bs{\mslitdriv}_{\bs{T}}\right) + \LVradial{n}^{(\mu)} (\bs{\theta}) \\
		&+ \frac{3}{2} \sum_{j=1}^n T_j  - 12\blm_{\bs{T}} + 3 \sum_{j=1}^n \log \covmap_{\bs{T},j}'\left(\xi_{T_j}^{j}\right).	
 	\end{split}
 	\end{align*}
\end{itemize}
Combing the observations above and applying Lemma~\ref{lem::Varadhan}, the law of $\bs{\gamma}_{[\bs{0},\bs{T}]}$ under $\PPradial{n}^{(\kappa;\mu)}$ satisfies large deviation principle in the space $(\chamber_{\bs{T}},\dist_{\chamber})$ as $\kappa\to 0+$ with rate function  
\[\LJ_{\oplus n}(\bs{\gamma}_{[\bs{0},\bs{T}]})-\Phi(\bs{\gamma}_{[\bs{0},\bs{T}]},0)\stackrel{\eqref{eqn::multi_time_energy_radial_def}}{=} \rateradial{n}^{(\mu)}(\bs{\gamma}_{[\bs{0},\bs{T}]}) \]
as desired.
\medbreak
It remains to show that $\rateradial{n}^{(\mu)}$ is a good rate function in the space $(\chamber_{\bs{T}}, \dist_{\chamber})$, i.e. for any $C>0$, the level set $\{\bs{\gamma}_{[\bs{0},\bs{T}]}: \rateradial{n}^{(\mu)}(\bs{\gamma}_{[\bs{0},\bs{T}]})\le C\}$ is compact. 
To this end, we compare the function $\rateradial{n}^{(\mu)}$ with the good rate function $\LJ_{\oplus n}$ defined in~\eqref{eqn::LDP_finitetime_aux1}. Combining~(\ref{eqn::finite_energy_simple_aux1},\ref{eqn::finite_energy_simple_aux2},\ref{eqn::finite_energy_simple_aux3}) and letting $t_2=\cdots=t_n=0$, we have
\begin{align*}
	\rateradial{n}^{(\mu)}(\bs{\gamma}_{[\bs{0}, \bs{T}]}) \ge & \frac{1}{2} \int_0^{T_1}\left( \dot{\xi}_{t_1}^{1} \right)^2 \ud t_1 - \mu \left( \xi_{T_1}^{1} - \theta_1 \right) + 2 \sum_{j=2}^n \log \left( \frac{\sin \left(\frac{\theta_j-\theta_1}{2}\right)}{\sin \left(\frac{\covmap_{T_1}^{1}(\theta_j) - \xi_{T_1}^{1}}{2}\right)} \right)  \\
	\ge & \frac{1}{2} \int_0^{T_1}\left( \dot{\xi}_{t_1}^{1} \right)^2 \ud t_1 - \mu \left( \xi_{T_1}^{1} - \theta_1 \right) + 2 \sum_{j=2}^n \log \left( \sin \left(\frac{\theta_j-\theta_1}{2}\right) \right) \\
	= & \frac{1}{4} \int_0^{T_1}\left( \dot{\xi}_{t_1}^{1} \right)^2 \ud t_1 + \frac{1}{4} \int_0^{T_1} \left( \left( \dot{\xi}_{t_1}^{1} \right)^2 - 4\mu \dot{\xi}_{t_1}^{1} \right) \ud t_1 + 2 \sum_{j=2}^n \log \left( \sin \left(\frac{\theta_j-\theta_1}{2}\right) \right) \notag \\
	\ge & \frac{1}{4} \int_0^{T_1}\left( \dot{\xi}_{t_1}^{1} \right)^2 \ud t_1 -T_1 \mu^2 + 2 \sum_{j=2}^n \log \left( \sin \left(\frac{\theta_j-\theta_1}{2}\right) \right),
 \end{align*}
which implies
\begin{equation} \label{eqn::LDP_finitetime_aux5}
 	\LJ_{\oplus n}(\bs{\gamma}_{[\bs{0}, \bs{T}]}) = \frac{1}{2} \sum_{j=1}^n \int_0^{T_j} \left( \dot{\xi}_{t_j}^j \right)^2 \ud t_j \le 2n \rateradial{n}^{(\mu)}(\bs{\gamma}_{[\bs{0}, \bs{T}]}) + 2 \mu^2 \sum_{j=1}^{n} T_j - 8 \sum_{1\le i<j\le n} \log \left(\sin \left(\frac{\theta_j-\theta_i}{2}\right)\right),
\end{equation}
by symmetry. Assume $\left\{ \bs{\gamma}_{[\bs{0},\bs{T}]}^{(k)} \right\}_{k\ge 1}\subset \chamber_{\bs{T}}$ is a sequence such that $\rateradial{n}^{(\mu)}(\bs{\gamma}_{[\bs{0},\bs{T}]}^{(k)})\le C$ for $k\ge 1$. Since $\LJ_{\oplus n}$ is a good rate function in the space $\left(\prod_{j=1}^{n} \chamber_{T_j}(\U;\ee^{\ii\theta_j};0),\dist_{\chamber}\right)$, combining with~\eqref{eqn::LDP_finitetime_aux5}, we can pass to a subsequence, still denoted by $\left\{ \bs{\gamma}_{[\bs{0},\bs{T}]}^{(k)} \right\}_{k\ge 1}$, which converges to some element $\bs{\gamma}_{[\bs{0},\bs{T}]}^{(\infty)}\in \overline{\chamber}_{\bs{T}}$. Recalling~\eqref{eqn::multi_time_energy_radial_def}, we have
\begin{align}\label{eqn::LDP_finitetime_aux3bis}
 	\rateradial{n}^{(\mu)}(\bs{\gamma}_{[\bs{0}, \bs{T}]})=\LJ_{\oplus n}(\bs{\gamma}_{[\bs{0}, \bs{T}]})
 	&+12\blm_{\bs{T}}-\frac{3}{2}\sum_{j=1}^n T_j-\frac{1}{2}(p^2-4-\mu^2)\log \mathfrak{g}'_{\bs{T}}(0)-3\sum_{j=1}^n \log\covmap'_{\bs{T}, j}\left(\xi^j_{T_j}\right)\notag\\
 	&+\LVradial{n}^{(\mu)}\left(\bs{\mslitdriv}_{\bs{T}}\right)-\LVradial{n}^{(\mu)}(\bs{\theta}). 
\end{align}
On the one hand, the good rate function $\LJ_{\oplus n}$ is lower semicontinuous with respect to $\bs{\gamma}_{[\bs{0},\bs{T}]}$. On the other hand, the other terms in the RHS of~\eqref{eqn::LDP_finitetime_aux3bis}, except the first term, are continuous with respect to $\bs{\gamma}$ due to Carathéodory convergence theorem (see e.g.~\cite[Theorem~1.8]{Pommerenke}) and Loewner–Kufarev theorem (see e.g.~\cite[Theorem~8.5]{Berestycki2011LecturesOS}). By~\eqref{eqn::LDP_finitetime_aux3bis}, we have
\begin{equation} \label{eqn::LDP_finitetime_aux9}
 	\rateradial{n}^{(\mu)}(\bs{\gamma}_{[\bs{0},\bs{T}]}^{(\infty)}) \le \liminf_{k\to \infty} \rateradial{n}^{(\mu)}(\bs{\gamma}_{[\bs{0},\bs{T}]}^{(k)}) \le C.
\end{equation}
Eq.~\eqref{eqn::LDP_finitetime_aux9} implies that $\bs{\gamma}_{[\bs{0},\bs{T}]}^{(\infty)}$ has finite $n$-time-energy and Lemma~\ref{lem::finite_energy_simple} implies that $\bs{\gamma}_{[\bs{0},\bs{T}]}^{(\infty)}\in \chamber_{\bs{T}}$. Thus the level set of $\rateradial{n}^{(\mu)}$ is compact and $\rateradial{n}^{(\mu)}$ is good.	
\end{proof}

\subsection{Large deviation in infinite time: proof of Theorem~\ref{thm::radialSLE_LDP}}
\label{subsec:::LDP_infinite_time_multiradial}
We will prove Theorem~\ref{thm::radialSLE_LDP} using Proposition~\ref{prop::finitetime_LDP_radial}, return estimate for radial $\SLE$ in Proposition~\ref{prop::radialSLE_return} and the contraction principle in Lemma~\ref{lem::generalized_contraction}.

Recall the definition of projective system introduced in Section~\ref{subsec::LDP_infinite_time_halfwatermelon}. We introduce the projective system that we will use. To simplify our notation, we write $\chamber_{\bs{t}}=\chamber_{\bs{t}}(\Omega;\bs{x};z)$. The collection $(\chamber_{\bs{t}})_{\bs{t}\in [\bs{0},\bs{\infty})}$ of $n$-time parameterized simple curves, equipped with the metric $\dist_{\chamber}$ in~\eqref{eqn::para_curve_metric}, forms a projective system with restriction maps 
\begin{align*} \label{eqn::restriction_maps}
	p_{\bs{s},\bs{t}}:&\chamber_{\bs{t}} \to \chamber_{\bs{s}} , \qquad \bs{\gamma}_{[\bs{0},\bs{t}]} \mapsto \bs{\gamma}_{[\bs{0},\bs{s}]}, \qquad \text{ for } \bs{s} \le \bs{t},
\end{align*}
which are continuous and satisfy $p_{\bs{s},\bs{r}}=p_{\bs{s},\bs{t}} \circ p_{\bs{t},\bs{r}}$ whenever $\bs{s}\le \bs{t}\le\bs{r}$. Its projective limit,
\[
\overleftarrow{\chamber}:=\varprojlim \chamber_{\bs{t}}=\left\{ \left( \bs{\gamma}_{[\bs{0},\bs{t}]} \right)_{\bs{t}\in [\bs{0},\bs{\infty})}: \bs{\gamma}_{[\bs{0},\bs{s}]}=p_{\bs{s},\bs{t}} \left( \bs{\gamma}_{[\bs{0},\bs{t}]} \right) \text{ for all } \bs{s}\le\bs{t} \right\} \subset \prod_{\bs{t}\in [\bs{0},\bs{\infty})} \chamber_{\bs{t}},
\]
is equipped with the topology induced by the product topological space $\prod_{\bs{t}\in [\bs{0},\bs{\infty})} \chamber_{\bs{t}}$. The projections are given by 
\begin{equation} \label{eqn::projection_radial}
	p_{\bs{s}}:\overleftarrow{\chamber} \to \chamber_{\bs{s}}, \qquad \left( \bs{\gamma}_{[\bs{0},\bs{t}]} \right)_{\bs{t}\in [\bs{0},\bs{\infty})} \mapsto \bs{\gamma}_{[\bs{0},\bs{s}]}.
\end{equation}

\begin{lemma} \label{lem::LDP_projection_radial}
Fix $\mu\in\R, n\ge 2$ and $n$-polygon $(\Omega;\bs{x})$ with $z\in \Omega$. The family $\{\PPradial{n}^{(\kappa; \mu)}(\Omega;\bs{x};z)\}_{\kappa\in (0,4]}$ of laws of $n$-radial $\SLE_{\kappa}^{\mu}$ satisfies large deviation principle in $\overleftarrow{\chamber}$ with the topology induced by the product topological space $\prod_{\bs{t}\in [\bs{0},\bs{\infty})} \chamber_{\bs{t}}$ as $\kappa\to 0+$ with good rate function $\rateradial{n}^{(\mu)}(\Omega;\bs{x};z;\cdot)$ in~\eqref{eqn::multi_time_energy_radial}. 
\end{lemma}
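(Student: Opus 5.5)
The plan is to follow the chordal analogue, Lemma~\ref{lem::LDP_projection}, and combine Proposition~\ref{prop::finitetime_LDP_radial} with the Dawson--G\"artner theorem (Lemma~\ref{lem::Dawson-Gartner}).

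By conformal invariance we first reduce to $(\Omega;\bs{x};z)=(\U;\ee^{\ii\bs{\theta}};0)$. The index set $[\bs{0},\bs{\infty})$ with the coordinatewise partial order is right-filtering: given $\bs{s},\bs{t}$, the coordinatewise maximum dominates both. The maps $p_{\bs{s},\bs{t}}$ are continuous for $\dist_{\chamber}$, because restricting time can only decrease the supremum in~\eqref{eqn::para_curve_metric}. They also satisfy $p_{\bs{s},\bs{r}}=p_{\bs{s},\bs{t}}\circ p_{\bs{t},\bs{r}}$, so $(\chamber_{\bs{t}},p_{\bs{s},\bs{t}})$ is a projective system.

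The family $\PPradial{n}^{(\kappa;\mu)}$ is a law on $\chamberradial{n}(\U;\ee^{\ii\bs{\theta}};0)$. Its natural image in $\overleftarrow{\chamber}$ under $\bs{\gamma}\mapsto(\bs{\gamma}_{[\bs{0},\bs{t}]})_{\bs{t}}$ gives a probability measure on the projective limit, and its pushforward under each $p_{\bs{t}}$ is exactly the law of $\bs{\gamma}_{[\bs{0},\bs{t}]}$. This uses the fact that multi-radial SLE curves are simple, mutually disjoint and defined for all times, so each $\bs{\gamma}_{[\bs{0},\bs{t}]}$ lies in $\chamber_{\bs{t}}$. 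Measurability of this map holds because each coordinate is continuous.

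Proposition~\ref{prop::finitetime_LDP_radial} gives, for every $\bs{t}$, an LDP for $\PPradial{n}^{(\kappa;\mu)}\circ p_{\bs{t}}^{-1}$ with good rate function $\rateradial{n}^{(\mu)}(\bs{\gamma}_{[\bs{0},\bs{t}]})$. Lemma~\ref{lem::Dawson-Gartner} then yields an LDP on $\overleftarrow{\chamber}$ with good rate function $\sup_{\bs{t}}\rateradial{n}^{(\mu)}(\bs{\gamma}_{[\bs{0},\bs{t}]})$.

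It remains to identify this supremum with~\eqref{eqn::multi_time_energy_radial}. By~\eqref{eqn::multi_time_energy_radial_def2} the truncated energy is nondecreasing in each coordinate of $\bs{t}$. Hence the supremum over the directed set equals the limit along $\bs{t}\to\bs{\infty}$, which is the definition~\eqref{eqn::multi_time_energy_radial}.

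The main subtlety I expect is that the truncated energies in Proposition~\ref{prop::finitetime_LDP_radial} must be consistent with the projections. Concretely, $\rateradial{n}^{(\mu)}(\bs{\gamma}_{[\bs{0},\bs{t}]})$ must depend only on $p_{\bs{t}}$ of the element, and the energy of an element of $\overleftarrow{\chamber}$ must be well defined even when it does not come from a genuine full curve. Both points follow because the truncated energy is defined intrinsically from $\bs{\gamma}_{[\bs{0},\bs{t}]}$ via the exact differential~\eqref{eqn::multi_time_energy_radial_def2}.
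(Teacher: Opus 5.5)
Your proposal is correct and follows the same route as the paper, which proves this lemma by combining Proposition~\ref{prop::finitetime_LDP_radial} with the Dawson--G\"artner theorem (Lemma~\ref{lem::Dawson-Gartner}). The details you add are exactly what the paper leaves implicit: the projective-system structure, the identification of the pushforward measures, and the equality of the supremum with the limit~\eqref{eqn::multi_time_energy_radial}, which follows from the monotonicity given by~\eqref{eqn::multi_time_energy_radial_def2}.
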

\begin{proof}
This is a combination of Proposition~\ref{prop::finitetime_LDP_radial} and Lemma~\ref{lem::Dawson-Gartner}.
\end{proof}

 Recall we equip $\chamberradial{n}(\U;\ee^{\ii\bs{\theta}}; 0)$ with the metric $\dist_{\chamber}$ defined in~\eqref{eqn::dist_chamber}. The projections~\eqref{eqn::projection_radial} induce a continuous bijection
\begin{equation*}
	\iota:\chamberradial{n}(\U;\ee^{\ii\bs{\theta}}; 0) \to \overleftarrow{\chamber}, \qquad \bs{\gamma} \mapsto \left( \bs{\gamma}_{[\bs{0},\bs{t}]} \right)_{\bs{t}\in [\bs{0},\bs{\infty})}.
\end{equation*}
However, the map $\iota$ is not a homeomorphism from $\chamberradial{n}(\U;\ee^{\ii\bs{\theta}}; 0)$ to $\overleftarrow{\chamber}$ (see~\cite[Remark~4.2]{AbuzaidPeltolaLargeDeviationCapacityParameterization}). To apply generalized contraction principle, we define
\begin{equation*}\label{eqn::exittime_radial}
	\tau_{p}^j:=\inf \left\{ t\ge 0: \left| \gamma_t^j \right|=\ee^{-p} \right\}, \qquad \overline{\U}_{p}=\ee^{-p} \overline{\U}, 
\end{equation*}
and we consider a smaller space:
\begin{equation*}
	G_{\overrightarrow{N}}:=\bigcap_{p=1}^{\infty} \bigcap_{j=1}^n  \left\{ \bs{\gamma}\in \chamberradial{n}(\U;\ee^{\ii\bs{\theta}}; 0): \gamma^j_{[\tau_{N_p}^j,\infty)} \subset \overline{\U}_p \right\},
\end{equation*}
for any sequence of integers $\overrightarrow{N}=(N_p)_{p\in \N}$ with $N_p>p$.

\begin{lemma}\label{lem::homeomorphisim_GN}
For any sequence of integers $\overrightarrow{N}=(N_p)_{p\in \N}$ with $N_p>p$, the continuous bijection
\begin{equation*}
	\iota|_{G_{\overrightarrow{N}}}: \chamberradial{n}(\U;\ee^{\ii\bs{\theta}}; 0) \to \overleftarrow{\chamber}, \qquad \bs{\gamma} \mapsto \left( \bs{\gamma}_{[\bs{0},\bs{t}]} \right)_{\bs{t}\in [\bs{0},\bs{\infty})},
\end{equation*}
is a homeomorphsim and the set $\iota\left(G_{\overrightarrow{N}}\right)$ is closed.
\end{lemma}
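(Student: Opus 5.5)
I would mirror the proof of Lemma~\ref{lem::homeomorphisim_FN} in the chordal setting. There are two claims: closedness of $G_{\overrightarrow{N}}$ (hence of $\iota(G_{\overrightarrow{N}})$), and continuity of the inverse of $\iota$ on $\iota(G_{\overrightarrow{N}})$.

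\emph{Closedness.} Fix $p\in\N$ and $1\le j\le n$. I would show that the complement
\[
\left\{ \bs{\gamma}: \gamma^j_{[\tau^j_{N_p},\infty)}\not\subset \overline{\U}_p \right\}
\]
is open in $\chamberradial{n}(\U;\ee^{\ii\bs{\theta}};0)$. If $\gamma^j_{[\tau^j_{N_p},\infty)}$ contains a point $w$ with $|w|>\ee^{-p}$, then $\gamma^j$ first reaches $\partial\U_{N_p}$ strictly before it visits $w$. Both events are stable under small perturbations in $\dist_{\chamber}$ (this uses the common parameterization in~\eqref{eqn::dist_chamber}):
\begin{itemize}
\item a nearby curve is uniformly close at the same capacity times;
\item so it also hits $\partial\U_{N_p}$ and afterwards exits $\overline{\U}_p$.
\end{itemize}
One must handle the slight mismatch between hitting times. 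I would replace ``first hits $|w|=\ee^{-N_p}$'' with ``comes within a small margin'' and use that $N_p>p$ gives room, so it suffices to check this with a strict margin.

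Intersecting these closed sets over $p$ and $j$ shows $G_{\overrightarrow{N}}$ is closed. Closedness of $\iota(G_{\overrightarrow{N}})$ in $\overleftarrow{\chamber}$ is then argued as in the chordal lemma. A net in $\iota(G_{\overrightarrow{N}})$ converging in $\overleftarrow{\chamber}$ has limit whose finite-time projections satisfy the return constraints for all finite $\bs{t}$, because these constraints only involve finite-time pieces. Hence the limit lies in $\iota(G_{\overrightarrow{N}})$.

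\emph{Continuity of the inverse.} Suppose $\iota(\bs{\gamma}^{(\ell)})\to\iota(\bs{\gamma})$ in $\overleftarrow{\chamber}$ with all curves in $G_{\overrightarrow{N}}$. This means uniform convergence on $[\bs{0},\bs{t}]$ for every finite $\bs{t}$. Fix $p$ and choose $\bs{t}=(N_p+\log 4,\ldots)$. By Lemma~\ref{lem::taum_Koebe}, $\tau^j_{N_p}\le N_p$ for every curve, so times beyond $\bs{t}$ occur after $\tau^j_{N_p}$. On $G_{\overrightarrow{N}}$ those later portions lie in $\overline{\U}_p$. This gives the analogue of~\eqref{eqn::homeomorphism_aux1}:
\[
\dist_{\chamber}(\bs{\gamma}^{(\ell)},\bs{\gamma})\le \dist_{\chamber}(\bs{\gamma}^{(\ell)}_{[\bs{0},\bs{t}]},\bs{\gamma}_{[\bs{0},\bs{t}]})+2\,\diam(\overline{\U}_p) .
\]
Letting $\ell\to\infty$ and then $p\to\infty$ yields $\bs{\gamma}^{(\ell)}\to\bs{\gamma}$.

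The main obstacle I expect is the closedness step, specifically the hitting-time mismatch under perturbation, which needs the strict-margin formulation.
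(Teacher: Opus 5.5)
The gap is in the closedness step, exactly where you locate the difficulty, and the ``strict margin'' device does not close it. With the paper's $\tau^j_{N_p}=\inf\{t:|\gamma^j_t|=\ee^{-N_p}\}$, i.e.\ the entrance time of the \emph{closed} disk $\overline{\U}_{N_p}$, the set $\{\bs{\gamma}:\gamma^j_{[\tau^j_{N_p},\infty)}\not\subset\overline{\U}_p\}$ is not open.

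Here is a counterexample. Let $\gamma^j$ touch $\partial\U_{N_1}$ tangentially from outside. Then let it travel out to modulus larger than $\ee^{-1}$, and only afterwards run radially into $0$. Push the curve outward by $1/\ell$ near the touching point. This gives simple curves converging to $\gamma^j$ in $\dist_{\chamber}$, since the capacity parameterizations converge by Carath\'eodory kernel convergence. However, their hitting time of $\partial\U_{N_1}$ jumps to the final approach. So for increasing $\overrightarrow{N}$ the perturbed curves lie in $G_{\overrightarrow{N}}$, while the limit does not.

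Thus a nearby curve need not ``also hit $\partial\U_{N_p}$'' at a nearby time. Replacing ``hits'' by ``comes within a margin'' changes the stopping time, and hence the set. The room $N_p>p$ is irrelevant to tangency. The paper's proof, which only points back to Lemma~\ref{lem::homeomorphisim_FN}, passes over the same issue.

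The missing idea is to use the entrance time of the \emph{open} disk, $\sigma^j_{N_p}:=\inf\{t:|\gamma^j_t|<\ee^{-N_p}\}$. This is the genuine analogue of the chordal exit time from the closed set $\overline{D}$, paired with the open ball $B(0,p)$. With it, openness is immediate and needs no margin. If $|\gamma^j_s|>\ee^{-p}$ for some $s\ge\sigma^j_{N_p}$, there is $t'<s$ with $|\gamma^j_{t'}|<\ee^{-N_p}$. These strict inequalities at the fixed times $t',s$ survive small $\dist_{\chamber}$-perturbations.

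Nothing downstream changes. Since $\sigma^j_{N_p}\ge\tau^j_{N_p}$, Proposition~\ref{prop::radialSLE_return} still bounds the complement of $G_{\overrightarrow{N}}$. The Schwarz-lemma argument of Lemma~\ref{lem::taum_Koebe} still gives $\sigma^j_{N_p}\le N_p$.

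With this fix, your inverse-continuity step is correct. It is also more precise than the paper's estimate, which truncates at the curve-dependent $\bs{\tau}_{N_p}$ even though convergence in $\overleftarrow{\chamber}$ only controls truncations at deterministic times.

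For closedness of $\iota(G_{\overrightarrow{N}})$ in $\overleftarrow{\chamber}$, replace the vague ``constraints only involve finite-time pieces'' with the following argument.
\begin{itemize}
\item The bound $|\gamma^j_t|\le\ee^{-p}$ for $t\ge N_p$ is a condition at deterministic times, so it passes to limits in $\overleftarrow{\chamber}$.
\item Hence the limit curves tend to $0$, and the limit is $\iota$ of an element of $\chamberradial{n}(\U;\ee^{\ii\bs{\theta}};0)$.
\item Your tail estimate then upgrades the convergence to $\dist_{\chamber}$, and closedness of $G_{\overrightarrow{N}}$ concludes.
\end{itemize}
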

\begin{proof}
This can be proved by the same proof for Lemma~\ref{lem::homeomorphisim_FN}, with~\eqref{eqn::homeomorphism_aux1} replaced by the following estimate:
\[
\dist_{\chamber} \left(\bs{\gamma}^{(\ell)},\bs{\gamma}\right) \le \dist_{\chamber} \left( \bs{\gamma}_{[\bs{0},\bs{\tau}_{N_p}]}^{(\ell)}, \bs{\gamma}_{[\bs{0},\bs{\tau}_{N_p}]}  \right) +2\ee^{-p}.
\]
\end{proof}

\begin{proof}[Proof of Theorem~\ref{thm::radialSLE_LDP}]
This can be proved by the same proof for Theorem~\ref{thm::halfwatermelon_LDP}, with Proposition~\ref{prop::SLE_return_chordal} replaced by Proposition~\ref{prop::radialSLE_return}, Lemma~\ref{lem::LDP_projection} replaced by Lemma~\ref{lem::LDP_projection_radial}, and  Lemma~\ref{lem::homeomorphisim_FN} replaced by Lemma~\ref{lem::homeomorphisim_GN}.
\end{proof}

\subsection{Large deviation for radial SLE with spiral: proof of Proposition~\ref{prop::radialSLE_LDP}}
\label{subsec::radial_rho_LDP}
In this section, we prove Proposition~\ref{prop::radialSLE_LDP}. The proof is similar to that of Theorem~\ref{thm::radialSLE_LDP} in Sections~\ref{subsec:::LDP_finite_time_multiradial}-\ref{subsec:::LDP_infinite_time_multiradial}: we first prove the large deviation principle for finite-time curves (see Lemma~\ref{lem::rhoLR_finite_LDP_radial}), and then extend it to infinite-time curves using the return estimate in Proposition~\ref{prop::radialSLE_return}.

\medbreak 

The following lemma gives an equivalent definition of the energy $\rateradial{1}^{(\bs{\rho};\mu)}(\Omega;\bs{x};z;\gamma_{[0,T]})$ defined in \eqref{eqn::rho_energy_radial}. It will be used in the proof of Proposition~\ref{prop::radialSLE_LDP}. 

\begin{lemma} \label{lem::rho_energy_radial_expansion}
Assume the same notation as in Definition~\ref{def::rho_energy_radial}. For $\gamma\in \overline{\chamber}(\Omega;x_1;z)$, we have
\begin{align}\label{eqn::rho_energy_radial_expansion}
	&\rateradial{1}^{(\bs{\rho};\mu)}(\Omega;\bs{x};z;\gamma_{[0,T]}) \nonumber\\
	 = & \frac{1}{2} \int_{0}^{T} \left(\dot{\xi}_t \right)^2 \ud t - \Bigg( \frac{\bar{\rho}(\bar{\rho}+4)-4\mu^2}{8} T + \sum_{j=2}^{n} \frac{\rho_j(\rho_j+4)}{4} \log \covmap'_T(\theta_j)+ \mu \xi_T + \sum_{j=2}^{n} \frac{\mu \rho_j}{2} (\covmap_T(\theta_j) - \theta_j) \notag \\
	&+ \sum_{j=2}^{n} \rho_j \log \left| \frac{\sin \left( \frac{\covmap_T(\theta_j)-\xi_T}{2} \right)}{\sin \left( \frac{\theta_j-\theta_1}{2} \right)}\right| + \sum_{2\le j<\ell\le n} \frac{\rho_j \rho_{\ell}}{2} \log \left| \frac{\sin \left( \frac{\covmap_T(\theta_\ell)-\covmap_T(\theta_j)}{2} \right)}{\sin \left( \frac{\theta_\ell-\theta_j}{2} \right)}\right| \Bigg).
\end{align}
\end{lemma}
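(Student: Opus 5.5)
The plan mirrors the chordal identity~\eqref{eqn::rho_energy_chordal2}. First I reduce to $(\Omega;\bs{x};z)=(\U;\ee^{\ii\bs{\theta}};0)$, since both sides are defined through $\varphi$. Then I expand the square in~\eqref{eqn::rho_energy_radial_noAC}, writing $\Delta_t^j:=\covmap_t(\theta_j)-\xi_t$ and using $\cot(\frac{\xi_t-\covmap_t(\theta_j)}{2})=-\cot(\Delta_t^j/2)$. This gives $\frac12\int_0^T\dot\xi_t^2\,\ud t$ plus the cross terms
\[
-\int_0^T\mu\dot\xi_t\,\ud t,\qquad \sum_j\frac{\rho_j}{2}\int_0^T \dot\xi_t\cot(\Delta_t^j/2)\,\ud t,\qquad -\mu\sum_j \frac{\rho_j}{2}\int_0^T\cot(\Delta^j_t/2)\,\ud t,
\]
together with the squared terms $\frac{\mu^2}{2}T$, $\frac{\rho_j^2}{8}\int\cot^2(\Delta^j_t/2)\,\ud t$ and $\frac{\rho_j\rho_\ell}{4}\int\cot(\Delta^j_t/2)\cot(\Delta^\ell_t/2)\,\ud t$ for $j<\ell$.

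Each integrand is then converted into an exact derivative, using the covering-map Loewner equation $\partial_t\covmap_t(\theta)=\cot(\Delta_t/2)$ and $\mathfrak{g}_t'(0)=\ee^t$. Routine computations give
\begin{align*}
\partial_t\log\covmap_t'(\theta_j)&=-\tfrac12\csc^2(\Delta^j_t/2)=-\tfrac12-\tfrac12\cot^2(\Delta^j_t/2),\\
\partial_t\log\sin(\Delta^j_t/2)&=\tfrac12\cot(\Delta^j_t/2)\bigl(\cot(\Delta^j_t/2)-\dot\xi_t\bigr),\\
\partial_t\log\sin\bigl((\covmap_t(\theta_\ell)-\covmap_t(\theta_j))/2\bigr)&=\tfrac12\cot\bigl((\covmap_t(\theta_\ell)-\covmap_t(\theta_j))/2\bigr)\bigl(\cot(\Delta^\ell_t/2)-\cot(\Delta^j_t/2)\bigr).
\end{align*}
The last bracket is handled with the cotangent identity $\cot(b-a)(\cot a-\cot b)=1+\cot a\cot b$, applied with $a=\Delta^j_t/2$, $b=\Delta^\ell_t/2$ (so $b-a$ is half the difference of the $\covmap_t$'s). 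It turns the mixed product $\cot(\Delta^j_t/2)\cot(\Delta^\ell_t/2)$ into $2\partial_t\log\sin(\cdot)-1$.

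With these, the remaining terms resolve as follows. The $\mu\dot\xi$ term integrates to $-\mu(\xi_T-\theta_1)$. The $\mu\cot$ term becomes $-\mu\frac{\rho_j}{2}(\covmap_T(\theta_j)-\theta_j)$. Combining the $\dot\xi\cot$ and $\cot^2$ terms, $\frac{\rho_j}{2}\dot\xi\cot+\frac{\rho_j^2}{8}\cot^2$ equals $-\rho_j\,\partial_t\log\sin(\Delta^j/2)$ plus $(\frac{\rho_j}{2}+\frac{\rho_j^2}{8})\cot^2$. Via the first identity, this last piece equals $-\frac{\rho_j(\rho_j+4)}{4}\partial_t\log\covmap'_t(\theta_j)-\frac{\rho_j(\rho_j+4)}{8}$.

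The final bookkeeping is collecting the constant-in-$t$ terms. These are $\frac{\mu^2}{2}T$, the $-\frac{\rho_j(\rho_j+4)}{8}T$ from each $j$, and $-\frac{\rho_j\rho_\ell}{4}T$ from each pair. They sum to $-\frac{\bar\rho(\bar\rho+4)-4\mu^2}{8}T$ because $\sum_j\rho_j^2+2\sum_{j<\ell}\rho_j\rho_\ell=\bar\rho^2$. The boundary terms at $t=0$ produce the normalizations: $\covmap_0'=1$, the denominators $\sin\frac{\theta_j-\theta_1}{2}$ and $\sin\frac{\theta_\ell-\theta_j}{2}$, and $\theta_j$. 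The main obstacle is sign and absolute-value bookkeeping, and one point needs checking. The $\mu\xi$ term in~\eqref{eqn::rho_energy_radial_expansion} appears as $\mu\xi_T$ rather than $\mu(\xi_T-\theta_1)$, which suggests either the normalization $\theta_1=0$ or an additive constant absorbed there. I will verify against the martingale~\eqref{eqn::SLEkapparho_martingale_radial}, whose $\kappa\to0$ log-ratio $\kappa\log(M_T/M_0)$ should reproduce exactly the bracket, and adopt whichever convention makes these agree.
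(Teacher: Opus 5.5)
Your route is the paper's: expand the square in \eqref{eqn::rho_energy_radial_noAC} and turn each integrand into an exact $t$-derivative via $\partial_t h_t(\theta_j)=\cot(\Delta^j_t/2)$. Your formulas for $\partial_t\log h_t'(\theta_j)$ and $\partial_t\log\sin(\Delta^j_t/2)$, the handling of the $\dot\xi\cot$ and $\cot^2$ terms, and the collection of the $T$-proportional constants are all correct.

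There is one sign slip, in the mixed-product step. The derivative of $\log\sin\bigl((h_t(\theta_\ell)-h_t(\theta_j))/2\bigr)$ carries the factor $\cot b-\cot a=-(\cot a-\cot b)$. Your cotangent identity therefore gives $\partial_t\log\sin(\cdot)=-\tfrac12(1+\cot a\cot b)$, that is,
$\cot(\Delta^j_t/2)\cot(\Delta^\ell_t/2)=-1-2\,\partial_t\log\sin(\cdot)$, not $2\,\partial_t\log\sin(\cdot)-1$. With your sign, the pair contribution would enter the energy as $+\frac{\rho_j\rho_\ell}{2}\log(\cdots)$ instead of the required $-\frac{\rho_j\rho_\ell}{2}\log(\cdots)$. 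Your constant $-\frac{\rho_j\rho_\ell}{4}T$ is unaffected.

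Your suspicion about the $\mu\xi_T$ term is justified. We have $\int_0^T\dot\xi_t\,\mathrm{d}t=\xi_T-\theta_1$. Likewise, $\kappa\log(M_T/M_0)$ for the martingale \eqref{eqn::SLEkapparho_martingale_radial} produces $\mu(\xi_T-\theta_1)$, because $M_0$ contains the factor $\exp(\mu\theta_1/\kappa)$. The paper's own proof writes $-\mu\xi_T$, and its $\Phi$ in the following lemma writes $+\mu\xi_t$. The identity as displayed therefore holds only with $\mu(\xi_T-\theta_1)$ in place of $\mu\xi_T$, or after rotating so that $\theta_1=0$. Since the same constant is dropped in both places, the identification of the rate function in the subsequent LDP argument is unaffected.
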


\begin{proof}
Expanding~\eqref{eqn::rho_energy_radial_noAC}, we obtain
\begin{align} \label{eqn::rho_energy_radial_expansion_aux1}
\begin{split}
	\rateradial{1}^{(\bs{\rho};\mu)}(\Omega;\bs{x};z;\gamma_{[0,T]}) = & \frac{1}{2} \int_{0}^{T} \left( \dot{\xi}_t \right)^2 \ud t + \frac{1}{2} \mu^2 T  - \mu \xi_T + \sum_{j=2}^{n} \frac{\rho_j^2}{8} \int_{0}^{T}  \cot \left( \frac{\xi_t-\covmap_t(\theta_j)}{2} \right)^2 \ud t \\
	& + \sum_{2\le j<\ell \le n} \frac{\rho_j \rho_\ell}{4} \int_{0}^{T} \cot \left( \frac{\xi_t-\covmap_t(\theta_j)}{2} \right)  \cot \left( \frac{\xi_t-\covmap_t(\theta_\ell)}{2} \right) \ud t \\
	& + \sum_{j=2}^{n} \frac{\mu \rho_j}{2} \int_{0}^{T} \cot \left( \frac{\xi_t - \covmap_t(\theta_j)}{2} \right) \ud t - \sum_{j=2}^{n} \frac{\rho_j}{2} \int_{0}^{T} \dot{\xi}_t \cot \left( \frac{\xi_t - \covmap_t(\theta_j)}{2} \right) \ud t.	
\end{split}
\end{align}
Standard calculations of the radial Loewner equation gives 
\begin{equation*}
\begin{split}	
	&\partial_t \log \sin \left( \frac{\covmap_t(\theta_j)-\xi_t}{2} \right) = \frac{1}{2} \cot \left( \frac{\covmap_t(\theta_j)-\xi_t}{2} \right)^2 - \frac{1}{2} \dot{\xi}_t \cot \left( \frac{\covmap_t(\theta_j)-\xi_t}{2} \right), \quad \partial_t \covmap_{t} (\theta_j) = \cot \left( \frac{\covmap_t(\theta_j)-\xi_t}{2} \right), \\
	&\partial_t \log \sin \left( \frac{\covmap_t(\theta_\ell)-\covmap_t(\theta_j)}{2} \right) = - \frac{\cos \left( \frac{\covmap_t(\theta_\ell)-\covmap_t(\theta_j)}{2} \right)}{2\sin \left( \frac{\covmap_t(\theta_\ell)-\xi_t}{2} \right) \sin \left( \frac{\covmap_t(\theta_j)-\xi_t}{2} \right)}, \quad \partial_t \log \covmap'_{t} (\theta_j) = - \frac{1}{2} \csc \left( \frac{\covmap_t(\theta_j)-\xi_t}{2} \right)^2.
\end{split}	
\end{equation*}
From these, we have
\begin{equation*}
	\begin{split}
		\cot^2\left( \frac{\covmap_t(\theta_j)-\xi_t}{2} \right) =& -1-2 \partial_t \log h_t'(\theta_j),\\
		\dot{\xi}_t \cot \left(\frac{h_t\left(\theta_j\right)-\xi_t}{2}\right)=&-1-2 \partial_t \log h_t^{\prime}\left(\theta_j\right)-2 \partial_t \log \sin \left(\frac{h_t\left(\theta_j\right)-\xi_t}{2}\right), \\
		\cot \left(\frac{h_t\left(\theta_{\ell}\right)-\xi_t}{2}\right) \cot \left(\frac{h_t\left(\theta_j\right)-\xi_t}{2}\right)=&-1-2 \partial_t \log \sin \left(\frac{h_t\left(\theta_{\ell}\right)-h_t\left(\theta_j\right)}{2}\right).
	\end{split}
\end{equation*}
Plugging these into~\eqref{eqn::rho_energy_radial_expansion_aux1}, we obtain~\eqref{eqn::rho_energy_radial_expansion}.
\end{proof}

\begin{lemma}	\label{lem::rhoLR_finite_LDP_radial}
Fix $n\ge 1$ and $n$-polygon $(\Omega;\bs{x})$ with $z\in \Omega$. Fix $\mu\in \R$ and $\bs{\rho}\in\R_{\ge 0}^{n-1}$ and $t>0$. 
Suppose $\gamma\sim\PPradial{1}^{(\kappa;\bs{\rho};\mu)}(\Omega; \bs{x}; z)$ is radial $\SLE_{\kappa}^{\mu}(\bs{\rho})$ in $(\Omega; \bs{x}; z)$. 
Then the family of laws of $\gamma_{[0,t]}$ under $\PPradial{1}^{(\kappa;\bs{\rho};\mu)}(\Omega; \bs{x}; z)$ satisfies large deviation principle in the space $(\chamber_t(\Omega;x_1;z),\dist_{\chamber})$ as $\kappa\to 0+$ with good rate function $\rateradial{1}^{(\bs{\rho};\mu)}(\Omega;\bs{x};z;\cdot)$ defined in~\eqref{eqn::rho_energy_radial_noAC}.
\end{lemma}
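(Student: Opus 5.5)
We follow the chordal argument of Lemma~\ref{lem::rhoLR_finite_LDP_chordal}, adapted to the radial martingale. By conformal invariance we reduce to $(\Omega;\bs{x};z)=(\U;\ee^{\ii\bs{\theta}};0)$. Lemma~\ref{lem::finite_radialSLE_LDP} gives the reference LDP: the law of $\gamma_{[0,t]}$ under radial $\SLE_\kappa$ in $(\U;\ee^{\ii\theta_1};0)$ satisfies an LDP on $(\chamber_t,\dist_\chamber)$ with good rate function $\rateradial{1}(\gamma_{[0,t]})=\frac12\int_0^t\dot\xi_s^2\,\ud s$. By Lemma~\ref{lem::SLEkapparho_martingale_radial}, for $A\subset\chamber_t$,
\[
\PPradial{1}^{(\kappa;\bs{\rho};\mu)}[A]=\Eradial{1}^{(\kappa)}\Big[\exp\Big(\tfrac1\kappa\Phi(\gamma_{[0,t]};\kappa)\Big)\one\{\gamma_{[0,t]}\in A\}\Big],\qquad \Phi:=\kappa\log\frac{M_t}{M_0}.
\]
Here the curve is simple and stays in $\U$, so no force point is swallowed before time $t$.

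We expand $\Phi$ from~\eqref{eqn::SLEkapparho_martingale_radial}. Since $\mathfrak{g}_t'(0)=\ee^t$, the first factor gives the deterministic term $\frac{\bar\rho(\bar\rho+4)-4\mu^2}{8}t$. The remaining terms are
\begin{itemize}
\item $\frac{\rho_j(\rho_j+4-\kappa)}{4}\log\covmap_t'(\theta_j)$;
\item $\rho_j\log\big|\sin\frac{\covmap_t(\theta_j)-\xi_t}{2}\big/\sin\frac{\theta_j-\theta_1}{2}\big|$;
\item the pairwise $\sin$ ratios with weights $\frac{\rho_j\rho_\ell}{2}$;
\item $\mu(\xi_t-\theta_1)+\sum_j\frac{\mu\rho_j}{2}(\covmap_t(\theta_j)-\theta_j)$.
\end{itemize}
Setting $\kappa=0$ yields exactly the bracket in~\eqref{eqn::rho_energy_radial_expansion}, up to the constant $\mu\theta_1$, which cancels in the normalization. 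So Lemma~\ref{lem::Varadhan} should produce the rate function $\rateradial{1}-\Phi(\cdot;0)=\rateradial{1}^{(\bs{\rho};\mu)}$, by Lemma~\ref{lem::rho_energy_radial_expansion}.

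To apply Lemma~\ref{lem::Varadhan} we split $\Phi=\Phi_0+f(\kappa)\Phi_1$ with $f(\kappa)=-\kappa/4$ and $\Phi_1=\sum_j\rho_j\log\covmap_t'(\theta_j)\le0$, so the $\kappa$-dependent piece satisfies condition~(c). Continuity, condition~(a), follows as before: Carathéodory convergence together with Loewner--Kufarev gives uniform convergence of $\xi$ and of $\covmap_\cdot(\theta_j)$ up to time $t$. The main obstacle is the exponential moment condition on $\Phi_0$, since $\mu\xi_t$ is unbounded.

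The $\sin$ terms are bounded above, and $\covmap_t(\theta_j)-\xi_t\in(0,2\pi)$. The only unbounded parts are the $\mu$-terms and the negative $\log\sin$ coefficient that appears if we instead condition on $\xi$. For the $\mu$-terms, we rewrite $\mu\xi_t+\sum_j\frac{\mu\rho_j}{2}\covmap_t(\theta_j)$ as $(1+\bar\rho/2)\mu\xi_t+O(1)$, using $\covmap_t(\theta_j)-\xi_t\in(0,2\pi)$. We then verify condition~(d) with $\beta=2$, exactly as in the proof of Proposition~\ref{prop::finitetime_LDP_radial}. Doubling $\Phi_0$ amounts to comparing with the martingale with spiral $2\mu$; the leftover terms are bounded by constants depending on $t,\bs{\rho},\mu$, with $\log\covmap'\le0$ and $\log\sin\le0$ controlled in the right direction. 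The identity $\E[M^{(2\mu)}_t/M^{(2\mu)}_0]=1$ then gives a finite $\limsup$.

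If sign issues arise for the $\rho_j\log\sin$ terms after doubling, we instead use $\beta$ slightly above $1$. Alternatively, we bound the $\mu$-part directly: under radial $\SLE_\kappa$, $\xi_t-\theta_1=\sqrt\kappa B_t$, so $\E[\exp(c|\xi_t|/\kappa)]\le 2\exp(c^2t/(2\kappa))$, which already gives (d).

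Finally, goodness of the rate function follows as at the end of the proof of Proposition~\ref{prop::finitetime_LDP_radial}. The identity~\eqref{eqn::rho_energy_radial_expansion}, combined with $|\mu\xi_T|\le\frac14\int\dot\xi^2+\mu^2T+|\mu\theta_1|$, bounds $\rateradial{1}$ by a multiple of $\rateradial{1}^{(\bs{\rho};\mu)}$ plus constants. Level sets are therefore precompact, and the limit lies in $\chamber_t$ because finite Dirichlet energy implies simplicity (Lemma~\ref{lem::finite_Dirichlet_energy_simple}). Lower semicontinuity comes from lower semicontinuity of $\rateradial{1}$ together with continuity of the remaining terms.
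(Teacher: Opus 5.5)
Your argument is the paper's own proof. You tilt the reference LDP of Lemma~\ref{lem::finite_radialSLE_LDP} by $\Phi=\kappa\log(M_t/M_0)$ via Lemma~\ref{lem::Varadhan}, and identify $\rateradial{1}-\Phi(\cdot;0)$ with $\rateradial{1}^{(\bs{\rho};\mu)}$ through Lemma~\ref{lem::rho_energy_radial_expansion}. You check (d) with $\beta=2$ by comparing with the spiral-$2\mu$ martingale, and you obtain goodness by bounding $\rateradial{1}$ in terms of $\rateradial{1}^{(\bs{\rho};\mu)}$. The paper only says ``the same argument as in Proposition~\ref{prop::finitetime_LDP_radial}'', and your details are correct. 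This includes the alternative Gaussian bound using $\xi_t-\theta_1=\sqrt{\kappa}B_t$.

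One bookkeeping point needs fixing. Your split $f(\kappa)=-\kappa/4$, $\Phi_1=\sum_j\rho_j\log\covmap_t'(\theta_j)$ is not covered by Lemma~\ref{lem::Varadhan} as stated:
\begin{itemize}
\item Condition (c) requires $f(0)>0$. The proof uses this to choose $\eps<(1-1/\beta)f_2(0)$ in the moment bound for $\Phi^{M,\eps}$.
\item Condition (b) also fails, because $\log\covmap_t'(\theta_j)$ is unbounded below on $\chamber_t$ (the curve may approach $\ee^{\ii\theta_j}$).
\end{itemize}
The fix is to keep the full coefficient. For each $j$ with $\rho_j>0$, take $f_j(\kappa)=\rho_j(\rho_j+4-\kappa)/4$ and $\Phi_j=\log\covmap_t'(\theta_j)\le 0$. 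This is exactly of type (c), as in the paper's use of $f_3(\kappa)=3-\kappa/2$. Your $\beta=2$ verification of (d) is unaffected, since these terms now cancel exactly against the corresponding ones in the $2\mu$ martingale.
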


\begin{proof}
It suffices to show the conclusion for $(\Omega;\bs{x};z)=(\U;\ee^{\ii \bs{\theta}};0)$ and we eliminate them from the notation. 
Recall that the law of radial $\SLE_{\kappa}^{\mu}(\bs{\rho})$ in $(\U;\ee^{\ii\bs{\theta}};0)$ is the same as the law of radial $\SLE_{\kappa}$ in $(\U;\ee^{\ii\theta_1};0)$ tilted by $M_t$ given in~\eqref{eqn::SLEkapparho_martingale_radial}, up to the first time $\ee^{\ii\theta_2}$ or $\ee^{\ii\theta_n}$ is swallowed. For $\gamma_{[0,t]}\in \overline{\chamber}_t(\U;\ee^{\ii\theta_1};0)$, we define
\begin{align*}
	\Phi(\gamma_{[0,t]};\kappa):=& \kappa \log\frac{M_t}{M_0} =\frac{\bar{\rho}(\bar{\rho}+4)-4\mu^2}{8} t + \mu \xi_t + \sum_{j=2}^{n} \frac{\mu \rho_j}{2} (\covmap_t(\theta_j)  - \theta_j) + \sum_{j=2}^n \frac{\rho_j(\rho_j+4-\kappa)}{4} \log \covmap_t'(\theta_j) \\
	&+\sum_{j=2}^{n} \rho_j \log \left| \frac{\sin \left( \frac{\covmap_t(\theta_j)-\xi_t}{2} \right)}{\sin \left( \frac{\theta_j-\theta_1}{2} \right)}\right|
	 + \sum_{2\le j<\ell\le n} \frac{\rho_j \rho_{\ell}}{2} \log \left| \frac{\sin \left( \frac{\covmap_t(\theta_\ell)-\covmap_t(\theta_j)}{2} \right)}{\sin \left( \frac{\theta_\ell-\theta_j}{2} \right)}\right|.
\end{align*}
By Lemma~\ref{lem::SLEkapparho_martingale_radial}, for any subset $A\subset \chamber_{t}(\U;\ee^{\ii\theta_1};0)$, we have
\[
\PPradial{1}^{(\kappa;\bs{\rho};\mu)}[A] = \E^{(\kappa)} \left[ \exp\left( \frac{1}{\kappa} \Phi(\gamma_{[0,t]};\kappa) \right) \one\{\gamma_{[0,t]}\in A\} \right],
\]
where $\E^{(\kappa)}$ is the expectation with respect to the law of radial $\SLE_{\kappa}$ in $(\U;\ee^{\ii\theta_1};0)$. Applying the generalization of Varadhan's lemma (see Lemma~\ref{lem::Varadhan} and Remark~\ref{rmk::Varadhan}), we conclude that the family of laws of $\gamma_{[0,t]}$ under $ \PPradial{1}^{(\kappa;\bs{\rho};\mu)}$ satisfies large deviation principle in the space $(\chamber_t(\Omega;\bs{x};z),\dist_{\chamber})$ with good rate function
\begin{equation*}
	\rateradial{1}(\gamma_{[0,t]}) - \Phi(\gamma_{[0,t]};0) \stackrel{\eqref{eqn::rho_energy_radial_expansion}}{=} \rateradial{1}^{(\bs{\rho};\mu)}(\gamma_{[0,t]}),
\end{equation*}
with the same argument as in the proof of Proposition~\ref{prop::finitetime_LDP_radial}.
\end{proof}

\begin{proof}[Proof of Proposition~\ref{prop::radialSLE_LDP}]
It suffices to show the conclusion for $(\Omega;\bs{x};z)=(\U;\ee^{\ii \bs{\theta}};0)$ and we eliminate them from the notation. By Lemma~\ref{lem::rhoLR_finite_LDP_radial} and Lemma~\ref{lem::Dawson-Gartner}, we obtain large deviation principle of $\PPradial{1}^{(\kappa;\bs{\rho};\mu)}$ on the projective limit $\overleftarrow{\chamber}$ with the topology induced by the product topological space $\prod_{t\in [0,\infty)} \chamber_{t}$ as $\kappa\to 0+$ with good rate function $\rateradial{1}^{(\bs{\rho};\mu)}(\Omega;\bs{x};z;\gamma)$ in~\eqref{eqn::rho_energy_radial}. Combining this with Lemma~\ref{prop::radialSLE_return}, we complete the proof with the same argument as in the proof of Theorem~\ref{thm::radialSLE_LDP}.
\end{proof}

\subsection{Common-time parameter and proof of Proposition~\ref{prop::LDP_DysonBM_radial}}
\label{subsec::common_time_radial}
\paragraph*{Common-time parameter (radial).}
Fix $n\ge 1$ and $\bs{\theta}=(\theta_1, \ldots, \theta_n)\in \LX_n^{\U}$. Consider an $n$-tuple $\bs{\gamma}_{[\bs{0},\bs{t}]} := ( \gamma^{1}_{[0,t_1]}, \ldots, \gamma^{n}_{[0,t_n]} ) \in \chamber_{\bs{t}}(\U;\ee^{\ii\bs{\theta}};0)$ parameterized by $\bs{t}=(t_1,\ldots,t_n)\in [0,\infty)^n$.
Recall that $\mathfrak{g}_{\bs{t}}$ is the conformal map from $\smash{\U \setminus \cup_{j=1}^n \gamma^{j}_{[0,t_j]}}$ onto $\U$ normalized at the origin, i.e., 
\[
	\mathfrak{g}_{\bs{t}}(0)=0, \quad \log \mathfrak{g}_{\bs{t}}'(0)=\aleph_{\bs{t}}>0.
\] 
A standard calculation gives
\begin{equation*}
	\ud \aleph_{\bs{t}}=\sum_{j=1}^{n} \covmap'_{\bs{t},j} ( \xi_{t_j}^{j} )^2 \ud t_j.
\end{equation*}
We say that the $n$-tuple $\bs{\gamma}_{[\bs{0},\bs{t}]}$ is parameterized by common-time parameter if
\begin{equation*}
	\partial_{t_j} \aleph_{(t,\ldots,t)}=1, \quad \text{for } 1\le j\le n.
\end{equation*}
It is proved in~\cite[Lemma~3.2]{HealeyLawlerNSidedRadialSLE} that common-time parameter exists for continuous disjoint simple curves $\bs{\gamma}_{[\bs{0},\bs{t}]}$. The relation between common-time parameter and multi-time parameter is given by
\begin{equation} \label{eqn::common-multi}
	\ud t_j = \covmap_{\bs{t},j}'(W_{t_j}^j)^{-2} \ud t, \quad \text{ for } 1\le j\le n,
\end{equation}
Moreover,
\begin{equation*}
	t\le t_j(t) \le nt, \quad \text{for } 1\le j\le n.
\end{equation*}
Under common-time parameter, we denote the $n$-tuple by $\bs{\gamma}_{[0,t]}=\bs{\gamma}_{[(0,\ldots,0),(t,\ldots,t)]}$. We define the following normalized conformal transformations:
\begin{itemize}
	\item $\mathfrak{g}_t^j$ is the conformal map from $\U\setminus \gamma_{[0,t]}^j$ onto $\U$ with $\mathfrak{g}_t^j(0)=0$ and $\left(\mathfrak{g}_t^j \right)'(0)>0$, where $1\le j\le n$.
	\item $\mathfrak{g}_{t,j}$ is the conformal map from $\U\setminus \mathfrak{g}_{t}^j (\cup_{i \neq j} \gamma_{[0,t]}^i)$ onto $\U$ with $\mathfrak{g}_{t,j}(0)=0$ and $\mathfrak{g}'_{t,j}(0)>0$, where $1\le j\le n$.
	\item $\mathfrak{g}_t$ is the conformal map from $\U\setminus \cup_{j=1}^n \gamma_{[0,t]}^j$ onto $\U$ with $\mathfrak{g}_t(0)=0$ and $\mathfrak{g}'_t(0)=\ee^{nt}>0$, where $1\le j\le n$. 
\end{itemize}
Let $\covmap_{t}^{j},\covmap_{t},\covmap_{t,j}$ be the covering maps of $\mathfrak{g}_{t}^{j},\mathfrak{g}_{t},\mathfrak{g}_{t,j}$, respectively. Denote by $\xi^j$ the driving function of $\gamma^j$ and denote the driving function of the $n$-tuple $\bs{\gamma}_{[0,t]}$, started from $\bs{\mslitdriv}_0=(\theta_1,\ldots,\theta_n)\in\LX_n^{\U}$, by
\begin{equation*}
	\bs{\mslitdriv}_t=(\mslitdriv_t^1,\ldots,\mslitdriv_t^n), \quad \text{with } \mslitdriv_t^j=\covmap_{t,j}\left(\xi_t^j\right), \quad\text{for } 1\le j\le n.
\end{equation*}

\begin{lemma} \label{lem::DysonBM_driving_function_radial}
Under common-time parameter, the driving function $\bs{\mslitdriv}_t$ of $n$-radial $\SLE_{\kappa}^{\mu}$ satisfies the SDE of Dyson circular ensemble with parameter $\beta = 8/\kappa$ in~\eqref{eqn::DysonBM_radial}.	
\end{lemma}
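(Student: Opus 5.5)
The plan is to compute the dynamics of $\mslitdriv^j_t=\covmap_{t,j}(\xi^j_{t_j(t)})$ under the common-time parameter, using the multi-time description of $n$-radial $\SLE_\kappa^\mu$ in Lemma~\ref{lem::nradial_mart}. By the martingale characterization, when only the $j$-th time runs, the marginal law of $\gamma^j$ in the domain $\U\setminus\cup_{i\ne j}\gamma^i_{[0,t_i]}$ is radial $\SLE_\kappa$ tilted by $M_{\bs t}$. Girsanov therefore gives the drift of $\xi^j$: $\ud\xi^j_{t_j}=\sqrt\kappa\,\ud B^j_{t_j}+\kappa\,\partial_{\xi^j}\log M_{\bs t}\,\ud t_j$. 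The $\xi^j$-dependence of $\log M_{\bs t}$ enters only through $\covmap'_{\bs t,j}(\xi^j)^{\mathfrak b}$ and through $\LZradial{n}^{(\kappa;\mu)}(\bs\mslitdriv_{\bs t})$, since the $\blm_{\bs t}$ and $\mathfrak g'_{\bs t}(0)$ factors are not functions of the current tip. This yields
\[
\ud\xi^j_{t_j}=\sqrt\kappa\,\ud B^j_{t_j}+\Big(\kappa\mathfrak b\frac{\covmap''_{\bs t,j}(\xi^j)}{\covmap'_{\bs t,j}(\xi^j)}+\covmap'_{\bs t,j}(\xi^j)\Big(\mu+2\sum_{i\ne j}\cot\Big(\frac{\mslitdriv^j_{\bs t}-\mslitdriv^i_{\bs t}}{2}\Big)\Big)\Big)\ud t_j .
\]
The factor $2$ comes from $\kappa\partial_j\log\LZradial{n}^{(\kappa;\mu)}=\mu+\sum_{i\ne j}\cot(\cdot)$ with the chain rule. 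I need to double-check this normalization against the marginal SDE \eqref{eqn::multiradial_marginal_SDE}; I expect the coefficient of $\cot$ in $\xi^j$ to be $1$ in multi-time and to become $2$ only after the change of time. I will fix the constants by matching with \eqref{eqn::DysonBM_radial}.

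Next I will use Itô's formula for $\mslitdriv^j_{\bs t}=\covmap_{\bs t,j}(\xi^j_{t_j})$, i.e. the stochastic version of \eqref{eqn::expansion_mslitdriv}. Compared with that deterministic formula there is an extra Itô term $\tfrac\kappa2\covmap''_{\bs t,j}\,\ud t_j$. The $-3\covmap''_{\bs t,j}$ term comes from the time-derivative of $\covmap_{\bs t,j}$ at the tip. Together with $\covmap'_{\bs t,j}\cdot\kappa\mathfrak b\,\covmap''/\covmap'=(3-\kappa/2)\covmap''$, these three contributions cancel exactly, as in the chordal case of \cite[Lemma~3.4]{HuangWuYangMultipleSLEsDysonBM}. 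What remains in the $t_j$-direction is
\[
\covmap'_{\bs t,j}\sqrt\kappa\,\ud B^j_{t_j}+\covmap'^2_{\bs t,j}\Big(\mu+\sum_{i\ne j}\cot\tfrac{\mslitdriv^j-\mslitdriv^i}{2}\Big)\ud t_j .
\]
In the other directions $t_\ell$, $\ell\ne j$, the contribution is $\cot\big(\tfrac{\mslitdriv^j-\mslitdriv^\ell}{2}\big)\covmap'^2_{\bs t,\ell}\,\ud t_\ell$.

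Finally I will substitute the common-time relation \eqref{eqn::common-multi}, $\ud t_\ell=\covmap'^{-2}_{\bs t,\ell}\,\ud t$, for all $\ell$ simultaneously. The martingale term becomes $\sqrt\kappa\,\covmap'_{\bs t,j}\,\ud B^j_{t_j}$, which has quadratic variation $\kappa\,\ud t$; by Lévy's characterization it is $\sqrt\kappa\,\ud\tilde B^j_t$, and the $\tilde B^j$ are independent because the curves are driven by independent Brownian motions in their own times. The drifts then add to $\mu+2\sum_{i\ne j}\cot\big(\tfrac{\mslitdriv^j-\mslitdriv^i}{2}\big)$, which is \eqref{eqn::DysonBM_radial}.

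The main obstacle is justifying that the multi-time process can be run along the diagonal curve $t\mapsto\bs t(t)$ with this SDE. The time change is random and couples all $n$ coordinates, so I would argue as in \cite[Lemma~3.4]{HuangWuYangMultipleSLEsDysonBM} and \cite{HealeyLawlerNSidedRadialSLE}. The approach is to approximate the diagonal by piecewise alternating growth (grow curve $1$, then $2$, and so on, in small increments), apply the one-parameter Girsanov and Itô computations on each piece using the domain Markov property from Lemma~\ref{lem::nradial_mart}, and pass to the limit as the mesh tends to $0$.
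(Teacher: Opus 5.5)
Your argument is correct and follows the paper's route. First you obtain the multi-time SDE
\[
\ud\mslitdriv^j_{\bs t}=\sqrt\kappa\,\covmap'_{\bs t,j}(\xi^j_{t_j})\,\ud B^j_{t_j}+\mu\,\covmap'_{\bs t,j}(\xi^j_{t_j})^2\,\ud t_j+\sum_{i\ne j}\cot\Big(\frac{\mslitdriv^j_{\bs t}-\mslitdriv^i_{\bs t}}{2}\Big)\Big(\covmap'_{\bs t,j}(\xi^j_{t_j})^2\,\ud t_j+\covmap'_{\bs t,i}(\xi^i_{t_i})^2\,\ud t_i\Big).
\]
Then you substitute \eqref{eqn::common-multi}. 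The paper simply quotes this SDE from Lemma~\ref{lem::nradial_mart} and \cite[Eq.~(2.9)]{HuangPeltolaWuMultiradialSLEResamplingBP}, whereas you rederive it via Girsanov and the It\^o cancellation $-3+(3-\kappa/2)+\kappa/2=0$.

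The one slip is the factor $2$ in your first display for $\ud\xi^j$. Directly from \eqref{eqn::nradial_pf_U}, $\kappa\partial_j\log\LZradial{n}^{(\kappa;\mu)}=\mu+\sum_{i\neq j}\cot\big(\tfrac{\theta_j-\theta_i}{2}\big)$, with coefficient $1$. This agrees with \eqref{eqn::multiradial_marginal_SDE} and with what you use in your It\^o step. The doubling comes only from the $\ud t_\ell$-contributions after the time change, so no constants need to be `fixed by matching' with \eqref{eqn::DysonBM_radial}.
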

\begin{proof}
Under multi-time parameter, Lemma~\ref{lem::nradial_mart} and~\cite[Eq.~(2.9)]{HuangPeltolaWuMultiradialSLEResamplingBP} (see also~\cite[Eq.~(3.14,3.15)]{KrusellWangWuCommutationRelation}) imply that the driving function $\bs{\mslitdriv}_{\bs{t}}$ of $n$-radial $\SLE_{\kappa}^{\mu}$ satisfies that, for $1\le j\le n$,
\begin{equation} \label{eqn::DysonBM_driving_function_radial_aux1}
	\ud \mslitdriv_{\bs{t}}^j = \sqrt{\kappa} \covmap'_{\bs{t}, j} (\xi_{t_j}^j) \ud B_t^j + \mu \covmap'_{\bs{t}, j} (\xi_{t_j}^j)^2 \ud t_j + \sum_{i \neq j} \cot\left( \frac{\mslitdriv_{\bs{t}}^j- \mslitdriv_{\bs{t}}^i}{2} \right) \left( \covmap'_{\bs{t}, j} (\xi_{t_j}^j)^2 \ud t_j + \covmap'_{\bs{t}, i} (\xi_{t_i}^i)^2 \ud t_i \right).
\end{equation}
Changing multi-time parameter into common-time parameter, plugging~\eqref{eqn::common-multi} into~\eqref{eqn::DysonBM_driving_function_radial_aux1}, we obtain~\eqref{eqn::DysonBM_radial} as desired.
\end{proof}

\begin{proof}[Proof of Proposition~\ref{prop::LDP_DysonBM_radial}]
From~\eqref{eqn::expansion_mslitdriv} and~\eqref{eqn::multi_time_energy_radial_def2}, we have
\begin{align*}
	\ud \rateradial{n}^{(\mu)}(\U;\ee^{\ii\bs{\theta}};0;\bs{\gamma}_{[\bs{0},\bs{t}]})=&\frac{1}{2}\sum_{j=1}^{n} \bigg( \dot{\xi}_{t_j}^{j} - \covmap'_{\bs{t},j}(\xi_{t_j}^{j}) \bigg( \mu+\sum_{i\neq j} \cot \bigg(\frac{\mslitdriv_{\bs{t}}^{j} - \mslitdriv_{\bs{t}}^{i}}{2}\bigg) \bigg) 
	- 3 \, \frac{\covmap''_{\bs{t},j}(\xi_{t_j}^{j})}{\covmap'_{\bs{t},j}(\xi_{t_j}^{j})}\bigg)^2 \ud t_j\\
	=&\frac{1}{2}\sum_{j=1}^{n} \left( \left(\covmap'_{\bs{t},j}\left(\xi_{t_j}^j\right)\right)^{-2}\frac{\partial}{\partial t_j}\mslitdriv_{\bs{t}}^{j}-  \mu-\sum_{i\neq j} \cot\left(\frac{\mslitdriv_{\bs{t}}^{j}-\mslitdriv_{\bs{t}}^{i}}{2}\right)\right)^2 \left(\covmap'_{\bs{t},j}\left(\xi_{t_j}^j\right)\right)^{2}\ud t_j. 
\end{align*}
Using the relation between common-time parameter and multi-time parameter~\eqref{eqn::common-multi}, we obtain from Proposition~\ref{prop::finitetime_LDP_radial} that under common-time parameter, for any finite time $T$, the family of laws of $\bs{\gamma}_{[0,T]}$ under $\PPradial{n}(\U;\ee^{\ii \bs{\theta}};0)$ satisfies large deviation principle in the space $(\chamber_{T}(\U;\ee^{\ii\bs{\theta}};0;\dist_{\chamber})$ as $\kappa\to 0+$ with good rate function
\begin{equation*}
	\rateradial{n}^{(\mu)}(\U;\ee^{\ii\bs{\theta}};0;\bs{\gamma}_{[0,t]})=\frac{1}{2}\int_{0}^{T}\sum_{j=1}^{n} \left( \dot{\mslitdriv}_t^j-  \mu- 2\sum_{i\neq j} \cot\left(\frac{\mslitdriv_{t}^j-\mslitdriv_{t}^{i}}{2}\right)  \right)^2 \ud t.
\end{equation*}
Here $\chamber_{T}$ is short for $\chamber_{(T,\ldots,T)}$ under common-time parameter. Finally,  by Loewner-Kufarev theorem (see e.g.~\cite[Theorem~8.5]{Berestycki2011LecturesOS}), the map $\bs{\gamma}_{[0,T]}\mapsto\bs{\mslitdriv}_{[0,T]}$ from $(\chamber_{T}(\U;\ee^{\ii\bs{\theta}};0), \dist_\chamber)$ to $\mathrm{C}([0,T],\LX_n^{\U})$ is continuous. Thus, by the contraction principle and Lemma~\ref{lem::DysonBM_driving_function_radial}, the family of laws of Dyson circular ensemble $\{\bs{\mslitdriv}^\kappa\}_{\kappa\in (0,4]}$ satisfies large deviation principle in the space $\mathrm{C}([0,T], \LX_n^{\U})$ as $\kappa\to 0+$ with good rate function~\eqref{eqn::rate_DysonBM_radial} as desired.
\end{proof}

\subsection{Boundary perturbation: proof of Proposition~\ref{prop::bp_radial}}
\label{subsec::bp_radial_proof}
In this section, we prove Proposition~\ref{prop::bp_radial}. We first recall the boundary perturbation of $n$-radial $\SLE_{\kappa}$ in Lemma~\ref{lem::bp_multiradialkappa}, and then combine it with Theorem~\ref{thm::radialSLE_LDP} to prove Proposition~\ref{prop::bp_radial}.
The boundary perturbation of radial $\SLE_{\kappa}$ is originally established in~\cite{JahangoshahiLawlerMultiplepathsSLE}, and later extended to multi-radial $\SLE_{\kappa}$ with spiral in~\cite{HuangPeltolaWuMultiradialSLEResamplingBP}.

\begin{lemma}[{\cite[Proposition~1.4]{HuangPeltolaWuMultiradialSLEResamplingBP}}] 
\label{lem::bp_multiradialkappa}
Fix $\kappa\in (0,4]$, $\mu\in\R$, $n\ge 1$ and nice $n$-polygon $(\Omega; \bs{x})=(\Omega; x_1, \ldots, x_n)$ with $z\in\Omega$.
Let $U\subset\Omega$ be a simply connected subdomain which coincides with $\Omega$ in a neighborhood of $\{x_1, \ldots, x_n, z\}$. Then, the $n$-radial $\SLE_{\kappa}^{\mu}$ probability measure 
in the smaller polygon $(U; \bs{x}; z)$ is absolutely continuous with respect to that in $(\Omega; \bs{x}; z)$, 
with Radon-Nikodym derivative 
\begin{align*}\label{eqn:multiradial_bp}
	\frac{\ud \PPradial{n}^{(\kappa;\mu)} (U;\bs{x};z) }{\ud \PPradial{n}^{(\kappa;\mu)} (\Omega;\bs{x}; z) }(\bs{\gamma})=\frac{\LZradial{n}^{(\kappa;\mu)}(\Omega; \bs{x};z)}{\LZradial{n}^{(\kappa;\mu)}(U; \bs{x};z)} \, \one\{\bs{\gamma}\cap (\Omega\setminus U)=\emptyset\}\, \exp\Big(\frac{\mathfrak{c}}{2} \blm(\Omega; \bs{\gamma}, \Omega\setminus U)\Big) ,
\end{align*} 
where $\LZradial{n}^{(\kappa;\mu)}$ is partition function in~\eqref{eqn::nradial_pf}, and the constant
$\mathfrak{c}$ is the central charge defined in~\eqref{eqn::parameters_b_c}, and $\blm(\Omega; \bs{\gamma}, \Omega\setminus U)$ is the Brownian loop measure in $\Omega$ of those loops that intersect both $\bs{\gamma}$ and $\Omega\setminus U$ (see~\eqref{eqn::blm_def}). 
\end{lemma}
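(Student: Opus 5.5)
The plan is to reduce the statement to two one-curve facts and then glue them with the recursive description of $n$-radial $\SLE_\kappa^\mu$ in Definition~\ref{def::multiradialSLEspiral}. Define a measure $\QQ$ on $\chamberradial{n}(\Omega;\bs{x};z)$ by
\[
\frac{\ud\QQ}{\ud \PPradial{n}^{(\kappa;\mu)}(\Omega;\bs{x};z)}(\bs{\gamma})=\frac{\LZradial{n}^{(\kappa;\mu)}(\Omega;\bs{x};z)}{\LZradial{n}^{(\kappa;\mu)}(U;\bs{x};z)}\,\one\{\bs{\gamma}\cap(\Omega\setminus U)=\emptyset\}\exp\Big(\frac{\mathfrak{c}}{2}\blm(\Omega;\bs{\gamma},\Omega\setminus U)\Big).
\]
I will show that $\QQ$ satisfies the two bullets of Definition~\ref{def::multiradialSLEspiral} in $(U;\bs{x};z)$. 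Since those bullets characterize the law, $\QQ=\PPradial{n}^{(\kappa;\mu)}(U;\bs{x};z)$, and in particular $\QQ$ is a probability measure.

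\textbf{Step 1: additivity of the loop term.} With $\blm$ defined in~\eqref{eqn::blm_def}, I first check the cascade identity
\[
\blm(\Omega;\bs{\gamma},\Omega\setminus U)=\blm(\Omega;\gamma^1,\Omega\setminus U)+\blm(\Omega\setminus\gamma^1;\gamma^2,\ldots,\gamma^n,\Omega\setminus U).
\]
It follows by splitting the loops that hit $\Omega\setminus U$ and $\bs{\gamma}$ according to whether they hit $\gamma^1$. This lets me disintegrate the density above as (weight depending on $\gamma^1$ only) $\times$ (conditional weight for $(\gamma^2,\ldots,\gamma^n)$ in $\Omega\setminus\gamma^1$).

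\textbf{Step 2: the conditional law.} Given $\gamma^1$, the conditional law under $\PPradial{n}^{(\kappa;\mu)}(\Omega;\bs{x};z)$ is half-$(n-1)$-watermelon $\SLE_\kappa$ in $(\Omega\setminus\gamma^1;x_2,\ldots,x_n,z)$. Lemma~\ref{lem::halfwatermelon_bp}, applied with the pair of domains $U\setminus\gamma^1\subset\Omega\setminus\gamma^1$, converts the conditional weight into the half-watermelon law in $U\setminus\gamma^1$. The price is the factor
\[
\LZfusion{n-1}(\Omega\setminus\gamma^1;\ldots,z)\,/\,\LZfusion{n-1}(U\setminus\gamma^1;\ldots,z).
\]
Integrating out $(\gamma^2,\ldots,\gamma^n)$ then shows that the $\gamma^1$-marginal of $\QQ$ has density with respect to radial $\SLE_\kappa^\mu(2,\ldots,2)$ in $\Omega$ equal to
\[
\frac{\LZradial{n}^{(\kappa;\mu)}(\Omega)}{\LZradial{n}^{(\kappa;\mu)}(U)}\cdot\frac{\LZfusion{n-1}(\Omega\setminus\gamma^1)}{\LZfusion{n-1}(U\setminus\gamma^1)}\,\one\{\gamma^1\cap(\Omega\setminus U)=\emptyset\}\exp\Big(\frac{\mathfrak{c}}{2}\blm(\Omega;\gamma^1,\Omega\setminus U)\Big).
\]

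\textbf{Step 3: the marginal law (main obstacle).} It remains to show that this density is exactly the Radon--Nikodym derivative between radial $\SLE_\kappa^\mu(2,\ldots,2)$ in $(U;\bs{x};z)$ and in $(\Omega;\bs{x};z)$. Write $\Omega_t=\Omega\setminus\gamma^1_{[0,t]}$ and $U_t=U\setminus\gamma^1_{[0,t]}$. I would consider the process
\[
N_t=\one\{\gamma^1_{[0,t]}\cap(\Omega\setminus U)=\emptyset\}\exp\Big(\frac{\mathfrak{c}}{2}\blm(\Omega;\gamma^1_{[0,t]},\Omega\setminus U)\Big)\frac{\LZradial{n}^{(\kappa;\mu)}(\Omega_t;\gamma^1_t,x_2,\ldots,x_n;z)}{\LZradial{n}^{(\kappa;\mu)}(U_t;\gamma^1_t,x_2,\ldots,x_n;z)}.
\]
First, I would show that $N_t$ is a local martingale under radial $\SLE_\kappa^\mu(2,\ldots,2)$ in $\Omega$ and that tilting by it produces the driving SDE~\eqref{eqn::multiradial_marginal_SDE} in $U$. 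Concretely, map $U_t$ by $\mathfrak{g}_t$ to $\U$ and let $\psi_t$ be the induced map from $\mathfrak{g}_t(U_t)$ onto $\U$. The Itô computation then uses:
\begin{itemize}
\item the radial BPZ equations~\eqref{eqn::BPZ_U};
\item the conformal covariance in~\eqref{eqn::nradial_pf}, which contributes $\psi_t'$ at the marked points, $\CR$ at $z$, and the $\arg$ terms;
\item the identity $\partial_t\blm(\Omega;\gamma^1_{[0,t]},\Omega\setminus U)=-\frac{1}{6}\LS\psi_t(\xi_t)$, up to the radial normalization correction appearing in~\eqref{eqn::mt_def_radial_mart}.
\end{itemize}
All Itô terms should cancel; this is the same computation as in the classical radial restriction argument.

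Next, I identify the limit of $N_t$ as $t\to\infty$. I expect the ratio $\LZradial{n}^{(\kappa;\mu)}(\Omega_t)/\LZradial{n}^{(\kappa;\mu)}(U_t)$ to converge to $\LZfusion{n-1}(\Omega\setminus\gamma^1)/\LZfusion{n-1}(U\setminus\gamma^1)$ as $\gamma^1_t\to z$. The divergent $\CR$ and Poisson-kernel factors should cancel in the ratio because $U=\Omega$ near $z$. I also need uniform integrability of $N_t$ to pass to the limit. The density is bounded because $\mathfrak{c}\le0$ for $\kappa\le4$ and the partition-function ratios are controlled via Poisson-kernel monotonicity. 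This terminal identification is the step I expect to be hardest, and I would first attempt it by comparing both partition functions to the half-watermelon partition function via the fusion asymptotics at $z$.
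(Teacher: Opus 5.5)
The paper does not prove this lemma; it quotes it from \cite[Proposition~1.4]{HuangPeltolaWuMultiradialSLEResamplingBP}. Your argument therefore has to stand on its own, and as written it has a genuine gap.

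First, the half-watermelon ratio in Step~2 and the ratio in $N_t$ are upside down. For your conditional weight $W$, Lemma~\ref{lem::halfwatermelon_bp} gives $W\,\ud\PP^{\mathrm{hw}}_{\Omega\setminus\gamma^1}=\frac{\mathcal{Z}^{\mathrm{hw}}(U\setminus\gamma^1)}{\mathcal{Z}^{\mathrm{hw}}(\Omega\setminus\gamma^1)}\,\ud\PP^{\mathrm{hw}}_{U\setminus\gamma^1}$. So the $\gamma^1$-marginal density of $\QQ$ carries $\mathcal{Z}^{\mathrm{hw}}(U\setminus\gamma^1)/\mathcal{Z}^{\mathrm{hw}}(\Omega\setminus\gamma^1)$, not its inverse. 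Likewise, the restriction local martingale is
$\one\{\gamma^1_{[0,t]}\cap(\Omega\setminus U)=\emptyset\}\exp\big(\frac{\mathfrak c}{2}\blm(\Omega;\gamma^1_{[0,t]},\Omega\setminus U)\big)\,\LZradial{n}^{(\kappa;\mu)}(U_t;\cdot)/\LZradial{n}^{(\kappa;\mu)}(\Omega_t;\cdot)$.
For $n=1$, $\mu=0$ this is the familiar $\Phi_t'(0)^{\tilde{\mathfrak b}}|\Phi_t'(\ee^{\ii\xi_t})|^{\mathfrak b}\exp(\frac{\mathfrak c}{2}\blm_t)$, where $\Phi_t$ maps $\mathfrak{g}_t(U_t)$ onto $\U$ fixing $0$. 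Your $N_t$ has the reciprocal ratio and is not a local martingale. Even under your own expected limit, your Step~2 density and $N_\infty/N_0$ disagree by the prefactor $\LZradial{n}^{(\kappa;\mu)}(\Omega)/\LZradial{n}^{(\kappa;\mu)}(U)$. This is repairable, but it means the Step~3 computation was never actually checked.

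More seriously, Step~3 contains the entire content of the lemma and is left as an expectation. Given Lemma~\ref{lem::halfwatermelon_bp}, the one-curve identity you need is \emph{equivalent} to the lemma: it is the lemma integrated over $\gamma^2,\ldots,\gamma^n$. Because only $\gamma^1$ runs to $z$, its terminal value is a genuine fusion limit. You must show that, as the tip merges with the interior point $z$:
\begin{itemize}
\item the $\CR$, tip-to-$z$ Poisson-kernel and spiral $\arg$ ratios tend to $1$;
\item $\Poisson(\cdot;x_j;z)^{\frac{2n+4-\kappa}{2\kappa}}\Poisson(\cdot;\gamma^1_t,x_j)^{-1/\kappa}$ fuses into $\Poisson(\Omega\setminus\gamma^1;x_j,z)^{\frac{n+1}{\kappa}-\frac12}$, understood only as a ratio, since $z$ is the tip of the slit and not a $C^{1+\eps}$ boundary point.
\end{itemize}

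Uniform integrability is also missing, and your stated reason is false. $\mathfrak c=\frac{(6-\kappa)(3\kappa-8)}{2\kappa}>0$ for $\kappa\in(8/3,4]$ (e.g.\ $\mathfrak c=1$ at $\kappa=4$), so $\exp(\frac{\mathfrak c}{2}\blm)$ is unbounded as the curve approaches $\Omega\setminus U$. Poisson-kernel monotonicity gives no bound either, because the boundary kernels enter with exponent $-1/\kappa$. You need a localization argument:
\begin{itemize}
\item stop when the curve enters the $\eps$-neighbourhood of $\Omega\setminus U$, where only loops of diameter at least $\eps$ contribute;
\item bound the partition-function ratio before that time;
\item identify the terminal value off that event;
\item let $\eps\to0$, using that under the target law the curve stays at positive distance from $\overline{\Omega\setminus U}$.
\end{itemize}

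Finally, you can probably sidestep the fusion asymptotics altogether. Run the corrected analogue of $N_t$ for all $n$ curves simultaneously, in the multi-time framework of Lemma~\ref{lem::nradial_mart}, e.g.\ along the common-time diagonal. As all tips approach $z$, the ratio $\LZradial{n}^{(\kappa;\mu)}(U_t;\cdot)/\LZradial{n}^{(\kappa;\mu)}(\Omega_t;\cdot)$ should tend to $1$. The terminal value is then directly the target density $\one\{\bs\gamma\cap(\Omega\setminus U)=\emptyset\}\exp(\frac{\mathfrak c}{2}\blm(\Omega;\bs\gamma,\Omega\setminus U))$.
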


\begin{proof}[Proof of Proposition~\ref{prop::bp_radial}]
Comparing~\eqref{eqn::LVradial_def} and~\eqref{eqn::nradial_pf}, we have 
\begin{equation}\label{eqn::bp_radial_aux0}
\kappa\log\LZradial{n}^{(\kappa;\mu)}(\Omega; \bs{x};z)=-\LVradial{n}^{(\mu)}(\Omega; \bs{x};z)-\frac{\kappa}{2}\sum_{j=1}^n\log\Poisson(\Omega; x_j; z)+\frac{\kappa(\kappa-8)}{8}\log\CR(\Omega; z). 
\end{equation}
Note that $\chamberradial{n}(U;\bs{x};z)$ is an open subset of $\chamberradial{n}(\Omega;\bs{x};z)$. 
For $\bs{\gamma}\in \chamberradial{n}(U;\bs{x};z)$, we define
\begin{align*}\Phi(\bs{\gamma};\kappa) :=&\kappa \log \frac{\ud \PPradial{n}^{(\kappa;\mu)}(U; \bs{x}; z) }{\ud \PPradial{n}^{(\kappa;\mu)}(\Omega; \bs{x}; z) }(\bs{\gamma}) \\
=& \underbrace{-\LVradial{n}^{(\mu)}(\Omega; \bs{x}; z) + \LVradial{n}^{(\mu)}(U; \bs{x}; z)}_{\Phi_0(\bs{\gamma}):=} \underbrace{- \blm(\Omega; \bs{\gamma}, \Omega\setminus U)}_{\Phi_1(\bs{\gamma}):=} \underbrace{\frac{(6-\kappa)(8-3\kappa)}{4}}_{f_1(\kappa):=} \\
	& \underbrace{-\frac{\kappa}{2}}_{f_2(\kappa):=} \underbrace{\sum_{1\le \ell\le n} \left( \log\Poisson(\Omega; x_{\ell}, z) - \log\Poisson(U; x_{\ell}, z) \right)}_{\Phi_2(\bs{\gamma}):=}+\underbrace{\frac{\kappa(\kappa-8)}{8}}_{f_3(\kappa)}\underbrace{\left(\log\CR(\Omega; z)-\log\CR(U; z)\right)}_{\Phi_3(\gamma)},
\end{align*}
where the second equation is due to~\eqref{eqn::bp_radial_aux0}. 
For any subset $A\subset \chamberradial{n}(U;\bs{x};z)$, 
\begin{equation}\label{eqn::bp_radial_aux1}
	\PPradial{n}^{(\kappa;\mu)}(U;\bs{x};z)[A]= \Eradial{n}^{(\kappa;\mu)} (\Omega;\bs{x};z)\left[ \exp\left( \frac{1}{\kappa} \Phi(\bs{\gamma};\kappa) \right) \one\{\bs{\gamma}\in A\}\right].
\end{equation}
To apply Lemma~\ref{lem::Varadhan} and Remark~\ref{rmk::Varadhan}, note that $\Phi_0(\bs{\gamma}),\Phi_2(\bs{\gamma}), \Phi_3(\bs{\gamma})$ are constants and $\Phi_1(\bs{\gamma})$ is continuous. Thus conditions~(a) and (d') of Lemma~\ref{lem::Varadhan} holds; $(f_1(\kappa),\Phi_1(\bs{\gamma}))$ satisfies condition~(c) of Lemma~\ref{lem::Varadhan}; both $(f_2(\kappa),\Phi_2(\bs{\gamma}))$ and $(f_3(\kappa), \Phi_3(\bs{\gamma}))$ satisfy condition~(b) of Lemma~\ref{lem::Varadhan}. 
Combining with Theorem~\ref{thm::radialSLE_LDP},  the RHS of~\eqref{eqn::bp_radial_aux1} satisfies large deviation with rate function
\[\rateradial{n}^{(\mu)}(\Omega;\bs{x};z;\bs{\gamma})-\lim_{\kappa\to 0+} \Phi(\bs{\gamma};\kappa)= \rateradial{n}^{(\mu)}(\Omega;\bs{x};z;\bs{\gamma})+ \LVradial{n}^{(\mu)}(\Omega; \bs{x}; z) - \LVradial{n}^{(\mu)}(U; \bs{x}; z) + 12 \blm(\Omega;\bs{\gamma},\Omega\setminus U). \] 
Thus we obtain
\begin{equation*}
	\rateradial{n}^{(\mu)}(U;\bs{x};z;\bs{\gamma})=\rateradial{n}^{(\mu)}(\Omega;\bs{x};z;\bs{\gamma})+ \LVradial{n}^{(\mu)}(\Omega; \bs{x}; z) - \LVradial{n}^{(\mu)}(U; \bs{x}; z) + 12 \blm(\Omega;\bs{\gamma},\Omega\setminus U),
\end{equation*}
which gives~\eqref{eqn::bp_radial} as desired.
\end{proof}

\appendix
\section{Generalized Varadhan's lemma}
\label{appendix::Varadhan}
In this section, we prove a generalization of Varadhan's lemma. This result is instrumental in establishing large deviation principles for processes constructed by weighting a reference process with a $\kappa$-dependent local martingale under mild conditions. We apply this result to the chordal case in Propositions~\ref{prop::finite_watermelon_LDP},~\ref{prop::LDP_SLEkapparho}, and~\ref{prop::bp_chordal}, and to the radial case in Propositions~\ref{prop::finitetime_LDP_radial},~\ref{prop::radialSLE_LDP}, and~\ref{prop::bp_radial}.
\medbreak
First, we recall the Varadhan's lemma (see e.g.~\cite[Lemma~4.3.4 and Lemma~4.3.6]{DemboZeitouniLargeDeviations})
\begin{lemma}[Varadhan's lemma] \label{lem::initial_Varadhan}
Suppose that the probability measures $\left\{ \PP^{(\kappa)} \right\}_{\kappa>0}$ satisfies the large deviation principle in the Hausdorff topology space $X$ with good rate function $\LI$. Let $\Phi:X\to \R$ be a continuous function.
Then for any open subset $O\subset X$, we have 
\begin{equation*} \label{eqn::initial_Varadhan_O}
	\limsup_{\kappa\to 0} \kappa \log \E^{(\kappa)} \left[ \exp \left( \frac{1}{\kappa} \Phi(\eta) \right) \one \left\{ \eta\in O \right\} \right] \ge -\inf_{\eta \in O} \left( \LI(\eta)-\Phi(\eta) \right)
\end{equation*}
Furthermore, if $\Phi$ satisfies the following moment condition for some $\beta>1$:
\begin{equation} \label{eqn::initial_moment_cond}
	\limsup_{\kappa\to 0} \kappa \log \E^{(\kappa)} \left[ \exp \left( \frac{\beta}{\kappa} \Phi(\eta) \right) \right]<\infty.
\end{equation}
Then for any closed subset $F\subset X$, we have 
\begin{equation*} \label{eqn::initial_Varadhan_F} 
	\limsup_{\kappa\to 0} \kappa \log \E^{(\kappa)} \left[ \exp \left( \frac{1}{\kappa} \Phi(\eta) \right) \one \left\{ \eta\in F \right\} \right] \le -\inf_{\eta \in F} \left( \LI(\eta)-\Phi(\eta) \right).
\end{equation*}
\end{lemma}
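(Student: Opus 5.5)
The lower bound for open sets follows from the LDP lower bound plus continuity of $\Phi$. The upper bound for closed sets uses a truncation of $\Phi$ together with the moment condition~\eqref{eqn::initial_moment_cond}. This is the standard route of \cite[Lemmas~4.3.4 and 4.3.6]{DemboZeitouniLargeDeviations}, and I would follow it.

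\emph{Lower bound.} Fix $\eta_0\in O$ with $\LI(\eta_0)<\infty$ and $\delta>0$. Since $\Phi$ is continuous, there is an open neighborhood $G\subset O$ of $\eta_0$ on which $\Phi>\Phi(\eta_0)-\delta$. Then
\[
\E^{(\kappa)}\left[\exp\left(\tfrac1\kappa\Phi\right)\one\{\eta\in O\}\right]\ge \exp\left(\tfrac{\Phi(\eta_0)-\delta}{\kappa}\right)\PP^{(\kappa)}[G].
\]
The LDP lower bound gives $\liminf_{\kappa\to0}\kappa\log\PP^{(\kappa)}[G]\ge-\LI(\eta_0)$. Let $\delta\to0$ and take the supremum over $\eta_0\in O$.

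\emph{Upper bound, truncation.} For $F$ closed and $K>0$, split the expectation according to whether $\Phi(\eta)<K$ or $\Phi(\eta)\ge K$. On $\{\Phi\ge K\}$, Chebyshev's inequality with exponent $\beta$ gives
\[
\E^{(\kappa)}\left[e^{\Phi/\kappa}\one\{\Phi\ge K\}\right]\le e^{-(\beta-1)K/\kappa}\,\E^{(\kappa)}\left[e^{\beta\Phi/\kappa}\right].
\]
By~\eqref{eqn::initial_moment_cond}, the $\kappa\log$ of this term is at most $C-(\beta-1)K$, which tends to $-\infty$ as $K\to\infty$. It remains to bound $\limsup\kappa\log\E^{(\kappa)}\left[e^{(\Phi\wedge K)/\kappa}\one_F\right]$ by $-\inf_F(\LI-\Phi)$; once that holds, the $\limsup$ of the log of the sum is the maximum of the two, and letting $K\to\infty$ finishes.

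\emph{Upper bound, bounded case.} This is the step needing the most care, and it is where goodness of $\LI$ is used. Let $\Phi_K=\Phi\wedge K$ and $L=\inf_F(\LI-\Phi_K)$ (the truncation only increases this infimum relative to $\Phi$, which is fine). Fix $\delta>0$.
\begin{itemize}
\item For each $\eta\in F$ with $\LI(\eta)\le M$ (with $M$ large), continuity of $\Phi_K$ and lower semicontinuity of $\LI$ give an open neighborhood $G_\eta$ whose closure satisfies $\inf_{\overline{G_\eta}}\LI\ge\LI(\eta)-\delta$ and $\sup_{\overline{G_\eta}}\Phi_K\le\Phi_K(\eta)+\delta$.
\item The set $F\cap\{\LI\le M\}$ is compact because $\LI$ is good, so it is covered by finitely many $G_{\eta_i}$.
\item The remainder $F\setminus\bigcup_i G_{\eta_i}$ is closed and has $\LI>M$ on it. There I bound $e^{\Phi_K/\kappa}\le e^{K/\kappa}$ and apply the LDP upper bound, giving at most $K-M$.
\end{itemize}
On each $\overline{G_{\eta_i}}\cap F$, the LDP upper bound gives a contribution of at most $\Phi_K(\eta_i)+\delta-\LI(\eta_i)+\delta\le -L+2\delta$. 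Taking the maximum over finitely many pieces, then sending $M\to\infty$ and $\delta\to0$, yields the claim.
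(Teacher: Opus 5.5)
Your proof is correct and is exactly the standard Dembo--Zeitouni argument (Lemmas~4.3.4 and~4.3.6) that the paper invokes only by citation. Your lower bound actually yields the $\liminf$ form, which is the form the proof of Lemma~\ref{lem::Varadhan} uses, even though the statement above is written with $\limsup$.

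One step needs more justification, since $X$ is only assumed Hausdorff: the choice of $G_\eta$. Lower semicontinuity of $\LI$ alone does not give a neighborhood whose closure avoids the closed set $\{\LI\le\LI(\eta)-\delta\}$. That would require regularity of $X$. Instead, use goodness: this level set is compact, and in a Hausdorff space a compact set can be separated from a point outside it by disjoint open sets, which gives such a $G_\eta$.
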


\begin{lemma}[Generalized Varadhan's lemma] \label{lem::Varadhan}
Suppose that the probability measures $\left\{ \PP^{(\kappa)} \right\}_{\kappa>0}$ satisfies the large deviation principle in the Hausdorff topology space $X$ with good rate function $\LI$. Suppose $X'\subset X$ is an open topological subspace of $X$. 
Suppose 
\begin{equation} \label{eqn::Def_Phi_gamma_kappa}
	\Phi(\eta; \kappa)=\Phi_0(\eta)+f_1(\kappa)\Phi_1(\eta)+f_2(\kappa)\Phi_2(\eta)
\end{equation}
satisfying the following conditions (a,b,c,d) or (a,b,c,d'): 
\begin{enumerate}
	\item[(a)] $\Phi_0$ is continuous on $X'$; 
	\item[(b)] $\Phi_1$ is continuous on $X'$ and there exists a constant $A_1>0$ such that $|\Phi_1(\eta)|\le A_1$; $f_1$ is continuous on $[0,\kappa_0]$ and $f_1(0)=0$; we denote $A'_1:=\sup_{\kappa\in [0,\kappa_0]} |f_1(\kappa)|$;
	\item[(c)] $\Phi_2$ is continuous on $X'$ and $\Phi_2(\eta)\le 0$ for $\eta\in X'$; $f_2$ is continuous and decreasing on $[0,\kappa_0]$ with $f_2(0)>0$;
	\item[(d)] there exists $\beta>1$ such that: 
	\begin{equation*} \label{eqn::moment_cond}
		\limsup_{\kappa\to 0} \kappa \log \E^{(\kappa)} \left[ \exp \left( \frac{1}{\kappa} \left( \beta \Phi_0(\eta)+f_1(\kappa)\Phi_1(\eta)+f_2(\kappa)\Phi_2(\eta) \right) \right) \one\{\eta\in X'\} \right]<\infty.
	\end{equation*}
	\item[(d')] there exists a constant $A_0>0$ such that $\Phi(\eta,\kappa)\le A_0$ for all $\eta\in X'$ and $\kappa\in [0,\kappa_0]$.
\end{enumerate}
Then for any closed subset $F\subset X'$ and open subset $O\subset X'$, we have 
\begin{align}
	&\limsup_{\kappa\to 0} \kappa \log \E^{(\kappa)} \left[ \exp \left( \frac{1}{\kappa} \Phi(\eta; \kappa) \right) \one \left\{ \eta\in F \right\} \right] \le -\inf_{\eta \in F} \left( \LI(\eta)-\Phi(\eta,0) \right), \label{eqn::Varadhan_F} \\
	&\limsup_{\kappa\to 0} \kappa \log \E^{(\kappa)} \left[ \exp \left( \frac{1}{\kappa} \Phi(\eta; \kappa) \right) \one \left\{ \eta\in O \right\} \right] \ge -\inf_{\eta \in O} \left( \LI(\eta)-\Phi(\eta,0) \right) \label{eqn::Varadhan_O}.
\end{align}
\end{lemma}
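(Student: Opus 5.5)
The plan is to reduce everything to the classical Varadhan's lemma (Lemma~\ref{lem::initial_Varadhan}), applied on the open subspace $X'$. Two obstacles have to be handled: the $\kappa$-dependence of $\Phi(\cdot;\kappa)$, and the fact that $\Phi$ is only defined and continuous on $X'$ rather than on all of $X$.

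First, restricting an LDP to an open subset is harmless for the lower bound. For the upper bound one works with closed sets $F\subset X'$; they are closed in $X'$, and we use the upper bound through $\overline{F}^X$. That costs nothing, because the infimum is computed over $F$ and continuity is only needed there. In practice it is cleaner to reprove Varadhan's estimates directly by the standard covering argument, rather than invoke Lemma~\ref{lem::initial_Varadhan} as a black box.

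\textbf{Lower bound \eqref{eqn::Varadhan_O}.} Fix $\eta_0\in O$ with $\LI(\eta_0)<\infty$ and $\delta>0$. By continuity of $\Phi_0,\Phi_1,\Phi_2$ at $\eta_0$, choose an open neighborhood $U\subset O$ of $\eta_0$ on which each $\Phi_i$ is within $\delta$ of its value at $\eta_0$. On $U$ we then have
\[
\Phi(\eta;\kappa)\ge \Phi_0(\eta_0)-\delta-A_1'\ldots
\]
More precisely, $f_1(\kappa)\Phi_1\ge -|f_1(\kappa)|A_1\to 0$, and $f_2(\kappa)\Phi_2(\eta)\ge f_2(\kappa)(\Phi_2(\eta_0)-\delta)\to f_2(0)(\Phi_2(\eta_0)-\delta)$, using $f_2>0$ near $0$. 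Hence
\[
\liminf_{\kappa\to0}\kappa\log\E[\cdots\one_U]\ge \Phi(\eta_0;0)-C\delta-\inf_U\LI\ge \Phi(\eta_0;0)-C\delta-\LI(\eta_0)
\]
by the LDP lower bound. Letting $\delta\to0$ and taking the supremum over $\eta_0$ gives \eqref{eqn::Varadhan_O}.

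\textbf{Upper bound \eqref{eqn::Varadhan_F}.} The key monotonicity step is this: since $\Phi_2\le0$ and $f_2$ is decreasing, for $\kappa\le\kappa_1$ we have $f_2(\kappa)\Phi_2\le f_2(\kappa_1)\Phi_2$. Also $|f_1(\kappa)\Phi_1|\le A_1\sup_{[0,\kappa_1]}|f_1|=:\epsilon(\kappa_1)\to0$. So for $\kappa\le\kappa_1$,
\[
\Phi(\eta;\kappa)\le \Psi_{\kappa_1}(\eta):=\Phi_0(\eta)+f_2(\kappa_1)\Phi_2(\eta)+\epsilon(\kappa_1),
\]
which is a fixed continuous function on $X'$. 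For a fixed $\kappa_1$ we then run the usual Varadhan upper bound for $\Psi_{\kappa_1}$, split into two regimes.
\begin{itemize}
\item[(i)] On $F\cap\{\Psi_{\kappa_1}\le -L\}$ the contribution is at most $-L$.
\item[(ii)] On the level set $K=\{\LI\le \alpha\}\cap \overline F$, which is compact by goodness, cover the points of $K\cap X'$ by finitely many neighborhoods on which $\Psi_{\kappa_1}$ oscillates by at most $\delta$, and apply the LDP upper bound to their closures. Points of $\overline F\setminus X'$ lie outside $F$; since $F$ is closed in $X$ per the statement, $F\subset X'$ is closed, and these points do not occur.
\end{itemize}
The large-values tail is controlled by (d) or (d'). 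Under (d'), $\Phi\le A_0$ uniformly, so there is no tail and a truncation at level $A_0$ suffices. Under (d), the tail is handled via Hölder's inequality with exponents $\beta,\beta'$: the truncated region $\{\Phi_0>L\}$ contributes at most $\frac1\beta\big(\limsup\kappa\log\E[e^{(\beta\Phi_0+f_1\Phi_1+f_2\Phi_2)/\kappa}\one_{X'}]\big)$ plus $\frac{1}{\beta'}\limsup \kappa\log\PP[\Phi_0> L]$-type terms, exactly as in the proof of Dembo--Zeitouni Lemma 4.3.6. The latter tends to $-\infty$ as $L\to\infty$ by the Chebyshev bound from (d).

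This yields
\[
\limsup\le -\inf_F(\LI-\Psi_{\kappa_1}).
\]
Finally let $\kappa_1\downarrow0$. Since $\Psi_{\kappa_1}\downarrow\Phi(\cdot;0)$ pointwise (the $f_2(\kappa_1)\Phi_2$ term decreases to $f_2(0)\Phi_2$ and $\epsilon\to0$), we need
\[
\inf_F(\LI-\Psi_{\kappa_1})\to\inf_F(\LI-\Phi(\cdot;0)).
\]
This is the main obstacle: interchanging the limit with the infimum. We would handle it by the compactness of level sets of $\LI$ together with a Dini-type argument. Restricted to $\{\LI\le\alpha\}\cap F$, the functions $\LI-\Psi_{\kappa_1}$ are lower semicontinuous and increase monotonically, so their infima converge by compactness. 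Outside the level set, the bound $\Psi_{\kappa_1}\le\Phi_0+\epsilon$ together with the tail control keeps the infimum large.
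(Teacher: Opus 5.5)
Your lower bound and the domination $\Phi(\cdot;\kappa)\le\Psi_{\kappa_1}$ for $\kappa\le\kappa_1$ are correct. The overall architecture is the paper's: it uses $\Phi^{M,\epsilon}=\max\{-M,\ \Phi_0+\epsilon+(f_2(0)-\epsilon)\Phi_2\}$ where you use $\Psi_{\kappa_1}$. The gap is the tail step under (d). You bound the region $\{\Phi_0>L\}$ by H\"older and then claim that $\limsup_{\kappa\to0}\kappa\log\mathbb{P}^{(\kappa)}[\Phi_0>L]\to-\infty$ ``by the Chebyshev bound from (d)''. No such bound exists. Condition (d) only controls $G_\kappa:=\beta\Phi_0+f_1(\kappa)\Phi_1+f_2(\kappa)\Phi_2$, and the nonpositive term $f_2\Phi_2$ can absorb arbitrarily large values of $\Phi_0$. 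For example, take $X=\mathbb{R}$, $X'=(0,\infty)$, $\mathbb{P}^{(\kappa)}$ the law of $\sqrt{\kappa}|Z|$ with $Z$ standard Gaussian, $f_1\equiv0$, $f_2\equiv1$, $\Phi_0(x)=1/x$, $\Phi_2(x)=-2/x$, $\beta=3/2$. Then $G_\kappa<0$ on $X'$, so (d) holds, but $\mathbb{P}^{(\kappa)}[\Phi_0>L]\to1$. Your H\"older estimate therefore gives only a bound independent of $L$ for this region, whereas in fact $\Phi=-1/x<-L$ there. In Dembo--Zeitouni the Chebyshev step is applied to the integrated function itself, not to one summand of it.

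The repair is short. Most directly, $\Phi(\cdot;\kappa)=G_\kappa-(\beta-1)\Phi_0$, so
\[
\mathbb{E}^{(\kappa)}\big[e^{\Phi(\eta;\kappa)/\kappa};\ \eta\in X',\ \Phi_0(\eta)>L\big]\le e^{-(\beta-1)L/\kappa}\,\mathbb{E}^{(\kappa)}\big[e^{G_\kappa(\eta)/\kappa};\ \eta\in X'\big],
\]
and by (d) its $\limsup_{\kappa\to0}\kappa\log$ is at most $C-(\beta-1)L$. The paper instead verifies the classical moment condition for the dominating function. Once $\beta f_2(\kappa_1)\ge f_2(0)\ge f_2(\kappa)$ (the analogue of its restriction $\epsilon<(1-1/\beta)f_2(0)$), the sign $\Phi_2\le0$ gives $\beta\Psi_{\kappa_1}\le G_\kappa+A_1|f_1(\kappa)|+\beta\epsilon(\kappa_1)$ on $X'$. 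This inequality is worth having anyway. Combined with the Varadhan lower bound for $\beta\Psi_{\kappa_1}$ on $X'$, it yields $\mathcal{I}-\Psi_{\kappa_1}\ge(1-1/\beta)\mathcal{I}-C$ uniformly in small $\kappa_1$. That bound is what actually ``keeps the infimum large'' off the level sets when you interchange $\kappa_1\downarrow0$ with $\inf_F$; the paper passes over this step, and your Dini argument is the right way to handle it.

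Your H\"older route can also be saved by intersecting with $F$. Goodness of $\mathcal{I}$ and continuity of $\Phi_0$ on $F$ give $\inf\{\mathcal{I}(\eta):\eta\in F,\ \Phi_0(\eta)\ge L\}\to\infty$, so the LDP upper bound on that closed set, not (d), sends $\limsup_{\kappa\to0}\kappa\log$ of its probability to $-\infty$. That fix, like your compactness steps, needs $F$ closed in $X$. Your opening claim that passing to $\overline{F}^X$ for $F$ merely closed in $X'$ ``costs nothing'' is false. Take $\mathbb{P}^{(\kappa)}=\delta_\kappa$ on $X=\mathbb{R}$, $X'=(0,\infty)$, $\Phi\equiv1$, $F=X'$: the left side of the upper bound is $1$ and the right side is $-\infty$. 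State the reading you adopt in (ii), that $F$ is closed in $X$, from the start.
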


\begin{proof}
Note that the conditions (a,b,c,d') are stronger than the conditions (a,b,c,d). Thus we directly use conditions (a,b,c,d) to prove the lemma.
	\medbreak
First, we prove~\eqref{eqn::Varadhan_F}. Let $\overline{F}$ be the closure of $F$ in $X$. For $\eps,M>0$, define
\begin{equation*}
	\Phi^{M,\eps}(\eta):=\begin{cases}
		\max \left\{-M,\Phi_0(\eta)+\eps +\Phi_2(\eta) (f_2(0)-\eps)  \right\}, & \eta\in X', \\
		-M, & \eta \in X\setminus X'.
	\end{cases}
\end{equation*}
By conditions~(b) and (c), there exists $\kappa_1\in (0,\kappa_0)$ such that $\Phi(\eta; \kappa)\le \Phi^{M,\eps}(\eta)$ for $\eta\in X'$ and $\kappa\in [0,\kappa_1]$. 
	\medbreak
Note that $\Phi^{M,\eps}(\eta)$ satisfies the moment condition~\eqref{eqn::initial_moment_cond} when $\eps<(1-1/\beta) f_2(0)$, because
\begin{align}
	& \limsup_{\kappa\to 0} \kappa \log \E^{(\kappa)} \left[ \exp \left( \frac{\beta}{\kappa} \Phi^{M,\eps}(\eta) \right) \right] \notag\\
	\le & \limsup_{\kappa\to 0} \kappa \log \left( \ee^{-M/\kappa} + \E^{(\kappa)} \left[ \exp \left( \frac{\beta}{\kappa} \left( \Phi_0(\eta)+ \Phi_2(\eta) (f_2(0)-\eps) + \eps \right) \right) \one \left\{ \eta\in X' \right\} \right] \right) \notag \\
	\le & \limsup_{\kappa\to 0} \kappa \log \left( \ee^{-M/\kappa} + \ee^{\eps \beta/\kappa} \E^{(\kappa)} \left[ \exp \left( \frac{1}{\kappa} \left( \beta \Phi_0(\eta) + \Phi_2(\eta) f_2(0) \right) \right) \one \left\{ \eta\in X' \right\} \right] \right) \notag\\
	\le & \limsup_{\kappa\to 0} \kappa \log \left( \ee^{-M/\kappa} + \ee^{(\eps \beta+A_1 A'_1)/\kappa} \E^{(\kappa)} \left[ \exp \left( \frac{1}{\kappa} \left( \beta \Phi_0(\eta)+f_1(\kappa)\Phi_1(\eta)+f_2(\kappa)\Phi_2(\eta) \right) \right) \one \left\{ \eta\in X' \right\} \right] \right) \tag{due to conditions~(b) and~(c)}\\
	< & \infty. \tag{due to condition~(d)}
\end{align}
We have
\begin{align}
	&\limsup_{\kappa\to 0} \kappa \log \E^{(\kappa)} \left[ \exp \left( \frac{1}{\kappa} \Phi(\eta; \kappa) \right) \one \left\{ \eta\in F \right\} \right] \notag \\
	\le & \limsup_{\kappa\to 0} \kappa \log \E^{(\kappa)} \left[ \exp \left( \frac{1}{\kappa} \Phi^{M,\eps}(\eta) \right) \one \left\{ \eta\in \overline{F}  \right\} \right]  \notag \\
	\le & -\inf_{\eta\in \overline{F}} \left\{ \LI(\eta)-\Phi^{M,\eps}(\eta) \right\}  \tag{due to~\eqref{eqn::initial_Varadhan_F}} \\
	\to & -\inf_{\eta\in F} \left( \LI(\eta)-\Phi(\eta,0) \right) \quad \text{ as } \eps\downarrow 0, M\uparrow\infty, \notag
\end{align}
which implies~\eqref{eqn::Varadhan_F}.
	\medbreak
Next, we prove~\eqref{eqn::Varadhan_O}. We have
\begin{align}
	&\liminf_{\kappa\to 0} \kappa \log \E^{(\kappa)} \left[ \exp \left( \frac{1}{\kappa} \Phi(\eta; \kappa) \right) \one \left\{ \eta\in O \right\} \right] \notag \\
	\ge & \liminf_{\kappa\to 0} \kappa \left( -A_1 A'_1 + \log \E^{(\kappa)} \left[ \exp \left( \frac{1}{\kappa} \Phi(\eta,0) \right) \one \left\{ \eta\in O \right\} \right] \right) \tag{due to conditions~(b) and~(c)} \\
	\ge & -\inf_{\eta \in O} \left( \LI(\eta)-\Phi(\eta,0) \right), \tag{due to~\eqref{eqn::initial_Varadhan_O}}
\end{align}
which implies~\eqref{eqn::Varadhan_O}.
\end{proof}

\begin{remark} \label{rmk::Varadhan}
In fact, the decomposition of $\Phi(\eta; \kappa)$ in~\eqref{eqn::Def_Phi_gamma_kappa} can be more general. Assume there exists a positive integer $m$ such that
\begin{equation*}
	\Phi(\eta; \kappa)=\Phi_0(\eta)+\sum_{j=1}^{m} f_j(\kappa)\Phi_j(\eta).
\end{equation*}
The condition~(d) becomes that there exists $\beta>1$ that 
\begin{equation}\label{eqn::conditiond_general}
	\limsup_{\kappa\to 0} \kappa \log \E^{(\kappa)} \left[ \exp \left( \frac{1}{\kappa} \left( \beta \Phi_0(\eta)+\sum_{j=1}^{m} f_j(\kappa)\Phi_j(\eta) \right) \right) \one\{\eta\in X'\} \right]<\infty,
\end{equation}
and the condition~(d') does not change.
Assume $\Phi_0(\eta)$ satisfies condition~(a); $(f_j(\kappa),\Phi_j(\kappa))$ satisfies condition~(b) or condition~(c) for $1\le j\le m$; and $\Phi(\eta; \kappa)$ satisfies condition~\eqref{eqn::conditiond_general}. Then~\eqref{eqn::Varadhan_F} and~\eqref{eqn::Varadhan_O} hold. The proof is the same as the proof of Lemma~\ref{lem::Varadhan}.
\end{remark}


\end{document}